\title{Jointly Improving the Sample and Communication Complexities in Decentralized Stochastic Minimax Optimization}
\author{
   Xuan Zhang\textsuperscript{\rm 1},
    Gabriel Mancino-Ball\textsuperscript{\rm 2},
    Necdet Serhat Aybat\textsuperscript{\rm 1},
    Yangyang Xu\textsuperscript{\rm 2}
}
\crefname{section}{Section}{Sections} 
\theoremstyle{plain}
\newtheorem{theorem}{Theorem}
\newtheorem{lemma}{Lemma}
\theoremstyle{definition}
\newtheorem{definition}{Definition}
\newtheorem{assumption}{Assumption}
\theoremstyle{remark}
\newtheorem{remark}{Remark}
\theoremstyle{definition}
\newtheorem{param}{Parameter Condition}
\def\grad{\nabla}
\def\ba{\mathbf{a}}
\def\bb{\mathbf{b}}
\def\bd{\mathbf{d}}
\def\be{\mathbf{e}}
\def\bp{\mathbf{p}}
\def\bq{\mathbf{q}}
\def\br{\mathbf{r}}
\def\bv{\mathbf{v}}
\def\bx{\mathbf{x}}  
\def\by{\mathbf{y}}
\def\bz{\mathbf{z}}
\def\bE{\mathbf{E}}
\def\bI{\mathbf{I}}
\def\bP{\mathbf{P}}
\def\bQ{\mathbf{Q}}
\def\bR{\mathbf{R}}
\def\bY{\mathbf{Y}}
\def\bX{\mathbf{X}}
\def\cB{\mathcal{B}}
\def\cC{\mathcal{C}}
\def\cE{\mathcal{E}}
\def\cG{\mathcal{G}}
\def\cO{\mathcal{O}}
\def\cV{\mathcal{V}}
\def\mR{\mathbb{R}}
\def\smskip{\smallskip}
\def\texitem#1{\par\smskip\noindent\hangindent 25pt
               \hbox to 25pt {\hss #1 ~}\ignorespaces}
\def\norm#1{\|#1\|}
\newcommand{\BEAS}{\begin{eqnarray*}}
\newcommand{\EEAS}{\end{eqnarray*}}
\newcommand{\BEA}{\begin{eqnarray}}
\newcommand{\EEA}{\end{eqnarray}}
\newcommand{\BEQ}{\begin{eqnarray}}
\newcommand{\EEQ}{\end{eqnarray}}
\newcommand{\BIT}{\begin{itemize}}
\newcommand{\EIT}{\end{itemize}}
\newcommand{\BNUM}{\begin{enumerate}}
\newcommand{\ENUM}{\end{enumerate}}
\newcommand{\BA}{\begin{array}}
\newcommand{\EA}{\end{array}}
\newcommand{\reals}{\mathbb{R}}
\newif\ifpagenumbering
\newsavebox{\theorembox}
\newsavebox{\lemmabox}
\newsavebox{\defnbox}
\newsavebox{\assbox}
\savebox{\theorembox}{\noindent\bf Theorem}
\savebox{\lemmabox}{\noindent\bf Lemma}
\savebox{\defnbox}{\noindent Definition}
\newcommand{\blx}{\mathbf{x}}
\newcommand{\bly}{\mathbf{y}}
\def\bbx{\Bar{\mathbf{x}}}
\def\bby{\Bar{\mathbf{y}}}
\def\bbd{\Bar{\mathbf{d}}}
\def\bbv{\Bar{\mathbf{v}}}
\def\xz#1{\textcolor{blue}{#1}}
\def\yy#1{\textcolor{black}{#1}}
\def\gmbf#1{\textcolor{black}{#1}}
\def\sa#1{\textcolor{red}{#1}}
\def\gda{\texttt{DGDA-VR}}
\def\xz#1{\textcolor{black}{#1}}
\def\yy#1{\textcolor{black}{#1}}
\def\gmb#1{\textcolor{black}{#1}}
\def\sa#1{\textcolor{black}{#1}}
\def\xzrr#1{\textcolor{blue}{#1}}
\def\xzy#1{\textcolor{purple}{#1}}
\def\xzf#1{\textcolor{black}{#1}}
\def\xz#1{\textcolor{black}{#1}}
\def\yy#1{\textcolor{black}{#1}}
\def\gmb#1{\textcolor{black}{#1}}
\def\sa#1{\textcolor{black}{#1}}
\def\gda{\texttt{DGDA-VR}}
\def\xz#1{\textcolor{black}{#1}}
\def\yy#1{\textcolor{black}{#1}}
\def\gmb#1{\textcolor{black}{#1}}
\def\sa#1{\textcolor{black}{#1}}
\def\xzrr#1{\textcolor{black}{#1}}
\def\xzy#1{\textcolor{black}{#1}}
\def\xzref#1{\textcolor{black}{#1}}
\def\saa#1{\textcolor{black}{#1}}
\def\yx#1{\textcolor{black}{#1}}
\DeclareMathOperator*{\argmax}{argmax}
\def\tgrad{\tilde{\nabla}}
\def\grad{\nabla}
\def\ba{\mathbf{a}}
\def\bb{\mathbf{b}}
\def\bd{\mathbf{d}}
\def\be{\mathbf{e}}
\def\bp{\mathbf{p}}
\def\bq{\mathbf{q}}
\def\br{\mathbf{r}}
\def\bv{\mathbf{v}}
\def\bx{\mathbf{x}}  
\def\by{\mathbf{y}}
\def\bz{\mathbf{z}}
\def\bE{\mathbf{E}}
\def\bI{\mathbf{I}}
\def\bP{\mathbf{P}}
\def\bQ{\mathbf{Q}}
\def\bR{\mathbf{R}}
\def\bY{\mathbf{Y}}
\def\bX{\mathbf{X}}
\def\cB{\mathcal{B}}
\def\cC{\mathcal{C}}
\def\cE{\mathcal{E}}
\def\cG{\mathcal{G}}
\def\cO{\mathcal{O}}
\def\cV{\mathcal{V}}
\def\mR{\mathbb{R}}
\def\smskip{\smallskip}
\def\texitem#1{\par\smskip\noindent\hangindent 25pt
               \hbox to 25pt {\hss #1 ~}\ignorespaces}
\newif\ifpagenumbering
\savebox{\theorembox}{\noindent\bf Theorem}
\savebox{\lemmabox}{\noindent\bf Lemma}
\savebox{\defnbox}{\noindent Definition}
\newcommand{\xmark}{\ding{55}}
\newcommand{\bigO}[1]{\mathcal{O}\left(#1\right)}
\begin{document}

\maketitle

\begin{abstract}
 We propose a novel single-loop decentralized algorithm, \texttt{DGDA-VR}, for solving the stochastic nonconvex strongly-concave minimax problems over a connected network of 
 agents, \saa{which are equipped with  stochastic first-order oracles to estimate their local gradients. 
 \xzf{\texttt{DGDA-VR}, \saa{incorporating variance reduction,} achieves} 
 } $\mathcal{O}(\epsilon^{-3})$ 
 \xzf{oracle complexity} and 
 $\mathcal{O}(\epsilon^{-2})$ 
 communication complexity \saa{\textit{without} resorting to multi-communication rounds -- both 
 are \textit{optimal}, i.e., matching} the lower bounds for this class of problems. 
 \saa{Since 
 \xzf{\texttt{DGDA-VR} }
 does not require multiple communication rounds,
 it is applicable to a broader range of decentralized computational environments.} 
 To the best of our knowledge, \saa{this is the first distributed method using a single communication round in each iteration} 
 to jointly optimize the 
 \saa{oracle} and communication complexities for the problem considered here.
\end{abstract}

\section{Introduction}
This paper considers a connected network $\mathcal{G}=(\mathcal{V},\mathcal{E})$ of $M$ agents which cooperatively solve 
\vspace{-2.5mm}
\begin{equation}
    \label{eq:main-problem}
\small{\min_{\bx\in\mR^{\sa{d}}}\max_{\by\in\mR^m}f(\bx,\by)\triangleq  \frac{1}{M}\sum_{i=1}^M f_i(\bx,\by),}
\vspace{-2mm}
\end{equation}
where 
$f_i:\xz{\reals^d\times\reals^m}\rightarrow \mR$ is \sa{smooth and possibly \emph{nonconvex}} in $\bx\in\mR^{d}$ and \sa{are} \gmb{strongly-}concave in $\by\in\mR^m$ for $i=1,2,...,M$. \sa{Furthermore, each agent-$i$ can only access 
unbiased stochastic gradients $\tilde\grad f_i$ rather than exact gradients $\grad f_i$, and we assume that $\{\tilde\grad f_i\}_{i\in\cV}$ have finite variances, uniformly bounded by some $\sigma>0$.} \gmb{The set $\mathcal{V}=\{1,2,\dots,M\}$ indexes the $M$ agents and $(i,j)\in\mathcal{E}\subseteq\mathcal{V}\times\mathcal{V}$ only if agent $i$ can send information to agent $j$.} Minimax optimization has garnered recent interest due to applications in many machine learning settings such as adversarial training~\cite{Goodfellow2014,Mingrui2020}, distributionally robust optimization~\cite{Namkoong2016,Wenhan2021}, reinforcement learning~\cite{Xin2021}, and fair machine learning~\cite{Nouiehed2019}. 
The problem \sa{in}~\eqref{eq:main-problem} arises naturally when the data is 
\sa{physically} distributed among many agents or is too large to store on 
\sa{a single} computing device~\cite{Xin2021HSGD}. It is well known that \emph{centralized} methods suffer from communication bottlenecks on the parameter server~\cite{Lian2017,Wenhan2021} and potential data privacy violations~\cite{Verbraeken2020}; hence, decentralized methods have emerged as a practical alternative to overcome these issues.

In a decentralized setting, \saa{only agent-$i$ has access to} $f_i$ and its stochastic gradient oracle; 
thus, in order for the $M$ agents to collaboratively solve \eqref{eq:main-problem}, each agent-$i$ will make a local copy, denoted as $(\bx_i, \by_i)$, of the primal-dual variable $(\bx,\by)$ and communicate the local variables and gradient information with its \saa{immediate} (1-hop) neighbors. In this way, \eqref{eq:main-problem} can be reformulated equivalently into the following problem in a decentralized format:
\vspace*{-1mm}
\begin{equation}\label{eq:main-problem-2}
\small{
        \begin{split}
        &\min_{\{\blx_i\}_{\sa{i\in\cV}}}\max_{\{\bly_i\}_{\sa{i\in\cV}}}\frac{1}{\sa{|\cV|}}\sum_{\sa{i\in\cV}} f_i(\bx_i,\by_i)\\
        \text{ s.t. }&\bx_i=\bx_j,\quad \by_i=\by_j,\enskip\forall\,(i,j)\in\mathcal{E}.
        \end{split}
        }
\end{equation}
\vspace*{-2.5mm}\\
Consensus among the agents is then enforced through the use of a \emph{mixing matrix} \sa{encoding the topology of $\cG$}. 

\sa{Although there are 
decentralized algorithms for stochastic nonconvex-strongly-concave minimax problems, the existing work~\cite{Mingrui2020,Chen2022} requires \textit{multi-communication rounds} at each iteration; hence, they can be analyzed as a centralized algorithm with inexact gradients. In \gmb{a} multi-agent setting, \gmb methods requiring multi-communication rounds per iteration are \emph{not} desired as they require more strict coordination among the agents while single round communication methods are much easier to implement.} We will design a decentralized algorithm for \eqref{eq:main-problem} or equivalently \eqref{eq:main-problem-2} that only requires a single communication round per iteration.
Although another recent work~\cite{Wenhan2021} 
\saa{also proposed a decentralized algorithm for the same setting} with a single round of communication per iteration, we noticed that its proof has a fundamental issue and the claimed complexity results do not hold --- we explain this problem in detail when we compare our results with the existing work \saa{below}.
In addition, the communication complexity 
of the algorithm in \cite{Wenhan2021} 
\saa{is intrinsically 
of the same order 
with its \xzf{oracle complexity}; 
hence, it} cannot be optimal. In contrast, the 
method we 
\saa{propose} can achieve an optimal complexity result for both \xzf{oracle complexity} and communication complexity in terms of 
\saa{its} dependence on a given tolerance \saa{$\epsilon>0$ for $\epsilon$-stationarity, defined below.}

\paragraph{Contributions.} Our contributions are two-fold. First, 
we propose a decentralized 
stochastic gradient-type method, called \gda{}, for solving \eqref{eq:main-problem} or equivalently \eqref{eq:main-problem-2}. At every iteration of the method, each agent-$i$ performs a local stochastic gradient descent step for $\bx_i$ and a local stochastic gradient ascent step for $\by_i$, along a tracked (global) stochastic gradient direction. \gda{} needs only a single communication round per iteration 
\saa{among neighbors for} (weighted) averaging local variables and \saa{tracking the global} stochastic gradient information.

Second, we show that when each agent uses a SPIDER-type stochastic gradient estimator \cite{fang2018spider}, which is a variant of SARAH \cite{nguyen2017sarah}, \gda{} can, in a decentralized manner, generate $\{\bz_i(\epsilon)\}_{i\in\cV}$ with $\bz_i(\epsilon)\triangleq\big(\bx_i(\epsilon),\by_i(\epsilon)\big)$ such that the local decisions $\{\bz_i(\epsilon)\}_{i\in\cV}$ and their average $(\bbx_\epsilon,\bby_\epsilon)=\bar{\bz}_\epsilon=\frac{1}{|\cV|}\sum_{i\in\cV}\bz_i(\epsilon)$ have the following properties:
\begin{enumerate}[ topsep=-0.5ex, itemsep=-0.5ex]
\item $\bbx_\epsilon$ is an \emph{$\epsilon$-stationary point} of the primal function $\Phi(\cdot)\triangleq\max_{\by}f(\cdot,\by)$, i.e., $\mathbf{E}[\norm{\grad \Phi(\bbx_\epsilon)}]\leq \epsilon$;
\item $\bby_\epsilon$ is an \emph{$\cO(\epsilon)$-optimal-response} to $\bbx_\epsilon$, i.e., $\mathbf{E}[\norm{\bby_\epsilon-\by^*(\bbx_\epsilon)}] = \cO(\epsilon)$, where $\by^*(\bbx_\epsilon)=\argmax_{\by}f(\bbx_\epsilon,\by)$; 
\item $\{\bz_i(\epsilon)\}_{i\in\cV}$ has \emph{$\cO(\epsilon)$-consensus-violation}, i.e., $\mathbf{E}[\sum_{i\in\cV}\norm{\bz_i(\epsilon)-\bar{\bz}_\epsilon}^2] = \cO(\epsilon^2)$;
\item 
\saa{computing $\{\bz_i(\epsilon)\}_{i\in\cV}$} requires $\cO((1-\rho)^{-2}\epsilon^{-2})$ communication among neighboring nodes, which employ $\cO(\sigma(1-\rho)^{-2}\epsilon^{-3})$ stochastic oracle calls, i.e., the sampling complexity --- here, \gmb{$\rho\in[0,1)$} 
measures the connectivity of the underlying communication network, and a smaller $\rho$ means \gmb{a} more connected network. 
The orders $\cO(\epsilon^{-2})$ for communication rounds and $\cO(\epsilon^{-3})$ for stochastic \yx{gradient} oracles both match with existing lower bounds \cite{sun2019distributed, arjevani2022lower}.
\end{enumerate}


\paragraph{Notation and definitions.}
Throughout the paper, \xz{we use bold lower-case letters $\bx, \by, \ldots$ to denote vectors and upper-case letters $X, Y, \ldots$  to denote matrices.} $\|\cdot\|$ denotes the \emph{Euclidean norm} for a vector. $\|\cdot\|_F$ and $\|\cdot\|_2$ denote the \emph{Frobenius norm}, and the \emph{spectral norm} of a matrix, respectively. The symbols $\mathbf{I}$ and $\mathbf{1}$ denote the identity matrix and the column vector with all elements $1$, respectively. The symbol $\bE$ is used for expectation. $W$ represents a mixing matrix and $\Pi\triangleq\frac{1}{M}\mathbf{1}\mathbf{1}^\top\in\reals^{M\times M}$ the averaging matrix.
We let $\mathbb{N}^+\triangleq \mathbb{N}/\{0\}$. Given $M\in\mathbb{N}^+$, $[M]$ denotes the integer set $\{1,2,..,M\}$. 
Given a random variable $\xi$, \sa{for any $i\in [M]$, $\tilde{\grad}f_i(\bx,\by;\xi)$ denotes an unbiased estimator of $\grad f_i(\bx,\by)$, of which properties are stated in~Assumptions~\ref{ASPT:general-sto-grad} and 
 \ref{ASPT:smooth-F-mean-squared}.} \gmb{We interchangeably use $\reals^{d}\times\reals^m=\reals^{d+m}$ when it is convenient to define the inputs to $f_i$ as a single vector.}
 We will compactly use 
 matrix variables \saa{for the formulation in \eqref{eq:main-problem-2}:}
 \vspace{-1mm}
\begin{equation*}
    \small{X \triangleq [\bx_1, \ldots, \bx_M]^\top, \ Y \triangleq [\by_1, \ldots, \by_M]^\top, \ Z \triangleq [X, Y].}
\end{equation*} 
\paragraph{Organization.}
We first briefly discuss the previous work on decentralized minimax problems related to ours. After we give some important definitions and state our assumptions, we describe our proposed method and main results in detail. Finally, we test our method against the SOTA methods employing variance reduction on \yx{a game problem and two \saa{different} robust machine learning problems.}
\section{Related Work}\label{sec:related-works}
\begin{table*}[h]
  \centering
    {\small
    \renewcommand{\arraystretch}{1.6}
  \begin{tabular}[b]{lccccc}
	\toprule
	Method & P & U & 
 \xzf{Oracle} Comp. & Comm. Comp. & Requirement\\
	\midrule
    GT-DA~\cite{Tsaknakis2020}$^{\dagger}$ & FS & D & $\tilde{\mathcal{O}}\left(\frac{n\kappa^{a_s}}{(1-\rho)^{b_s}\varepsilon^{2}}\right)$ & $\tilde{\mathcal{O}}\left(\frac{\kappa^{a_c}}{(1-\rho)^{b_c}\varepsilon^{2}}\right)$ &  mult. $\mathbf{y}$-update\\
    GT-SRVR~\cite{Xin2021} & FS & S & $\bigO{n+\frac{\sqrt{n}\kappa^{c_s}}{(1-\rho)^{d_s}\varepsilon^{2}}}^{\diamond}$ & $\bigO{\frac{\kappa^{c_c}}{(1-\rho)^{d_c}\varepsilon^{2}}}$ &  \xmark \\
    DSGDA~\cite{Gao2022} & FS & S & $\bigO{\frac{\sqrt{n}\xzf{L}\kappa^3}{(1-\rho)^{2}\varepsilon^{2}}}^{\P}$ & $\bigO{\frac{\xzf{L}\kappa^3}{(1-\rho)^{2}\varepsilon^{2}}}$ & \xmark \\
    \midrule
    DPOSG~\cite{Mingrui2020} & S & S & $\bigO{\frac{\sigma^2}{(1-\rho^t)^{2}\varepsilon^{12}}}^{\ddagger}$ & $\bigO{\frac{\sigma^2}{(1-\rho^t)^{2}\varepsilon^{12}}}^{\ddagger}$ &  mult. comm.\\
    DREAM~\cite{Chen2022} & S & S & $\bigO{\frac{\xzf{L}\kappa^3\sigma}{\varepsilon^{3}}}$ & $\bigO{\frac{\xzf{L}\kappa^2}{\sqrt{1-\rho}\varepsilon^{2}}}$ & mult. comm. \\
    \saa{\textbf{This Paper (\gda{})}} & S & S & \xzrr{$\cO\left(\frac{\xzf{L}\kappa^3\sigma}{\min\{1/\kappa,(1-\rho)^2\}\epsilon^3}\right)$} & \xzrr{$\cO\left(\frac{\xzf{L} \kappa^2}{\min\{1/\kappa,(1-\rho)^{2}\}\varepsilon^{2}}\right)$} &  \xmark\\
    \midrule
    \xzf{\textbf{Lower Bounds}$^{\circ}$}  & S & S & \xzrr{$\Omega\left(L\sigma\epsilon^{-3}\right)$}  & \xzrr{$\Omega\left(\frac{L}{\sqrt{1-\rho}\epsilon^{2}}\right)$} &  \xmark\\
	\bottomrule
  \end{tabular}
  }
   \caption{
    The P column 
    \saa{shows} the problem setting: 
    finite-sum (FS) or stochastic (S) --for FS setting, 
    $n$ denotes the number of component functions. The U column 
    \saa{indicates} whether stochastic (S) or deterministic (D) gradients are used. Some works do not \saa{explicitly state} the dependence upon the spectral gap or condition number --we use constants $a,b,c,d>0$ with subscripts $s$ and $c$ 
    \saa{indicating that} these unknowns are related to the \emph{sample} or \emph{communication} complexities, respectively. An \xmark\enskip in the final column indicates there is no special requirement for the theoretical results to hold. \saa{Table notes:} $^{\dagger}$GT-DA considers a slightly different problem than~\eqref{eq:main-problem-2} as consensus is only enforced on $\{\mathbf{x}_i\}$ or $\{\mathbf{y}_i\}$; additionally, GT-DA performs deterministic updates; hence, $\sigma$ does not appear in the 
    \saa{complexity} results. $^{\diamond}$GT-SRVR can remove the dependence upon $\sigma$ by computing a full gradient periodically. $^{\P}$DSGDA uses a variance reduction technique which removes the bounded variance assumption (hence $\sigma$ does not appear); however, it is unclear whether this technique can be extended to the stochastic setting. $^{\ddagger}$DPOSG considers the nonconvex-\emph{nonconcave} problem, hence $\kappa$ is undefined for this setting; additionally, $t>1$ represents the required number of communications per iteration.
    $^\circ$
    \xzf{\cite{arjevani2022lower} considers centralized nonconvex minimization problems defined by functions with Lipshitz gradients and assumes that their stochastic oracles are unbiased and have bounded variance. Similarly, \cite{Sun2020} considers a deterministic distributed nonconvex minimization problem under the same conditions. 
    The oracle complexity of distributed methods cannot be less than that of centralized methods, and the communication complexity of minimax problems cannot be less than that of minimization problems; \saa{therefore, their lower bounds apply here.}
    }
    }
    \label{table:related_works}
\end{table*}%

We provide a brief literature review 
on \emph{decentralized optimization} methods (specifically for nonconvex and stochastic problems), 
and \sa{discuss both centralized and decentralized methods for minimax problems}.

\paragraph{Decentralized optimization.} D-PSGD~\cite{Lian2017} first advocated for the use of decentralized methods and provided convergence analysis for a stochastic gradient-type method. $D^2$~\cite{Tang2018} improved the analysis of D-PSGD to allow for data heterogeneity. More recently, gradient tracking has been utilized to further enhance the convergence rate of new methods; see~\cite{Lu2019,Zhang2020,Koloskova2021,Xin2021GradTrack} for further discussions. Variance reduction methods that mimic updates from the SARAH~\cite{Nguyen2017} and SPIDER~\cite{Wang2019} methods provide optimal gradient complexity results at the expense of large batch computations; examples include D-SPIDER-SFO~\cite{Pan2020}, D-GET~\cite{Sun2020}, GT-SARAH~\cite{Xin2021Sarah}, DESTRESS~\cite{Li2022}. To avoid the large batch requirement of these methods, the STORM~\cite{Cutkosky2019,Xu2020} and Hybrid-SGD~\cite{Tran2022} methods have also been adapted to the \emph{decentralized setting}; see GT-STORM~\cite{Zhang2021} and GT-HSGD~\cite{Xin2021HSGD}. Both types of variance reduction have recently been extended to include a proximal term in ProxGT-SR-O/E~\cite{Xin2021ProxGT} and DEEPSTORM~\cite{MancinoBall2022}. There are many other decentralized methods which handle various problem settings, but an exhaustive discussion is beyond the scope of this work; we refer interested readers to the references in the above works for more details.

\paragraph{Minimax optimization.} Before discussing purely decentralized minimax optimization methods, we first provide a brief overview of minimax optimization methods in the \emph{centralized setting}. 
\xz{
In recent years, a \sa{significant 
amount of work has been proposed}~\cite{chen2021accelerated,jin2020local,lin2020gradient,lin-near-optimal,lu2020hybrid,ostrovskii2021efficient, thekumparampil2019efficient,zhang2021robust,yang2022faster}. 
Moreover, the lower complexity bounds \sa{have also been} 
studied for centralized minimax algorithms in \cite{zhang2019lower,zhang2021complexity,li2021complexity}.
Additionally, more methods 
\sa{employing} \emph{variance reduction} have been considered to improve the performance of the stochastic minimax algorithms, e.g., see \cite{xu2020enhanced,huang2021efficient,luo2020stochastic,zhang2022sapd}. In this paper, \sa{to control the noise accumulation,} we 
\sa{propose \gda}, \sa{a decentralized 
method employing} the SPIDER \emph{variance reduction} technique~\cite{fang2018spider}, a variant of SARAH~\cite{nguyen2017sarah}.
}

For the \emph{decentralized setting}, we summarize some representative work for solving the minimax problem in Table~\ref{table:related_works}. The \sa{method} GT-DA~\cite{Tsaknakis2020} 
\sa{is proposed for a slightly modified version of~\eqref{eq:main-problem-2} in the \emph{deterministic} setting;} this method only enforces consensus on $\mathbf{x}_i$ variables and as such requires the $\mathbf{y}$-subproblem \yx{to} be solved to an increasing accuracy at each iteration. GT-SRVR~\cite{Xin2021} is closely related to \gda{}, our proposed algorithm; \sa{that said, the analysis for GT-SRVR is only 
provided} for the finite-sum problem, \gmb{and 
the dependence upon important parameters such as $\kappa$ and $\rho$ is unclear}. Similarly, DSGDA~\cite{Gao2022} \sa{is proposed for} the finite-sum \sa{setting, and employs} a stochastic gradient estimator from~\cite{Li2021Sarah}\gmb{, for which it is unclear on how to theoretically extend to the general stochastic setting}. For the purely stochastic 
\sa{case,} DPSOG~\cite{Mingrui2020} is a general method that solves the \sa{nonconvex-nonconcave} problem, however, \sa{its 
oracle complexity} is sub-optimal. \sa{Furthermore, DPSOG} 
requires multiple communications \sa{per iteration in order to guarantee the convergence to a stationary point.} 

\paragraph{Comparison with DM-HSGD and DREAM.} We provide a 
detailed comparison \sa{of \gda{}} to two closely related methods: \sa{DM-HSGD~\cite{Wenhan2021} and DREAM~\cite{Chen2022}.} 
The recent DM-HSGD~\cite{Wenhan2021} algorithm adapts the STORM-type update to the decentralized minimax setting; however, there are several critical errors in their proof which impact their results. First, their equation (28) does not hold with the given choice of $\theta$. In fact, $\theta$ must depend on $L$, 
\gmb{for which} 
\sa{it is not clear whether their convergence analysis will go through if one chooses $\theta=\Theta(1/L)$ to make their equation (28) valid, e.g., in this scenario the coefficient of $\mathbf{E}\norm{\bar{u}^t}$ becomes positive and cannot be dropped from the final bound while their convergence analysis requires this term to be dropped.} Second, the algorithm is claimed to solve the minimax problem in~\eqref{eq:main-problem-2} such that $\mathbf{y}\in\mathcal{Y}$ for a convex set $\mathcal{Y}\subseteq\mathbb{R}^m$; however, equation (29) in their Lemma~5 cannot hold unless $\mathcal{Y}\triangleq\mathbb{R}^m$ which means that at best, their analysis is only applicable to~\eqref{eq:main-problem-2} \sa{without simple constraint sets}. The recent DREAM~\cite{Chen2022} is similar to our method in terms of the variance reduction technique used to reduce the \xzf{oracle complexity}. However, their proof \emph{requires} the use of \saa{multi-communication rounds},
\gmbf{i.e., \saa{rather than using a mixing matrix $W$ (satisfying Assumption~\ref{ASPT:mixture-matrix}), each iteration of DREAM uses} $W^K$ for $K=\mathcal{O}(\log(M)/(1-\rho))$ which exhibits the typical behavior of a \emph{centralized} method}.\footnote{\saa{Indeed, for $W$ satisfying Assumption~\ref{ASPT:mixture-matrix}, as $k\to\infty$, $W^k$ converges \textit{linearly} to the averaging matrix $\frac{1}{M}\mathbf{1}\mathbf{1}^\top\in\reals^{M\times M}$.}} 
Our proof technique removes such a requirement 
\saa{while ensuring the convergence of \gda{}} \sa{for any 
connected} network. 

\section{Preliminaries}\label{sec:prelim}
\sa{Throughout the paper, for notational convenience, we define $\bz=(\bx,\by)\in\reals^{d+m}$ to be the concatenation of the $\blx$ and $\bly$ variables. We start with some basic definitions.}

\begin{definition}\label{def:L-smooth}
    A differentiable function \sa{$r$}
is $L$-smooth if $\exists L > 0$ such that 
\sa{$\|\grad r(\bz) - \grad r(\bz')\|\leq L \|\bz-\bz'\|$, $\forall\, \bz,\bz'$.}
\end{definition}

Since only stochastic estimates of \yx{$\{\nabla f_i\}$} are available to the agents, we introduce the concept of a stochastic oracle and \saa{state our 
assumptions on the oracle below.} 

\begin{definition}\label{def:oracle}
For all $i\in[M]$, given a random sample $\xi$, we define the stochastic oracle of $\grad f_i(\bx,\by)$ at $(\blx,\bly)$ to be $\tilde{\nabla} f_i(\bx,\by;\xi)$. Additionally, given a set of random samples $\cB$, 
\vspace*{-2mm}
\begin{equation}\label{eq:stoch-oracle-avg}
    \vspace*{-2mm}
    \small{G_i(\cB)\triangleq \frac{1}{|\cB|}\sum_{\xi\in \cB} \tilde{\grad } f_i(\bx_i,\by_i;\xi)}
\end{equation}
 is the averaged stochastic estimator for \yx{$\grad f_i(\bx_i,\by_i)$} with random samples $\cB$. 
 $G_i^t(\cB)$ denotes \eqref{eq:stoch-oracle-avg} evaluated at $(\blx_i^t,\bly_i^t)$.
\end{definition}




Below we 
state our assumptions on the functions $\{f_i\}_{i\in\cV}$ and their stochastic gradient oracles and also, assumptions on the primal objective $\Phi(\cdot)$ and the mixing matrix $W$.
\begin{assumption}
\label{ASPT:smooth-f}
     \sa{There exists $L>0$ such that \gmb{$f_i:\reals^{d+m}\to\reals$ }
     is $L$-smooth for all $i\in[M]$.} 
\end{assumption}
\begin{assumption}\label{ASPT:SC}
\sa{There exists $\mu>0$ such that
$f_i(\bx,\cdot)$ is $\mu$-strongly concave for all fixed $\bx$ and $i\in[M]$.}
\end{assumption}
\begin{remark}
\sa{Assumptions~\ref{ASPT:smooth-f} and \ref{ASPT:SC} imply that $f$ is $L$-smooth and $f(\bx,\cdot)$ is $\mu$-strongly concave for all $\bx$.}
\end{remark}

\begin{definition}\label{def:Phi-condition-number}
\sa{The condition number of 
\eqref{eq:main-problem} is 
$\kappa\triangleq L/\mu$. 
The primal function 
is defined as $\Phi(\cdot)\triangleq\max_{\by}f(\cdot,\by)$.}
\end{definition}

\begin{assumption}\label{ASPT:lower-bounded-Phi}
\gmb{
\saa{$\Phi$} is lower bounded, i.e.,} \sa{$\inf_\bx \Phi(\bx)>-\infty$.} 
\end{assumption}

\gmb{Assumptions~\ref{ASPT:smooth-f},~\ref{ASPT:SC}, and~\ref{ASPT:lower-bounded-Phi} are standard in the minimax literature, e.g., see~\cite{li2021complexity}.} For all $i\in[M],$ we make the following assumptions for the stochastic oracles $\tilde{\nabla} f_i(\bx,\by;\xi)$ \saa{(see 
Definition~\ref{def:oracle}).} 
\begin{assumption}\label{ASPT:general-sto-grad} \sa{The stochastic gradients are unbiased and have finite variance. Namely, there exists $\sigma > 0$ such that for all $i\in [M]$ and for any $\bz = (\bx,\by)\in \mathbb{R}^{d\sa{+} m}$,} the stochastic gradient $\tilde{\grad} f_i(\bz;\xi)$ satisfies the conditions:\\[-4mm]
\begin{enumerate}[itemsep=1ex]
    \item \small{$\mathbf{E}\big[\tilde{\grad} f_i(\bz;\xi)~|~\bz\big] = \grad  f_i(\bz)$};\vspace*{-1mm}
    \item \small{$\mathbf{E}\big[\|\tilde{\grad} f_i(\bz;\xi) - \grad f_i(\bz)\|^2~|~\bz \big]\leq \sigma^2$}.
\end{enumerate}
\end{assumption}%
\saa{Assumption~\ref{ASPT:general-sto-grad} is} common in the literature, e.g., \cite{can2022,fallah2020optimal,yang2022faster}, \saa{and satisfied} when gradients are estimated from randomly sampled data points with replacement. \gmb{We also make the following 
assumption on $f_i$.}
\begin{assumption}
\label{ASPT:smooth-F-mean-squared}
Given random $\xi$, \saa{for any $\bz,\bz'\in \mR^{d+m}$,} we assume $\bE[\|\sa{\tilde\grad} f_i(\bz;\xi) - \sa{\tilde\grad} f_i(\bz’;\xi) \|^2]\leq L^2\bE[\|\bz-\bz'\|^2]$. 
\end{assumption}

\xz{Indeed, Assumptions~\ref{ASPT:general-sto-grad} and \ref{ASPT:smooth-F-mean-squared} imply \cref{ASPT:smooth-f} holds, see section~2.2 in \cite{tran2022hybrid}.}
Finally, we state our assumptions on the mixing matrix \saa{$W\in\reals^{M\times M}$.} 

\begin{assumption}\label{ASPT:mixture-matrix}
    Consider a connected
network $\cG\triangleq (\cV,\cE)$, 
\sa{where $\cV$ denotes the set of $M$ agents
and $\cE\subseteq\cV\times\cV$ is the set of} 
edges. 
\gmb{An ordered pair $(i,j)\in\cE$ if agent $i$ can directly communicate with agent $j$.} 
Let $W\triangleq (w_{ij})\sa{\in\reals^{M\times M}}$ be a 
matrix \gmb{with non-negative entries }
such that  
\begin{enumerate}
    \item  (Decentralized property) If $(i,j)\notin \cE$, then $w_{ij}=0$;
   \item \gmb{(Doubly stochastic property) $W\mathbf{1}=\mathbf{1}$ and $W^\top\mathbf{1}=\mathbf{1}$};
   \item  (Spectral property) \gmb{$\rho\triangleq\|W-\Pi\|_2\in[0,1)$;}
\end{enumerate}
where \sa{$\Pi\triangleq\frac{1}{M}\mathbf{1}\mathbf{1}^\top\in\reals^{M\times M}$ denotes the average operator.} 
\end{assumption}

\gmb{Notice that $W$ is not assumed to be symmetric; \saa{hence, Assumption~\ref{ASPT:mixture-matrix} 
covers both strongly-connected weight-balanced directed networks and undirected ones~\cite{Xin2021HSGD}.} This is a weaker assumption compared to some related papers~\cite{Mingrui2020,Zhang2021,Chen2022}, which require a symmetric $W$ and hence are only theoretically applicable to undirected networks.}

\gmb{Indeed, the main problem in~\eqref{eq:main-problem} is equivalent to $\min_{\bx} \Phi(\bx)$. Moreover, the norm of the gradient of the primal function $\Phi(\bx)$, i.e., $\|\grad \Phi(\bx)\|$, is widely used as the \sa{convergence metric in the algorithmic analysis for nonconvex minimax problems} in the literature. Given that we solve~\eqref{eq:main-problem-2}, we quantify the consensus errors among the agents related to the average point $\bar{\mathbf{z}}=(\bar{\blx},\bar{\bly})$ and also $\|\grad \Phi(\bar{\mathbf{x}})\|$.}
\begin{algorithm}[t]
\begin{algorithmic}[1]
\STATE \textbf{Input}: $Z^0$, $\{\eta_x,\eta_y\}$, $\{S_1,S_2,q,T\}$
\FOR{$t = 0,1,2,...,\gmb{T-1}$}
\STATE $X^{t+1} = WX^t - \eta_x D^t_x$
\STATE  $Y^{t+1} = WY^t + \eta_y D^t_y$
\IF{\yx{$\mathrm{mod}(t,q)=0$}}
\STATE \gmb{Let $\cC_{i}^{t+1}$ be random samples with $|\cC_{i}^{t+1}|=S_1$}
\STATE \gmb{$\bv^{t+1}_i = G_{i}^{t+1}(\cC_{i}^{t+1}),\;\forall i\in[M]$}
\ELSE 
\STATE \gmb{Let $\cB_{i}^{t+1}$ be random samples with $|\cB_{i}^{t+1}|=S_2$}
\STATE \gmb{$\bv_{i}^{t+1} =  G^{t+1}_{i}(\cB^{t+1}_i)-G^{t}_i(\cB^{t+1}_i) + \bv^{t}_i,\;\forall i\in[M]$}
\ENDIF
\STATE $D^{t+1}_x=W(D^t_x + V^{t+1}_x-V^t_x)$
\STATE $D^{t+1}_y=W(D^t_y + V^{t+1}_y-V^t_y)$
\vspace{1mm}
\ENDFOR
\STATE \textbf{Output}:\xz{
$(X^\tau, Y^\tau)$, where $\tau$ is selected from $\{0,\ldots,T-1\}$ uniformly at random} 
\end{algorithmic}
\caption{\sa{\gda{}}}
\label{Alg:main-alg}
\end{algorithm}

\section{\xz{\texttt{DGDA-VR}} Method}
\label{sec:method}
\gmb{We introduce our proposed \texttt{D}ecentralized \texttt{G}radient \texttt{D}ecent \texttt{A}scent - \texttt{V}ariance \texttt{R}eduction, \gda{}, method in Algorithm~\ref{Alg:main-alg} for solving~\eqref{eq:main-problem-2}.}
Specifically, \sa{through local computations and communicating with neighboring agents,} each \sa{agent-$i$ for $i\in [M]$} 
\sa{iteratively updates its} local variable $\mathbf{z}_i \triangleq(\mathbf{x}_i,\mathbf{y}_i)\in \mathbb{R}^{d\sa{+} m}$ -- 
its value at iteration $t\in\mathbb{N}$ is denoted by
$\mathbf{z}^t_i \triangleq(\mathbf{x}^t_i,\mathbf{y}^t_i)\in \mathbb{R}^{d\sa{+}m}$. 
For 
notational convenience, we define the following terms.
\begin{definition}\label{def:matrices-1}
\sa{$(X^t,Y^t),D^t,V^t\in \mR^{M \times (d+m)}$ such that 
\vspace*{-1mm}
\small{\begin{align*}
\vspace*{-1.5mm}
     & X^t \triangleq [\bx^t_1, \ldots, \bx^t_M]^\top, \ Y^t \triangleq [\by^t_1, \ldots, \by^t_M]^\top, \\
     & V^t \triangleq [\bv^t_1, \ldots, \bv^t_M]^\top, D^t \triangleq [\bd^t_1, \ldots, \bd^t_M]^\top,
\end{align*}}%
}%
where \sa{$(X^t,Y^t)$ 
denotes the iterates of \gda{} displayed in~\cref{Alg:main-alg}, ${\bd_i^t}=({\bd_{x,i}^t}, {\bd_{y,i}^t})$ denotes the gradient-tracking term, and ${\bv_i^t}=({\bv_{x,i}^t}, {\bv_{y,i}^t})$ 
denotes the SPIDER-type stochastic gradient estimates of agent-$i$ at iteration $t\in\mathbb{N}$.} 
Let $Z^t=\begin{pmatrix}
    X^t,Y^t
\end{pmatrix}\in\mR^{M \times (d+m)}$ for $t\in\mathbb{N}$.
\end{definition}
\begin{definition}\label{def:matrices-2}
For $t\geq 0$, given a matrix $X^t\in\mR^{M\times \saa{d}}$, we define \sa{$\bar{X}^t\triangleq \Pi(X^t)$, i.e.,} let 
\vspace*{-1.5mm}
\begin{equation}\label{def:bar-matrix}
 \small{\bbx^t \triangleq \frac{1}{M}\sum_{i=1}^M \sa{\bx_i^t},\quad \Bar{X}^t = \mathbf{1} \bbx^{t \top}
,\quad
X_\perp \sa{\triangleq} X^t - \Bar{X}^t, }
\vspace*{-1mm}
\end{equation}
and $\{\sa{\bar{Y}^t},Y^t_\perp,\sa{\bar{Z}^t},Z^t_\perp,\sa{\bar{D}^t},D^t_\perp,\bar{V}^t,V^t_\perp\}$ is defined similarly.
\end{definition}%
\gmb{Notice that under Assumption~\ref{ASPT:mixture-matrix}, Algorithm~\ref{Alg:main-alg} implies that}
\vspace{-2.5mm}
\begin{equation}\label{eq:avg-update}
    \begin{aligned}
    \bbx^{t+1} &= \bbx^t-\eta_x\bbd^t_x,\quad\ \bbd^{t+1}_x = \bbd^t_x + \bbv^{t+1}_x - \bbv^t_x, \\
        \bby^{t+1} & = \sa{\bby^t}+\eta_y\bbd^t_y, \
        \quad \bbd^{t+1}_y = \bbd^t_y + \bbv^{t+1}_y - \bbv^t_y, 
    \end{aligned}
    \vspace{-1mm}
\end{equation}
hold for all $t\geq 0$. Moreover, when $\bbd^0=\bbv^0$, it holds that $\bbd^t=\bbv^t$
\sa{for $t\in\mathbb{N}$}; thus, in such scenarios, we have
\vspace{-1.5mm}
\begin{equation}\label{eq:avg-update-2}
    \bbx^{t+1} = \sa{\bbx^t}-\eta_x\bbv^t_x,\quad
        \bby^{t+1} = \bby^t+\eta_y\bbv^t_y,\quad\sa{\forall~t\in\mathbb{N}}. 
\end{equation}

\begin{figure*}[t]
  \setlength\tabcolsep{1pt}
  \begin{center}
  \begin{tabular}{cccc}
	\includegraphics[width=0.225\textwidth]{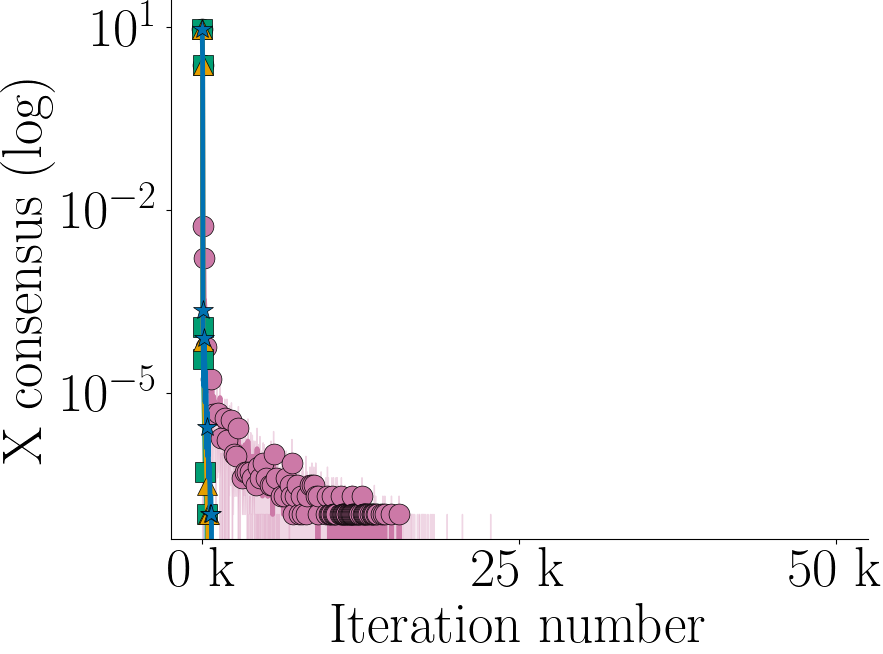} &

    \includegraphics[width=0.225\textwidth]{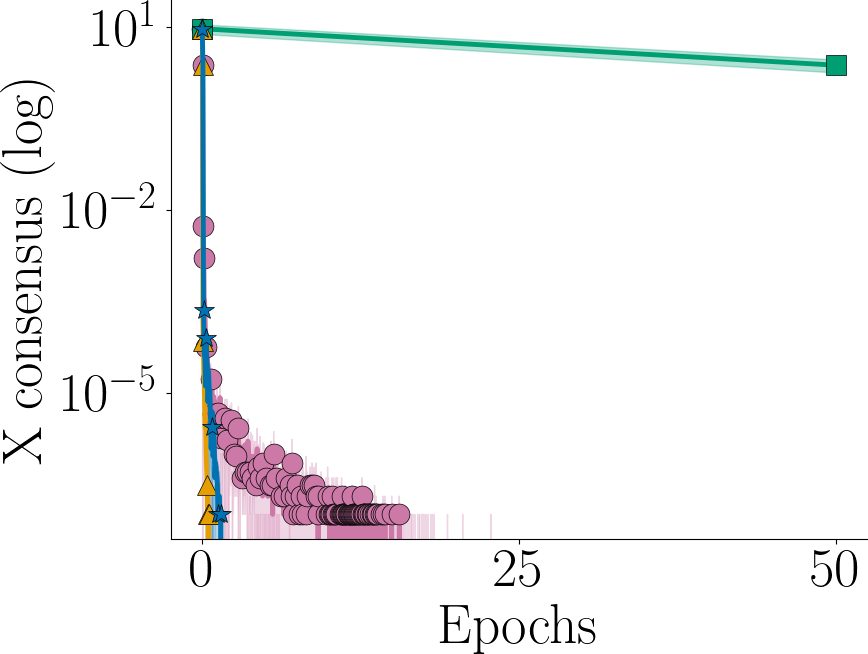} &
 
	\includegraphics[width=0.225\textwidth]{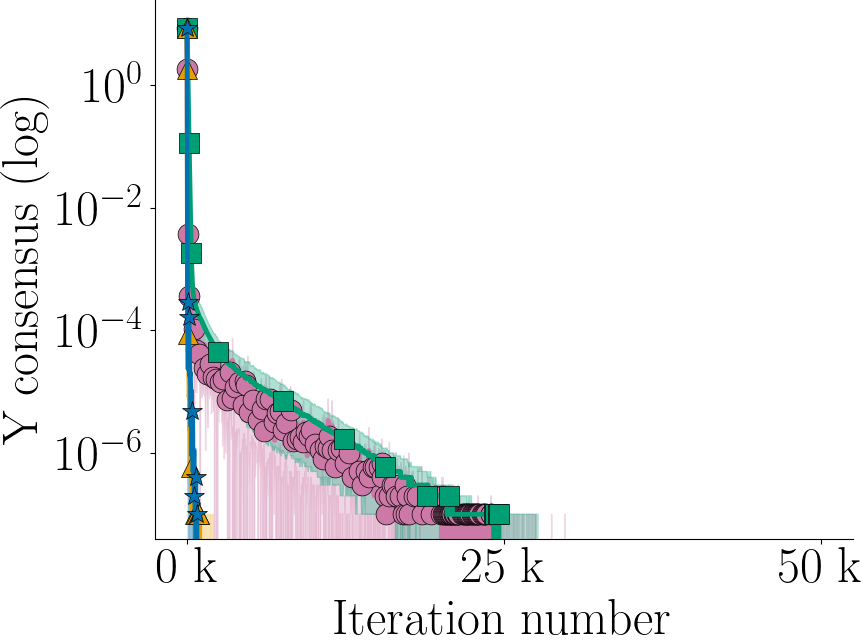} &

    \includegraphics[width=0.225\textwidth]{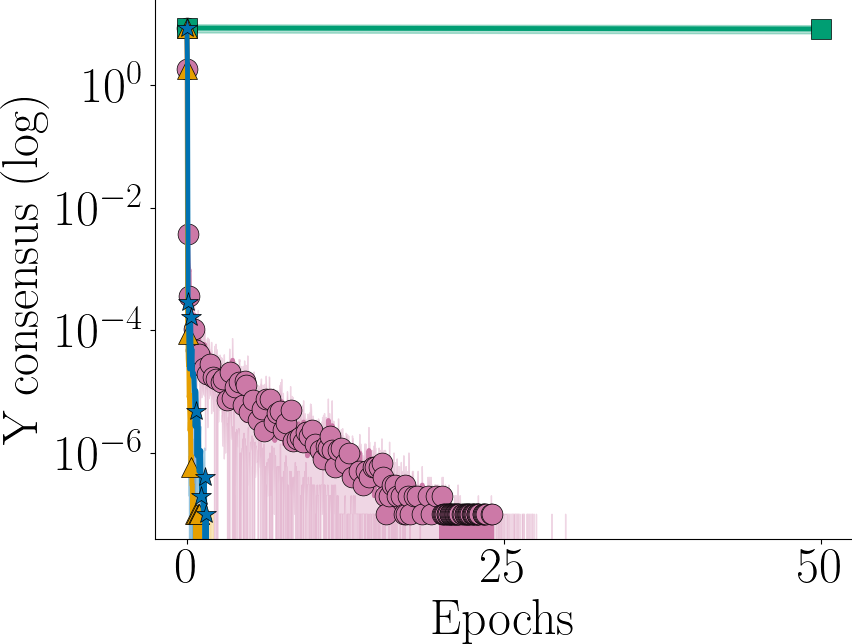} 
    \\
	\includegraphics[width=0.225\textwidth]{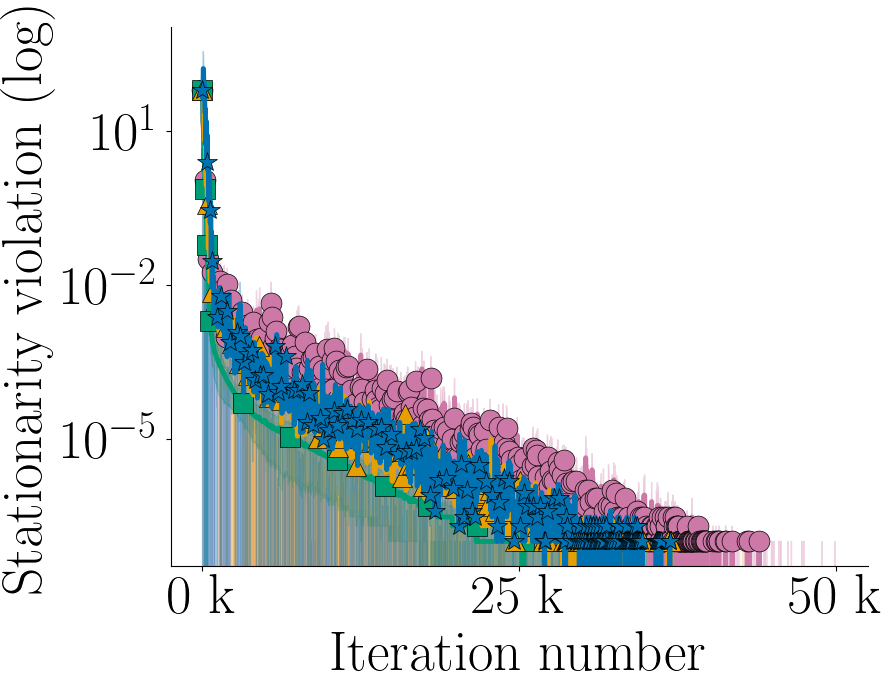} &

    \includegraphics[width=0.225\textwidth]{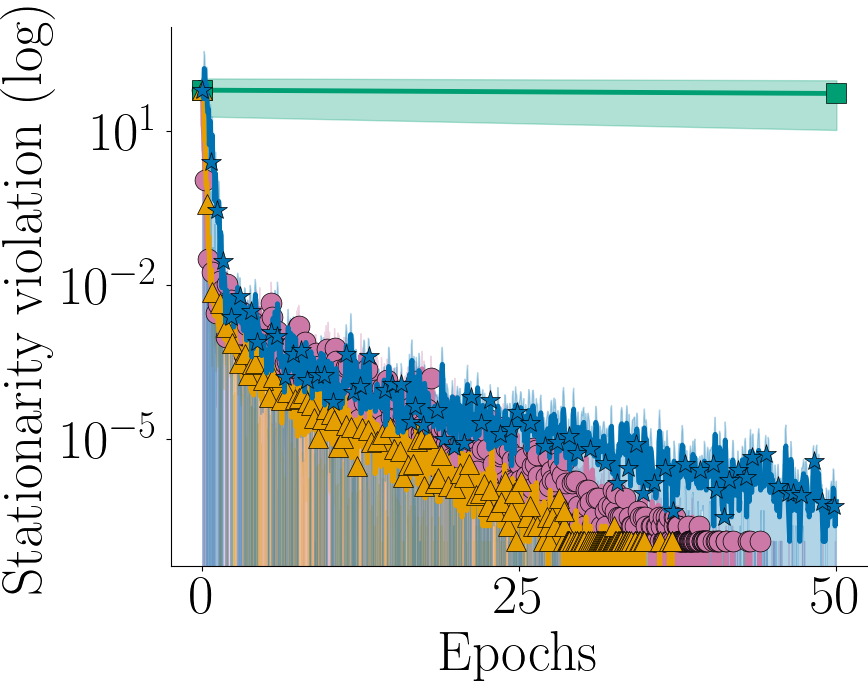} 
    &

	\includegraphics[width=0.225\textwidth]{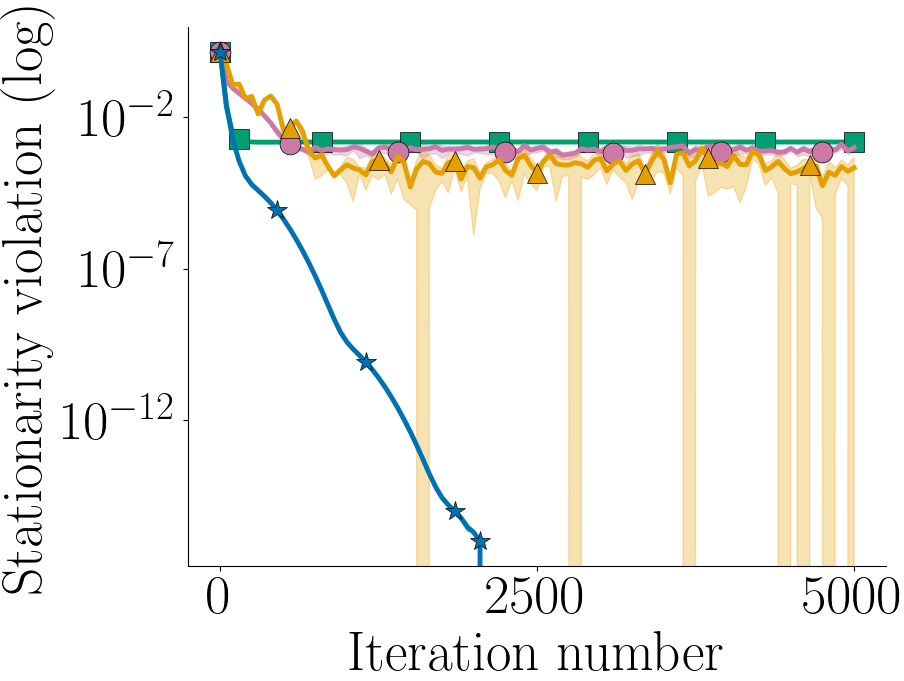} &
 
    \includegraphics[width=0.225\textwidth]{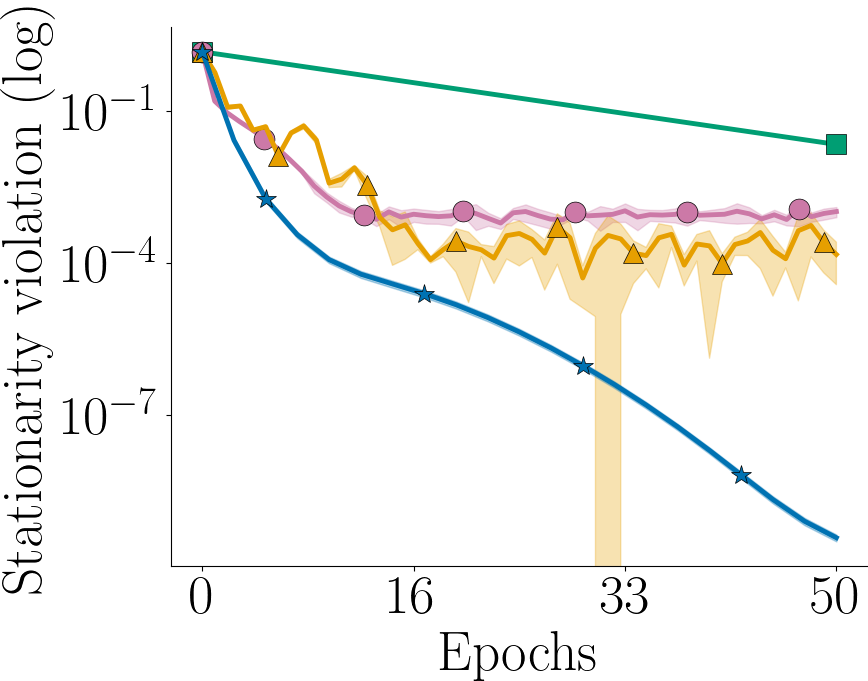} 

    \\

	\includegraphics[width=0.225\textwidth]{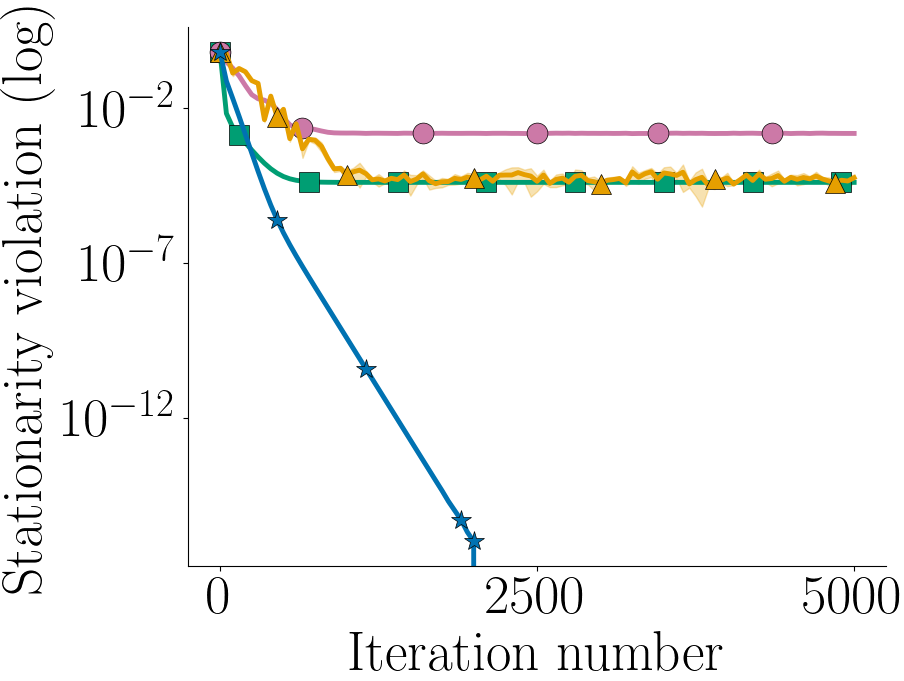} &

    \includegraphics[width=0.225\textwidth]{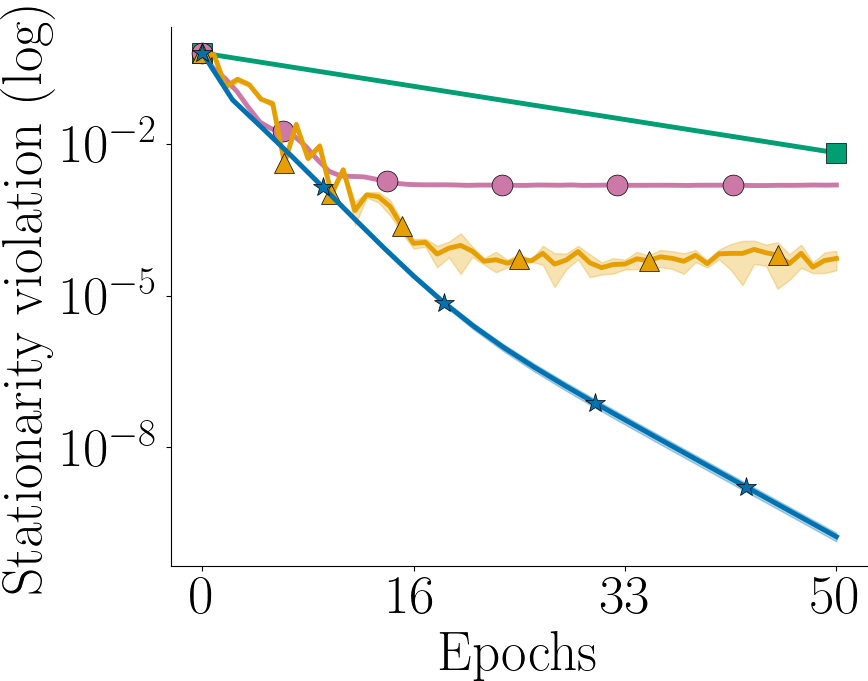} &

	\includegraphics[width=0.225\textwidth]{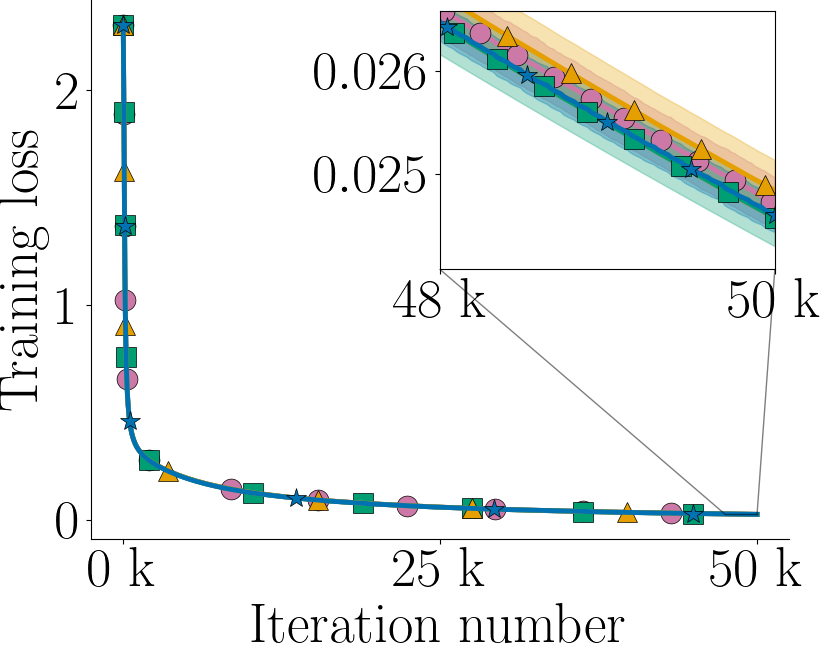} &

    \includegraphics[width=0.225\textwidth]{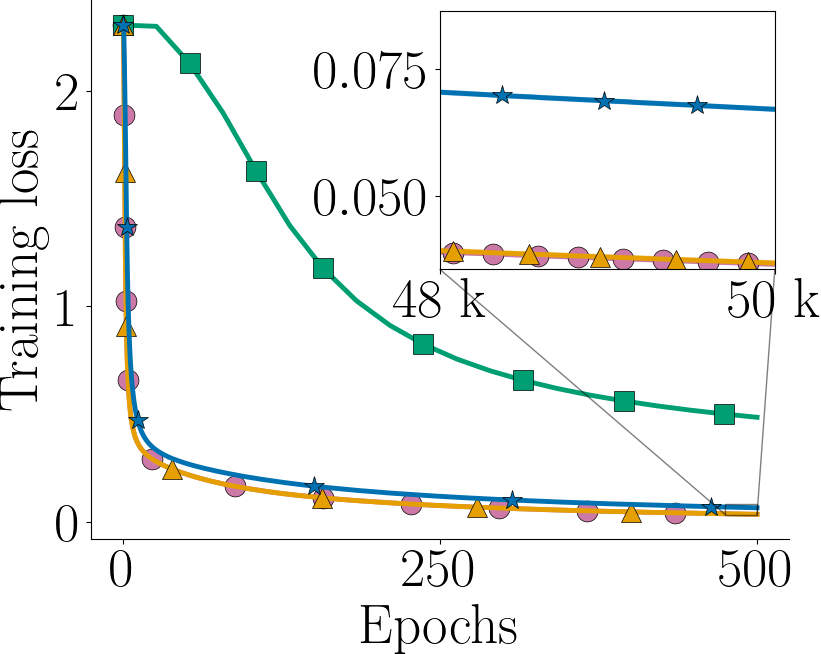} 

    \\
    
	\includegraphics[width=0.225\textwidth]{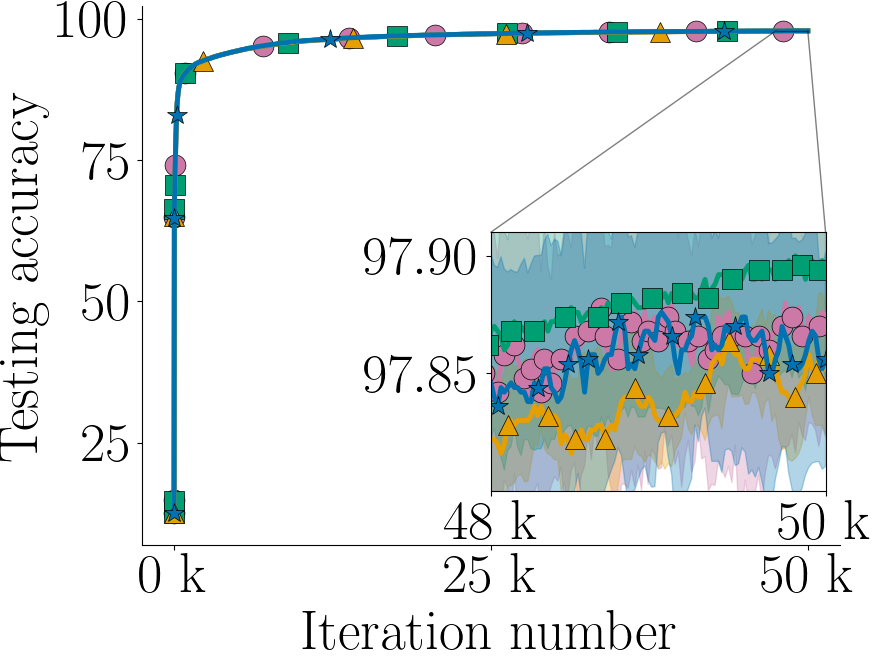} &

    \includegraphics[width=0.225\textwidth]{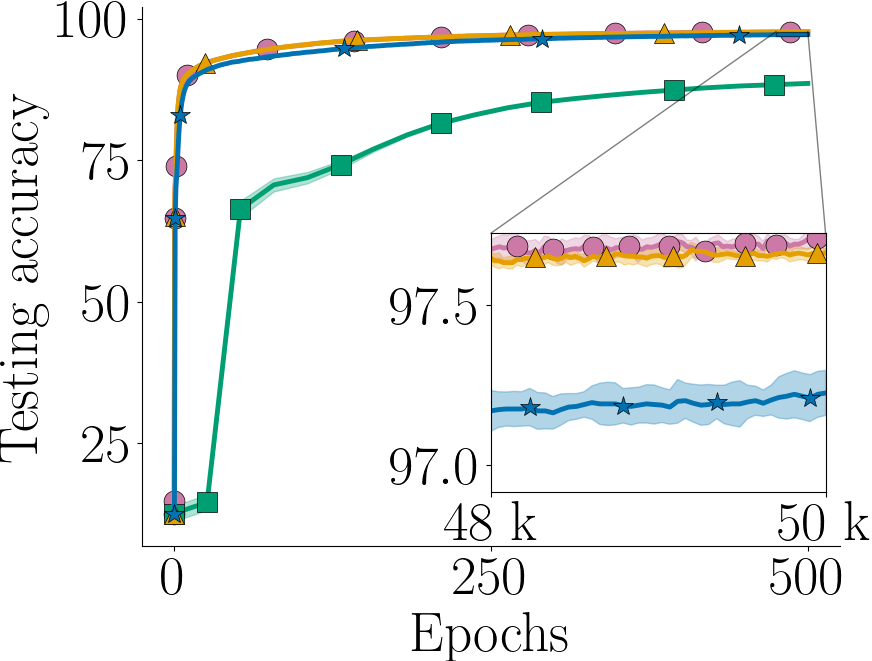} &
 
	\includegraphics[width=0.225\textwidth]{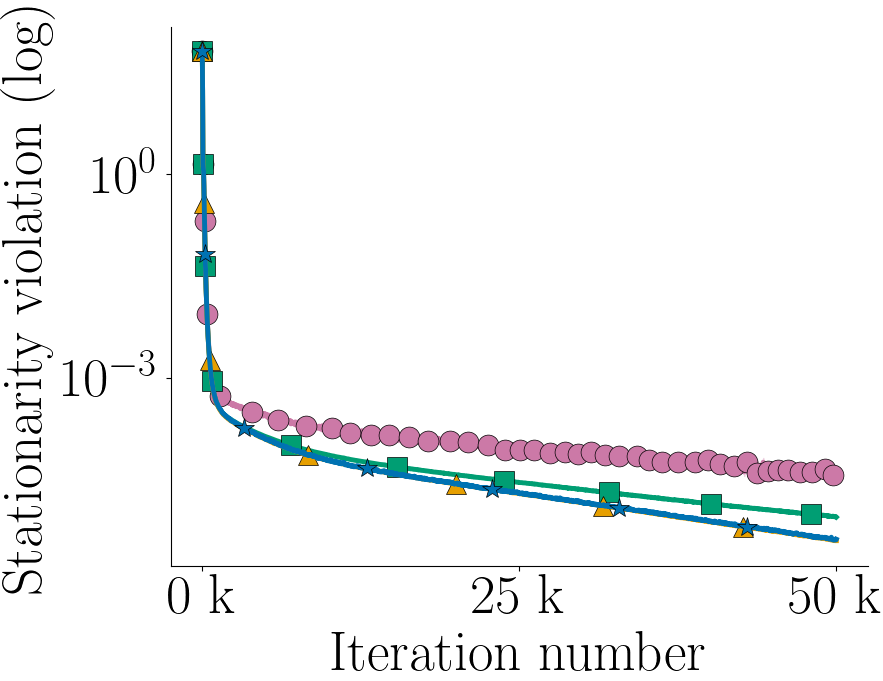}  &

    \includegraphics[width=0.225\textwidth]{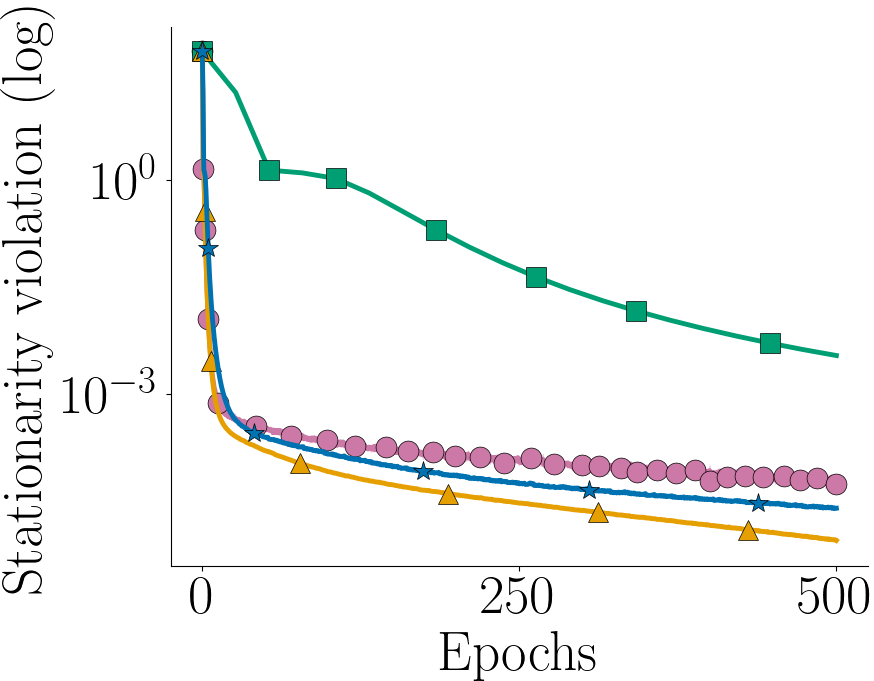} 
    
    \\
    \multicolumn{4}{c}{\includegraphics[width=0.5\textwidth]{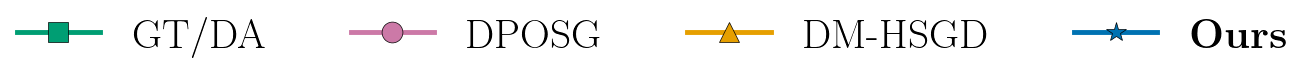}}
	
  \end{tabular}

  \end{center}
  \caption{  Pictures 1-6 are for the PL game~\eqref{experiments:pl}. 
  Pictures 7-10 for the robust non-convex linear regression model~\eqref{experiments:lr}; the first two correspond to the a9a dataset, while the last two correspond to the ijcnn1 dataset.  Pictures 11-16 for the robust neural network training problem~\eqref{experiments:nn}. The arrangement of these pictures follows a left-to-right, then top-to-bottom order.
  }
  \label{fig:all_exps}
\end{figure*}

 \begin{figure*}[ht]
  \setlength\tabcolsep{1pt}
  \begin{center}
    \includegraphics[width=0.245\textwidth]{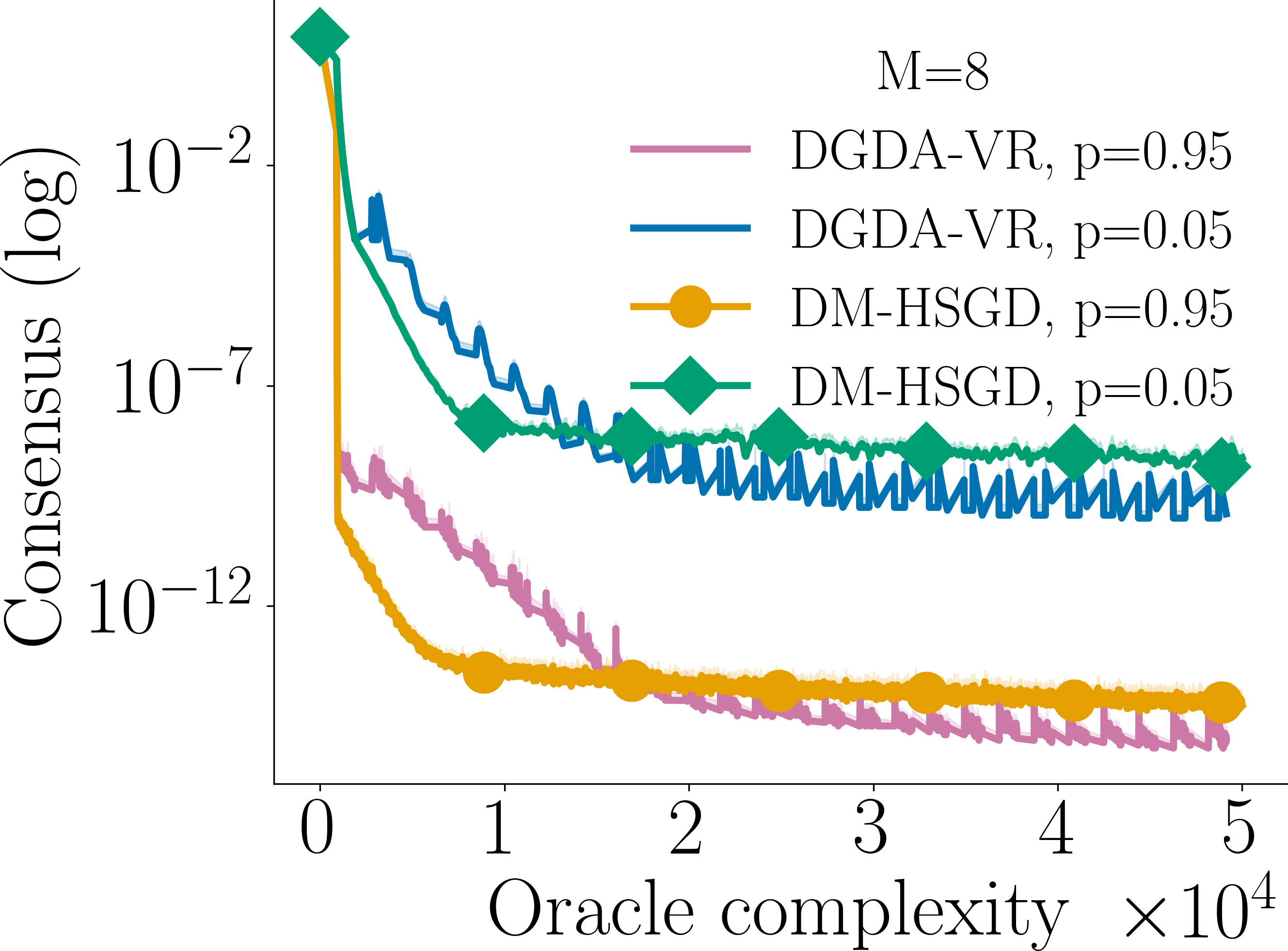} 
    \includegraphics[width=0.245\textwidth]{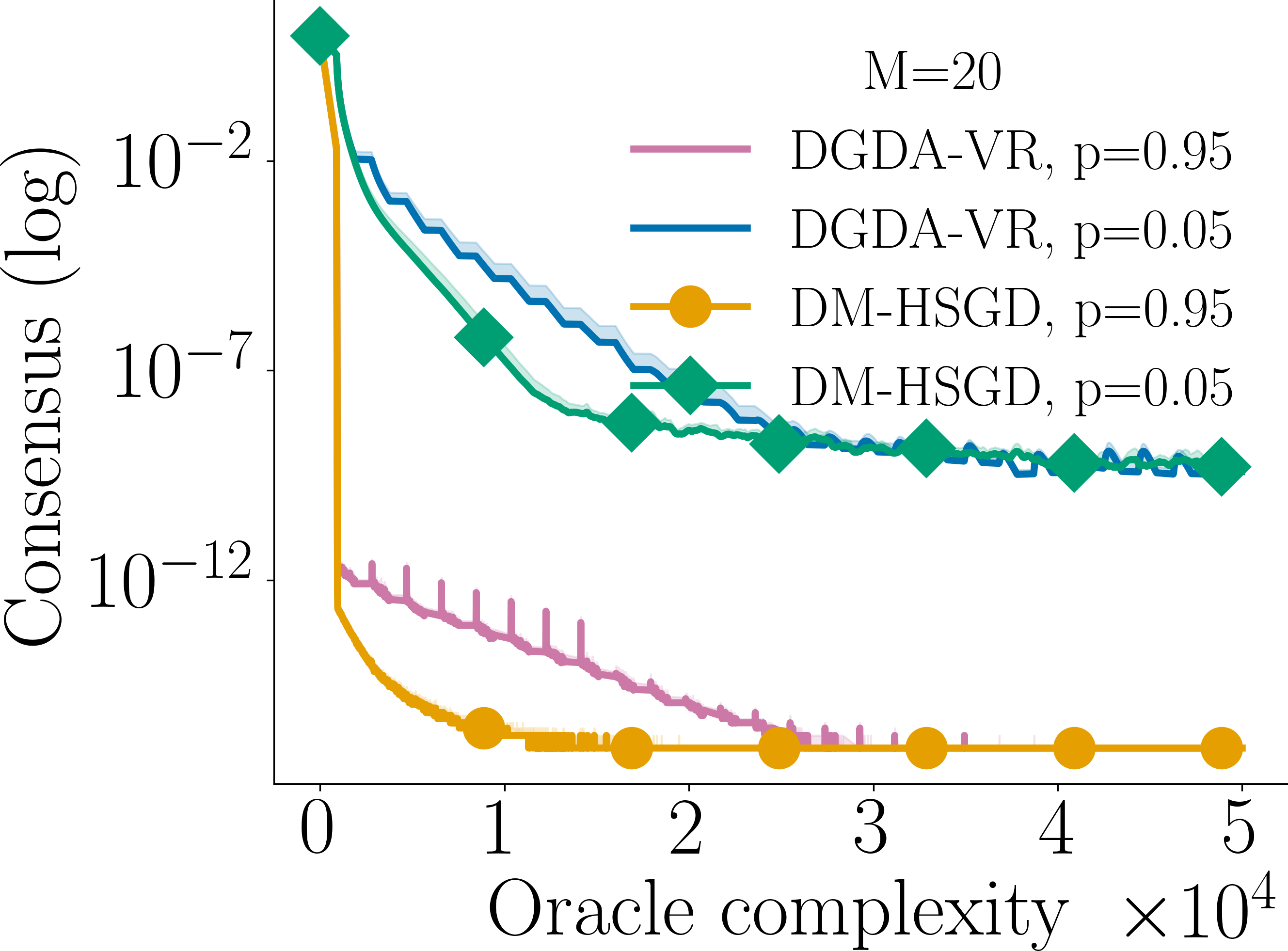} 
    \\
    \includegraphics[width=0.245\textwidth]{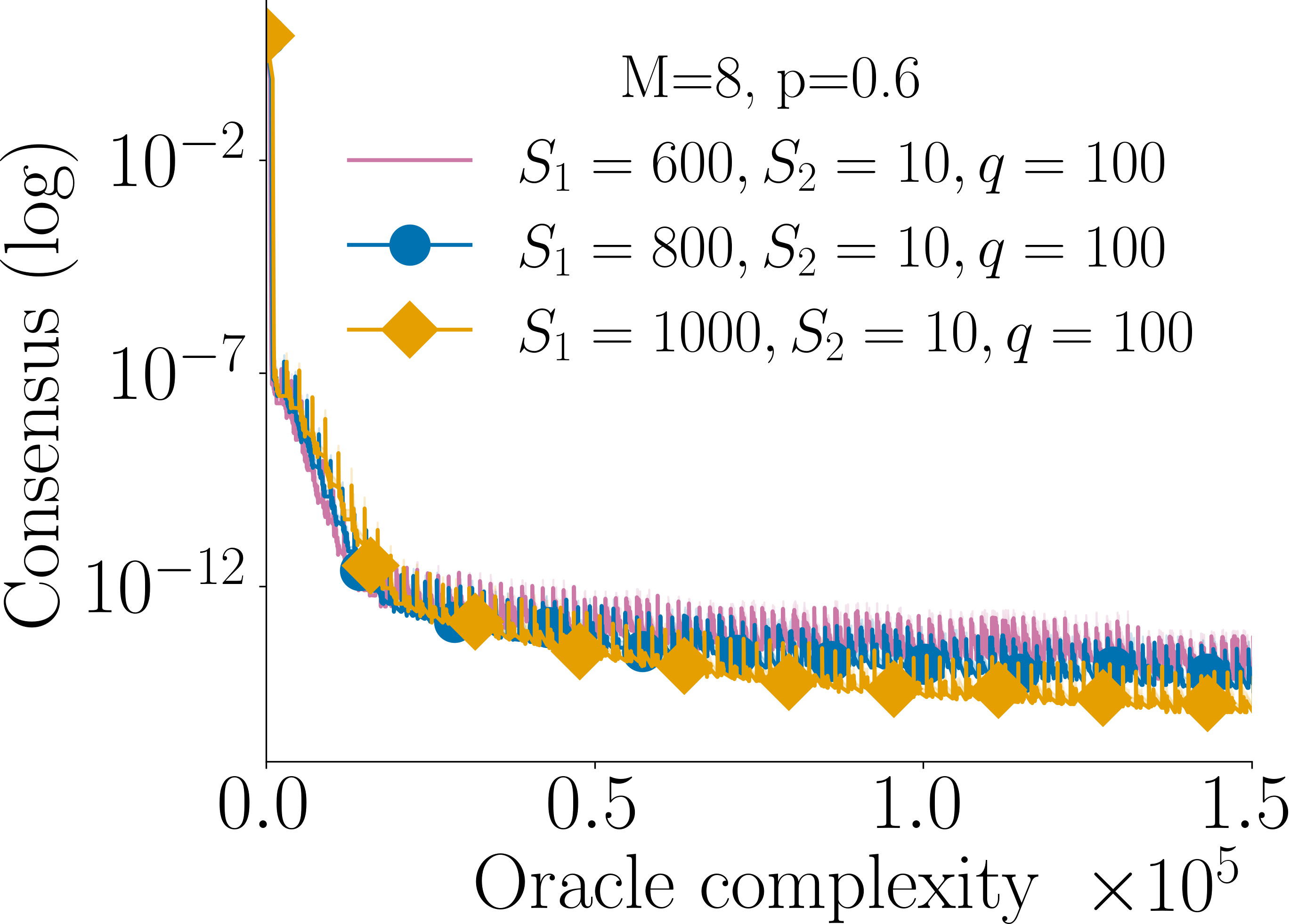} 
    \includegraphics[width=0.245\textwidth]{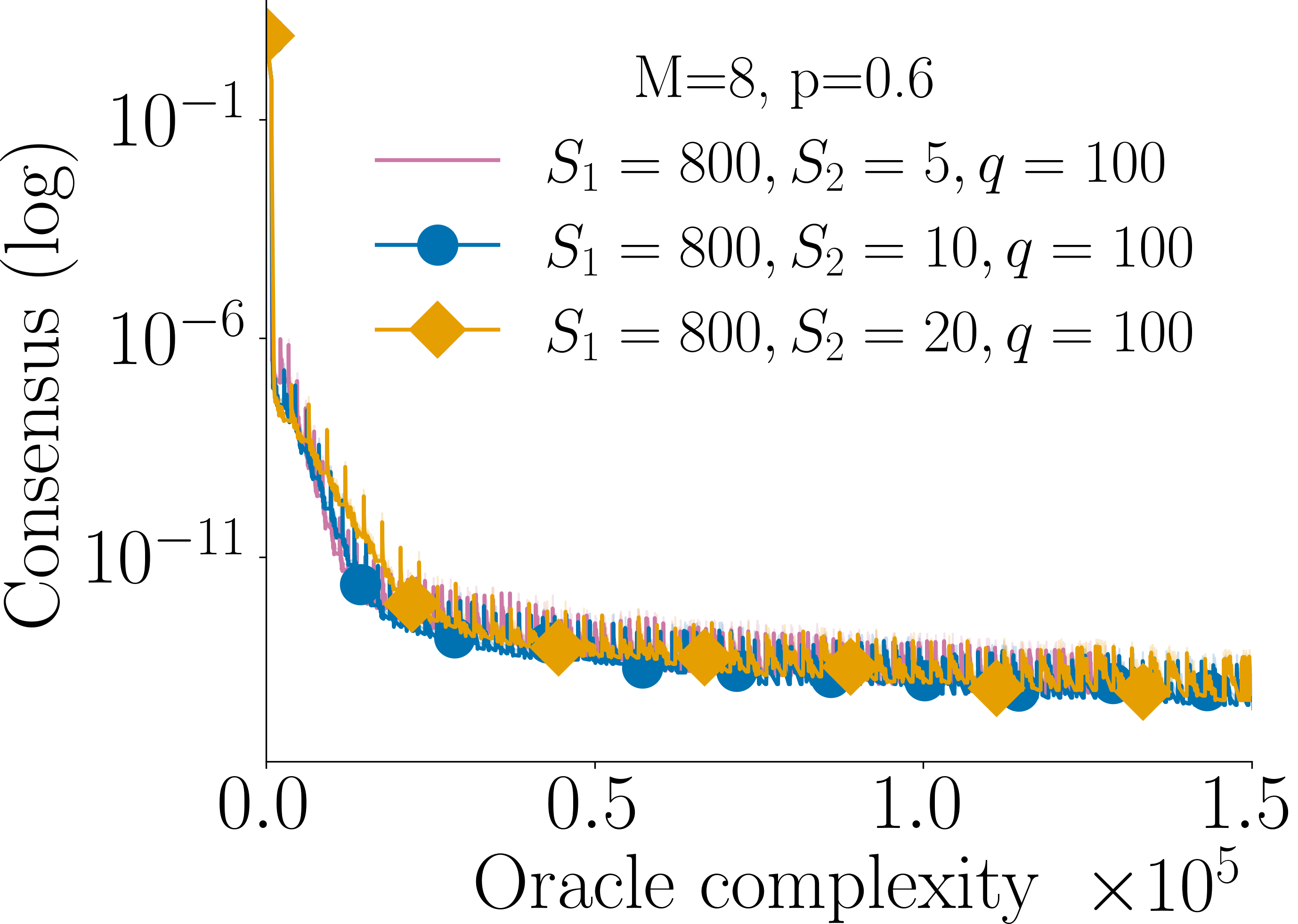} 
    \includegraphics[width=0.245\textwidth]{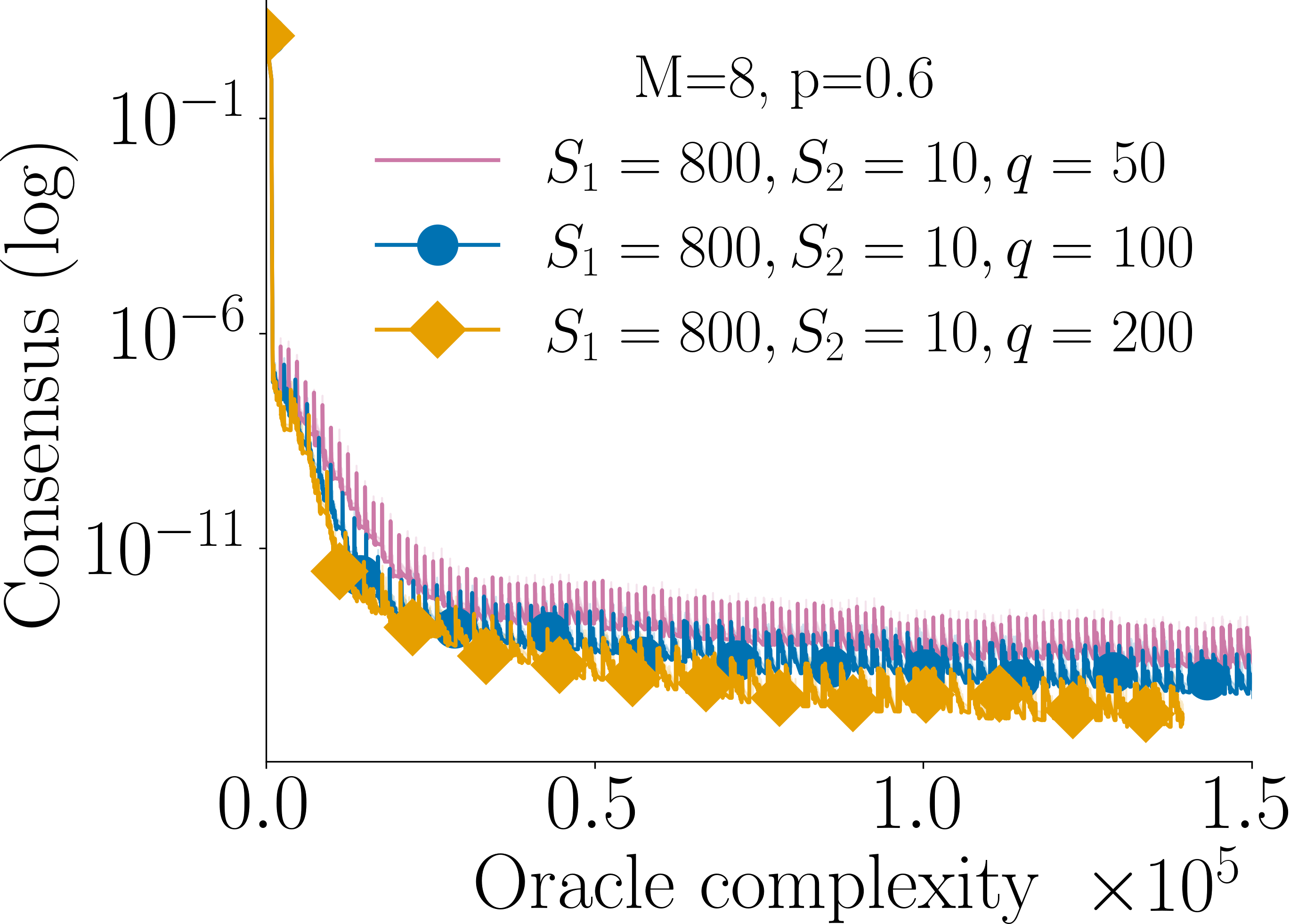} 
\end{center}
  \caption{\xzf{Sensitivity analysis results for the PL game~\eqref{experiments:pl}. The first two plots show the sensitivity analysis in terms of graph connectivity $\rho$, while the last three show the sensitivity analysis in terms of the batchsizes $S_1,S_2$ and the frequency $q$.} \gmbf{Here, oracle complexity refers to data points visited.}}
  \label{fig:sensitive-analysis}
\end{figure*}
\subsection{\sa{Main Results}}
\sa{In \gmb{a} multi-agent system, for any $\epsilon>0$, our aim for each agent-$i$ is to compute $\bz_i(\epsilon)=\Big(\bx_i(\epsilon),\by_i(\epsilon)\Big)$ such that} 
\vspace{-2.5mm}
{{\small
\begin{subequations}
\label{eq:stop_cond}
\begin{align}
&\sa{\mathbf{E}\Big[\|\grad\Phi(\bbx_\epsilon)\|\Big]\leq \epsilon,} \label{eq:stop_cond-a}\\
&\sa{\mathbf{E}\Big[\norm{\bby_\epsilon-\by^*(\bbx_\epsilon)}^2\Big] = \cO(\epsilon^2)},
\label{eq:dual-violation-bound}\\
&\sa{\mathbf{E}\Big[\sum_{i=1}^M\norm{\bz_i(\epsilon)-\bar{\bz}_\epsilon}^2\Big] = \cO(\epsilon^2),}
\label{eq:consensus-violation-bound}
\end{align}
\end{subequations}}
\vspace*{-3mm}
\\
}%
\vspace*{-1.5mm}
\sa{where $\bar{\bz}_\epsilon=(\bbx_\epsilon,\bby_\epsilon)\triangleq\frac{1}{M}\sum_{i=1}^M\big(\bx_i(\epsilon),\by_i(\epsilon)\big)$ and 
\begin{equation}
    \label{def:y-star}
    \by^*(\cdot)\triangleq \argmax_{\by} f(\cdot,\by)
    \vspace*{-2mm}
\end{equation}%
denotes the best-response function.}
\sa{
We 
show that \gda{} can indeed generate $\{\bz_i(\epsilon)\}_{i\in\cV}$ such that \eqref{eq:stop_cond} holds. More importantly, in the decentralized optimization context, let $T_\epsilon$ denote the minimum number of communication rounds required to 
compute $\{\bx_i(\epsilon)\}_{i\in\cV}$ satisfying \eqref{eq:stop_cond} in a 
\yx{decentralized} manner --- in each communication round, each agent-$i$ transmits \yx{two} vectors of size $(d+m)$ to its neighbors, i.e., $\bz_i^t$ and $\bd_i^t$. According to \gda{}, $T_\epsilon$ communication rounds require each agent-$i$ to make $C_\epsilon\triangleq\lceil \frac{T_\epsilon}{q}\rceil S_1+T_\epsilon S_2$ calls to its stochastic oracle $\tilde\grad f_i$. Our aim is to provide bounds on the expected communication and oracle complexities, i.e., $T_\epsilon$ and $C_\epsilon$.} \sa{Moreover, 
we will provide precise bounds on the dual suboptimality as in~\eqref{eq:dual-violation-bound} and on the consensus violation (the deviation from the average) for $\{\bz_i(\epsilon)\}_{i\in\cV}$ 
as in~\eqref{eq:consensus-violation-bound}. 
}
The result below shows our guarantee on 
\eqref{eq:stop_cond-a}.
\begin{theorem}\label{thm:main-result}
  Suppose \yx{Assumptions~
  \ref{ASPT:smooth-f}-\ref{ASPT:mixture-matrix}  hold}, and  $\{\eta_x,\eta_y\}$ and $\{S_1,S_2,q\}$  are chosen such that
  \vspace{-2mm}
  {\small
  \begin{equation}\label{eq:final-step-size-short}
  \small{\begin{aligned}
        &\eta_y = \Theta\left(\frac{1}{L}\min\Big\{(1-\rho)^2, \frac{1}{\kappa}\Big\}\right),\; \eta_x = \Theta\left(\frac{\eta_y}{\kappa^2}\right),\\
        \vspace{-1mm}
        &  \xzrr{S_1 = \Theta\left(\frac{\kappa^2\sigma^2}{\epsilon^2}\right)},\;
      S_2 \geq q,
      \quad
      q\geq 1.
  \end{aligned}}
  \end{equation}}%
  \vspace{-2mm}\\
  Given $\epsilon>0$, 
  \sa{there exists $T_\epsilon\in\mathbb{N}$ such that
  \vspace{-1.2mm}
  {\small
  \begin{equation*}
        T_\epsilon = \cO\Big(\max\left\{\frac{1}{\eta_x},\xzrr{\frac{L\kappa}{\eta_y}},
     \xzrr{\frac{L^2\kappa^2}{(1-\rho)M}}
     \right\}\epsilon^{-2} \Big),
     \vspace{-1.2mm}
  \end{equation*}}%
  and $\{X^t\}_{t=0}^T$ generated by \gda{} satisfies}
  \vspace{-1mm}
{\small
 \begin{equation}
 \label{eq:grad_bound}
 \sa{\frac{1}{T}\sum_{t=0}^{T-1}\mathbf{E}\Big[\|\grad\Phi(\bbx^t)\|\Big]\leq \epsilon,\quad\forall~T\geq T_\epsilon,}
 \vspace{-2mm}
 \end{equation}}%
 where $\bbx^t$ is defined in \eqref{def:bar-matrix}.
\end{theorem}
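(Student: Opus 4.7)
The plan is to construct a Lyapunov (potential) function that couples the primal suboptimality $\Phi(\bar{\bx}^t) - \Phi^*$, the dual tracking error $\|\bar{\by}^t - \by^*(\bar{\bx}^t)\|^2$, the consensus errors $\|Z^t_\perp\|_F^2$ and $\|D^t_\perp\|_F^2$, and the SPIDER variance term $\mathbf{E}[\|\bar{\bv}^t - \nabla f(\bar{\bz}^t)\|^2]$, then show it is monotonically decreasing (up to an $\mathcal{O}(\epsilon^2)$ slack) under the parameter choice in \eqref{eq:final-step-size-short}. First I would establish a primal descent inequality: using Assumption~\ref{ASPT:smooth-f} and the fact that $\Phi$ is $(L+L\kappa)$-smooth (a standard consequence of Assumptions~\ref{ASPT:smooth-f}--\ref{ASPT:SC}), combined with the average update $\bar{\bx}^{t+1} = \bar{\bx}^t - \eta_x \bar{\bv}^t_x$ in~\eqref{eq:avg-update-2}, yields an inequality of the form $\mathbf{E}[\Phi(\bar{\bx}^{t+1})] \leq \mathbf{E}[\Phi(\bar{\bx}^t)] - \tfrac{\eta_x}{2}\mathbf{E}[\|\nabla \Phi(\bar{\bx}^t)\|^2] + \text{error terms}$, where the error terms involve the dual gap, consensus error on $Z$, and the SPIDER variance.

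Next, I would analyze the dual side by exploiting $\mu$-strong concavity: the quadratic growth of $f(\bar{\bx}^t,\cdot)$ together with the averaged ascent step for $\bar{\by}^t$ should give a contraction of the form $\mathbf{E}[\|\bar{\by}^{t+1} - \by^*(\bar{\bx}^{t+1})\|^2] \leq (1 - c\eta_y\mu)\mathbf{E}[\|\bar{\by}^t - \by^*(\bar{\bx}^t)\|^2] + \text{error terms}$; the key point here is that the drift in the best response $\|\by^*(\bar{\bx}^{t+1}) - \by^*(\bar{\bx}^t)\|$ is controlled by $\kappa \eta_x \|\bar{\bv}^t_x\|$ (since $\by^*(\cdot)$ is $\kappa$-Lipschitz), which is why $\eta_x/\eta_y = \Theta(1/\kappa^2)$ is forced in~\eqref{eq:final-step-size-short}.

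Third, I would bound the consensus errors on $Z^t$ and $D^t$ via the mixing property $\|W - \Pi\|_2 = \rho$: iterating the recursion $Z^{t+1}_\perp = W Z^t_\perp - \eta D^t_\perp$ (schematically) gives a contraction by factor $\rho$ at each step, producing bounds $\|Z^{t+1}_\perp\|_F^2 \leq \tfrac{1+\rho^2}{2}\|Z^t_\perp\|_F^2 + \tfrac{\eta^2}{1-\rho^2}\|D^t_\perp\|_F^2$ and a similar one for $\|D^{t+1}_\perp\|_F^2$ in terms of $\|V^{t+1}_\perp - V^t_\perp\|_F^2$; the latter must then be bounded by $L^2 \|Z^{t+1} - Z^t\|_F^2$ plus the sampling noise via Assumption~\ref{ASPT:smooth-F-mean-squared}. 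The SPIDER variance $\mathbf{E}[\|\bar{\bv}^t - \nabla f(\bar{\bz}^t)\|^2]$ satisfies the standard recursion that, within an epoch of length $q$, it accumulates $\mathcal{O}(L^2 \eta^2 / (M S_2))$ per step on top of an initial $\sigma^2/(M S_1)$, so choosing $S_1 = \Theta(\kappa^2 \sigma^2/\epsilon^2)$ and $S_2 \geq q \geq 1$ keeps it at $\mathcal{O}(\epsilon^2)$.

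The main obstacle I expect is calibrating the weights in the Lyapunov function so that, after adding the primal descent, dual contraction, the two consensus recursions, and the variance recursion, every cross-term (in particular the coupling between $\|\bar{\bv}^t_x\|$ appearing in dual drift and the $\|\nabla\Phi(\bar{\bx}^t)\|^2$ term in primal descent, and the $\|D^t_\perp\|_F^2$ terms appearing on both sides of $Z$ and $D$ recursions) is dominated with constants that work out under~\eqref{eq:final-step-size-short}; this typically forces the quantitative choices $\eta_y = \Theta(\min\{(1-\rho)^2, 1/\kappa\}/L)$ and $\eta_x = \Theta(\eta_y/\kappa^2)$. Once the Lyapunov function is shown to satisfy $\mathbf{E}[L^{t+1}] \leq \mathbf{E}[L^t] - \tfrac{\eta_x}{4}\mathbf{E}[\|\nabla\Phi(\bar{\bx}^t)\|^2] + \mathcal{O}(\epsilon^2/T_\epsilon)$, telescoping from $t=0$ to $T-1$, using Assumption~\ref{ASPT:lower-bounded-Phi} to bound $L^0 - L^T = \mathcal{O}(1)$, and applying Jensen's inequality to convert $\mathbf{E}[\|\nabla\Phi\|^2]$ into $\mathbf{E}[\|\nabla\Phi\|]$ gives~\eqref{eq:grad_bound} with the claimed $T_\epsilon$, since the dominant term in $1/T_\epsilon$ comes from $\max\{1/\eta_x, L\kappa/\eta_y, L^2\kappa^2/((1-\rho)M)\}\epsilon^{-2}$.
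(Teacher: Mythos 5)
Your plan assembles exactly the ingredients the paper uses: the $(\kappa+1)L$-smoothness of $\Phi$ with the averaged update $\bbx^{t+1}=\bbx^t-\eta_x\bbv_x^t$ for primal descent, the $\mu$-strong-concavity contraction of $\|\bby^t-\by^*(\bbx^t)\|^2$ with the $\kappa$-Lipschitz best response forcing $\eta_x/\eta_y=\Theta(1/\kappa^2)$, the $\rho$-contraction of $Z_\perp$ and $D_\perp$ under $W-\Pi$, the SPIDER variance recursion with reset every $q$ steps, and the final telescoping plus Jensen step. So the substance matches the paper's proof (Lemmas on $\delta_t$, $\|E^t\|_F^2$, the fundamental $\Phi$-descent inequality, and the consensus bounds, combined in Theorem~\ref{thm:main-convergence}).

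The one place where your packaging differs, and where you should be careful, is the claim that a single potential is ``monotonically decreasing (up to an $\mathcal{O}(\epsilon^2)$ slack)'' per iteration. The SPIDER error recursion is \emph{expansive} within an epoch, $\bE[\|\be_i^t\|^2]\le(1+c_1)\bE[\|\be_i^{t-1}\|^2]+\cdots$ with $c_1>0$, and the gradient-tracking consensus recursion for $\|D_\perp^t\|_F^2$ takes a different (and much messier) form at epoch boundaries $t=nq$, where the bound involves sums over the entire previous epoch. A time-invariant weighted Lyapunov function therefore does not decrease at every step. The paper's workaround is to telescope the $\Phi$-descent \emph{epoch by epoch} (using $q\le 1/(2c_1)$ so that $(1+c_1)^q\le 2$ tames the accumulated double sums), and then to bound the horizon-wide sums $\sum_t\delta_t$, $\sum_t\|Z_\perp^t\|_F^2$, $\sum_t\|D_\perp^t\|_F^2$ in terms of $\sum_t\|\grad\Phi(\bbx^t)\|^2$ with small coefficients before substituting back and rearranging. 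Your plan would go through once you replace ``per-iteration monotone potential'' with this epoch-wise descent plus global summation; the constant calibration you anticipate as the main obstacle is indeed where all of the paper's effort (the parameter conditions and the $a_\cdot$, $b_\cdot$, $c_\cdot$ coefficient bounds) is spent.
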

 \begin{remark}
    Without loss of generality, 
    $L\geq 1$. Indeed, \cref{ASPT:smooth-F-mean-squared} holds for all $\hat{L}$ such that $\hat{L}\geq L$; therefore, 
    \vspace{-0.5mm}
    {\small
    $$
    T_\epsilon=\cO\Big( \frac{\xzrr{\max\{L\kappa^2,L^2\kappa\}}}{\min\{1/\kappa,(1-\rho)^2\}}\cdot\frac{1}{\epsilon^2}\Big).
    \vspace{-0.5mm}
    $$}%
    Given $T=T_\epsilon, S_1$ and $S_2\geq q$, the optimal $q=\Theta(\sqrt{S_1})=\Theta\Big(\xzrr{\frac{\kappa\sigma}{\epsilon}}\Big)$ and \xzrr{$S_2= \Theta(q)$}, so $TS_2+TS_1/q \sim 2 TS_2 =\xzrr{ \cO\left( \frac{\max\{L\kappa^3,L^2\kappa^2\}\sigma}{\min\{1/\kappa,(1-\rho)^2\}\epsilon^3}\right)}.$
\end{remark}%
The result below shows our guarantee on \eqref{eq:dual-violation-bound} and \eqref{eq:consensus-violation-bound}.
\begin{theorem}\label{thm:sub-result}
\sa{Under the premise of \cref{thm:main-result},}
\vspace{-2.5mm}
{\small
\begin{align}
  &\sa{\frac{1}{T}\sum_{t=0}^{T-1}\mathbb{E}[\|Z^t_\perp\|_F^2]=
  \cO\Big(\xzrr{M\epsilon^2/(L^2\kappa^2)}\Big),
  \quad \forall~T\geq T_\epsilon},\\
  &\frac{1}{T}\sum_{t=0}^{T-1}\mathbb{E}[\norm{\by^{t}-\by^*(\bbx^{t})}^2]=\xzrr{ \cO(\epsilon^2/L^2)},\quad \forall~T\geq T_\epsilon,
\end{align}}%
\vspace{-3mm}
\\
where \sa{$\Lambda_0\triangleq\max\{\|Z^0_\perp\|^2_F,\|D^0_\perp\|^2_F\}$}.
\end{theorem}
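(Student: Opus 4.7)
\textbf{Proof sketch for Theorem~\ref{thm:sub-result}.} My plan is to extract the two bounds directly from the Lyapunov descent that must underlie the proof of Theorem~\ref{thm:main-result}, rather than redoing the technical machinery from scratch. The standard recipe for a gradient-tracking plus SPIDER scheme on a nonconvex-strongly-concave minimax problem is to build a potential of the shape
\[
\Psi^t \triangleq \Phi(\bbx^t) + a\,\|\bby^t-\by^*(\bbx^t)\|^2 + b\,\|Z^t_\perp\|_F^2 + c\,\|D^t_\perp\|_F^2 + d\,\|V^t-\nabla f(Z^t)\|_F^2,
\]
where $\nabla f(Z^t)$ denotes the stack of true local gradients $\{\grad f_i(\bz_i^t)\}_{i\in\cV}$ and the weights $a,b,c,d>0$ are tuned so that the one-step inequality
\[
\bE[\Psi^{t+1}]\leq \bE[\Psi^t]-\lambda_1\,\bE\|\nabla\Phi(\bbx^t)\|^2-\lambda_2\,\bE\|\bby^t-\by^*(\bbx^t)\|^2-\lambda_3\,\bE\|Z^t_\perp\|_F^2
\]
holds for some $\lambda_1,\lambda_2,\lambda_3>0$. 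Telescoping over $t=0,\ldots,T-1$ and using $\Psi^T\geq\inf\Phi>-\infty$ (Assumption~\ref{ASPT:lower-bounded-Phi}) \emph{simultaneously} produces the $\|\grad\Phi(\bbx^t)\|^2$ bound that proves Theorem~\ref{thm:main-result} and the two bounds claimed here; what remains is to substitute the stepsize choices~\eqref{eq:final-step-size-short} and verify the orders.

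For the consensus bound I would begin from the gradient-tracking identity
\[
Z^{t+1}_\perp = (W-\Pi)\,Z^t_\perp - [\,\eta_x D^t_{x,\perp},\; -\eta_y D^t_{y,\perp}\,],
\]
which follows from Algorithm~\ref{Alg:main-alg} together with $W\Pi=\Pi W=\Pi$. Taking Frobenius norms, invoking $\|W-\Pi\|_2=\rho$ from Assumption~\ref{ASPT:mixture-matrix}, and applying Young's inequality with a parameter tied to $\rho$ yields
\[
\|Z^{t+1}_\perp\|_F^2 \leq \tfrac{1+\rho^2}{2}\|Z^t_\perp\|_F^2 + \tfrac{2\max\{\eta_x^2,\eta_y^2\}}{1-\rho^2}\|D^t_\perp\|_F^2.
\]
Summing this over $t$ and combining with the $\sum_t\bE\|D^t_\perp\|_F^2$ control already required in the Theorem~\ref{thm:main-result} proof, the averaged consensus term is of order $\max\{\eta_x^2,\eta_y^2\}/(1-\rho)^2$ times an averaged residual; substituting $\eta_y=\Theta(\min\{(1-\rho)^2,1/\kappa\}/L)$ and $\eta_x=\Theta(\eta_y/\kappa^2)$ collapses it to $\cO(M\epsilon^2/(L^2\kappa^2))$.

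For the dual suboptimality I would combine the averaged update~\eqref{eq:avg-update-2} with $\mu$-strong concavity of $f(\bbx^t,\cdot)$ and the $\kappa$-Lipschitz continuity of $\by^*$ (a standard consequence of Assumptions~\ref{ASPT:smooth-f}--\ref{ASPT:SC}) to derive a one-step inequality of the form
\[
\bE\|\bby^{t+1}-\by^*(\bbx^{t+1})\|^2 \leq (1-\Theta(\mu\eta_y))\,\bE\|\bby^t-\by^*(\bbx^t)\|^2 + \Theta(\kappa^2\eta_x^2)\,\bE\|\grad\Phi(\bbx^t)\|^2 + \text{(consensus+noise)}.
\]
Rearranging, summing, and dividing by $\mu\eta_y T$, the averaged dual gap is bounded by $\cO(1/(\mu\eta_y T))$ times $\Psi^0$ plus a telescoped residual, plus $\cO(\eta_x^2\kappa^2/(\mu\eta_y))=\cO(1/L^2)$ times the averaged squared primal gradient; both contributions are $\cO(\epsilon^2/L^2)$ once $T\geq T_\epsilon$ by Theorem~\ref{thm:main-result}.

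The hard part is purely bookkeeping: I must verify that under the step-size regime~\eqref{eq:final-step-size-short} the coefficients $\lambda_2$ and $\lambda_3$ in the descent of $\Psi^t$ scale like $\Omega(L\kappa)$ and $\Omega(L^2\kappa^2/M)$ respectively, so that dividing the $\cO(\epsilon^2)$ averaged residual by them yields precisely the advertised orders. Producing $\lambda_3$ this large in particular hinges on the aggressive primal/dual stepsize imbalance $\eta_x=\eta_y/\kappa^2$, which was chosen exactly so that the primal drift does not inflate the dual contraction or the consensus decay; any slackness there would bleed an extra factor of $\kappa$ into either bound.
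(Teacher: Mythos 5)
Your overall strategy --- derive a consensus recursion from $\|W-\Pi\|_2=\rho$, a dual-gap recursion from $\mu$-strong concavity plus the $\kappa$-Lipschitzness of $\by^*(\cdot)$, couple them to the primal descent, telescope, and then read off the orders from $\eta_x=\Theta(\eta_y/\kappa^2)$ and $\eta_y=\Theta(\min\{(1-\rho)^2,1/\kappa\}/L)$ --- is exactly what the paper does: its proof recalls the summed bound \cref{eq:bound-Z-perp-final-2} on $\sum_t\|Z^t_\perp\|_F^2$ and the summed bound \cref{eq:bound-deltat-sum-final-2} on $\sum_t\delta_t$, both expressed in terms of $\sum_t\|\grad\Phi(\bbx^t)\|^2$ and initial conditions, and then substitutes the $\cO(T\epsilon^2)$ estimate from Theorem~\ref{thm:main-result}. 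Your individual one-step inequalities for $Z^t_\perp$ and for $\delta_t$ match \cref{lemma:bound-deltat-1} and the recursion at the start of the proof of \cref{lemma:bound-Z-D-perp-final}.

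The one place where your sketch would fail if executed literally is the claim that a fixed-weight potential $\Psi^t$ containing $d\,\|V^t-\nabla f(Z^t)\|_F^2$ admits a \emph{per-iteration} descent $\bE[\Psi^{t+1}]\leq\bE[\Psi^t]-\lambda_1\bE\|\grad\Phi(\bbx^t)\|^2-\lambda_2\bE[\delta_t]-\lambda_3\bE\|Z^t_\perp\|_F^2$. The SPIDER error satisfies only $\bE\|\be_i^{t}\|^2\leq\bE\|\be_i^{t-1}\|^2+\tfrac{L^2}{S_2}\bE\|\bz_i^{t}-\bz_i^{t-1}\|^2$ within an epoch (\cref{lemma:bound-eti}); it never contracts between restarts, yet it enters the descent of $\Phi$, $\delta_t$ and $\|Z^t_\perp\|_F^2$ with a strictly positive coefficient. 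Since no component of $\Psi^t$ produces a $-\lambda_4\|E^t\|_F^2$ term, the coefficient of $\|E^t\|_F^2$ in $\bE[\Psi^{t+1}]-\bE[\Psi^t]$ cannot be made nonpositive for all $t$; the accumulation is only paid off at the resets $t\equiv 0\ (\mathrm{mod}\ q)$ together with $S_2\geq q$. This is precisely why the paper's Lemmas~\ref{lemma:bound-Et}, \ref{lemma:bound-Phi-final}, \ref{lemma:bound-deltat-final} and \ref{lemma:bound-D-perp-2} are all organized over $q$-length blocks and then summed over epochs, rather than telescoped one step at a time. Your sketch does ultimately lean on "the $\sum_t\bE\|D^t_\perp\|_F^2$ control already required in the Theorem~\ref{thm:main-result} proof," which implicitly imports this epoch structure, so the gap is reparable, but the clean one-step Lyapunov framing should be replaced by the epoch-wise telescoping. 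Separately, two of your stated coefficients are off, though harmlessly under the chosen step sizes: what is needed is $\lambda_2/\lambda_1\geq L^2$ (not $\lambda_2=\Omega(L\kappa)$, which yields $\epsilon^2/(L\kappa)$ and matches $\epsilon^2/L^2$ only after the paper's $L\geq 1$ normalization and the $\min$ in \cref{eq:bd-final-delta}), and the Young's-inequality split that preserves the $(1-\Theta(\mu\eta_y))$ contraction necessarily produces a coefficient $\Theta(\kappa^2\eta_x^2/(\eta_y\mu))$ on $\|\grad\Phi(\bbx^t)\|^2$, so the averaged dual gap carries the factor $\Theta(\kappa^2\eta_x^2/(\mu\eta_y)^2)=\Theta(1/L^2)$ rather than your $\kappa^2\eta_x^2/(\mu\eta_y)$.
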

\begin{remark}
Let $\tau_\epsilon$ be a random variable with a uniform distribution over $\{0,\ldots,T_\epsilon-1\}$. Then \eqref{eq:grad_bound} implies that $\mathbf{E}\Big[\|\grad\Phi(\bbx^{\tau_\epsilon})\|\Big]\leq \epsilon$. Furthermore, we also have $\mathbb{E}[\|Z^{\tau_\epsilon}_\perp\|_F]= \cO(\epsilon)$ and $\mathbb{E}[\norm{\by^{\tau_\epsilon}-\by^*(\bbx^{\tau_\epsilon})}] = \cO(\epsilon)$. 
\end{remark}

\begin{remark}
    \xzf{Since the final complexity bound
    depends 
    on the choice of 
    $\eta_x, \eta_y, S_1, S_2, \sa{q}$, we evaluate the tightness of our results by comparing these parameters with those in related work. Our selection of the VR parameters $S_1 = \mathcal{O}(\kappa^2\epsilon^{-2}), S_2 = \mathcal{O}(\kappa\epsilon^{-1}), q = \mathcal{O}(\kappa\epsilon^{-1})$ is consistent with the optimal choice in single-loop centralized VR methods, 
    e.g., 
     \sa{(Luo et al. 2020).} 
    The \sa{time-scale} ratio $\eta_y/\eta_x = \kappa^2$ aligns with the ratios used in existing works on GDA methods \sa{(Lin, Jin, and Jordan 2020).}
    To adapt the GDA to the decentralized setting, we have introduced a factor of $\frac{1}{\kappa}$ into the selection of $\eta_y$. DREAM can set $\eta_y=\frac{1}{L}$ but requires \saa{multi-communication rounds}. It is not yet clear if this cost can be further reduced, and whether $\frac{1}{\kappa}$ represents the optimal adjustment -- nevertheless, our analysis 
    \saa{seems to be tight when compared to the existing results.}} 
\end{remark}


 



  
  

\section{Numerical Experiments}
\label{sec:numerics}
We test our proposed method on three problems: a quadratic minimax problem, robust non-convex linear regression, and robust neural network training. For the first and third problem, we let $M=8$ such that each agent is represented by an NVIDIA Tesla V100 GPU. For the second problem, we test methods in a serial manner to facilitate more general reproducibility; here, we let $M=20$. In all cases, we 
use a ring \saa{(cycle)} graph \saa{with equal weights on edges including self loops, i.e.,  $w_{i,i-1}=w_{i,i}=w_{i,i+1}=1/3$ for all $i\in[M]$.} 
The learning rates for all tests are chosen such that $\eta_y\in\{10^{-1},10^{-2},10^{-3}\}$ and we tune the ratio $\frac{\eta_x}{\eta_y}\in\{1,10^{-1},10^{-2},10^{-3}\}$. We test our proposed method against 3 methods: DPSOG~\cite{Mingrui2020}, DM-HSGD~\cite{Wenhan2021}, and the deterministic GT/DA~\cite{Tsaknakis2020}. {The code is made available at \url{https://github.com/gmancino/DGDA-VR}.}

\subsection{A Polyak-Lojasiewicz game}


 
 
    

	
  
We consider a slightly modified version of the two-player Polyak-Lojasiewicz game from~\cite{Chen2022PL}. Namely, we make the problem decentralized by letting each agent $i\in[M]$ contain a dataset of triples $\small\{(\bp_{ij},\bq_{ij},\br_{ij})\}_{j=1}^{n}$ where each vector lies in $\mR^d$. 
\saa{For all $i\in[M]$, let ${\small f_i}:\reals^d\times\reals^d\to\reals$} such that
\vspace{-2mm}
    {\small \begin{equation}\label{experiments:pl}
        f_{i}(\bx_i,\by_i)=\frac{1}{2}(\bx_i)^\top\bP_i\bx_i-\frac{1}{2}(\by_i)^\top\bQ_i\by_i+(\bx_i)^\top\bR_i\by_i,
            \vspace{-2mm}
    \end{equation}}%
where
        {\small$\bP_i=\frac{1}{n}\sum_{j=1}^{n}\bp_{ij}\bp_{ij}^\top$, $\bQ_i=\frac{1}{n}\sum_{j=1}^{n}\bq_{ij}\bq_{ij}^\top+\alpha\bI$, $\bR_i=\frac{1}{n}\sum_{j=1}^{n}\br_{ij}\br_{ij}^\top$}
for some $\alpha>0$ which guarantees the problem is strongly-concave in $\by$; we choose $\alpha=1$ for these experiments. Data is generated in the same manner as in~\cite{Chen2022PL}\footnote{See~\url{https://github.com/TrueNobility303/SPIDER-GDA/blob/main/code/GDA/pl_data_generator.m}} to guarantee that $\small \bP_i$ is singular; hence, the problem is not strongly-convex in $\bx$. Here, $n=1{,}000$ and we fix the mini-batch size for all methods to be 1 (besides GT/DA). For our proposed method, we set $q=S_1=100$. We run each algorithm 
\saa{for} 50,000 iterations and 
plot the results 
of 50 epochs \saa{(one pass over the whole dataset through sampling is an epoch)} 
for each method. We measure the stationarity violation as \gmbf{$\norm{\sum_{i=1}^{M}\nabla_{\bx}f_{i}(\bar{\bx},\by^{(*)})}_2^2+\norm{\bX_{\perp}}_F^2+\norm{\bY_{\perp}}_F^2$, where $\by^{(*)}\triangleq \arg\max_{\by}\sum_{i=1}^Mf_i(\bar{\bx},\by)$ for $\bar{\bx}=\frac{1}{M}\sum_{i=1}^{M}\bx_{i}$.}
\saa{Results shown in Figure~\ref{fig:all_exps} 
demonstrate that 
\gda{} is competitive against SOTA for computing a stationary point.} 
\\
\textbf{Sensitivity Analysis}\quad
\gmbf{To 
\saa{assess} the influence of 
\textit{graph connectivity}, we 
\saa{compared \gda{} against DM-HSGD 
on 
random connected graphs, generated} such that there is an edge between any two nodes with probability $p\in\{0.05, 0.95\}$ \saa{--- corresponding to low and high connectivity scenarios,} respectively. For each $p$, we generate 15 random graphs of size $M\in\{8, 20\}$ 
\saa{-- the average value of $\rho$ over 15 realizations} is 0.94, 0.97, 0.16, 0.1 for $(p,M)$ combinations $(0.05,8)$, $(0.05,20)$, $(0.95,8)$, $(0.95,20)$, respectively. The first two plots in Fig.~\ref{fig:sensitive-analysis} report the sum of 
squared norms of the $\bx$ and $\by$ consensus violations 
\saa{against the oracle complexity.} 
In addition, we generate 15 random graphs for $M=8$ and $p=0.6$, i.e., \saa{moderate connectivity with} $\rho\approx 0.63$, \saa{to test \gda{} using low, moderate, high levels for each parameter $S_1,S_2,q$ while fixing the other two at the moderate level.}
Results are reported in the last three plots of \saa{Fig.}~\ref{fig:sensitive-analysis} which show that our method is not sensitive to the choice of hyper-parameters $S_1,S_2,q$.}

\subsection{Robust Machine Learning}

We consider two 
robust machine learning problems: non-convex linear regression with tabular data and neural network training with image data. Let each agent $i\in[M]$ contain a dataset of points and labels denoted by $\{(\ba_{ij},b_{ij})\}_{j=1}^{n}$ where $b_{ij}$ is the class label of data point $\ba_{ij}$. For these problems, $\by_{i}^{(*)}\triangleq \arg\max_{\by_i}f_i(\bx,\by_{i})$ is not easily computable; as a proxy, we report the stationarity violation using
   \vspace{-1.5mm}
    {\small \begin{equation}\label{eq:numerical:stationarity2}
		\norm{
  \sum_{i=1}^{M}\nabla f_{i}(\bar{\bx},\bar{\by})}_2^2+\norm{\bX_{\perp}}_F^2+\norm{\bY_{\perp}}_F^2.
    \vspace{-1.5mm}
	\end{equation}}

\paragraph{Robust Non-convex Linear Regression}
We consider training a robust version of the non-convex linear regression classifier from~\cite{Sun2020}. 
\saa{For $i\in [M]$, let}
        \vspace{-1.5mm}
    {\small \begin{equation}\label{experiments:lr}
        f_{i}(\bx_i,\by_i)=\frac{1}{n}\sum_{j=1}^n\ln\left(\left(b_{ij}-\bx_{i}^\top(\ba_{ij}+\by_{i})\right)^2/2+1\right)-\frac{\alpha}{2}\|\by_i\|_2^2
        \vspace{-1mm}
    \end{equation} }%
where $b_{ij}\in\{-1,+1\}$ and $\alpha>0$ is a penalty term which guarantees 
\saa{that $f_i$ is strongly-concave in $\by_i$} --we 
\saa{set $\alpha=1$} for these experiments. The $\by$ variable acts as a perturbation to the data; \saa{hence, we seek to minimize the loss on the worst-case data perturbation.} 
\saa{We 
test \gda{} on two datasets:} a9a and ijcnn1\footnote{See:~\url{https://www.csie.ntu.edu.tw/ cjlin/libsvmtools/datasets/}}. \gmbf{We fix the mini-batch to be 32 for all methods beside GT/DA and set $S_1=1{,}000,q=32$ for our method. We} run each method 
\saa{for} 5{,}000 iterations and 
plot \saa{the results 
of 50 epochs 
for each method. Results 
shown in Figure~\ref{fig:all_exps} demonstrate that 
in contrast to \gda{},} the main bottleneck for other methods \saa{is to} achieve consensus among agents. 

\paragraph{Robust Neural Network Training}
We consider a slightly modified version of the robust neural network training problem from~\cite{Deng2022,Sharma2022}. For all $i\in [M]$, let
\vspace{-4mm}
   {\small \begin{equation}\label{experiments:nn}
        f_{i}(\bx_i,\by_i)=\frac{1}{n}\sum_{j=1}^n\ell\left(g_{\bx_i}(\ba_{ij}+\by_i),b_{ij}\right)-\frac{\alpha}{2}\|\by_i\|_2^2,
        \vspace{-3mm}
    \end{equation}}%
where $g_{\bx_i}$ is a neural network parameterized by $\bx_i$, $\ell$ is the cross-entropy loss function, and $\alpha>0$ is a penalty parameter which guarantees that 
\saa{$f_i$ is strongly-concave in $\by_i$ --we set $\alpha=1$} for these experiments. Inspired by~\cite{Deng2022}, we \saa{adopt $g_{\bx_i}$ corresponding to} a two-layer network (200 hidden units) with a tanh activation function, 
and we use the MNIST~\cite{lecun1998mnist} dataset for training. We fix the mini-batch size for all methods to be 100 (besides GT/DA). For 
\gda{}, we set $q=100$ and 
$S_1=7,500$. We run each algorithm to 50{,}000 iterations and 
\saa{plot the results 
of 500 epochs 
for each method. Results shown in Figure~\ref{fig:all_exps} verify that 
\gda{} 
is competitive against the stochastic methods and still outperforms the deterministic method in terms of data passes required to compute a near stationary point.}

\section{Conclusion}
In this work, we proposed a \texttt{D}ecentralized \texttt{G}radient \texttt{D}ecent \texttt{A}scent - \texttt{V}ariance \texttt{R}eduction method, \gda{}, for solving the stochastic nonconvex strongly-concave minimax problem over a connected network of $M$ computing agents. Under the assumption that the computing agents only have access to stochastic first-order oracles, our method incorporates variance reduction and gradient tracking to jointly optimize the sample and communication complexities to be $\bigO{\epsilon^{-3}}$ and $\bigO{\epsilon^{-2}}$, respectively, for reaching an $\epsilon$-accurate solution. For the class of problems considered here, this is the first work which does not require multiple coordinated communications in each iteration to achieve these optimal complexities.

\section*{Acknowledgements}
This work is partly supported by NSF Grant DMS-2208394 and the ONR \saa{grants N00014-21-1-2271 and} N00014-22-1-2573, and also by the Rensselaer-IBM AI Research Collaboration, part of the IBM AI Horizons Network.

\bibliography{reference}

\begin{thebibliography}{58}
\providecommand{\natexlab}[1]{#1}

\bibitem[{Arjevani et~al.(2022)Arjevani, Carmon, Duchi, Foster, Srebro, and Woodworth}]{arjevani2022lower}
Arjevani, Y.; Carmon, Y.; Duchi, J.~C.; Foster, D.~J.; Srebro, N.; and Woodworth, B. 2022.
\newblock Lower bounds for non-convex stochastic optimization.
\newblock \emph{Mathematical Programming}, 1--50.

\bibitem[{{Can}, {Gurbuzbalaban}, and {Aybat}(2022)}]{can2022}
{Can}, B.; {Gurbuzbalaban}, M.; and {Aybat}, N. 2022.
\newblock {A Variance-Reduced Stochastic Accelerated Primal Dual Algorithm}.
\newblock \emph{arXiv e-prints}, arXiv:2202.09688.

\bibitem[{Chen, Yao, and Luo(2022)}]{Chen2022PL}
Chen, L.; Yao, B.; and Luo, L. 2022.
\newblock Faster Stochastic Algorithms for Minimax Optimization under Polyak-Lojasiewicz Condition.
\newblock In \emph{36th NeurIPS}.

\bibitem[{Chen, Ye, and Luo(2022)}]{Chen2022}
Chen, L.; Ye, H.; and Luo, L. 2022.
\newblock A Simple and Efficient Stochastic Algorithm for Decentralized Nonconvex-Strongly-Concave Minimax Optimization.
\newblock \emph{arXiv preprint arXiv:2212.02387}.

\bibitem[{Chen, Ma, and Zhou(2021)}]{chen2021accelerated}
Chen, Z.; Ma, S.; and Zhou, Y. 2021.
\newblock Accelerated Proximal Alternating Gradient-Descent-Ascent for Nonconvex Minimax Machine Learning.
\newblock \emph{arXiv preprint arXiv:2112.11663}.

\bibitem[{Chen et~al.(2021)Chen, Zhou, Xu, and Liang}]{chen2021proximal}
Chen, Z.; Zhou, Y.; Xu, T.; and Liang, Y. 2021.
\newblock Proximal Gradient Descent-Ascent: Variable Convergence under {KL} Geometry.
\newblock \emph{arXiv preprint arXiv:2102.04653}.

\bibitem[{Cutkosky and Orabona(2019)}]{Cutkosky2019}
Cutkosky, A.; and Orabona, F. 2019.
\newblock Momentum-Based Variance Reduction in Non-Convex SGD.
\newblock In \emph{33th NeurIPS}.

\bibitem[{Deng and Mahdavi(2021)}]{Deng2022}
Deng, Y.; and Mahdavi, M. 2021.
\newblock Local Stochastic Gradient Descent Ascent: Convergence Analysis and Communication Efficiency.
\newblock In \emph{AISTATS-24}.

\bibitem[{Fallah, Ozdaglar, and Pattathil(2020)}]{fallah2020optimal}
Fallah, A.; Ozdaglar, A.; and Pattathil, S. 2020.
\newblock An optimal multistage stochastic gradient method for minimax problems.
\newblock In \emph{59th IEEE CDC}.

\bibitem[{Fang et~al.(2018)Fang, Li, Lin, and Zhang}]{fang2018spider}
Fang, C.; Li, C.~J.; Lin, Z.; and Zhang, T. 2018.
\newblock Spider: Near-optimal non-convex optimization via stochastic path-integrated differential estimator.
\newblock In \emph{32th NeurIPS}.

\bibitem[{Gao(2022)}]{Gao2022}
Gao, H. 2022.
\newblock Decentralized Stochastic Gradient Descent Ascent for Finite-Sum Minimax Problems.
\newblock \emph{arXiv preprint arXiv:2212.02724}.

\bibitem[{Goodfellow et~al.(2014)Goodfellow, Pouget-Abadie, Mirza, Xu, Warde-Farley, Ozair, Courville, and Bengio}]{Goodfellow2014}
Goodfellow, I.; Pouget-Abadie, J.; Mirza, M.; Xu, B.; Warde-Farley, D.; Ozair, S.; Courville, A.; and Bengio, Y. 2014.
\newblock Generative Adversarial Nets.
\newblock In \emph{27th NeurIPS}.

\bibitem[{Huang, Wu, and Huang(2021)}]{huang2021efficient}
Huang, F.; Wu, X.; and Huang, H. 2021.
\newblock Efficient mirror descent ascent methods for nonsmooth minimax problems.
\newblock \emph{35th NeurIPS}.

\bibitem[{Jin, Netrapalli, and Jordan(2020)}]{jin2020local}
Jin, C.; Netrapalli, P.; and Jordan, M. 2020.
\newblock What is local optimality in nonconvex-nonconcave minimax optimization?
\newblock In \emph{ICML}, 4880--4889. PMLR.

\bibitem[{Koloskova, Lin, and Stich(2021)}]{Koloskova2021}
Koloskova, A.; Lin, T.; and Stich, S.~U. 2021.
\newblock An Improved Analysis of Gradient Tracking for Decentralized Machine Learning.
\newblock In \emph{35th NeurIPS}.

\bibitem[{LeCun(1998)}]{lecun1998mnist}
LeCun, Y. 1998.
\newblock The MNIST database of handwritten digits.
\newblock \emph{http://yann. lecun. com/exdb/mnist/}.

\bibitem[{Li, Li, and Chi(2022)}]{Li2022}
Li, B.; Li, Z.; and Chi, Y. 2022.
\newblock DESTRESS: Computation-Optimal and Communication-Efficient Decentralized Nonconvex Finite-Sum Optimization.
\newblock \emph{SIAM Journal on Mathematics of Data Science}, 4(3): 1031--1051.

\bibitem[{Li et~al.(2021)Li, Tian, Zhang, and Jadbabaie}]{li2021complexity}
Li, H.; Tian, Y.; Zhang, J.; and Jadbabaie, A. 2021.
\newblock Complexity Lower Bounds for Nonconvex-Strongly-Concave Min-Max Optimization.
\newblock \emph{arXiv preprint arXiv:2104.08708}.

\bibitem[{Li, Hanzely, and Richtárik(2021)}]{Li2021Sarah}
Li, Z.; Hanzely, S.; and Richtárik, P. 2021.
\newblock ZeroSARAH: Efficient Nonconvex Finite-Sum Optimization with Zero Full Gradient Computation.
\newblock \emph{arXiv preprint arXiv:2103.01447}.

\bibitem[{Lian et~al.(2017)Lian, Zhang, Zhang, Hsieh, Zhang, and Liu}]{Lian2017}
Lian, X.; Zhang, C.; Zhang, H.; Hsieh, C.-J.; Zhang, W.; and Liu, J. 2017.
\newblock Can Decentralized Algorithms Outperform Centralized Algorithms? A Case Study for Decentralized Parallel Stochastic Gradient Descent.
\newblock In \emph{31th NeurIPS}.

\bibitem[{Lin, Jin, and Jordan(2020)}]{lin2020gradient}
Lin, T.; Jin, C.; and Jordan, M. 2020.
\newblock On gradient descent ascent for nonconvex-concave minimax problems.
\newblock In \emph{ICML}, 6083--6093. PMLR.

\bibitem[{{Lin}, {Jin}, and {Jordan}(2020)}]{lin-near-optimal}
{Lin}, T.; {Jin}, C.; and {Jordan}, M.~I. 2020.
\newblock {Near-Optimal Algorithms for Minimax Optimization}.
\newblock \emph{arXiv e-prints}, arXiv:2002.02417.

\bibitem[{Liu et~al.(2020)Liu, Zhang, Mroueh, Cui, Ross, Yang, and Das}]{Mingrui2020}
Liu, M.; Zhang, W.; Mroueh, Y.; Cui, X.; Ross, J.; Yang, T.; and Das, P. 2020.
\newblock A Decentralized Parallel Algorithm for Training Generative Adversarial Nets.
\newblock In \emph{34th NeurIPS}.

\bibitem[{Lu et~al.(2020)Lu, Tsaknakis, Hong, and Chen}]{lu2020hybrid}
Lu, S.; Tsaknakis, I.; Hong, M.; and Chen, Y. 2020.
\newblock Hybrid block successive approximation for one-sided non-convex min-max problems: algorithms and applications.
\newblock \emph{IEEE Transactions on Signal Processing}, 68: 3676--3691.

\bibitem[{Lu et~al.(2019)Lu, Zhang, Sun, and Hong}]{Lu2019}
Lu, S.; Zhang, X.; Sun, H.; and Hong, M. 2019.
\newblock GNSD: a Gradient-Tracking Based Nonconvex Stochastic Algorithm for Decentralized Optimization.
\newblock In \emph{2019 IEEE DSW}.

\bibitem[{Luo et~al.(2020)Luo, Ye, Huang, and Zhang}]{luo2020stochastic}
Luo, L.; Ye, H.; Huang, Z.; and Zhang, T. 2020.
\newblock Stochastic recursive gradient descent ascent for stochastic nonconvex-strongly-concave minimax problems.
\newblock In \emph{34th NeurIPS}.

\bibitem[{Mancino-Ball et~al.(2023)Mancino-Ball, Miao, Xu, and Chen}]{MancinoBall2022}
Mancino-Ball, G.; Miao, S.; Xu, Y.; and Chen, J. 2023.
\newblock Proximal stochastic recursive momentum methods for nonconvex composite decentralized optimization.
\newblock In \emph{Proceedings of the AAAI Conference on Artificial Intelligence}, volume~37, 9055--9063.

\bibitem[{Namkoong and Duchi(2016)}]{Namkoong2016}
Namkoong, H.; and Duchi, J.~C. 2016.
\newblock Stochastic Gradient Methods for Distributionally Robust Optimization with f-divergences.
\newblock In \emph{30th NeurIPS}.

\bibitem[{Nguyen et~al.(2017{\natexlab{a}})Nguyen, Liu, Scheinberg, and Tak{\'a}{\v{c}}}]{nguyen2017sarah}
Nguyen, L.~M.; Liu, J.; Scheinberg, K.; and Tak{\'a}{\v{c}}, M. 2017{\natexlab{a}}.
\newblock SARAH: A novel method for machine learning problems using stochastic recursive gradient.
\newblock In \emph{ICML}, 2613--2621. PMLR.

\bibitem[{Nguyen et~al.(2017{\natexlab{b}})Nguyen, Liu, Scheinberg, and Tak{\'a}{\v{c}}}]{Nguyen2017}
Nguyen, L.~M.; Liu, J.; Scheinberg, K.; and Tak{\'a}{\v{c}}, M. 2017{\natexlab{b}}.
\newblock {SARAH}: A Novel Method for Machine Learning Problems Using Stochastic Recursive Gradient.
\newblock In Precup, D.; and Teh, Y.~W., eds., \emph{ICML}, volume~70 of \emph{PMLR}, 2613--2621. International Convention Centre, Sydney, Australia: PMLR.

\bibitem[{Nouiehed et~al.(2019)Nouiehed, Sanjabi, Huang, Lee, and Razaviyayn}]{Nouiehed2019}
Nouiehed, M.; Sanjabi, M.; Huang, T.; Lee, J.~D.; and Razaviyayn, M. 2019.
\newblock Solving a Class of Non-Convex Min-Max Games Using Iterative First Order Methods.
\newblock In \emph{33th NeurIPS}.

\bibitem[{Ostrovskii, Lowy, and Razaviyayn(2021)}]{ostrovskii2021efficient}
Ostrovskii, D.~M.; Lowy, A.; and Razaviyayn, M. 2021.
\newblock Efficient search of first-order nash equilibria in nonconvex-concave smooth min-max problems.
\newblock \emph{SIOPT}, 31(4): 2508--2538.

\bibitem[{Pan, Liu, and Wang(2020)}]{Pan2020}
Pan, T.; Liu, J.; and Wang, J. 2020.
\newblock {D-SPIDER-SFO:} A Decentralized Optimization Algorithm with Faster Convergence Rate for Nonconvex Problems.
\newblock In \emph{AAAI-20}.

\bibitem[{Sharma et~al.(2022)Sharma, Panda, Joshi, and Varshney}]{Sharma2022}
Sharma, P.; Panda, R.; Joshi, G.; and Varshney, P. 2022.
\newblock Federated Minimax Optimization: Improved Convergence Analyses and Algorithms.
\newblock In \emph{ICML}, 19683--19730. PMLR.

\bibitem[{Sun and Hong(2019)}]{sun2019distributed}
Sun, H.; and Hong, M. 2019.
\newblock Distributed non-convex first-order optimization and information processing: Lower complexity bounds and rate optimal algorithms.
\newblock \emph{IEEE Transactions on Signal processing}, 67(22): 5912--5928.

\bibitem[{Sun, Lu, and Hong(2020)}]{Sun2020}
Sun, H.; Lu, S.; and Hong, M. 2020.
\newblock Improving the Sample and Communication Complexity for Decentralized Non-Convex Optimization: Joint Gradient Estimation and Tracking.
\newblock In \emph{ICML}, 9217--9228. PMLR.

\bibitem[{Tang et~al.(2018)Tang, Lian, Yan, Zhang, and Liu}]{Tang2018}
Tang, H.; Lian, X.; Yan, M.; Zhang, C.; and Liu, J. 2018.
\newblock $D^2$: Decentralized Training over Decentralized Data.
\newblock In \emph{ICML}, 4848--4856. PMLR.

\bibitem[{Thekumparampil et~al.(2019)Thekumparampil, Jain, Netrapalli, and Oh}]{thekumparampil2019efficient}
Thekumparampil, K.~K.; Jain, P.; Netrapalli, P.; and Oh, S. 2019.
\newblock Efficient algorithms for smooth minimax optimization.
\newblock \emph{arXiv preprint arXiv:1907.01543}.

\bibitem[{Tran-Dinh et~al.(2022{\natexlab{a}})Tran-Dinh, Pham, Phan, and Nguyen}]{Tran2022}
Tran-Dinh, Q.; Pham, N.~H.; Phan, D.~T.; and Nguyen, L.~M. 2022{\natexlab{a}}.
\newblock A hybrid stochastic optimization framework for composite nonconvex optimization.
\newblock \emph{Mathematical Programming}, 191(2): 1005--1071.

\bibitem[{Tran-Dinh et~al.(2022{\natexlab{b}})Tran-Dinh, Pham, Phan, and Nguyen}]{tran2022hybrid}
Tran-Dinh, Q.; Pham, N.~H.; Phan, D.~T.; and Nguyen, L.~M. 2022{\natexlab{b}}.
\newblock A hybrid stochastic optimization framework for composite nonconvex optimization.
\newblock \emph{Mathematical Programming}, 191(2): 1005--1071.

\bibitem[{Tsaknakis, Hong, and Liu(2020)}]{Tsaknakis2020}
Tsaknakis, I.; Hong, M.; and Liu, S. 2020.
\newblock Decentralized Min-Max Optimization: Formulations, Algorithms and Applications in Network Poisoning Attack.
\newblock In \emph{IEEE ICASSP 2020}, 5755--5759.

\bibitem[{Verbraeken et~al.(2020)Verbraeken, Wolting, Katzy, Kloppenburg, Verbelen, and Rellermeyer}]{Verbraeken2020}
Verbraeken, J.; Wolting, M.; Katzy, J.; Kloppenburg, J.; Verbelen, T.; and Rellermeyer, J.~S. 2020.
\newblock A Survey on Distributed Machine Learning.
\newblock \emph{ACM Comput. Surv.}, 53(2).

\bibitem[{Wang et~al.(2019)Wang, Ji, Zhou, Liang, and Tarokh}]{Wang2019}
Wang, Z.; Ji, K.; Zhou, Y.; Liang, Y.; and Tarokh, V. 2019.
\newblock SpiderBoost and Momentum: Faster Variance Reduction Algorithms.
\newblock In \emph{33th NeurIPS}. Curran Associates, Inc.

\bibitem[{Xian et~al.(2021)Xian, Huang, Zhang, and Huang}]{Wenhan2021}
Xian, W.; Huang, F.; Zhang, Y.; and Huang, H. 2021.
\newblock A Faster Decentralized Algorithm for Nonconvex Minimax Problems.
\newblock In \emph{35th NeurIPS}.

\bibitem[{Xin et~al.(2021)Xin, Das, Khan, and Kar}]{Xin2021ProxGT}
Xin, R.; Das, S.; Khan, U.~A.; and Kar, S. 2021.
\newblock A Stochastic Proximal Gradient Framework for Decentralized Non-Convex Composite Optimization: Topology-Independent Sample Complexity and Communication Efficiency.
\newblock \emph{arXiv preprint, arXiv:2110.01594}.

\bibitem[{Xin, Khan, and Kar(2021{\natexlab{a}})}]{Xin2021HSGD}
Xin, R.; Khan, U.; and Kar, S. 2021{\natexlab{a}}.
\newblock A Hybrid Variance-Reduced Method for Decentralized Stochastic Non-Convex Optimization.
\newblock In \emph{ICML}, 11459--11469. PMLR.

\bibitem[{Xin, Khan, and Kar(2021{\natexlab{b}})}]{Xin2021GradTrack}
Xin, R.; Khan, U.~A.; and Kar, S. 2021{\natexlab{b}}.
\newblock An Improved Convergence Analysis for Decentralized Online Stochastic Non-Convex Optimization.
\newblock \emph{IEEE Transactions on Signal Processing}, 69: 1842--1858.

\bibitem[{Xin, Khan, and Kar(2022)}]{Xin2021Sarah}
Xin, R.; Khan, U.~A.; and Kar, S. 2022.
\newblock Fast Decentralized Nonconvex Finite-Sum Optimization with Recursive Variance Reduction.
\newblock \emph{SIOPT}, 32(1): 1--28.

\bibitem[{Xu et~al.(2020)Xu, Wang, Liang, and Poor}]{xu2020enhanced}
Xu, T.; Wang, Z.; Liang, Y.; and Poor, H.~V. 2020.
\newblock Enhanced first and zeroth order variance reduced algorithms for min-max optimization.
\newblock In \emph{Openreview}.

\bibitem[{Xu and Xu(2023)}]{Xu2020}
Xu, Y.; and Xu, Y. 2023.
\newblock Momentum-Based Variance-Reduced Proximal Stochastic Gradient Method for Composite Nonconvex Stochastic Optimization.
\newblock \emph{Journal of Optimization Theory and Applications}, 196(1): 266--297.

\bibitem[{Yang et~al.(2022)Yang, Orvieto, Lucchi, and He}]{yang2022faster}
Yang, J.; Orvieto, A.; Lucchi, A.; and He, N. 2022.
\newblock Faster single-loop algorithms for minimax optimization without strong concavity.
\newblock In \emph{AISTATS}, 5485--5517. PMLR.

\bibitem[{Zhang, Hong, and Zhang(2019)}]{zhang2019lower}
Zhang, J.; Hong, M.; and Zhang, S. 2019.
\newblock On Lower Iteration Complexity Bounds for the Saddle Point Problems.
\newblock \emph{arXiv preprint arXiv:1912.07481}.

\bibitem[{Zhang and You(2020)}]{Zhang2020}
Zhang, J.; and You, K. 2020.
\newblock Decentralized Stochastic Gradient Tracking for Non-convex Empirical Risk Minimization.
\newblock \emph{arXiv preprint arXiv:1909.02712}.

\bibitem[{Zhang et~al.(2021{\natexlab{a}})Zhang, Yang, Guzm{\'a}n, Kiyavash, and He}]{zhang2021complexity}
Zhang, S.; Yang, J.; Guzm{\'a}n, C.; Kiyavash, N.; and He, N. 2021{\natexlab{a}}.
\newblock The complexity of nonconvex-strongly-concave minimax optimization.
\newblock In \emph{Uncertainty in Artificial Intelligence}, 482--492. PMLR.

\bibitem[{Zhang, Aybat, and G{\"u}rb{\"u}zbalaban(2021)}]{zhang2021robust}
Zhang, X.; Aybat, N.; and G{\"u}rb{\"u}zbalaban, M. 2021.
\newblock Robust Accelerated Primal-Dual Methods for Computing Saddle Points.
\newblock \emph{arXiv preprint arXiv:2111.12743}.

\bibitem[{Zhang, Aybat, and Gurbuzbalaban(2022)}]{zhang2022sapd}
Zhang, X.; Aybat, N.; and Gurbuzbalaban, M. 2022.
\newblock {SAPD}+: An Accelerated Stochastic Method for Nonconvex-Concave Minimax Problems.
\newblock In \emph{36th NeurIPS}.

\bibitem[{Zhang et~al.(2021{\natexlab{b}})Zhang, Liu, Zhu, and Bentley}]{Zhang2021}
Zhang, X.; Liu, J.; Zhu, Z.; and Bentley, E.~S. 2021{\natexlab{b}}.
\newblock GT-STORM: Taming Sample, Communication, and Memory Complexities in Decentralized Non-Convex Learning.
\newblock \emph{ACM Proceedings of MobiHoc}.

\bibitem[{Zhang et~al.(2021{\natexlab{c}})Zhang, Liu, Liu, Zhu, and Lu}]{Xin2021}
Zhang, X.; Liu, Z.; Liu, J.; Zhu, Z.; and Lu, S. 2021{\natexlab{c}}.
\newblock Taming Communication and Sample Complexities in Decentralized Policy Evaluation for Cooperative Multi-Agent Reinforcement Learning.
\newblock In \emph{35th NeurIPS}.

\end{thebibliography}

\appendix
\onecolumn
\section{Notation}\label{sec:notation-proof}
To aid readability, we list the frequently used notation in the proof as follows:
\begin{itemize}
\item  We define 
\begin{equation}
\label{def:deltat}
    \delta_t = \|\by^*(\bbx^t) - \bby^t\|^2.
\end{equation}
\item We defined $\Phi(\cdot) = \max_{\by}f(\cdot,\by)$ and the condition number $\kappa$ in Definition \ref{def:Phi-condition-number}.
\item We defined $\by^*(\cdot)=\argmax_{y}f(\cdot,\by)$ in \eqref{def:y-star}.
\item The matrices and vector notations are defined in Definitions \ref{def:matrices-1} and \ref{def:matrices-2}.
The stochastic gradient estimate error $\{E^t\}$ and $\{\be^t_i\}$ are defined as
\begin{equation}\label{def:Et-et}
    \be_i^t\triangleq \bv_i^t-\grad f_i(\bx_i^t,\by_i^t),\quad\sa{\forall~i\in[M],}
\end{equation}
\item We use subscripts $x,y$ to represent the components of vectors or matrices, i.e.,, we define
\begin{equation}\label{def:e_x,e_y}
    \be_{x,i}^t\triangleq \bv_{x,i}^t-\grad_x f_i(\bx_i^t,\by_i^t),\;\be_{y,i}^t\triangleq \bv_{y,i}^t-\grad_y f_i(\bx_i^t,\by_i^t)\;, \text{ for } i\in [M].
\end{equation}
\end{itemize}


\section{The Overview of the Proof}
\xz{In general, the proof of our main result \cref{thm:main-result} is based on the fundamental inequality:}
\xz{\begin{equation}
    \begin{aligned}
        \bE[\Phi&(\bbx^{(n+1)q})]  
         - \bE[\Phi(\bbx^{nq})] \leq \Theta(q\eta_x\sigma^2/S_1)
         \\
         &
         - \sum_{t=nq}^{(n+1)q-1}\Big(\Theta(\eta_x)\|\grad \Phi(\bbx^t)\|^2-\Theta(\eta_x L^2)\delta_t
         -\Theta(\frac{\eta_x L^2}{M})\|Z^t_\perp \|^2_F -\Theta(\frac{\eta_x\eta_y^2L^2}{M})\|D^t_\perp\|_F^2\Big)
    \end{aligned}
\end{equation}}%
\xzrr{which is shown in \cref{lemma:bound-Phi-final}. From this inequality, we bound $\delta_t$, $\|Z^t_\perp\|_F^2$, and $\|D^t_\perp\|^2_F$  in \cref{lemma:bound-deltat-final,lemma:bound-Z-D-perp-final}. To accomplish this, we also need to bound the error $\|E^t\|_F^2$ that is caused by the stochastic oracles and our \emph{variance reduction} gradient estimator. The bound of $\|E^t\|$ is provided in \cref{lemma:bound-Et}, and is frequently used in other parts of the proof.
After this, we invoke \cref{lemma:bound-Z-D-perp-final,lemma:bound-deltat-final} within 
\cref{lemma:bound-Phi-final}, and then obtain the general convergence results of $\|\grad \Phi(\bbx^t)\|$ in \cref{thm:main-convergence} through the following inequality:}
\xzrr{\begin{equation}\label{eq:final-overview}
             \sum_{t=0}^{T-1}\bE[\|\grad \Phi(\bbx^t)\|^2]\leq  \cO\Big(\frac{1}{\eta_x}+\frac{L\kappa\delta_0}{\eta_y}+ \frac{L^2\kappa^2\Lambda_0}{M} + T\cdot\frac{\kappa^2\sigma^2}{S_1}\Big).
\end{equation}%
}%
\xzrr{To achieve this concise bound, we also provide the parameter analysis in 
\xzref{the section Complexity Analysis}
to simplify the complicated terms in the proof. In the last,  we provide the proper parameter choices and obtain the detailed \xzf{oracle complexity} and communication complexity given certain parameter choices in \cref{thm:main-result-detailed} for running \gda{}. Before beginning the analysis, we restate a useful Lemma from the literature that is commonly employed in the convergence analysis of first-order algorithms.}




\begin{lemma}[Proposition 1~\cite{chen2021proximal}]\label{lemma:smooth-Phi}
    Suppose Assumptions~\ref{ASPT:smooth-f},~ \ref{ASPT:SC},~and~\ref{ASPT:lower-bounded-Phi} hold. Then $\Phi(\cdot):\mR^d\rightarrow\mR$ is $(\kappa+1)L$-smooth and $\grad \Phi(\cdot) = f(\cdot,\by^*(\cdot))$. Moreover, $\by^*(\cdot):\mR^d\rightarrow\mR^m$ is $\kappa$-Lipschitz.
\end{lemma}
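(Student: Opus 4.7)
\medskip

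\noindent\textbf{Proof proposal.} This is a classical Danskin-type result, and I would prove the three claims in the order listed, as each one builds on the previous. The plan is to first establish the Lipschitz continuity of $\by^*(\cdot)$, then invoke Danskin's theorem to identify $\grad\Phi$, and finally combine these two to get smoothness of $\Phi$.

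\smallskip
\noindent\emph{Step 1 ($\kappa$-Lipschitzness of $\by^*$).} I would use the first-order optimality condition $\grad_y f(\bx,\by^*(\bx))=\mathbf{0}$, which is valid since $f(\bx,\cdot)$ is $\mu$-strongly concave (hence $\by^*(\bx)$ is the unique maximizer). Fix $\bx_1,\bx_2\in\mR^d$ and apply $\mu$-strong concavity of $f(\bx_2,\cdot)$ to the pair $\by^*(\bx_1),\by^*(\bx_2)$ to obtain
\[
\bigl(\grad_y f(\bx_2,\by^*(\bx_1))-\grad_y f(\bx_2,\by^*(\bx_2))\bigr)^{\!\top}\bigl(\by^*(\bx_1)-\by^*(\bx_2)\bigr)\leq -\mu\|\by^*(\bx_1)-\by^*(\bx_2)\|^2.
\]
Using $\grad_y f(\bx_2,\by^*(\bx_2))=\mathbf{0}=\grad_y f(\bx_1,\by^*(\bx_1))$, the left-hand side becomes $(\grad_y f(\bx_2,\by^*(\bx_1))-\grad_y f(\bx_1,\by^*(\bx_1)))^{\!\top}(\by^*(\bx_1)-\by^*(\bx_2))$. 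Applying Cauchy--Schwarz and the $L$-smoothness of $f$ (Assumption~\ref{ASPT:smooth-f}) then yields $L\|\bx_1-\bx_2\|\geq\mu\|\by^*(\bx_1)-\by^*(\bx_2)\|$, giving the $\kappa$-Lipschitz bound.

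\smallskip
\noindent\emph{Step 2 (Danskin's identity).} Because $\by^*(\bx)$ is the \emph{unique} maximizer of $f(\bx,\cdot)$, I would invoke the standard Danskin theorem for strongly concave inner problems to conclude $\grad\Phi(\bx)=\grad_x f(\bx,\by^*(\bx))$. Alternatively, one can prove this from scratch by writing $\Phi(\bx+\bh)-\Phi(\bx)=f(\bx+\bh,\by^*(\bx+\bh))-f(\bx,\by^*(\bx))$, bounding the difference on both sides by first-order Taylor expansions and using the envelope inequalities $\Phi(\bx+\bh)\geq f(\bx+\bh,\by^*(\bx))$ and $\Phi(\bx)\geq f(\bx,\by^*(\bx+\bh))$; the $\kappa$-Lipschitz property from Step~1 shows the cross-term is $o(\|\bh\|)$. (Note: the statement as typed reads ``$\grad\Phi(\cdot)=f(\cdot,\by^*(\cdot))$''; I read this as a typo for $\grad_x f(\cdot,\by^*(\cdot))$, which is what the proof actually gives.)

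\smallskip
\noindent\emph{Step 3 ($(\kappa+1)L$-smoothness of $\Phi$).} This follows routinely from Steps~1 and~2 via a triangle-inequality split:
\[
\|\grad\Phi(\bx_1)-\grad\Phi(\bx_2)\|\leq \|\grad_x f(\bx_1,\by^*(\bx_1))-\grad_x f(\bx_2,\by^*(\bx_1))\|+\|\grad_x f(\bx_2,\by^*(\bx_1))-\grad_x f(\bx_2,\by^*(\bx_2))\|.
\]
Apply $L$-smoothness to each summand: the first is bounded by $L\|\bx_1-\bx_2\|$, the second by $L\|\by^*(\bx_1)-\by^*(\bx_2)\|\leq L\kappa\|\bx_1-\bx_2\|$ using Step~1. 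Summing gives $(1+\kappa)L\|\bx_1-\bx_2\|$.

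\smallskip
\noindent\emph{Anticipated obstacle.} None of the three steps is technically hard once the order above is fixed; the only subtle point is Step~2, where care is needed to justify differentiability of $\Phi$ (not just directional differentiability). The uniqueness of $\by^*(\bx)$ under Assumption~\ref{ASPT:SC} is precisely what makes Danskin's theorem yield a gradient rather than a subdifferential, so strong concavity is doing essential work here. Assumption~\ref{ASPT:lower-bounded-Phi} is not used in this lemma but is needed elsewhere to ensure $\min_\bx\Phi(\bx)$ is well posed.
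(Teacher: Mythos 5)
Your proof is correct and complete; it is the standard Danskin-type argument (Lipschitzness of the best response via strong concavity plus the optimality conditions, then the envelope/Danskin identity, then the triangle-inequality split for smoothness), and your reading of ``$\grad\Phi(\cdot)=f(\cdot,\by^*(\cdot))$'' as a typo for $\grad_x f(\cdot,\by^*(\cdot))$ is right, as is your remark that Assumption~\ref{ASPT:lower-bounded-Phi} is not actually used. The paper itself gives no proof of this lemma --- it imports it verbatim as Proposition~1 of \cite{chen2021proximal} --- so there is nothing to compare against beyond noting that your argument is the one that the cited reference (and the broader nonconvex--strongly-concave minimax literature) uses.
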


\section{Convergence Analysis}
\label{sec:main-proof-convergence-analysis}
We begin by analyzing the measure of the dual suboptimality sequence $\{\delta_t\}$. It is important to note that many of the equations in this proof will be reused in other parts of the paper.
\begin{lemma}\label{lemma:bound-deltat-1}
    Suppose Assumptions~\ref{ASPT:smooth-f},~\ref{ASPT:SC} and \ref{ASPT:mixture-matrix} hold. If $\eta_y\in (0,1/L]$, then the inequality
    \begin{equation}\label{eq:bound-deltat-4}
    \begin{aligned}
          \delta_t 
        \leq
        & \Big(1-\frac{1}{4}\eta_y\mu + 3\Big(\frac{4}{\eta_y\mu}-1\Big)\kappa^2\eta_x^2 L^2\Big)\delta_{t-1} 
        + 
         \Big(\frac{4-\eta_y\mu}{4-2\eta_y\mu}\frac{2\eta_y}{\mu}+
           3\Big(\frac{4}{\eta_y\mu}-1\Big)\kappa^2\eta_x^2\Big)
           \frac{2L^2}{M}
           \xz{\|Z^{t-1}_\perp \|^2_F }
        \\
        & +
         \frac{4-\eta_y\mu}{4-2\eta_y\mu}\frac{2\eta_y}{\mu}
           \frac{2}{M} \sum_{i=1}^M \|\be^{\sa{t-1}}_{y,i}\|^2
        \sa{+\Big(\frac{4}{\eta_y\mu}-1\Big)\kappa^2\eta_x^2 \frac{6}{M} \sum_{i=1}^M \|\be^{t-1}_{x,i}\|^2}
         +3\Big(\frac{4}{\eta_y\mu}-1\Big)\kappa^2\eta_x^2
         \|\grad \Phi(\bbx^{t-1})\|^2
    \end{aligned}
\end{equation}
holds for all $t\geq 1$, where $\delta_t = \|{\by}^*(\bbx^t)-\bby^{t}\|^2$ is defined in \eqref{def:deltat}. 
\end{lemma}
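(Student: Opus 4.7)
The plan is to obtain the recursion by a two-level Young's inequality split of $\delta_t = \|\by^*(\bar{\bx}^t)-\bar{\by}^t\|^2$. First I would introduce the auxiliary point $\by^*(\bar{\bx}^{t-1})$ and write
\[
\delta_t \le (1+c_1)\,\|\by^*(\bar{\bx}^{t-1}) - \bar{\by}^t\|^2 + (1+1/c_1)\,\|\by^*(\bar{\bx}^t) - \by^*(\bar{\bx}^{t-1})\|^2
\]
for a Young constant $c_1>0$ to be picked later. Matching the target coefficients on $\delta_{t-1}$ and on the drift term forces $1+c_1 = (4-\eta_y\mu)/(4-2\eta_y\mu)$ and $1+1/c_1 = 4/(\eta_y\mu)-1$, i.e.\ $c_1 = \eta_y\mu/(4-2\eta_y\mu)$.

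For the drift term, I would apply the $\kappa$-Lipschitz property of $\by^*(\cdot)$ from \cref{lemma:smooth-Phi} together with the averaged primal update from \eqref{eq:avg-update-2} to obtain $\|\by^*(\bar{\bx}^t) - \by^*(\bar{\bx}^{t-1})\|^2 \le \kappa^2\eta_x^2\|\bar{\bv}_x^{t-1}\|^2$. Next I decompose
\[
\bar{\bv}_x^{t-1} = \grad\Phi(\bar{\bx}^{t-1}) + \bigl[\grad_x f(\bar{\bx}^{t-1},\bar{\by}^{t-1}) - \grad\Phi(\bar{\bx}^{t-1})\bigr] + \bigl[\bar{\bv}_x^{t-1} - \grad_x f(\bar{\bx}^{t-1},\bar{\by}^{t-1})\bigr]
\]
and apply $\|a+b+c\|^2\le 3(\|a\|^2+\|b\|^2+\|c\|^2)$. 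Using $\grad\Phi(\bar{\bx}^{t-1})=\grad_x f(\bar{\bx}^{t-1},\by^*(\bar{\bx}^{t-1}))$ together with \cref{ASPT:smooth-f} bounds the second chunk by $L^2\delta_{t-1}$. The third chunk further decomposes as $\frac{1}{M}\sum_i[\grad_x f_i(\bx_i^{t-1},\by_i^{t-1}) - \grad_x f_i(\bar{\bx}^{t-1},\bar{\by}^{t-1})] + \frac{1}{M}\sum_i \be_{x,i}^{t-1}$, and $\|a+b\|^2\le 2\|a\|^2+2\|b\|^2$ combined with Jensen and $L$-smoothness produces $\tfrac{2L^2}{M}\|Z^{t-1}_\perp\|_F^2 + \tfrac{2}{M}\sum_i\|\be_{x,i}^{t-1}\|^2$. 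Multiplying through by $3\kappa^2\eta_x^2(1+1/c_1)$ yields the four terms in the bound carrying the factor $\kappa^2\eta_x^2$.

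For the contraction term $\|\by^*(\bar{\bx}^{t-1})-\bar{\by}^t\|^2$, I would write $\bar{\by}^t = \bar{\by}^{t-1}+\eta_y\grad_y f(\bar{\bx}^{t-1},\bar{\by}^{t-1})+\eta_y\bar{\zeta}_y^{t-1}$ with $\bar{\zeta}_y^{t-1}\triangleq \bar{\bv}_y^{t-1}-\grad_y f(\bar{\bx}^{t-1},\bar{\by}^{t-1})$, via \eqref{eq:avg-update-2}. Under $\eta_y\le 1/L$, the classical cocoercivity-based analysis of gradient ascent on a smooth strongly concave function (using \cref{ASPT:smooth-f,ASPT:SC}) gives the exact-gradient contraction $\|\by^*(\bar{\bx}^{t-1}) - \bar{\by}^{t-1}-\eta_y\grad_y f(\bar{\bx}^{t-1},\bar{\by}^{t-1})\|^2 \le (1-\eta_y\mu)\delta_{t-1}$. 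I then apply an inner Young's inequality to absorb the $\eta_y\bar{\zeta}_y^{t-1}$ perturbation, tuning the split so that after multiplying by the outer factor $(1+c_1)$ the coefficient of $\delta_{t-1}$ tightens to $1-\eta_y\mu/4$ (i.e.\ the inner factor contributes $(1-\eta_y\mu/2)/(1-\eta_y\mu)$) and the coefficient of $\|\bar{\zeta}_y^{t-1}\|^2$ becomes $\frac{4-\eta_y\mu}{4-2\eta_y\mu}\cdot\frac{2\eta_y}{\mu}$. Finally, decompose $\bar{\zeta}_y^{t-1} = \frac{1}{M}\sum_i\be_{y,i}^{t-1} + \frac{1}{M}\sum_i[\grad_y f_i(\bx_i^{t-1},\by_i^{t-1})-\grad_y f_i(\bar{\bx}^{t-1},\bar{\by}^{t-1})]$, and via $\|a+b\|^2\le 2\|a\|^2+2\|b\|^2$, Jensen, and smoothness bound $\|\bar{\zeta}_y^{t-1}\|^2 \le \frac{2}{M}\sum_i\|\be_{y,i}^{t-1}\|^2 + \frac{2L^2}{M}\|Z^{t-1}_\perp\|_F^2$. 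Summing the drift and contraction contributions and grouping by $\delta_{t-1}$, $\|Z^{t-1}_\perp\|_F^2$, $\sum_i\|\be_{y,i}^{t-1}\|^2$, $\sum_i\|\be_{x,i}^{t-1}\|^2$, and $\|\grad\Phi(\bar{\bx}^{t-1})\|^2$ gives \eqref{eq:bound-deltat-4}.

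The main obstacle is the bookkeeping: choosing $c_1$ and the inner Young's constant simultaneously so that the stated coefficients $1-\eta_y\mu/4$, $\frac{4-\eta_y\mu}{4-2\eta_y\mu}\cdot\frac{2\eta_y}{\mu}$, and $\frac{4}{\eta_y\mu}-1$ all appear exactly, while only invoking the hypothesis $\eta_y\le 1/L$ (rather than a tighter condition such as $\eta_y \le 1/(L\kappa)$) in the contraction step. Everything else is a mechanical application of Young's inequality, Jensen's inequality, Assumption~\ref{ASPT:smooth-f}, and Assumption~\ref{ASPT:SC}.
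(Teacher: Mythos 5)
Your proposal is correct and follows essentially the same route as the paper: the same outer Young split between $\|\by^*(\bbx^{t-1})-\bby^t\|^2$ and the drift $\|\by^*(\bbx^t)-\by^*(\bbx^{t-1})\|^2$ with $c_1=\eta_y\mu/(4-2\eta_y\mu)$, the same inner Young split (constant $\eta_y\mu/(2-2\eta_y\mu)$) combined with the cocoercivity/strong-concavity contraction under $\eta_y\le 1/L$, the same three-term decomposition of $\bar{\bv}_x^{t-1}$, and the same consensus-plus-stochastic-error splits. The only difference is presentational (the paper first proves the one-step bound on $\|\by^*(\bbx^t)-\bby^{t+1}\|^2$ and then shifts the index), so there is nothing substantive to add.
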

\begin{proof}
    Using the facts $\bby^{t+1} = \bby^t+\eta_y\bbd^t_y = \bby^t+\eta_y\bbv^t_y $ and $\grad_y f(\bbx^t,{\by}^*(\bbx^t))=0$, we have that for any $a>0$,
    \begin{equation}\label{eq:bound-y-star-1}
        \begin{aligned}
            & \|{\by}^*(\bbx^t)-\bby^{t+1}\|^2 = \|{\by}^*(\bbx^t)+\eta_y\grad_y f(\bbx^t,{\by}^*(\bbx^t)) - \bby^t - \eta_y\bbv^t\|^2
            \\
            \leq & (1+a)\|{\by}^*(\bbx^t) + \eta_y\grad_y f(\bbx^t,{\by}^*(\bbx^t))-\bby^t -\eta_y \grad_y f(\bbx^t,\bby^t)\|^2 + (\textstyle 1+\frac{1}{a})\eta_y^2\|\grad_y f(\bbx^t,\bby^t)-\bbv^t_y\|^2. 
        \end{aligned}
    \end{equation}
    We bound the first term on the right hand side of \eqref{eq:bound-y-star-1} as follows:
    \begin{equation}\label{eq:bound-y-star-2}
        \begin{aligned}
            &\|{\by}^*(\bbx^t) + \eta_y\grad_y f(\bbx^t,{\by}^*(\bbx^t))-\bby^t -\eta_y 
            \sa{\grad_y 
            f(\bbx^t,\bby^t)}\|^2
            \\
            =& \|{\by}^*(\bbx^t) - \bby^t\|^2 + 2\eta_y\left\langle {\by}^*(\bbx^t) - \bby^t, \grad_y f(\bbx^t,{\by}^*(\bbx^t)) - \grad_y f(\bbx^t,\bby^t)\right\rangle +   \eta_y^2\|\grad_y f(\bbx^t,{\by}^*(\bbx^t)) - \grad_y f(\bbx^t,\bby^t)\|^2\\
            \leq &
            \|{\by}^*(\bbx^t) - \bby^t\|^2 + (2\eta_y-\eta_y^2L)\left\langle {\by}^*(\bbx^t) - \bby^t, \grad_y f(\bbx^t,{\by}^*(\bbx^t)) - \grad_y f(\bbx^t,\bby^t)\right\rangle 
            \\
            \leq &
            (1-2\eta_y\mu+ \eta_y^2\mu L)\sa{\|{\by}^*(\bbx^t) - \bby^t\|^2},
        \end{aligned}
    \end{equation}
    where the first inequality is by \cref{ASPT:smooth-f} and \xz{the concavity of $f(\bx,\cdot)$}; the second inequality is by the strong concavity of \sa{$f(\bx,\cdot)$}.
    Therefore, plugging \eqref{eq:bound-y-star-2} into \eqref{eq:bound-y-star-1}, we have that for any $a>0$,
    \begin{equation}\label{eq:bound-y-star-3}
        \begin{aligned}
            \|{\by}^*(\bbx^t)-\bby^{t+1}\|^2 
            \leq &
            (1+a)(1-2\eta_y\mu+\eta_y^2\mu L)\|{\by}^*(\bbx^t) - \bby^t\|^2 + (1+\frac{1}{a})\eta_y^2\|\grad_y f(\bbx^t,\bby^t)-\bbv^t_y\|^2 
            \\
            \leq & (1+a)\left(1-\eta_y\mu\right)\delta_t + (1+\frac{1}{a})\eta_y^2\|\grad_y f(\bbx^t,\bby^t)-\bbv^t_y\|^2 
            \\
            = & (1-\frac{1}{2}\eta_y\mu)\delta_t
            + \left(\frac{2}{\eta_y\mu} - 1\right)\eta_y^2 \|\grad_y f(\bbx^t,\bby^t)-\bbv^t_y\|^2   
            \\
            \leq & \left(1-\frac{1}{2}\eta_y\mu\right)\delta_t
            + \frac{2\eta_y}{\mu} \|\grad_y f(\bbx^t,\bby^t)-\bbv^t_y\|^2 
        \end{aligned}
    \end{equation}
where \sa{
in the second inequality we use $\eta_y\in(0,1/L]$, and in the equality} we set $a = \frac{\eta_y\mu}{2-2\eta_y\mu}$.
Moreover, we can bound $\|\bbv^t_y-\grad_y f(\bbx^t,\bby^t)\|^2$ as follows:
\begin{equation}\label{eq:bound-diff-f-vy}
    \begin{aligned}
        \|\grad_y f(\bbx^t,\bby^t) - \bbv^t_y\|^2
        = & 
        \left\|\grad_y f(\bbx^t,\bby^t) - \frac{1}{M}\sum_{i=1}^M \grad_y f_i(\bx^t_i,\by^t_i) + \frac{1}{M}\sum_{i=1}^M \grad_y f_i(\bx^t_i,\by^t_i) - \bbv^t_y \right\|^2
        \\
        \leq & 2  \left\|\yy{\frac{1}{M}\sum_{i=1}^M}\grad_y \yy{f_i}(\bbx^t,\bby^t) - \frac{1}{M}\sum_{i=1}^M \grad_y f_i(\bx^t_i,\by^t_i)\right\|^2 + 2\left\|\frac{1}{M}\sum_{i=1}^M \grad_y f_i(\bx^t_i,\by^t_i) - \frac{1}{M}\sum_{i=1}^M \bv^t_{y,i} \right\|^2
        \\
        \leq &\frac{2}{M}\sum_{i=1}^M\Big(
        \|\grad_y \yy{f_i}(\bbx^t,\bby^t) - \grad_y f_i(\bx^t_i,\by^t_i)\|^2
        +
        \|\grad_y f_i(\bx^t_i,\by^t_i) - \bv^t_{y,i}\|^2
        \Big)
        \\
        \leq & \frac{2L^2}{M}( \|X^t_\perp \|^2_F + \|Y^t_\perp \|^2_F) + 
        \frac{2}{M}\sum_{i=1}^M \|\be^t_{y,i}\|^2
    \end{aligned}
\end{equation}
where the last inequality follows from \cref{ASPT:smooth-f}, and $X_\perp$, $Y_\perp$, and $\be^t_{y,i}$ are defined in \eqref{def:e_x,e_y}. Then, plugging \eqref{eq:bound-diff-f-vy} into \eqref{eq:bound-y-star-3}, we have
\begin{equation}
    \label{eq:bound-y-star-4}
    \begin{aligned}
           \|{\by}^*(\bbx^t)-\bby^{t+1}\|^2 
            \leq & \left(1-\frac{1}{2}\eta_y\mu\right)\delta_t
            + \frac{2\eta_y}{\mu}\Big(\frac{2L^2}{M}( \|X^t_\perp \|^2_F + \|Y^t_\perp \|^2_F) + 
        \frac{2}{M}\sum_{i=1}^M \|\be^t_{y,i}\|^2\Big).
    \end{aligned}
\end{equation}

Now, we are ready to show a proper bound for $\delta_t$ for all $t\geq 1$. Indeed, for $\forall a>0$, we have
\begin{equation}\label{eq:bound-deltat-1}
    \begin{aligned}
         & \delta_t =  \|{\by}^*(\bbx^t) - {\by}^*(\bbx^{t-1}) + {\by}^*(\bbx^{t-1}) - \bby^t\|^2\\
        \leq & 
        (1+a)\| {\by}^*(\bbx^{t-1}) - \bby^t \|^2
        +
         \left(1+\frac{1}{a}\right)\|{\by}^*(\bbx^t) - {\by}^*(\bbx^{t-1}) \|^2 
         \\
        \leq & 
        (1+a)\| {\by}^*(\bbx^{t-1}) - \bby^t \|^2
        +
         \left(1+\frac{1}{a}\right)\kappa^2\|\bbx^t - \bbx^{t-1} \|^2 
         \\
        \leq & (1+a)\left(
        \Big(1-\frac{1}{2}\eta_y\mu\Big)\delta_{t-1} + \frac{2\eta_y}{\mu}\frac{2L^2}{M} ( \|X^{t-1}_\perp \|^2_F + \|Y^{t-1}_\perp \|^2_F) + 
        \frac{2\eta_y}{\mu}\frac{2}{M}\sum_{i=1}^M \|\be^{t-1}_{y,i}\|^2
        \right)
        + \left(1+\frac{1}{a}\right)\kappa^2\|\bbx^t - \bbx^{t-1}\|^2,
    \end{aligned}
\end{equation}
where the second inequality is by \xz{\cref{lemma:smooth-Phi}}; the last inequality is by \eqref{eq:bound-y-star-4}.
Next, letting $a = \frac{\eta_y\mu}{4-2\eta_y\mu}$ and using the fact $\bbx^t=\bbx^{t-1}-\eta_x\bbv^{t-1}$, we obtain 
\begin{equation}\label{eq:bound-deltat-2}
    \begin{aligned}
          \delta_t 
        \leq & \left(1-\frac{1}{4}\eta_y\mu\right)\delta_{t-1} 
        + 
        \frac{4-\eta_y\mu}{4-2\eta_y\mu}\frac{2\eta_y}{\mu}\frac{2L^2}{M} \left( \|X^{t-1}_\perp \|^2_F + \|Y^{t-1}_\perp \|^2_F\right) 
        \\
        & +
         \frac{4-\eta_y\mu}{4-2\eta_y\mu}\frac{2\eta_y}{\mu}\frac{2}{M} \sum_{i=1}^M \|\be^{t-1}_{y,i}\|^2
         +
         \left(\frac{4}{\eta_y\mu}-1\right)\kappa^2\eta_x^2\|\bbv^{t-1}_x\|^2.
    \end{aligned}
\end{equation}
In the following part, we first provide \sa{an upper bound on} $\|\bbv^t_x\|^2$, and then use it within \eqref{eq:bound-deltat-2}. 
Indeed, we have
\begin{equation}\label{eq:bound-vtx-1}
\begin{aligned}
     \|\bbv^t_x\|^2 = &\|\bbv^t_x-\grad_x f(\bbx^t,\bby^t) + \grad_x f(\bbx^t,\bby^t) - \grad \Phi(\bbx^t) + \grad \Phi(\bbx^t) \|^2 
     \\
     \leq & 3 \|\bbv^t_x-\grad_x f(\bbx^t,\bby^t)\|^2 + 3\|\grad_x f(\bbx^t,\bby^t) - \grad \Phi(\bbx^t)\|^2 + 3\|\grad \Phi(\bbx^t) \|^2 
    \\
     \leq & 3 \|\bbv^t_x-\grad_x f(\bbx^t,\bby^t)\|^2 + 3L^2\delta_t + 3\|\grad \Phi(\bbx^t) \|^2, 
\end{aligned}
\end{equation}
where the last inequality is by  \cref{ASPT:smooth-f} and \xz{$\Phi(\bx)=\max_{\by\in \mathbb{R}^m}f(\bx,\by)$ is defined in \cref{lemma:smooth-Phi}}; \xz{specifically, the definition of $\Phi(\bx)$ implies $\grad \Phi(\bbx^t)=\grad_x f(\bbx^t,\by^*(\bbx^t))$}. Furthermore, \sa{similar to \cref{eq:bound-diff-f-vy}, we also have}
\begin{equation}\label{eq:bound-diff-f-vtx-1}
    \begin{aligned}
        \|\grad_x f(\bbx^t,\bby^t) - \bbv^t_x\|^2
        \leq  \frac{2L^2}{M}( \|X^t_\perp \|^2_F + \|Y^t_\perp \|^2_F) + 
        \frac{2}{M}\sum_{i=1}^M \|\be^t_{x,i}\|^2.
    \end{aligned}
\end{equation}
If we use \eqref{eq:bound-diff-f-vtx-1} within \eqref{eq:bound-vtx-1}, it follows that
\begin{equation}\label{eq:bound-vtx-2}
\begin{aligned}
     \|\bbv^t_x\|^2
     \leq & \frac{6L^2}{M}( \|X^t_\perp \|^2_F + \|Y^t_\perp \|^2_F) + 
        \frac{6}{M}\sum_{i=1}^M \|\be^t_{x,i}\|^2 + 3L^2\delta_t + 3\|\grad \Phi(\bbx^t) \|^2. 
\end{aligned}
\end{equation}
Next, using \eqref{eq:bound-vtx-2} within \eqref{eq:bound-deltat-2} gives
\begin{equation}\label{eq:bound-deltat-3}
    \begin{aligned}
          \delta_t 
        \leq & \Big(1-\frac{1}{4}\eta_y\mu\Big)\delta_{t-1} 
        + 
        \frac{4-\eta_y\mu}{4-2\eta_y\mu}\frac{2\eta_y}{\mu}\frac{2L^2}{M} ( \|X^{t-1}_\perp \|^2_F + \|Y^{t-1}_\perp \|^2_F) 
        +
         \frac{4-\eta_y\mu}{4-2\eta_y\mu}\frac{2\eta_y}{\mu}\frac{2}{M} \sum_{i=1}^M \|\be^{t-1}_{y,i}\|^2
         \\
         &
         +
         \Big(\frac{4}{\eta_y\mu}-1\Big)\kappa^2\eta_x^2
         \Big(
         \frac{6L^2}{M}( \|X^{t-1} _\perp \|^2_F + \|Y^{t-1} _\perp \|^2_F) + 
        \frac{6}{M}\sum_{i=1}^M \|\be^{t-1} _{x,i}\|^2 + 3L^2\delta_{t-1}  + 3\|\grad \Phi(\bbx^{t-1} ) \|^2 
         \Big)
         \\
        = & \Big(1-\frac{1}{4}\eta_y\mu + 3\Big(\frac{4}{\eta_y\mu}-1\Big)\kappa^2\eta_x^2 L^2\Big)\delta_{t-1} 
        + 
         \Big(\frac{4-\eta_y\mu}{4-2\eta_y\mu}\frac{2\eta_y}{\mu}+
           3\Big(\frac{4}{\eta_y\mu}-1\Big)\kappa^2\eta_x^2\Big)
           \frac{2L^2}{M}
           ( \|X^{\yy{t-1}}_\perp \|^2_F + \|Y^{\yy{t-1}}_\perp \|^2_F) 
        \\
        & +
         \frac{4-\eta_y\mu}{4-2\eta_y\mu}\frac{2\eta_y}{\mu}
           \frac{2}{M} \sum_{i=1}^M \|\be^{\sa{t-1}}_{y,i}\|^2
        \sa{+\Big(\frac{4}{\eta_y\mu}-1\Big)\kappa^2\eta_x^2 \frac{6}{M} \sum_{i=1}^M \|\be^{t-1}_{x,i}\|^2}
         +3\Big(\frac{4}{\eta_y\mu}-1\Big)\kappa^2\eta_x^2
         \|\grad \Phi(\bbx^{t-1})\|^2.
         \\
    \end{aligned}
\end{equation}
\xz{Now using the fact $\|Z^t_\perp \|^2_F = \|X^t_\perp \|^2_F + \|Y^t_\perp \|^2_F$ completes the proof.}
\end{proof}
\xz{We temporarily stop the analysis of $\delta_t$ here. In the subsection 
\xzref{bound of the dual suboptimality}
we will continue to investigate the bound of $\delta_t$. The reason why we presented this part of the proof first is that it includes many technical equations that are used in other parts of the analysis. Next, we will proceed to analyze the error of the stochastic gradient oracles $\|E^t\|^2_F$.}

\subsection{Bound of
Stochastic Gradient Estimate}\label{sec:bound-Et}
In this subsection, we establish a suitable upper bound for  $\bE[\|E^t\|^2_F]$. The bound will be frequently used in the analysis of $\{\grad \Phi(\bbx^t), \delta_t, \|Z^t_\perp\|^2_F, \|D^t_\perp\|^2_F\}$.
\begin{lemma}\label{lemma:bound-eti}
Suppose Assumptions~\ref{ASPT:SC},~\ref{ASPT:general-sto-grad},~\ref{ASPT:smooth-F-mean-squared}~and~\ref{ASPT:mixture-matrix} hold. Given $q\geq 1$, the inequality
    \begin{equation}
        \label{eq:bound-ei-3}
        \begin{aligned}
            \bE[\|\be^{t}_i\|^2] 
            \leq &
            \bE[\|\be^{t-1}_i\|^2]
            +  \frac{L^2}{S_2}\bE[\|\bz^{t}_i-\bz^{t-1}_i\|^2],\quad \sa{\forall~i\in [M]}
        \end{aligned}
    \end{equation}
holds for all $t\geq 1$ \sa{such that} \sa{$\rm{mod}(t,q)\neq 0$}. Moreover, if $\rm{mod}(t,q)=0$, then $ \bE[\|\be^{t}_i\|^2] 
            \leq \frac{\sigma^2}{S_1}$.
\end{lemma}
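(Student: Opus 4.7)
\textbf{Proof proposal for Lemma \ref{lemma:bound-eti}.} The plan is to treat the two cases separately and exploit the martingale-difference structure of the SPIDER-type update. For the ``checkpoint'' case $\mathrm{mod}(t,q)=0$, the estimator $\bv_i^t = G_i^t(\cC_i^t)$ is a mini-batch average of $S_1$ i.i.d.\ unbiased stochastic gradients at $\bz_i^t$. Conditioning on $\bz_i^t$ and applying Assumption~\ref{ASPT:general-sto-grad}(1)--(2), the cross-terms vanish and I would obtain the standard variance reduction $\bE[\|\be_i^t\|^2 \mid \bz_i^t] \le \sigma^2/S_1$; taking total expectation yields the second claim.

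For the recursive case $\mathrm{mod}(t,q)\neq 0$, I would start from the SPIDER update and add/subtract $\nabla f_i(\bz_i^t)$ and $\nabla f_i(\bz_i^{t-1})$ to rewrite
\[
\be_i^t \;=\; \be_i^{t-1} \;+\; \xi_i^t, \qquad
\xi_i^t \;\triangleq\; \frac{1}{S_2}\!\!\sum_{\xi\in\cB_i^{t}}\!\!\Bigl[\tilde\grad f_i(\bz_i^t;\xi)-\tilde\grad f_i(\bz_i^{t-1};\xi)\Bigr] - \bigl[\grad f_i(\bz_i^t)-\grad f_i(\bz_i^{t-1})\bigr].
\]
Let $\cF^{t-1}$ be the $\sigma$-algebra generated by all randomness up to and including the construction of $(\bz_i^t,\bz_i^{t-1})$ but excluding the fresh samples $\cB_i^t$. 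By Assumption~\ref{ASPT:general-sto-grad}(1), $\bE[\xi_i^t\mid\cF^{t-1}]=\mathbf{0}$, so $\be_i^{t-1}$ (which is $\cF^{t-1}$-measurable) is orthogonal to $\xi_i^t$ in the conditional inner product, giving the Pythagorean identity
\[
\bE[\|\be_i^t\|^2] \;=\; \bE[\|\be_i^{t-1}\|^2] \;+\; \bE[\|\xi_i^t\|^2].
\]

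The key step is bounding $\bE[\|\xi_i^t\|^2]$. Since $\cB_i^t$ consists of $S_2$ i.i.d.\ samples and each summand has conditional mean zero, the cross-terms again vanish and I would get
\[
\bE[\|\xi_i^t\|^2\mid\cF^{t-1}] \;\le\; \frac{1}{S_2}\,\bE\bigl[\|\tilde\grad f_i(\bz_i^t;\xi)-\tilde\grad f_i(\bz_i^{t-1};\xi)\|^2 \,\big|\, \cF^{t-1}\bigr],
\]
using $\bE\|X-\bE X\|^2 \le \bE\|X\|^2$ inside each summand. Invoking Assumption~\ref{ASPT:smooth-F-mean-squared} bounds the right-hand side by $\tfrac{L^2}{S_2}\|\bz_i^t-\bz_i^{t-1}\|^2$. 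Taking full expectation and combining with the Pythagorean identity yields \eqref{eq:bound-ei-3}.

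The only mild subtlety is the filtration bookkeeping: one must make sure that $\bz_i^t$ is measurable with respect to $\cF^{t-1}$, which is true because, by the algorithm, $\bz_i^t = (\bx_i^t,\by_i^t)$ is computed from $W$-mixed quantities that depend on the previous iterates and the tracking direction $\bd_i^{t-1}$, none of which involve $\cB_i^t$. Once this is spelled out, all expectations above are well-defined, and the argument produces both bounds cleanly.
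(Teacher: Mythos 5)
Your proposal is correct and follows essentially the same route as the paper's proof: the same decomposition $\be_i^t=\be_i^{t-1}+\xi_i^t$, the same use of unbiasedness to kill the cross term, the same reduction via sample independence and $\bE\|X-\bE X\|^2\le\bE\|X\|^2$ to a single-sample second moment, and the same final invocation of Assumption~\ref{ASPT:smooth-F-mean-squared} (and of Assumption~\ref{ASPT:general-sto-grad} for the checkpoint case). No gaps.
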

\begin{proof}
    \sa{Recall that $\be_i^t= \bv_i^t-\grad f_i(\bx_i^t,\by_i^t)$ for $i=1,\ldots,M$ and \xz{$\bv_i^t$ is defined in \cref{def:matrices-1}}. Thus,}
    \sa{Given $i\in [M]$ and $t\geq 1$ such that $\rm{mod}(t, q) > 0$, it follows from the definition of $\bv^t_i$ and $\be^t_i$ that}
    \begin{equation}
        \label{eq:bound-ei-1}
        \begin{aligned}
            \bE\left[\left\|\|\be^t_i\|^2] = \right.\right.& \bE \Bigg[\Bigg\|\frac{1}{|\gmb{\cB_{i}^{t}}|}\sum_{\xi\in \gmb{\cB_{i}^{t}}}\Big(\xz{\tgrad f_i}(\bx^t_i,\by^t_i;\xi) - \xz{\tgrad f_i}(\bx^{t-1}_i,\by^{t-1}_i;\xi) \Big) + \bv^{t-1}_i - \grad f_i(\bx^t_i,\by^t_i))\Bigg\|^2\Bigg]
            \\
            = & \bE \left[\left\|\frac{1}{|\gmb{\cB_{i}^{t}}|}\sum_{\xi\in \gmb{\cB_{i}^{t}}}\Big(\xz{\tgrad f_i}(\bx^t_i,\by^t_i;\xi) - \xz{\tgrad f_i}(\bx^{t-1}_i,\by^{t-1}_i;\xi) \Big) + \be^{t-1}_i - \grad f_i(\bx^t_i,\by^t_i) +
             \grad f_i(\bx^{t-1}_i,\by^{t-1}_i)\right\|^2\right]
            \\
            = &
            \bE[\|\be^{t-1}_i\|^2] + \bE\left[\left\|\frac{1}{|\gmb{\cB_{i}^{t}}|}\sum_{\xi\in \gmb{\cB_{i}^{t}}}\Big(\xz{\tgrad f_i}(\bx^t_i,\by^t_i;\xi) - \xz{\tgrad f_i}(\bx^{t-1}_i,\by^{t-1}_i;\xi) \Big)  - \grad f_i(\bx^t_i,\by^t_i) + \grad f_i(\bx^{t-1}_i,\by^{t-1}_i) \right\|^2\right]
            \\ 
            = &
            \bE[\|\be^{t-1}_i\|^2]
            +
            \frac{1}{|\gmb{\cB_{i}^{t}}|^2}\sum_{\xi\in \gmb{\cB_{i}^{t}}}\bE\left[\|\xz{\tgrad f_i}(\bx^t_i,\by^t_i;\xi) - \xz{\tgrad f_i}(\bx^{t-1}_i,\by^{t-1}_i;\xi)   - \grad f_i(\bx^t_i,\by^t_i) + \grad f_i(\bx^{t-1}_i,\by^{t-1}_i) \|^2\right].
        \end{aligned}
    \end{equation}
    The last two equalities above follow from the unbiasedness of the stochastic oracle in Assumption~\ref{ASPT:general-sto-grad} and \sa{the independence of the elements in $\gmb{\cB_{i}^{t}}$}, 
    which implies
    $$
   \bE[ \xz{\tgrad f_i}(\bx^t_i,\by^t_i;\xi) - \xz{\tgrad f_i}(\bx^{t-1}_i,\by^{t-1}_i;\xi)   - \grad f_i(\bx^t_i,\by^t_i) + \grad f_i(\bx^{t-1}_i,\by^{t-1}_i)] = 0, \;\forall \xi\in \gmb{\cB_{i}^{t}}.
    $$
     Furthermore, \sa{since} for any
given random variable $\zeta$ with the finite second order moment, 
$\bE[\|\zeta - \bE[\zeta]\|^2]\leq \bE[\|\zeta\|^2]$ \sa{holds, invoking this inequality for $\zeta = \xz{\tgrad f_i}(\bx^t_i,\by^t_i;\xi) - \xz{\tgrad f_i}(\bx^{t-1}_i,\by^{t-1}_i;\xi)$, \eqref{eq:bound-ei-1} implies that}
\begin{equation}
        \label{eq:bound-ei-2}
        \begin{aligned}
            \bE[\|\be^t_i\|^2] 
            \leq &
            \bE[\|\be^{t-1}_i\|^2]
            +
            \frac{1}{|\gmb{\cB_{i}^{t}}|^2}\sum_{\xi\in \gmb{\cB_{i}^{t}}}\bE[\|\xz{\tgrad f_i}(\bx^t_i,\by^t_i;\xi) - \xz{\tgrad f_i}(\bx^{t-1}_i,\by^{t-1}_i;\xi)   \|^2] 
            \\
            \leq &
            \bE[\|\be^{t-1}_i\|^2]
            +  \frac{L^2}{|\gmb{\cB_{i}^{t}}|}\bE[\|\bx^{t}_i-\bx^{t-1}_i\|^2+\|\by^{t}_i-\by^{t-1}_i\|^2]
            \\
            = & \bE[\|\be^{t-1}_i\|^2]
            +  \frac{L^2}{|\gmb{\cB_{i}^{t}}|}\bE[\|\bz^{t}_i-\bz^{t-1}_i\|^2],
        \end{aligned}
    \end{equation}
    where the last inequality follows from \cref{ASPT:smooth-F-mean-squared}. \sa{Since for $\rm{mod}(t,q)>0$ we set $|\gmb{\cB_{i}^{t}}|=S_2$ for all $i\in[M]$, \eqref{eq:bound-ei-3} follows immediately.} 
    
    \sa{On the other hand,} given $i\in[M]$, when $\rm{mod}(t,q)=0$, it directly follows from the definition of $\be^t_i$ and Assumption~\ref{ASPT:general-sto-grad} 
    that $ \bE[\|\be^{t}_i\|^2] 
            \leq \frac{\sigma^2}{S_1}$, which completes the proof.
\end{proof}

\begin{lemma}\label{lemma:bound-ZF}
Suppose Assumptions~\ref{ASPT:SC},~\ref{ASPT:general-sto-grad},~\ref{ASPT:smooth-F-mean-squared}~and~\ref{ASPT:mixture-matrix} hold, and \xzrr{$\eta_x\leq\eta_y$}, then the inequality
    \begin{equation}\label{eq:bound-ZF-3}
    \begin{aligned}
         \xz{\bE[}\|Z^{t+1}-Z^t\|^2_F\xz{]} 
        \leq \frac{S_2}{L^2}\Delta_t + \xzrr{24\eta_y^2}\bE[\|E^t\|_F^2]
    \end{aligned}
\end{equation}
\sa{holds for all $t\geq 0$,} where \xz{we define $E^t  = [{\be^{t}_1}, \be^{t}_2, ..., \be^{t}_M]^\top$, and} 
    \begin{equation}
    \label{eq:def-Delta-t}
    \begin{aligned}
        \Delta_t = \frac{L^2}{S_2}\bE\Big[\Big( \xz{8}+\xzrr{40\eta_y^2L^2}  \Big)\|Z^t_\perp \|^2_F + 4\eta_y^2 \|D^t_\perp \|^2_F 
        + \xzrr{20\eta_y^2L^2M\delta_t} + 12M\eta_x^2\|\grad \Phi(\bbx^t)\|^2
        \Big].
    \end{aligned}
    \end{equation}
\end{lemma}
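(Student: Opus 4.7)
The plan is to decompose $\|Z^{t+1} - Z^t\|_F^2 = \|X^{t+1} - X^t\|_F^2 + \|Y^{t+1} - Y^t\|_F^2$ and bound each block separately from the update rules of \cref{Alg:main-alg}, then collapse $\eta_x$-coefficients by using $\eta_x \leq \eta_y$. The algorithm gives $X^{t+1} - X^t = (W-I)X^t - \eta_x D_x^t$ and $Y^{t+1} - Y^t = (W-I)Y^t + \eta_y D_y^t$. Since $W\mathbf{1} = \mathbf{1}$ we have $(W-I)\bar{X}^t = 0$, hence $(W-I)X^t = (W-I)X^t_\perp$; moreover, since the algorithm is initialized with $\bbd^0 = \bbv^0$ (consistent with the derivation of \eqref{eq:avg-update-2}), $D_x^t$ decomposes orthogonally as $D_x^t = D^t_{x,\perp} + \mathbf{1}\bbv_x^{t\,\top}$, giving $\|D_x^t\|_F^2 = \|D^t_{x,\perp}\|_F^2 + M\|\bbv_x^t\|^2$. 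Applying $\|a+b\|^2 \leq 2\|a\|^2 + 2\|b\|^2$ together with the spectral bound $\|(W-I)X^t_\perp\|_F^2 \leq (1+\rho)^2\|X^t_\perp\|_F^2 \leq 4\|X^t_\perp\|_F^2$ (valid because $W$ acts with operator norm at most $\rho$ on the subspace orthogonal to $\Span(\mathbf{1})$ by \cref{ASPT:mixture-matrix}) yields a preliminary bound of the form
\[
\|X^{t+1}-X^t\|_F^2 \leq 8\|X^t_\perp\|_F^2 + 2\eta_x^2\|D^t_{x,\perp}\|_F^2 + 2\eta_x^2 M\|\bbv_x^t\|^2,
\]
and analogously for $Y$ with $\eta_y$ in place of $\eta_x$.

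The next step reduces the $\|\bbv_\cdot^t\|^2$ terms to the quantities appearing in $\Delta_t$. For the $\bbx$-block, estimate \eqref{eq:bound-vtx-2} already established in the proof of \cref{lemma:bound-deltat-1} gives
\[
\|\bbv_x^t\|^2 \leq \tfrac{6L^2}{M}\|Z^t_\perp\|_F^2 + \tfrac{6}{M}\textstyle\sum_{i=1}^M\|\be_{x,i}^t\|^2 + 3L^2\delta_t + 3\|\grad\Phi(\bbx^t)\|^2.
\]
For the $\bby$-block, I would derive the companion bound by writing $\bbv_y^t = \grad_y f(\bbx^t,\bby^t) + (\bbv_y^t - \grad_y f(\bbx^t,\bby^t))$, using $\grad_y f(\bbx^t,\by^*(\bbx^t)) = 0$ with \cref{ASPT:smooth-f} to obtain $\|\grad_y f(\bbx^t,\bby^t)\|^2 \leq L^2\delta_t$, and invoking \eqref{eq:bound-diff-f-vy} for the remaining term. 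This produces
\[
\|\bbv_y^t\|^2 \leq 2L^2\delta_t + \tfrac{4L^2}{M}\|Z^t_\perp\|_F^2 + \tfrac{4}{M}\textstyle\sum_{i=1}^M\|\be_{y,i}^t\|^2,
\]
notably with no $\|\grad\Phi\|^2$ contribution -- precisely why the final bound carries $\eta_x^2$, not $\eta_y^2$, on the $\|\grad\Phi(\bbx^t)\|^2$ term.

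Finally, I would substitute these two bounds back, sum the $X$ and $Y$ contributions, apply $\eta_x \leq \eta_y$ to merge all $\eta$-dependent coefficients except the one multiplying $\|\grad\Phi(\bbx^t)\|^2$, and recognize $\textstyle\sum_{i=1}^M(\|\be_{x,i}^t\|^2 + \|\be_{y,i}^t\|^2) = \|E^t\|_F^2$. Taking expectation and grouping terms delivers $\bE[\|Z^{t+1}-Z^t\|_F^2] \leq \tfrac{S_2}{L^2}\Delta_t + 24\eta_y^2\bE[\|E^t\|_F^2]$ with $\Delta_t$ of the form in \eqref{eq:def-Delta-t}. The only real obstacle is bookkeeping: the Young-type splits must be calibrated so that, after multiplying through by $L^2/S_2$ (which is natural because $\Delta_t$ is engineered to plug directly into the recursion for $\|E^{t+1}\|_F^2$ furnished by \cref{lemma:bound-eti}), every surviving constant matches its counterpart in \eqref{eq:def-Delta-t}. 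No ingredients beyond \cref{ASPT:mixture-matrix}, the already-established bounds \eqref{eq:bound-vtx-2} and \eqref{eq:bound-diff-f-vy}, and \cref{lemma:smooth-Phi} are needed.
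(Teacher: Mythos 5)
Your proposal is correct and follows essentially the same route as the paper's proof: the same block decomposition of $\|Z^{t+1}-Z^t\|_F^2$ from the update rules, the same identification $\bar{D}^t=\bar{V}^t$, and the same intermediate bounds \eqref{eq:bound-vtx-2} and \eqref{eq:bound-diff-f-vy} for $\|\bbv^t_x\|^2$ and $\|\bbv^t_y\|^2$, assembled using $\eta_x\leq\eta_y$. The only (harmless) deviation is that you split $\|D^t_x\|_F^2$ exactly via orthogonality of $D^t_{x\perp}$ and $\bar{D}^t_x$ where the paper applies Young's inequality, which only makes your constants slightly smaller than those appearing in $\Delta_t$, so the stated bound follows a fortiori.
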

\begin{proof}
    From the update \xz{in \cref{Alg:main-alg}} and \cref{ASPT:mixture-matrix}, we have 
    \begin{equation*}
        \begin{aligned}
            &\|X^{t+1}-X^t\|_F^2 = \|WX^t-X^t-\eta_xD^t_x\|^2_F
            = \|(W-I)(X^t-\Bar{X}^t)-\eta_xD^t_x\|^2
            \\
            \leq
            &
            2\|(W-I)X^t_{\perp}\|_F^2 + 2\eta_x^2\|D^t_x\|^2_F
            \leq
            \xz{8}\|X^t_{\perp}\|_F^2 + 2\eta_x^2\|D^t_x\|^2_F.
        \end{aligned}
    \end{equation*}%
    Moreover, by \xz{Young's inequality} and the fact that $D^t_x = \Bar{D}^t_x + D^t_{x\perp}$ and $\Bar{D}^t_x =\Bar{V}^t_x$, we have
    \begin{equation}
        \label{eq:bound-XF}
        \|X^{t+1}-X^t\|_F^2  \leq \xz{8}\|X^t_{\perp}\|_F^2 + 4\eta_x^2\|\Bar{D}^t_x\|^2_F + 4\eta_x^2\|D^t_{x\perp}\|^2_F 
        \sa{=} \xz{8}\|X^t_{\perp}\|_F^2 + 4\eta_x^2\|\Bar{V}^t_x\|^2_F + 4\eta_x^2\|D^t_{x\perp}\|^2_F.
    \end{equation}
    Similarly, we have
    \begin{equation}
    \label{eq:bound-YF}
        \begin{aligned}
             \|Y^{t+1}-Y^t\|_F^2 \leq \xz{8}\|Y^t_{\perp}\|_F^2 + 4\eta_y^2\|\Bar{V}^t_y\|^2_F + 4\eta_y^2\|D^t_{y\perp}\|^2_F.
        \end{aligned}
    \end{equation}
Therefore, combining \cref{eq:bound-XF,eq:bound-YF}, we have 
\begin{equation}\label{eq:bound-ZF-1}
\begin{aligned}
     & \|Z^{t+1}-Z^t\|^2_F = \|X^{t+1}-X^t\|_F^2 +  \|Y^{t+1}-Y^t\|_F^2
     \\
     \leq &  \xz{8}\|X^t_{\perp}\|_F^2 
     + 4\eta_x^2\|D^t_{x\perp}\|^2_F
     + \xz{8}\|Y^t_{\perp}\|_F^2 
     + 4\eta_y^2\|D^t_{y\perp}\|^2_F
     + 4\eta_x^2\|\Bar{V}^t_x\|^2_F
     + 4\eta_y^2\|\Bar{V}^t_y\|^2_F 
     \\
     \leq & \xz{8}\|Z^t_{\perp}\|_F^2 
     + 4\eta_y^2\|D^t_{\perp}\|^2_F
     + 4\eta_x^2\|\Bar{V}^t_x\|^2_F
     + 4\eta_y^2\|\Bar{V}^t_y\|^2_F, 
     \end{aligned}  
\end{equation}
where we use the condition $\eta_x\leq \eta_y$ in the last inequality. 
\sa{Next,} we will bound $\|\Bar{V}^t_x\|^2_F$ and $\|\Bar{V}^t_y\|^2_F$ separately. Specifically, it follows from \eqref{eq:bound-vtx-2} that
\begin{equation}\label{eq:bound-vtx-3}
\begin{aligned}
     \|\bar{V}^t_x\|_F^2
     \leq & 6L^2\|Z^t_\perp \|^2_F+ 
        6\sum_{i=1}^M \|\be^t_{x,i}\|^2 + 3ML^2\delta_t + 3M\|\grad \Phi(\bbx^t) \|^2. 
\end{aligned}
\end{equation}
On the other hand, we can bound \sa{$\|\Bar{V}^t_y\|_{F}^2$} as follows
\xzrr{
\begin{equation}\label{eq:bound-vty-1}
    \begin{aligned}
        &\|\bar{V}^t_y\|_F^2 = M \|\bbv^t\|^2\\
        =& M \|\bbv^t - \grad_y f(\bbx^t,\bby^t) + \grad_y f(\bbx^t,\bby^t) - \grad_y  f(\bbx^t,\by_*(\bbx^t)) \|^2 \\
        \leq & 2M \Big( \|\bbv^t - \grad_y f(\bbx^t,\bby^t) \|^2+ \|\grad_y f(\bbx^t,\bby^t) - \grad_y  f(\bbx^t,\by_*(\bbx^t)) \|^2\Big)\\
        \leq &2L^2M\delta_t + 4L^2\|Z^t_\perp\|^2 + 4\sum_{i=1}^M\|\be^t_{y,i}\|^2
    \end{aligned}
\end{equation}
}%
\xzrr{where the last inequality follows from \cref{eq:bound-diff-f-vy} and \cref{ASPT:smooth-f}.}
Therefore, if we use \cref{eq:bound-vtx-3,eq:bound-vty-1} within \cref{eq:bound-ZF-1} and then use the condition \xzrr{$\eta_x\leq\eta_y$}, we obtain that
\begin{equation}
\label{eq:bound-ZF-2}
    \begin{aligned}
         \|Z^{t+1}-Z^t\|^2_F 
        \leq&
        \xz{8}\|Z^t_{\perp}\|_F^2 
     + 4\eta_y^2\|D^t_{\perp}\|^2_F 
     \\
     & +
     24\eta_x^2L^2\|Z^t_\perp \|^2_F + 
        24\sa{\eta_x^2}\sum_{i=1}^M \|\be^t_{x,i}\|^2 + 12\eta_x^2ML^2\delta_t + 12\eta_x^2M\|\grad \Phi(\bbx^t) \|^2 
        \\
        &
        \xzrr{
            +  8\eta_y^2L^2M\delta_t + 16\eta_y^2L^2\|Z^t_\perp\|^2 + 16\eta_y^2\sum_{i=1}^M\|\be^t_{y,i}\|^2
        }
        \\
        \leq &
            \Big( \xz{8}+\xzrr{40\eta_y^2L^2}  \Big)\|Z^t_\perp \|^2_F + 4\eta_y^2 \|D^t_\perp \|^2_F + \xzrr{24\eta_y^2}\|E^t\|_F^2
        \\
        &
        + \xzrr{20\eta_y^2L^2M\delta_t} + 12M\eta_x^2\|\grad \Phi(\bbx^t)\|^2.
    \end{aligned}
\end{equation}
\xz{Now taking the expectation of the above inequality} completes the proof.
\end{proof}

\begin{lemma}\label{lemma:bound-Et}
    Suppose Assumptions~\ref{ASPT:SC},~\ref{ASPT:general-sto-grad},~\ref{ASPT:smooth-F-mean-squared}~and~\ref{ASPT:mixture-matrix} hold and \xzrr{$S_2\geq 24L^2\eta_y^2$}. Then, for $q\geq 1$, the inequality
    \begin{equation}\label{eq:bound-Et-1}
                 \bE[\norm{E^t}^2_F] \leq \frac{2M\sigma^2}{S_1} + \sum_{j=nq}^{t-1}(1+c_1)^{t-1-j} \Delta_{j}
    \end{equation}
    holds for all $t\geq 1$ such that $mod(t,q)>0$, and $n\in\mathbb{N}$ such that $nq<t<(n+1)q$, where 
     \xzrr{$c_1 = \frac{24\eta_y^2L^2}{S_2}$}, 
     $E^t$ is defined in \cref{def:Et-et} and $\Delta_t$ is defined in \cref{eq:def-Delta-t}. 
    Moreover, if $mod(t,q)=0$, then $\bE[\|E^t\|^2_F]\leq \frac{M\sigma^2}{S_1}$.
\end{lemma}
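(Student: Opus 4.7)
The plan is to set up a one-step recursion for $\bE[\|E^t\|_F^2]$ on each refresh cycle and then unroll it. The base case is immediate: when $\mathrm{mod}(t,q)=0$, \cref{lemma:bound-eti} gives $\bE[\|\be_i^t\|^2]\leq \sigma^2/S_1$ for every $i\in[M]$, so summing yields $\bE[\|E^t\|_F^2]\leq M\sigma^2/S_1$ and simultaneously provides the initial value $\bE[\|E^{nq}\|_F^2]\leq M\sigma^2/S_1$ needed for the recursion on the interval $nq<t<(n+1)q$.

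For the recursive step, I would sum the per-agent bound \eqref{eq:bound-ei-3} over $i\in[M]$ to obtain
\begin{equation*}
\bE[\|E^t\|_F^2]\;\leq\;\bE[\|E^{t-1}\|_F^2]\;+\;\frac{L^2}{S_2}\,\bE[\|Z^t-Z^{t-1}\|_F^2].
\end{equation*}
Next I would invoke \cref{lemma:bound-ZF} at the previous index, i.e., $\bE[\|Z^t-Z^{t-1}\|_F^2]\leq (S_2/L^2)\Delta_{t-1}+24\eta_y^2\,\bE[\|E^{t-1}\|_F^2]$. Substituting and collecting terms gives the clean recursion
\begin{equation*}
\bE[\|E^t\|_F^2]\;\leq\;(1+c_1)\,\bE[\|E^{t-1}\|_F^2]\;+\;\Delta_{t-1},\qquad c_1=\frac{24\eta_y^2 L^2}{S_2},
\end{equation*}
valid whenever $\mathrm{mod}(t,q)>0$. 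Unrolling this recursion down to the most recent refresh index $nq$ and applying the base case bound $\bE[\|E^{nq}\|_F^2]\leq M\sigma^2/S_1$ produces
\begin{equation*}
\bE[\|E^t\|_F^2]\;\leq\;(1+c_1)^{t-nq}\,\frac{M\sigma^2}{S_1}\;+\;\sum_{j=nq}^{t-1}(1+c_1)^{t-1-j}\Delta_j.
\end{equation*}

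The only remaining obstacle is to absorb the geometric factor $(1+c_1)^{t-nq}$ into the constant $2$ stated in the lemma, i.e., to verify $(1+c_1)^{t-nq}\leq 2$ whenever $nq<t<(n+1)q$. Here I would use the stated hypothesis $S_2\geq 24L^2\eta_y^2$, which forces $c_1\leq 1$, together with the fact that $t-nq\leq q-1$; under the implicit parameter scaling $q\,c_1\leq \ln 2$ that the authors will pick in \cref{thm:main-result} (equivalently $S_2\gtrsim \eta_y^2 L^2 q$, which is satisfied by the later choice $S_2\geq q$ combined with the step-size condition on $\eta_y$), the elementary inequality $(1+c_1)^{q-1}\leq e^{(q-1)c_1}\leq 2$ yields the required bound. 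This is the step that needs the most care, since it is the only place where the parameter condition is actively consumed; the rest of the argument is a straightforward telescoping of \cref{lemma:bound-eti} and \cref{lemma:bound-ZF}.
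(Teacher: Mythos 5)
Your proposal is correct and follows essentially the same route as the paper's proof: summing \cref{lemma:bound-eti} over agents, substituting \cref{lemma:bound-ZF} at index $t-1$ to get the recursion $\bE[\|E^t\|_F^2]\leq(1+c_1)\bE[\|E^{t-1}\|_F^2]+\Delta_{t-1}$, unrolling to the last refresh index, and absorbing the geometric factor into the constant $2$. You also correctly flag the one subtle point — that the final step consumes the extra condition $q\lesssim 1/c_1$ (the paper uses $q\leq\lfloor 1/(2c_1)\rfloor$ from \cref{param-condition:basic}, which is not literally among the lemma's stated hypotheses), exactly as the paper's own proof does.
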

\begin{proof}
    Given $t\geq 1$, when $mod(t,q)=0$, it directly follows from \cref{lemma:bound-eti} and \cref{ASPT:general-sto-grad} that $\bE[\|E^t\|^2_F]\leq \frac{M\sigma^2}{S_1}$. Moreover, when $mod(t,q)>0$,
    we take $n\in \mathbb{N}^+$ such that $nq<t<(n+1)q$. If we invoke  \cref{lemma:bound-ZF} within \cref{lemma:bound-eti}, we obtain that     
    \begin{equation}\label{eq:bound-Et-2}
        \bE[\|E^t\|^2_F]\leq (1+\xzrr{\frac{24\eta_y^2 L^2}{S_2}}) \bE[\|E^{t-1}\|^2_F] + \Delta_{t-1},\quad \forall\; \sa{t:}\ nq<t < (n+1)q.
    \end{equation}
    If we apply \cref{eq:bound-Et-2} recursively from $nq+1$ to $t$, it follows that
    \begin{equation*}
        \begin{aligned}
            \bE[\norm{E^t}^2_F] \leq & \Delta_{t-1} + (1+c_1)\Delta_{t-2} + \cdots + (1+c_1)^{t-nq-1}\Delta_{nq} + (1+c_1)^{t-nq} \bE[\norm{E^{nq}}^2_F] 
            \\
            = &\sum_{j=nq}^{t-1}(1+c_1)^{t-1-j} \Delta_{j} + (1+c_1)^{t-nq} \bE[\norm{E^{nq}}^2_F] 
            \\
            \leq & \sum_{j=nq}^{t-1}(1+c_1)^{t-1-j} \Delta_{j} + (1+c_1)^{t-nq} M\frac{\sigma^2}{S_1},\quad \forall\; \sa{t:}\ nq<t < (n+1)q,
        \end{aligned}
    \end{equation*}%
    where \xzrr{$c_1 = \frac{24\eta_y^2 L^2}{S_2}$}. 
    If we pick $q \sa{\leq} \lfloor 1/(2c_1) \rfloor$, then  $(1+c_1)^q\leq (1+c_1)^{1/(2c_1)}\leq2$ \sa{since $1/c_1\geq 1$ for 
    \xzrr{$\eta_y\leq\sqrt{\frac{S_2}{24}}\cdot\frac{1}{L} $}}, it follows that
    \begin{equation*}
         \bE[\norm{E^t}^2_F] \leq \frac{2M\sigma^2}{S_1} + \sum_{j=nq}^{t-1}(1+c_1)^{t-1-j} \Delta_{j},\quad \forall\; \sa{t:}\ nq<t < (n+1)q,
    \end{equation*}
    which completes the proof.
\end{proof}


\subsection{Fundamental Inequality}\label{sec:fundamentalresult}
\xz{In this subsection, we display the fundamental analysis of the sequences $\{\Phi(\bbx^t)\}$ and $\{\grad\Phi(\bbx^t)\}$. The analysis in \cref{lemma:bound-Phi-final} will be utilized to derive the final convergence result in \cref{thm:main-convergence} by constructing a telescoping sum.}
\begin{lemma}\label{lemma:bound-Phi-1}
Suppose \xz{\cref{ASPT:smooth-f} and \cref{ASPT:SC}} hold. Then the inequality
    \begin{equation}
    \label{eq:bound-phi-3}
    \begin{aligned}
        \Phi(\bbx^{t+1})  
         \leq  & \Phi(\bbx^t) - (\frac{\eta_x}{2} - \frac{3(\kappa +1)L\eta_x^2}{2})\|\grad \Phi(\bbx^t)\|^2
         +\Big(\eta_x + \frac{3(\kappa +1)L\eta_x^2}{2}\Big)\Big(L^2\delta_t + 
        \frac{2}{M}\sum_{i=1}^M \|\be^t_{x,i}\|^2\Big)
        \\
         &  + \frac{2L^2}{M}\Big(\eta_x + \frac{3(\kappa +1)L\eta_x^2}{2}\Big) \|Z^t_\perp \|^2_F 
    \end{aligned}
\end{equation}
holds for all $t\geq 0$.
\end{lemma}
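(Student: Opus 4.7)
The plan is to exploit the descent lemma for $\Phi$ using its $(\kappa+1)L$-smoothness (from Lemma~\ref{lemma:smooth-Phi}), together with the fact that the averaged primal iterate obeys $\bbx^{t+1}=\bbx^t-\eta_x\bbv_x^t$ (as in \eqref{eq:avg-update-2}, assuming the standard initialization $\bbd^0=\bbv^0$). This gives
\[
\Phi(\bbx^{t+1})\le \Phi(\bbx^t)-\eta_x\langle \grad\Phi(\bbx^t),\bbv_x^t\rangle+\tfrac{(\kappa+1)L\eta_x^2}{2}\|\bbv_x^t\|^2.
\]
The inner product will be handled by the Cauchy--Schwarz + Young identity $-\langle a,b\rangle\le -\tfrac12\|a\|^2+\tfrac12\|a-b\|^2$ applied with $a=\grad\Phi(\bbx^t)$ and $b=\bbv_x^t$, producing $-\tfrac{\eta_x}{2}\|\grad\Phi(\bbx^t)\|^2$ (the descent term) plus $\tfrac{\eta_x}{2}\|\grad\Phi(\bbx^t)-\bbv_x^t\|^2$. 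The quadratic term $\tfrac{(\kappa+1)L\eta_x^2}{2}\|\bbv_x^t\|^2$ will be expanded by the Cauchy--Schwarz inequality $\|u+v+w\|^2\le 3(\|u\|^2+\|v\|^2+\|w\|^2)$, which is exactly where the factor $3$ in the final coefficient $\tfrac{3(\kappa+1)L\eta_x^2}{2}$ appears.

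To carry out both expansions, I would insert the ``bridging'' vector $\grad_x f(\bbx^t,\bby^t)$, writing
\[
\bbv_x^t=\big(\bbv_x^t-\grad_x f(\bbx^t,\bby^t)\big)+\big(\grad_x f(\bbx^t,\bby^t)-\grad\Phi(\bbx^t)\big)+\grad\Phi(\bbx^t),
\]
and similarly for $\grad\Phi(\bbx^t)-\bbv_x^t$ (using the first two summands only, with a factor $2$). The three resulting pieces can each be controlled by already-established estimates: (i) Lemma~\ref{lemma:smooth-Phi} together with Assumption~\ref{ASPT:smooth-f} gives $\|\grad_x f(\bbx^t,\bby^t)-\grad\Phi(\bbx^t)\|^2=\|\grad_x f(\bbx^t,\bby^t)-\grad_x f(\bbx^t,\by^*(\bbx^t))\|^2\le L^2\delta_t$; (ii) the $x$-analog of the bound derived in \eqref{eq:bound-diff-f-vy} (which is the one used for the $y$-component in \eqref{eq:bound-diff-f-vtx-1}) yields $\|\grad_x f(\bbx^t,\bby^t)-\bbv_x^t\|^2\le \tfrac{2L^2}{M}\|Z^t_\perp\|_F^2+\tfrac{2}{M}\sum_{i=1}^M\|\be_{x,i}^t\|^2$; and (iii) $\|\grad\Phi(\bbx^t)\|^2$ simply appears again and will be absorbed into the $\|\grad\Phi(\bbx^t)\|^2$ coefficient.

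Combining, the inner-product expansion contributes the coefficient $\eta_x$ in front of the triple $\big(L^2\delta_t+\tfrac{2L^2}{M}\|Z^t_\perp\|_F^2+\tfrac{2}{M}\sum_i\|\be_{x,i}^t\|^2\big)$ (the factor $\tfrac{\eta_x}{2}$ from Young times the factor $2$ from the two-term Cauchy--Schwarz), while the $\|\bbv_x^t\|^2$ expansion adds $\tfrac{3(\kappa+1)L\eta_x^2}{2}$ in front of the same triple plus $\|\grad\Phi(\bbx^t)\|^2$. Adding everything and merging the $\|\grad\Phi(\bbx^t)\|^2$ contributions yields exactly the stated inequality \eqref{eq:bound-phi-3}.

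The main obstacle is bookkeeping rather than anything deep: one must pick the right split (two terms for $\|\grad\Phi-\bbv_x^t\|^2$ and three terms for $\|\bbv_x^t\|^2$) so that the coefficients of $L^2\delta_t$, $\tfrac{2L^2}{M}\|Z^t_\perp\|_F^2$, and $\tfrac{2}{M}\sum_i\|\be_{x,i}^t\|^2$ all align into the common multiplier $\eta_x+\tfrac{3(\kappa+1)L\eta_x^2}{2}$. A minor point worth flagging is that the identity $\bbx^{t+1}=\bbx^t-\eta_x\bbv_x^t$ relies on $\bbd^0=\bbv^0$; if this initialization were not assumed, one would instead use $\bbx^{t+1}=\bbx^t-\eta_x\bbd_x^t$ and carry the perturbation $\bbd_x^t-\bbv_x^t$ through, which would only add lower-order terms that vanish under the standard initialization.
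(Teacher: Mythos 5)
Your proposal is correct and follows essentially the same route as the paper: the $(\kappa+1)L$-descent lemma with $\bbx^{t+1}=\bbx^t-\eta_x\bbv_x^t$, the bridging vector $\grad_x f(\bbx^t,\bby^t)$ with a two-term split for the inner-product error and a three-term split for $\|\bbv_x^t\|^2$ (the source of the factor $3$), and the bounds $\|\grad_x f(\bbx^t,\bby^t)-\grad\Phi(\bbx^t)\|^2\le L^2\delta_t$ and \eqref{eq:bound-diff-f-vtx-1}. The only cosmetic difference is that the paper applies Young's inequality in the form $\langle \ba,\bb\rangle\le\frac14\|\ba\|^2+\|\bb\|^2$ to each piece of the inner product rather than your identity $-\langle \ba,\bb\rangle\le-\frac12\|\ba\|^2+\frac12\|\ba-\bb\|^2$; the resulting coefficients are identical, and your remark about the reliance on $\bbd^0=\bbv^0$ matches the paper's use of \eqref{eq:avg-update-2}.
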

\begin{proof}
For any $t\geq 0$, it follows from \cref{lemma:smooth-Phi} that
\begin{equation}\label{eq:bound-phi-1}
    \begin{aligned}
        \Phi(\bbx^{t+1}) & \leq \Phi(\bbx^{t}) + \langle \grad \Phi(\bbx^{t}), \bbx^{t+1} - \bbx^t \rangle   + \frac{(\kappa+1)L}{2}\|\bbx^{t+1} - \bbx^{t}\|^2 \\
        & = \Phi(\bbx^t) - \eta_x \|\grad \Phi(\bbx^t)\|^2 + \frac{(\kappa+1)L\eta_x^2}{2}\|\bbv^t_x\|^2 + \eta_x \langle \grad \Phi(\bbx^t) - \bbv^t_x, \grad \Phi(\bbx^t)\rangle. 
    \end{aligned}
\end{equation}
We bound the last inner product term in the above inequality as follows
\begin{equation}\label{eq:bound-diff-grad-1}
    \begin{aligned}
        \langle \grad \Phi(\bbx^t)-\bbv^t_x, \Phi(\bbx^t)\rangle
        = &  \langle \grad \Phi(\bbx^t)-\grad_x f(\bbx^t,\bby^t), \Phi(\bbx^t)\rangle + \langle \sa{\grad_x}f(\bbx^t,\bby^t) - \bbv^t_x, \Phi(\bbx^t)\rangle
        \\
        \leq & \frac{1}{2}\|\grad \Phi(\bbx)^t\|^2 + \|\grad \Phi(\bbx^t) - \grad_x f(\bbx^t,\bby^t)\|^2 + \|\grad_x f(\bbx^t,\bby^t) - \bbv^t_x\|^2,
    \end{aligned}
\end{equation}
where we use Young's inequality for \cref{eq:bound-diff-grad-1}, i.e.,  $\langle \ba,\bb \rangle\leq \frac{1}{4}\|\ba\|^2 + \|\bb\|^2,\;\forall \ba,\bb\in\mR^d$. 
Then, using \cref{eq:bound-vtx-1,eq:bound-diff-grad-1} 
within \cref{eq:bound-phi-1} leads to
\begin{equation}\label{eq:bound-phi-2}
    \begin{aligned}
        \Phi(\bbx^{t+1})  \leq  & \Phi(\bbx^t) - (\frac{\eta_x}{2} - \frac{3(\kappa +1)L\eta_x^2}{2})\|\grad \Phi(\bbx^t)\|^2
        \\
        & + \Big(\eta_x + \frac{3(\kappa +1)L\eta_x^2}{2}\Big) (\|\bbv^t_x-\grad_x f(\bbx^t,\bby^t)\|^2 + \|\grad_x f(\bbx^t,\bby^t) - \grad \Phi(\bbx^t)\|^2 )
        \\
         \leq  & \Phi(\bbx^t) - (\frac{\eta_x}{2} - \frac{3(\kappa +1)L\eta_x^2}{2})\|\grad \Phi(\bbx^t)\|^2
        \\
         &  + \Big(\eta_x + \frac{3(\kappa +1)L\eta_x^2}{2}\Big) \Big(\|\bbv^t_x-\grad_x f(\bbx^t,\bby^t)\|^2 + L^2\delta_t\Big),
    \end{aligned}
\end{equation}
where  the last inequality uses \cref{ASPT:smooth-f} and recalls that $\delta_t = \|{\by}^*(\bbx^t)-\bby^t\|^2$ and $\grad \Phi(\bbx^t)=\grad_x f(\bbx^t,\by^*(\bbx^t))$. 
Moreover, we can bound $\|\bbv^t_x-\grad_x f(\bbx^t,\bby^t)\|^2$ by \cref{eq:bound-diff-f-vtx-1}.
Next, if we plug \cref{eq:bound-diff-f-vtx-1} into \cref{eq:bound-phi-2}, we have that
\begin{equation*}
    \begin{aligned}
        \Phi(\bbx^{t+1})  
         \leq  & \Phi(\bbx^t) - (\frac{\eta_x}{2} - \frac{3(\kappa +1)L\eta_x^2}{2})\|\grad \Phi(\bbx^t)\|^2
         +\Big(\eta_x + \frac{3(\kappa +1)L\eta_x^2}{2}\Big)\Big(L^2\delta_t + 
        \frac{2}{M}\sum_{i=1}^M \|\be^t_{x,i}\|^2\Big)
        \\
         &  + \frac{2L^2}{M}\Big(\eta_x + \frac{3(\kappa +1)L\eta_x^2}{2}\Big) ( \|X^t_\perp \|^2_F + \|Y^t_\perp \|^2_F).
    \end{aligned}
\end{equation*}
Then using the fact $\|Z^t_\perp \|^2_F = \|X^t_\perp \|^2_F + \|Y^t_\perp \|^2_F$ completes the proof.
\end{proof}

\begin{lemma}\label{lemma:bound-Phi-final}
 Suppose Assumptions~\ref{ASPT:SC},~\ref{ASPT:general-sto-grad},~\ref{ASPT:smooth-F-mean-squared}~and~\ref{ASPT:mixture-matrix} hold. Then the inequality 
\begin{equation}\label{eq:bound-Phi-final}
    \begin{aligned}
        \bE[\Phi(\bbx^{(n+1)q})  ]
         \leq  & \bE\Big[\Phi(\bbx^{nq}) 
         + \Big(\eta_x + \frac{3(\kappa +1)L\eta_x^2}{2}\Big)\frac{4q\sigma^2}{S_1}
         - c_\Phi\sum_{t=nq}^{(n+1)q-1}\|\grad \Phi(\bbx^t)\|^2
          + c_\delta\sum_{t=nq}^{(n+1)q-1}\delta_t
          \\
          &
          + c_Z \sum_{t=nq}^{(n+1)q-1}\|Z^t_\perp \|^2_F
         +\frac{4(q-1)}{M}\Big(\eta_x + \frac{3(\kappa +1)L\eta_x^2}{2}\Big)\frac{L^2}{S_2}4\eta_y^2\sum_{t=nq}^{(n+1)q-2}\|D^t_\perp\|_F^2\Big],
    \end{aligned}
\end{equation}
holds for all $n\in\mathbb{N}$, where
\begin{equation}\label{eq:def-c-coeff}
    \begin{aligned}
        & c_\Phi \triangleq  (\frac{\eta_x}{2} - \frac{3(\kappa +1)L\eta_x^2}{2})
        -\frac{4(q-1)}{M}\Big(\eta_x + \frac{3(\kappa +1)L\eta_x^2}{2}\Big)\frac{L^2}{S_2}12M\eta_x^2,
        \\
        & c_\delta \triangleq \Big(\eta_x + \frac{3(\kappa +1)L\eta_x^2}{2}\Big)L^2 + \frac{4(q-1)}{M}\Big(\eta_x + \frac{3(\kappa +1)L\eta_x^2}{2}\Big)\frac{L^2}{S_2}\xzrr{20\eta_y^2L^2M},
        \\
        & c_Z \triangleq \frac{2L^2}{M}\Big(\eta_x + \frac{3(\kappa +1)L\eta_x^2}{2}\Big)
        +
        \frac{4(q-1)}{M}\Big(\eta_x + \frac{3(\kappa +1)L\eta_x^2}{2}\Big)\frac{L^2}{S_2}\xzrr{(8+40\eta_y^2L^2)}.
    \end{aligned}
\end{equation}
\end{lemma}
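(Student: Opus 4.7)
\textbf{Proof proposal for Lemma~\ref{lemma:bound-Phi-final}.}
The plan is to telescope the per-iteration descent inequality of Lemma~\ref{lemma:bound-Phi-1} across a full inner-loop ``epoch'' of length $q$, namely from $t=nq$ to $t=(n+1)q-1$, and then to absorb the stochastic-gradient-error sum $\sum_t \tfrac{2}{M}\sum_i \|\be^t_{x,i}\|^2 \le \tfrac{2}{M}\sum_t \|E^t\|_F^2$ using the variance-reduction bound of Lemma~\ref{lemma:bound-Et}. First, I would take expectations in \eqref{eq:bound-phi-3} and sum over $t\in\{nq,\ldots,(n+1)q-1\}$. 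The left-hand side telescopes to $\bE[\Phi(\bbx^{(n+1)q})-\Phi(\bbx^{nq})]$, and the right-hand side collects the sums $\sum_t \bE\|\grad\Phi(\bbx^t)\|^2$, $\sum_t \bE[\delta_t]$, $\sum_t \bE\|Z^t_\perp\|_F^2$, and the crucial error sum $\frac{2}{M}\sum_t \bE\|E^t\|_F^2$.

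Next, I would handle the error sum in two pieces: for $t=nq$ (a SPIDER ``restart'' iteration) the estimate is $\bE\|E^{nq}\|_F^2 \le M\sigma^2/S_1$, while for $nq<t<(n+1)q$ Lemma~\ref{lemma:bound-Et} gives
\[
\bE\|E^t\|_F^2 \le \tfrac{2M\sigma^2}{S_1} + \sum_{j=nq}^{t-1}(1+c_1)^{t-1-j}\Delta_j.
\]
Summing these bounds over the epoch and invoking $(1+c_1)^q \le 2$ (valid under the standing assumption $S_2 \ge 24L^2\eta_y^2$ used in Lemma~\ref{lemma:bound-Et}) allows me to swap the order of summation and bound $\sum_{t=j+1}^{(n+1)q-1}(1+c_1)^{t-1-j}\le 2(q-1)$, yielding
\[
\sum_{t=nq}^{(n+1)q-1}\bE\|E^t\|_F^2 \;\le\; \tfrac{(2q-1)M\sigma^2}{S_1} + 2(q-1)\sum_{j=nq}^{(n+1)q-2}\Delta_j.
\]
The constant-$\sigma^2$ term, multiplied by $(\eta_x+\tfrac{3(\kappa+1)L\eta_x^2}{2})\cdot\tfrac{2}{M}$, produces the $(\eta_x+\tfrac{3(\kappa+1)L\eta_x^2}{2})\tfrac{4q\sigma^2}{S_1}$ term in \eqref{eq:bound-Phi-final} (using $2q-1\le 2q$).

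Finally, I would substitute the definition \eqref{eq:def-Delta-t} of $\Delta_j$ and redistribute its four constituents into the matching sums on the right-hand side: the $\|Z^j_\perp\|_F^2$ piece adds the $\tfrac{4(q-1)}{M}(\eta_x+\cdots)\tfrac{L^2}{S_2}(8+40\eta_y^2 L^2)$ contribution to form $c_Z$; the $\delta_j$ piece adds the $\tfrac{4(q-1)}{M}(\eta_x+\cdots)\tfrac{L^2}{S_2}\cdot 20\eta_y^2 L^2 M$ contribution to form $c_\delta$; the $\|\grad\Phi(\bbx^j)\|^2$ piece contributes $\tfrac{4(q-1)}{M}(\eta_x+\cdots)\tfrac{L^2}{S_2}\cdot 12M\eta_x^2$, which is subtracted from the leading $\tfrac{\eta_x}{2}-\tfrac{3(\kappa+1)L\eta_x^2}{2}$ coefficient to produce $c_\Phi$; and the $\|D^j_\perp\|_F^2$ piece remains as the last, undisturbed summand over $j\in\{nq,\ldots,(n+1)q-2\}$. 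Combining everything and taking a total expectation yields exactly \eqref{eq:bound-Phi-final}.

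The main obstacle is purely bookkeeping: the coefficients $c_\Phi,c_\delta,c_Z$ each arise as a sum of a ``one-step'' piece (from Lemma~\ref{lemma:bound-Phi-1}) and a ``feedback'' piece (from the $\Delta_j$ terms that multiply the telescoped error sum), so one must track the multiplicative factors $(\eta_x+\tfrac{3(\kappa+1)L\eta_x^2}{2})\cdot\tfrac{2}{M}\cdot 2(q-1)=\tfrac{4(q-1)}{M}(\eta_x+\tfrac{3(\kappa+1)L\eta_x^2}{2})$ accurately through each of the four constituents of $\Delta_j$. The only genuine analytic step beyond algebra is the geometric bound $(1+c_1)^{q}\le 2$, which was already established in the proof of Lemma~\ref{lemma:bound-Et}.
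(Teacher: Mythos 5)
Your proposal is correct and follows essentially the same route as the paper's proof: telescope Lemma~\ref{lemma:bound-Phi-1} over the epoch $t=nq,\ldots,(n+1)q-1$, control $\bE\|E^t\|_F^2$ via Lemma~\ref{lemma:bound-Et} (with the restart iteration handled separately), swap the order of the resulting double sum using $(1+c_1)^q\le 2$ to get the factor $2(q-1)$, and then substitute the definition of $\Delta_j$ to assemble $c_\Phi$, $c_\delta$, $c_Z$ and the $\|D^t_\perp\|_F^2$ term. The only cosmetic difference is that the paper inserts the $E^t$ bound into the per-iteration inequality before summing rather than after, and note that the geometric bound actually rests on the standing condition $q\le 1/(2c_1)$ from Parameter Condition~\ref{param-condition:basic} rather than on $S_2\ge 24L^2\eta_y^2$ alone.
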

\begin{proof}
    For given $t\geq 0$ \sa{such that} $mod(t,q)>0$, we take $n\in \mathbb{N}$ \sa{such that} $nq<t<(n+1)q$. Then it follows from \cref{lemma:bound-Phi-1} and \cref{lemma:bound-Et} that
    \begin{equation}\label{eq:bound-phi-4}
    \begin{aligned}
        \bE[\Phi(\bbx^{t+1})  ]
         \leq  & \bE\Big[\Phi(\bbx^t) - (\frac{\eta_x}{2} - \frac{3(\kappa +1)L\eta_x^2}{2})\|\grad \Phi(\bbx^t)\|^2
         \\
         & +\Big(\eta_x + \frac{3(\kappa +1)L\eta_x^2}{2}\Big)\Big(L^2\delta_t + 
        \frac{4\sigma^2}{S_1} + \frac{2}{M}\sum_{j=nq}^{t-1}(1+c_1)^{t-1-j} \Delta_{j} \Big)
        \\
         &  + \frac{2L^2}{M}\Big(\eta_x + \frac{3(\kappa +1)L\eta_x^2}{2}\Big) \|Z^t_\perp \|^2_F\Big].
    \end{aligned}
\end{equation}
When $t\geq 0$ and $mod(t,q)=0$, we assume $t=nq$ for some $n\in\mathbb{N}$. Then \cref{eq:bound-phi-4} is also satisfied for $t=nq$ according to \cref{lemma:bound-Et}. Therefore, if we sum up \cref{eq:bound-phi-4} over $t=nq$ to $(n+1)q-1$, we obtain
\begin{equation}\label{eq:bound-phi-5}
    \begin{aligned}
       \bE[ \Phi(\bbx^{(n+1)q}) ] 
         \leq  & \bE\Big[\Phi(\bbx^{nq}) 
         + \Big(\eta_x + \frac{3(\kappa +1)L\eta_x^2}{2}\Big)\frac{4q\sigma^2}{S_1}
         - (\frac{\eta_x}{2} - \frac{3(\kappa +1)L\eta_x^2}{2})\sum_{t=nq}^{(n+1)q-1}\|\grad \Phi(\bbx^t)\|^2
         \\
         & + \Big(\eta_x + \frac{3(\kappa +1)L\eta_x^2}{2}\Big)L^2\sum_{t=nq}^{(n+1)q-1}\delta_t
          + \frac{2L^2}{M}\Big(\eta_x + \frac{3(\kappa +1)L\eta_x^2}{2}\Big) \sum_{t=nq}^{(n+1)q-1}\|Z^t_\perp \|^2_F\Big]
         \\
         & +\frac{2}{M}\Big(\eta_x + \frac{3(\kappa +1)L\eta_x^2}{2}\Big)\sum_{t=nq}^{(n+1)q-1}\sum_{j=nq}^{t-1}(1+c_1)^{t-1-j} \Delta_{j}
    \end{aligned}
\end{equation}
for all $n\in\mathbb{N}$. 
Note that
\begin{equation}\label{eq:bound-double-sum}
\begin{aligned}
        & \sum_{t=nq}^{(n+1)q-1}\sum_{j=nq}^{t-1}(1+c_1)^{t-1-j} \Delta_{j} =\sum_{j=nq}^{(n+1)q-2}\sum_{t=j+1}^{(n+1)q-1}(1+c_1)^{t-1-j} \Delta_{j}
        \\
   \leq & \sum_{j=nq}^{(n+1)q-2}(q-1)(1+c_1)^{q-2} \Delta_{j} \leq 2(q-1)\sum_{j=nq}^{(n+1)q-2} \Delta_j,
\end{aligned}
\end{equation}
where the last inequality is by $q\leq\frac{1}{2c_1}$ and \cref{lemma:parameter-ineq}.
Therefore, if we use \cref{eq:bound-double-sum} within \cref{eq:bound-phi-5}, we obtain that
\begin{equation}\label{eq:bound-phi-6}
    \begin{aligned}
        \bE[\Phi(\bbx^{(n+1)q}) ] 
         \leq  & \bE\Big[\Phi(\bbx^{nq}) 
         + \Big(\eta_x + \frac{3(\kappa +1)L\eta_x^2}{2}\Big)\frac{4q\sigma^2}{S_1}
         - (\frac{\eta_x}{2} - \frac{3(\kappa +1)L\eta_x^2}{2})\sum_{t=nq}^{(n+1)q-1}\|\grad \Phi(\bbx^t)\|^2
         \\
         & + \Big(\eta_x + \frac{3(\kappa +1)L\eta_x^2}{2}\Big)L^2\sum_{t=nq}^{(n+1)q-1}\delta_t
          + \frac{2L^2}{M}\Big(\eta_x + \frac{3(\kappa +1)L\eta_x^2}{2}\Big) \sum_{t=nq}^{(n+1)q-1}\|Z^t_\perp \|^2_F\Big]
         \\
         & +\frac{4(q-1)}{M}\Big(\eta_x + \frac{3(\kappa +1)L\eta_x^2}{2}\Big)\sum_{t=nq}^{(n+1)q-2}\Delta_{t}
    \end{aligned}
\end{equation}
holds for all $n\in\mathbb{N}$. Furthermore, \sa{using $\Delta_t$ defined in \cref{eq:def-Delta-t}, i.e.,}
\xzrr{\begin{equation*}
    \begin{aligned}
        \Delta_t =  \frac{L^2}{S_2}\bE\Big[\Big( \xz{8}+\xzrr{40\eta_y^2L^2}  \Big)\|Z^t_\perp \|^2_F + 4\eta_y^2 \|D^t_\perp \|^2_F 
        + \xzrr{20\eta_y^2L^2M\delta_t} + 12M\eta_x^2\|\grad \Phi(\bbx^t)\|^2
        \Big],
    \end{aligned}
    \end{equation*}}
\sa{within \cref{eq:bound-phi-6},} we obtain \cref{eq:bound-Phi-final} and complete the proof.
\end{proof}

\subsection{Bound of the Dual Suboptimality}\label{sec:Bound-of-the-Dual-Suboptimality}
\xz{In this subsection, we display the proper bound of the measure of suboptimality $\bE[\delta_t]$. \sa{The result of this analysis will be 
combined with \cref{lemma:bound-Phi-final}} to derive the final convergence result in \cref{thm:main-convergence} by constructing a telescoping sum.}
\begin{lemma}
\label{lemma:bound-deltat-final}
Suppose Assumptions~\ref{ASPT:SC},~\ref{ASPT:general-sto-grad},~\ref{ASPT:smooth-F-mean-squared}~and~\ref{ASPT:mixture-matrix} hold. Then the inequality
\begin{equation}\label{eq:bound-deltat-final}
    \begin{aligned}
        \bE\left[\delta_{(n+1)q} + a_\delta\sum_{t=nq+1}^{(n+1)q-1}\delta_{t} \right]
        \leq &\bE\left[ 
        \xz{(1-a_\delta)}\delta_{nq} 
         + \frac{16\eta_y q\sigma^2}{\mu S_1} 
        + a_\Phi
         \sum_{t=nq}^{(n+1)q-1}\|\grad \Phi(\bbx^{t})\|^2 \right.
         \\
         &
         \left.+ a_Z
           \sum_{t=nq}^{(n+1)q-1} \|Z^t_\perp \|^2_F 
            +\frac{16(q-1)\eta_y}{\mu M} \frac{L^2}{S_2}4\eta_y^2\sum_{t=nq}^{(n+1)q-2}\|D^t_\perp\|_F^2\right],
    \end{aligned}
\end{equation}
holds for all $n\in \mathbb{N}$,
where
\begin{equation}\label{eq:def-a-coeff}
    \begin{aligned}
        & a_\delta \triangleq  \frac{1}{4}\eta_y\mu - \frac{12}{\eta_y\mu}\kappa^2\eta_x^2 L^2
        - \xzrr{\frac{320(q-1)L^4\eta_y^3}{\mu S_2}},
        \\ 
        & a_\Phi \triangleq  \frac{12}{\eta_y\mu}\kappa^2\eta_x^2 + \frac{16(q-1)\eta_y}{\mu}\frac{L^2}{S_2}12\eta_x^2,
        \\
        & a_Z \triangleq \Big(\frac{4\eta_y}{\mu}+
          \frac{12}{\eta_y\mu}\kappa^2\eta_x^2\Big)
           \frac{2L^2}{M}
           +
           \frac{16(q-1)\eta_y}{\mu M}\frac{L^2}{S_2}  (8+\xzrr{40\eta_y^2L^2}).
    \end{aligned}
\end{equation}
\end{lemma}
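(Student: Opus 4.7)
The plan is to start from the single-step recursion \eqref{eq:bound-deltat-4} of Lemma~\ref{lemma:bound-deltat-1}, take expectation, and replace $\sum_{i=1}^M\|\be_{x,i}^{t-1}\|^2$ and $\sum_{i=1}^M\|\be_{y,i}^{t-1}\|^2$ by $\bE[\|E^{t-1}\|_F^2]$ after absorbing the factor $\tfrac{4-\eta_y\mu}{4-2\eta_y\mu}\le 2$ (valid since $\eta_y\mu\le 1$). This yields a recursion of the form $\bE[\delta_t]\le c_r\bE[\delta_{t-1}]+c_Z\bE[\|Z_\perp^{t-1}\|_F^2]+c_\Phi\bE[\|\grad\Phi(\bbx^{t-1})\|^2]+c_N\bE[\|E^{t-1}\|_F^2]$, where $c_r=1-\tfrac14\eta_y\mu+3(\tfrac{4}{\eta_y\mu}-1)\kappa^2\eta_x^2L^2$, and the coefficients $c_Z$, $c_\Phi$, $c_N$ can be read directly from the RHS of \eqref{eq:bound-deltat-4}.

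Then I would sum this bound over $t=nq+1,\dots,(n+1)q$. The LHS becomes $\bE[\delta_{(n+1)q}+\sum_{t=nq+1}^{(n+1)q-1}\delta_t]$, while the telescoping $\sum_t c_r\delta_{t-1}=c_r\delta_{nq}+c_r\sum_{t=nq+1}^{(n+1)q-1}\delta_t$ on the RHS lets me move the interior $\delta_t$'s to the left, producing the coefficient $(1-c_r)=\tfrac14\eta_y\mu-3(\tfrac{4}{\eta_y\mu}-1)\kappa^2\eta_x^2L^2$. This already matches the first two terms of $a_\delta$ in \eqref{eq:def-a-coeff}, and the $c_r\delta_{nq}$ piece on the RHS becomes $(1-a_\delta)\delta_{nq}$ once the $\Delta_j$-contribution handled below is extracted.

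Next I would plug in Lemma~\ref{lemma:bound-Et}: the full-batch iterate ($\mathrm{mod}(t-1,q)=0$) contributes $M\sigma^2/S_1$, while each other iterate contributes $\tfrac{2M\sigma^2}{S_1}+\sum_{j=nq}^{t-2}(1+c_1)^{t-2-j}\Delta_j$. The flat $\sigma^2/S_1$ pieces collapse to the $\tfrac{16\eta_y q\sigma^2}{\mu S_1}$ term in the statement, after absorbing small constants. For the double sum $\sum_t\sum_j(1+c_1)^{t-2-j}\Delta_j$ I would swap the order of summation and invoke $q\le\lfloor 1/(2c_1)\rfloor\Rightarrow(1+c_1)^{q-2}\le 2$, exactly as in~\eqref{eq:bound-double-sum}, collapsing it to $\le 2(q-1)\sum_{j=nq}^{(n+1)q-2}\Delta_j$. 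Expanding $\Delta_j$ via~\eqref{eq:def-Delta-t} distributes the common prefactor $\tfrac{16(q-1)\eta_y}{\mu M}\cdot\tfrac{L^2}{S_2}$ across its four summands, producing additional contributions to the coefficients of $\|Z_\perp^t\|_F^2$, $\|D_\perp^t\|_F^2$, $\delta_t$, and $\|\grad\Phi(\bbx^t)\|^2$ that, combined with the direct terms, reproduce exactly $a_Z$, the $\|D_\perp^t\|_F^2$-block, $a_\delta$, and $a_\Phi$ in \eqref{eq:def-a-coeff}.

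The main obstacle will be the bookkeeping of the three distinct sources contributing to the $\delta_t$-coefficient on the LHS: the strong-concavity reduction $\tfrac14\eta_y\mu$, the quadratic $\tfrac{12\kappa^2\eta_x^2L^2}{\eta_y\mu}$ from the $\|\bbv_x\|^2$ expansion in \eqref{eq:bound-vtx-2}, and the $\tfrac{320(q-1)L^4\eta_y^3}{\mu S_2}$ that arises because $\Delta_j$ itself contains the term $20\eta_y^2L^2M\delta_j$; after being multiplied by $\tfrac{16(q-1)\eta_y}{\mu M}\cdot\tfrac{L^2}{S_2}$ it lands on the RHS and must be transferred back to the LHS before $a_\delta$ can be identified. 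A secondary technical nuisance is the small index mismatch between the LHS summation range $\{nq{+}1,\dots,(n{+}1)q{-}1\}$ and the $\Delta_j$-range $\{nq,\dots,(n{+}1)q{-}2\}$: the term $\delta_{(n+1)q-1}$ appears on the LHS but receives no $\Delta$-correction, while $\delta_{nq}$ gets both the $c_r$ contribution and the $\Delta_{nq}$ contribution; both can be absorbed by loosening the interior coefficient to the smaller $a_\delta$ and folding the $\Delta_{nq}$ term into $(1-a_\delta)\delta_{nq}$, which yields the stated inequality without loss beyond the given constants.
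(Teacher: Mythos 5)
Your proposal is correct and follows essentially the same route as the paper: take expectation of \eqref{eq:bound-deltat-4}, merge the two oracle-error terms into $\frac{8\eta_y}{\mu M}\bE[\|E^{t-1}\|_F^2]$ (which requires not only $\frac{4-\eta_y\mu}{4-2\eta_y\mu}\le 2$ but also item (iii) of \cref{lemma:parameter-ineq} to match the coefficient of the $\be_{x,i}$ block), invoke \cref{lemma:bound-Et}, sum over the epoch, collapse the double sum via \eqref{eq:bound-double-sum}, and expand $\Delta_j$ to identify $a_\delta,a_\Phi,a_Z$. The bookkeeping subtleties you flag (the $\Delta_j$-induced $\delta$-feedback term in $a_\delta$ and the index mismatch) are exactly the ones the paper's proof handles.
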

\begin{proof}
     For given $t\in \mathbb{N}^+$, we take $n\in \mathbb{N}$ such that $nq<t\leq(n+1)q$. Then it follows from \cref{lemma:bound-deltat-1} and 
  $\eta_x\leq \frac{1}{\sqrt{3}\kappa}\eta_y$, $\eta_y\leq \frac{1}{L}$ together with \cref{lemma:parameter-ineq} that
     \begin{equation*}
    \begin{aligned}
              \delta_t 
        \leq
        & \Big(1-\frac{1}{4}\eta_y\mu + 3\Big(\frac{4}{\eta_y\mu}-1\Big)\kappa^2\eta_x^2 L^2\Big)\delta_{t-1} 
        +3\Big(\frac{4}{\eta_y\mu}-1\Big)\kappa^2\eta_x^2
         \|\grad \Phi(\bbx^{t-1})\|^2
        \\
        &
         + 
         \Big(\frac{4\eta_y}{\mu}+
           3\Big(\frac{4}{\eta_y\mu}-1\Big)\kappa^2\eta_x^2\Big)
           \frac{2L^2}{M}
          \|Z^{\yy{t-1}}_\perp \|^2_F
           +
         \frac{8\eta_y}{\mu M}
         \|E^{t-1}\|^2
         .
    \end{aligned}
\end{equation*}
Moreover, if we take the expectation of the above inequality and then use \cref{lemma:bound-Et}, we obtain
     \begin{equation}\label{eq:bound-deltat-5}
    \begin{aligned}
             \bE[ \delta_t ]
        \leq
        &\bE\Big[ \Big(1-\frac{1}{4}\eta_y\mu + 3\Big(\frac{4}{\eta_y\mu}-1\Big)\kappa^2\eta_x^2 L^2\Big)\delta_{t-1} 
        + 
         3\Big(\frac{4}{\eta_y\mu}-1\Big)\kappa^2\eta_x^2
         \|\grad \Phi(\bbx^{t-1})\|^2
         \\
         & +
         \Big(\frac{4\eta_y}{\mu}+
           3\Big(\frac{4}{\eta_y\mu}-1\Big)\kappa^2\eta_x^2\Big)
           \frac{2L^2}{M}
           \|Z^{\yy{t-1}}_\perp \|^2_F \Big]
         +
         \frac{8\eta_y}{\mu}\Big(\frac{2\sigma^2}{S_1} + \frac{1}{M}\sum_{j=nq}^{t-2}(1+c_1)^{t-2-j} \Delta_{j}\Big).
    \end{aligned}
\end{equation}
Next, if we sum up \cref{eq:bound-deltat-5} over $t=nq+1$ to $(n+1)q$, we obtain that
\begin{equation}
    \label{eq:bound-deltat-6}
    \begin{aligned}
        & \bE\Big[\delta_{(n+1)q} + \Big(\frac{1}{4}\eta_y\mu - 3\Big(\frac{4}{\eta_y\mu}-1\Big)\kappa^2\eta_x^2 L^2\Big)\sum_{t=nq+1}^{(n+1)q-1}\delta_{t} \Big]
        \\
        \leq &\bE\Big[ \Big(1-\frac{1}{4}\eta_y\mu + 3\Big(\frac{4}{\eta_y\mu}-1\Big)\kappa^2\eta_x^2 L^2\Big)\delta_{nq} + \frac{16\eta_y q\sigma^2}{\mu S_1} 
        +  3\Big(\frac{4}{\eta_y\mu}-1\Big)\kappa^2\eta_x^2
         \sum_{t=nq}^{(n+1)q-1}\|\grad \Phi(\bbx^{t})\|^2
         \\
         & + \Big(\frac{4\eta_y}{\mu}+
           3\Big(\frac{4}{\eta_y\mu}-1\Big)\kappa^2\eta_x^2\Big)
           \frac{2L^2}{M}
           \sum_{t=nq}^{(n+1)q-1}\|Z^t_\perp \|^2_F \Big]
           + \frac{8\eta_y}{\mu M} \sum_{t=nq+1}^{(n+1)q}\sum_{j=nq}^{t-2}(1+c_1)^{t-2-j} \Delta_{j},
    \end{aligned}
\end{equation}
\sa{where we set $\sum_{j=nq}^{nq-1}\Delta_j=0$ which arises for $t=nq+1$ in the above double summation.}
Next, if we use \cref{eq:bound-double-sum} within \cref{eq:bound-deltat-6}, we obtain that
\begin{equation*}
    \begin{aligned}
        & \bE\Big[\delta_{(n+1)q} + \Big(\frac{1}{4}\eta_y\mu - 3\Big(\frac{4}{\eta_y\mu}-1\Big)\kappa^2\eta_x^2 L^2\Big)\sum_{t=nq+1}^{(n+1)q-1}\delta_{t} \Big]
        \\
        \leq &\bE\Big[ \Big(1-\frac{1}{4}\eta_y\mu + 3\Big(\frac{4}{\eta_y\mu}-1\Big)\kappa^2\eta_x^2 L^2\Big)\delta_{nq} + \frac{16\eta_y q\sigma^2}{\mu S_1} 
        +  3\Big(\frac{4}{\eta_y\mu}-1\Big)\kappa^2\eta_x^2
         \sum_{t=nq}^{(n+1)q-1}\|\grad \Phi(\bbx^{t})\|^2
         \\
         & + \Big(\frac{4\eta_y}{\mu}+
           3\Big(\frac{4}{\eta_y\mu}-1\Big)\kappa^2\eta_x^2\Big)
           \frac{2L^2}{M}
           \sum_{t=nq}^{(n+1)q-1} \|Z^t_\perp \|^2_F \Big]
            + \frac{16(q-1)\eta_y}{\mu M} \sum_{t=nq}^{(n+1)q-2}\Delta_{t}.
    \end{aligned}
\end{equation*}
The above inequality further implies
\begin{equation}\label{eq:bound-deltat-7}
    \begin{aligned}
        & \bE\Big[\delta_{(n+1)q} + \Big(\frac{1}{4}\eta_y\mu - \frac{12}{\eta_y\mu}\kappa^2\eta_x^2 L^2\Big)\sum_{t=nq+1}^{(n+1)q-1}\delta_{t}\Big] 
        \\
        \leq & \bE\Big[\Big(1-\frac{1}{4}\eta_y\mu + \frac{12}{\eta_y\mu}\kappa^2\eta_x^2 L^2\Big)\delta_{nq} + \frac{16\eta_y q\sigma^2}{\mu S_1} 
        +  \frac{12}{\eta_y\mu}\kappa^2\eta_x^2
         \sum_{t=nq}^{(n+1)q-1}\|\grad \Phi(\bbx^{t})\|^2
         \\
         & + \Big(\frac{4\eta_y}{\mu}+
          \frac{12}{\eta_y\mu}\kappa^2\eta_x^2\Big)
           \frac{2L^2}{M}
           \sum_{t=nq}^{(n+1)q-1} \|Z^t_\perp \|^2_F \Big]
            + \frac{16(q-1)\eta_y}{\mu M} \sum_{t=nq}^{(n+1)q-2}\Delta_{t}.
    \end{aligned}
\end{equation}
Furthermore, recall from \eqref{eq:def-Delta-t} that
\begin{equation*}
    \begin{aligned}
        \Delta_t =  \frac{L^2}{S_2}\bE\Big[\Big( \xz{8}+\xzrr{\xzrr{40\eta_y^2L^2}}  \Big)\|Z^t_\perp \|^2_F + 4\eta_y^2 \|D^t_\perp \|^2_F 
        + \xzrr{20\eta_y^2L^2M\delta_t} + 12M\eta_x^2\|\grad \Phi(\bbx^t)\|^2
        \Big].
    \end{aligned}
    \end{equation*}
If we plug the above equality into \cref{eq:bound-deltat-7}, we obtain \cref{eq:bound-deltat-final} and complete the proof.
\end{proof}


\subsection{Bound of Consensus Error}\label{eq:bound-DZ}
\xz{In this subsection, we display the proper bound of the measure of consensus error $\bE[\|Z^t_\perp\|^2_F]$ and $\bE[\|D^t_\perp\|^2_F]$. The analysis in \cref{lemma:bound-Z-D-perp-final} will be utilized within \cref{lemma:bound-Phi-final} to derive the final convergence result in \cref{thm:main-convergence} by constructing a telescoping sum.}
\begin{lemma}\label{lemma:bound-D-perp-1}
      Suppose Assumptions~\ref{ASPT:SC},~\ref{ASPT:general-sto-grad},~\ref{ASPT:smooth-F-mean-squared}~and~\ref{ASPT:mixture-matrix} hold. Then
        \begin{equation}\label{eq:bound-D-perp-3}
    \begin{aligned}
        \bE[\|D_{\perp}^{t}\|^2_F] 
         \leq & \bE\Bigg[(\rho + \frac{4L^2\yy{\rho^2}\eta_y^2}{1-\rho})\|D^{t-1}_{\perp}\|_F^2 + \frac{L^2\yy{\rho^2}}{1-\rho}\Big( \xzrr{20\eta_y^2L^2M}\delta_{t-1} + 12M\eta_x^2\|\grad \Phi(\bbx^{t-1})\|^2 
         \\
         & +\xzrr{24\eta_y^2}\|E^{t-1}\|_F^2
         +
        \xzrr{(8+40\eta_y^2L^2)}\|Z^{t-1}_\perp \|^2_F
         \Big)\Bigg]
    \end{aligned}
    \end{equation}
    holds for all $t\geq 1$ and $mod(t,q)>0$. Moreover, when $t\geq 1$ and $mod(t,q)=0$, i.e., $t=nq$ for some $n\in\mathbb{N}$, it holds
     \begin{equation}
          \label{eq:bound-D-perp-4}
        \begin{aligned}
                    \bE[\|D_{\perp}^{t}\|^2_F ]
            \leq & 
            \bE\Big[\rho\|D^{t-1}_{\perp}\|_F^2 + \frac{\yy{\rho^2}}{1-\rho} \frac{6M\sigma^2}{S_1} 
            +
            \frac{3S_2\yy{\rho^2}}{1-\rho}\Delta_{t-1} + \frac{3\yy{\rho^2}}{1-\rho}\sum_{j = (n-1)q+1}^{t-1}\Delta_{j-1}
            \\
            & + \frac{\xzrr{72\eta_y^2}L^2\yy{\rho^2}}{1-\rho}\|E^{t-1}\|^2_F + \frac{\xzrr{72\eta_y^2}L^2\yy{\rho^2}}{(1-\rho)S_2}\sum_{j = (n-1)q+1}^{t-1}\|E^{j-1}\|^2_F\Big].
        \end{aligned}
      \end{equation}
\end{lemma}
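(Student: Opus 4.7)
The plan is to derive both bounds from a single recursion for $D^t_\perp$ obtained by projecting the gradient-tracking update onto the subspace orthogonal to $\mathbf{1}$. Starting from $D^{t} = W(D^{t-1} + V^{t} - V^{t-1})$ and using $\Pi W = W\Pi = \Pi$ from \cref{ASPT:mixture-matrix}, applying $\mathbf{I} - \Pi$ on both sides gives
\[
D^{t}_\perp \;=\; (W-\Pi)\, D^{t-1}_\perp \;+\; (W-\Pi)\,(V^{t}-V^{t-1}).
\]
Young's inequality with weight $\epsilon = (1-\rho)/\rho$ together with $\|W-\Pi\|_2 = \rho$ then yields the contraction
\[
\|D^t_\perp\|_F^2 \;\leq\; \rho\, \|D^{t-1}_\perp\|_F^2 \;+\; \frac{\rho^2}{1-\rho}\, \|V^t - V^{t-1}\|_F^2,
\]
so the only remaining task is to bound $\|V^t - V^{t-1}\|_F^2$ in the two cases.

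\textbf{Non-checkpoint step ($\mathrm{mod}(t,q)>0$).} Here the SPIDER recursion gives $\bv_i^t - \bv_i^{t-1} = G_i^t(\cB_i^t) - G_i^{t-1}(\cB_i^t)$, and a Jensen/independence argument together with \cref{ASPT:smooth-F-mean-squared} produces $\bE[\|V^t - V^{t-1}\|_F^2] \leq L^2\bE[\|Z^t - Z^{t-1}\|_F^2]$. Applying \cref{lemma:bound-ZF} at index $t-1$ then converts the right-hand side into $S_2\Delta_{t-1} + 24\eta_y^2 L^2 \bE[\|E^{t-1}\|_F^2]$, and substituting back and expanding $\Delta_{t-1}$ reproduces \eqref{eq:bound-D-perp-3}. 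In particular, the coefficient of $\|D^{t-1}_\perp\|_F^2$ picks up the extra $4L^2\rho^2\eta_y^2/(1-\rho)$ from the $4\eta_y^2\|D^{t-1}_\perp\|_F^2$ term hidden inside $\Delta_{t-1}$.

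\textbf{Checkpoint step ($t = nq$).} Now $\bv_i^t = G_i^t(\cC_i^t)$ is a fresh large-batch estimator, so I would write
\[
\bv_i^t - \bv_i^{t-1} \;=\; \be_i^t - \be_i^{t-1} + \bigl(\grad f_i(\bz_i^t) - \grad f_i(\bz_i^{t-1})\bigr),
\]
apply $(a+b+c)^2 \leq 3(a^2+b^2+c^2)$ and the Lipschitz property from \cref{ASPT:smooth-f} to obtain $\|V^t - V^{t-1}\|_F^2 \leq 3\|E^t\|_F^2 + 3\|E^{t-1}\|_F^2 + 3L^2\|Z^t - Z^{t-1}\|_F^2$. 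At the checkpoint, \cref{ASPT:general-sto-grad} gives $\bE[\|E^t\|_F^2]\leq M\sigma^2/S_1$, and \cref{lemma:bound-ZF} turns the $3L^2\|Z^t - Z^{t-1}\|_F^2$ piece into $3S_2\Delta_{t-1} + 72\eta_y^2 L^2\bE[\|E^{t-1}\|_F^2]$, which already delivers the $3S_2\Delta_{t-1}$ and one full copy of the $72\eta_y^2 L^2\|E^{t-1}\|_F^2$ terms in~\eqref{eq:bound-D-perp-4}. For the residual $3\bE[\|E^{t-1}\|_F^2]$, I would telescope the recursion from \cref{lemma:bound-eti} between the consecutive checkpoints $(n-1)q$ and $t-1$, anchored by $\bE[\|E^{(n-1)q}\|_F^2]\leq M\sigma^2/S_1$, and invoke \cref{lemma:bound-ZF} on each increment $\bE[\|Z^j - Z^{j-1}\|_F^2]$. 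This generates precisely the summations $3\sum_{j}\Delta_{j-1}$ and $(72\eta_y^2 L^2/S_2)\sum_{j}\|E^{j-1}\|_F^2$ in~\eqref{eq:bound-D-perp-4}, and the two $M\sigma^2/S_1$ contributions (one from $3\bE[\|E^t\|_F^2]$, one from the telescope's base) combine into the $6M\sigma^2/S_1$ constant.

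\textbf{Main obstacle.} The contraction is automatic, so the delicate part is matching the constants in the checkpoint case. It is tempting to simply substitute \cref{lemma:bound-Et} for $\bE[\|E^{t-1}\|_F^2]$, but that would introduce an additional $(1+c_1)^{\cdot}$ amplification and spoil the clean $72\eta_y^2 L^2$ factor. The key bookkeeping step is therefore to split the two appearances of $\bE[\|E^{t-1}\|_F^2]$ generated above: unroll only the $3\bE[\|E^{t-1}\|_F^2]$ coming from the Young's expansion via the sharper recursion of \cref{lemma:bound-eti}, and keep the $72\eta_y^2 L^2\bE[\|E^{t-1}\|_F^2]$ from \cref{lemma:bound-ZF} intact, so the two ``$\|E^{t-1}\|_F^2$''-style terms in~\eqref{eq:bound-D-perp-4} emerge with the correct prefactors. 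Everything else is routine algebra.
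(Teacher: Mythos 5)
Your proposal is correct and follows essentially the same route as the paper's proof: the same $(W-\Pi)$ contraction with Young's weight $a=\tfrac{1}{\rho}-1$, the same Jensen-plus-Assumption~\ref{ASPT:smooth-F-mean-squared} bound $\bE[\|V^t-V^{t-1}\|_F^2]\le L^2\bE[\|Z^t-Z^{t-1}\|_F^2]$ off checkpoints, and at checkpoints the same three-term decomposition $\bv_i^t-\bv_i^{t-1}=\be_i^t-\be_i^{t-1}+\grad f_i(\bz_i^t)-\grad f_i(\bz_i^{t-1})$ with $\be_i^{t-1}$ telescoped back to the previous checkpoint via \cref{lemma:bound-eti} and each increment converted through \cref{lemma:bound-ZF}. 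Your "main obstacle" remark about keeping the raw $\|E^{t-1}\|_F^2$ terms rather than prematurely substituting \cref{lemma:bound-Et} matches exactly how the paper structures \eqref{eq:bound-D-perp-4}.
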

\begin{proof}
     Recall that $D^t_\perp = D^t-\Bar{D}^t$ \sa{and}
     $D^{t+1} = W(D^{t} + V^{t+1} -V^{t})$, \sa{which implies that} $\bar{D}^{t+1} = \bar{D}^t + \bar{V}^{t+1} - \bar{V}^t$ for $t\geq 0$.
     Therefore, for 
     any constant $a>0$, we have that
        \begin{equation}\label{eq:bound-D-perp-1}
        \begin{aligned}
                    \|D_{\perp}^{t}\|^2_F 
        = & \|D^{t} - \Bar{D}^{t}\|^2_F = \|W(D^{t-1} + V^{t} -V^{t-1}) - (\bar{D}^{t-1} + \bar{V}^{t} - \bar{V}^{t-1})\|_F^2
        \\
        =&\|(W-\Pi)D^{t-1}_\perp +(W-\Pi)({V}_\perp ^t - {V}_\perp ^{t-1})\|^2_F
        \\
        \leq &
        (1+a)\|(W-\Pi)D_\perp ^{t-1}\|_F^2 + \textstyle (1+\frac{1}{a})\|(W-\Pi)({V}_\perp ^t - {V}_\perp ^{t-1})\|^2_F
        \\
        \leq & 
        (1+a)\|W-\Pi\|^2_2\|D_\perp ^{t-1}\|_F^2 + \textstyle (1+\frac{1}{a})\|W-\Pi\|^2_2\|{V}_\perp ^t - {V}_\perp ^{t-1}\|^2_F
        \\
        \leq & \rho\|D^{t-1}_{\perp}\|_F^2 + \frac{\yy{\rho^2}}{1-\rho}\|V^t - V^{t-1}\|^2_F
        \end{aligned}
    \end{equation}
    holds for all $t\geq 1$,
     where the first inequality is by Young's inequality and $\Pi$ is the average operator such that $\Pi\triangleq\frac{1}{M}\mathbf{1}\mathbf{1}^\top\in\reals^{M\times M}$, \sa{the second inequality uses $\norm{AB}_F^2\leq \norm{A}_2^2\norm{B}_F^2$}, the third inequality is by \cref{ASPT:mixture-matrix}, and  letting $a = \frac{1}{\rho}-1$ and the fact $\rho\yy{<}1$. In the following part, we will analyze $\|V^t - V^{t-1}\|^2_F$ when  $mod(t,q)>0$ and $mod(t,q)=0$. Indeed, when ${mod(t,q)>0}$ and $t\geq 1$, we have that
    \begin{equation}\label{eq:V-diff-1}
        \begin{aligned}
            \bE[\|V^t-V^{t-1}\|^2_F ]
            = &  \bE\left[\sum_{i=1}^M \left\|{\frac{1}{|\gmb{\cB_{i}^{t}}|}\sum_{\xi\in \gmb{\cB_{i}^{t}}}\Big(\tgrad f_i(x^t_i,y^t_i;\xi) - \tgrad f_i(x^{t-1}_i,y^{t-1}_i;\xi)\Big)}\right\|^2\right]
            \\
            \leq &  \bE\left[ \sum_{i=1}^M \frac{1}{|\gmb{\cB_{i}^{t}}|}\sum_{\xi\in \gmb{\cB_{i}^{t}}} \|\tgrad f_i(x^t_i,y^t_i;\xi) - \tgrad f_i(x^{t-1}_i,y^{t-1}_i;\xi)\|^2\right]
            \\
            \leq & L^2  \bE[\|Z^t-Z^{t-1}\|^2_F]
        \end{aligned}
    \end{equation}
    where the first inequality is by Young's inequality, and the second inequality is by \cref{ASPT:smooth-F-mean-squared}.  Moreover, if we take the expectation of \cref{eq:bound-D-perp-1} and then use \cref{eq:V-diff-1} and \cref{lemma:bound-ZF}, it follows that \cref{eq:bound-D-perp-3}
    holds for all $t\geq 1$ and $mod(t,q)>0$. Therefore, 
    \sa{the desired result holds for all $t\geq 1$ such that $mod(t,q)>0$.}

    Next, \sa{for $t\geq 1$ such that ${mod(t,q)=0}$,}  we upper bound $\norm{\bv^t_i - \bv^{t-1}_i}^2$ 
    as follows:
    \begin{equation}\label{eq:V-diff-2}
        \begin{aligned}
            &
            \bE[\norm{\bv^t_i - \bv^{t-1}_i}^2]
            \\
             =&
            \bE[\norm{\bv^t_i - \grad f_i(\bx^t_i,\by^t_i) + \grad f_i(\bx^t_i,\by^t_i)
            -  \grad f_i(\bx^{t-1}_i,\by^{t-1}_i) + \grad f_i(\bx^{t-1}_i,\by^{t-1}_i) - \bv^{t-1}_i}^2]
            \\
            \leq &  3\bE[\norm{\bv^t_i - \grad f_i(\bx^t_i,\by^t_i)}^2] + 3\bE[\norm{\grad f_i(\bx^t_i,\by^t_i)
            -  \grad f_i(\bx^{t-1}_i,\by^{t-1}_i)}^2] + 3\bE[\norm{  \bv^{t-1}_i- \grad f_i(\bx^{t-1}_i,\by^{t-1}_i)}^2]
            \\
            = &  3\bE[\norm{\be^t_i}^2] + 3\bE[\norm{\grad f_i(\bx^t_i,\by^t_i)
            -  \grad f_i(\bx^{t-1}_i,\by^{t-1}_i)}^2] + 3\bE[\norm{\be^{t-1}_i}^2]
            \\
            \leq &
            \frac{3\sigma^2}{S_1} + 3L^2\bE[\norm{\bz^t_i-\bz^{t-1}_i}^2] + 3\bE[\norm{\be^{t-1}_i}^2],\quad \forall i\in [M],
        \end{aligned}
    \end{equation}
    where the last inequality is by \cref{lemma:bound-eti} and \cref{ASPT:smooth-F-mean-squared}. Next, we assume that $t=nq$ for some $n\in \mathbb{N}^+$; \sa{since $mod(t-1,q)>0$, \cref{lemma:bound-eti} 
    implies that}
    \begin{equation}\label{eq:ei-deduction}
        \begin{aligned}
            \bE[\|\be^{t-1}_i\|^2] 
            \leq &
            \bE[\|\be^{t-2}_i\|^2]
            +  \frac{L^2}{S_2}\bE[\|\bz^{t-1}_i-\bz^{t-2}_i\|^2]
            \\
            \leq & \bE[\|\be^{(n-1)q}_i\|^2]
            +  \frac{L^2}{S_2}\sum_{j = (n-1)q+1}^{t-1}\bE[\|\bz^{j}_i-\bz^{j-1}_i\|^2]
            \\
            \leq & \frac{\sigma^2}{S_1} +  \frac{L^2}{S_2}\sum_{j = (n-1)q+1}^{t-1}\bE[\|\bz^{j}_i-\bz^{j-1}_i\|^2],\quad \forall i\in [M],
        \end{aligned}
    \end{equation}
    holds for all $t\geq 1$ and $mod(t,q)=0$.
    If we plug \cref{eq:ei-deduction} into \cref{eq:V-diff-2}, it follows that
    \begin{equation*}
        \begin{aligned}
            \bE[\norm{\bv^t_i - \bv^{t-1}_i}^2]
            \leq &
            \frac{6\sigma^2}{S_1} + 3L^2\bE[\norm{\bz^t_i-\bz^{t-1}_i}^2] 
            +  \frac{3L^2}{S_2}\sum_{j = (n-1)q+1}^{t-1}\bE[\|\bz^{j}_i-\bz^{j-1}_i\|^2],\quad \forall i\in [M],
        \end{aligned}
    \end{equation*}
     holds for all $t\geq 1$ and $mod(t,q)=0$; thus, we  obtain that 
     \begin{equation}\label{eq:V-diff-3}
         \bE[\norm{V^t-V^{t-1}}^2_F] \leq  \frac{6M\sigma^2}{S_1} + 3L^2\bE[\norm{Z^t-Z^{t-1}}^2_F] 
            +  \frac{3L^2}{S_2}\sum_{j = (n-1)q+1}^{t-1}\bE[\norm{Z^j-Z^{j-1}}^2_F]
     \end{equation}
      holds for all $t\geq 1$ \sa{such that $mod(t,q)=0$, i.e., $t=nq$ for some $n\in\mathbb{N}$.} Moreover, if we plug \cref{eq:V-diff-3} into \cref{eq:bound-D-perp-1}, and then use \cref{lemma:bound-ZF}, we obtain that \cref{eq:bound-D-perp-4}
       holds for all $t\geq 1$ \sa{such that} $mod(t,q)=0$, \sa{i.e., there exists $n\in\mathbb{N}$ such that $t=nq$.}
\end{proof}

\begin{lemma}\label{lemma:bound-D-perp-2}
  Suppose Assumptions~\ref{ASPT:SC},~\ref{ASPT:general-sto-grad},~\ref{ASPT:smooth-F-mean-squared}~and~\ref{ASPT:mixture-matrix} hold. Then the inequality
    \begin{equation}\label{eq:bound-D-perp-15}
 \begin{aligned}
        & \sa{\bE\Big[(1-\alpha)\|D^{Nq-1}_\perp\|^2_F 
        + \alpha \sum_{j=1}^{Nq-1} \|D_\perp^{j}\|^2_F}\Big]\\
        \sa{=} & \bE\Big[\|D^{Nq-1}_\perp\|^2_F 
        + \sum_{n=1}^{N-1}\xz{\alpha} \|D_\perp^{nq-1}\|^2_F
        +\sum_{n=1}^{N-1}\sum_{j=nq}^{(n+1)q-2}\xz{\alpha} \|D^j_\perp\|^2_F 
        +
        \xz{\alpha}  \sum_{j=1}^{q-2}\|D_\perp^{\sa{j}}\|^2\Big]
        \\
        \leq &
        \bE\Big[\Big(
         (\frac{3L^2}{(1-\rho)S_2} +  \xzrr{\frac{288\tilde{q}\eta_y^2L^4}{(1-\rho)S_2}})
        4\yy{\rho^2} \eta_y^2
        +
        ( \rho + \frac{4L^2\yy{\rho^2}\eta_y^2}{1-\rho} + \xzrr{\frac{24\eta_y^2L^2\yy{\rho^2}}{1-\rho}} \frac{8(q-1)L^2\eta_y^2}{S_2})\Big)\|D^0_\perp\|^2_F
        \\
        &
        +
        \sum_{j=0}^{q-2}
        \Big(
        {b}_\delta\delta_j
        +
        {b}_\Phi \|\grad \Phi(\bbx^j)\|^2
        +
        {b}_Z\|Z^j_\perp\|^2_F 
        \Big)
        +
        \sum_{n=1}^{N-1}\sum_{j=(n-1)q}^{(n+1)q-2}
        \Big(
        \hat{b}_\delta\delta_j
        +
        \hat{b}_\Phi \|\grad \Phi(\bbx^j)\|^2
        +
        \hat{b}_Z\|Z^j_\perp\|^2_F 
        \Big)\Big]
        \\
        &
      +
        (N-1)\frac{2M\sigma^2\yy{\rho^2} }{(1-\rho)S_1}
          \Big(
          ((q-1) +6\Tilde{q}) \xzrr{24L^2\eta_y^2} + 3
          \Big) + \xzrr{\frac{24\eta_y^2L^2\yy{\rho^2}}{1-\rho}}\frac{2M\sigma^2}{S_1}(q-1),
        \end{aligned}
\end{equation}
holds for all $N\in\mathbb{N}^+$,
where  
\begin{equation}\label{eq:def-tilde-q}
    \tilde{q} \triangleq \max \left\{1,\frac{q-1}{S_2}\right\}
\end{equation}
and
\begin{equation}\label{eq:def-alpha}
    \begin{aligned}
    & \alpha \triangleq \min\{\alpha_1,\alpha_2\},\;\alpha_1 \triangleq 1-\rho - \frac{12L^2\yy{\rho^2}\eta_y^2}{1-\rho},\\
        & \alpha_2 \triangleq 1-\rho  - \frac{4L^2\yy{\rho^2}\eta_y^2}{1-\rho} - \xzrr{\frac{24\eta_y^2L^2\yy{\rho^2}}{1-\rho}} \frac{8(q-1)L^2\eta_y^2}{S_2} -  \yy{\rho^2}\Big(\frac{3L^2}{(1-\rho)S_2} +  \xzrr{\frac{288\tilde{q}\eta_y^2L^4}{(1-\rho)S_2}}\Big)
        4\eta_y^2,
    \end{aligned}
\end{equation}
and
\begin{equation}\label{def:b-coeff}
    \begin{aligned}
        & b_\delta \triangleq \frac{L^2\yy{\rho^2}}{1-\rho}\xzrr{20ML^2\eta_y^2} + \xzrr{\frac{24\eta_y^2L^2\yy{\rho^2}}{1-\rho} \frac{2(q-1)L^2}{S_2}20ML^2\eta_y^2},
        \\
        & b_\Phi \triangleq \frac{12\rho^2 ML^2\eta_x^2}{1-\rho}
        + \xzrr{\frac{24\rho^2 \eta_y^2L^2}{1-\rho}}\frac{24(q-1)ML^2\eta_x^2}{S_2},
        \\
        & b_Z \triangleq \frac{\yy{\rho^2}L^2}{1-\rho} (\xzrr{8+40L^2\eta_y^2})(1+\xzrr{\frac{48(q-1)L^2\eta_y^2}{S_2}}),
        \end{aligned}
\end{equation}
and
\begin{equation}
    \label{eq:def-b-hat-coeff}
    \begin{aligned}
        &
        \hat{b}_\delta \triangleq
        \max\{
        b_\delta, \frac{3L^2\yy{\rho^2}}{1-\rho}\xzrr{20ML^2\eta_y^2}, \yy{\rho^2}\Big(\frac{3L^2}{(1-\rho)S_2} +  \xzrr{\frac{288\tilde{q}\eta_y^2L^4}{(1-\rho)S_2}}\Big) 
           \xzrr{20ML^2\eta_y^2}
        \},
        \\
        & \hat{b}_\Phi \triangleq 
        \max\{
        b_\Phi, \frac{36ML^2\yy{\rho^2}\eta_x^2}{1-\rho},
         \yy{\rho^2}\Big(\frac{3L^2}{(1-\rho)S_2} +  \xzrr{\frac{288\tilde{q}\eta_y^2L^4}{(1-\rho)S_2}}\Big)
            12M\eta_x^2
        \},
        \\
        &\hat{b}_Z \triangleq
        \max\{
        b_Z, \frac{3\yy{\rho^2}L^2}{1-\rho}
         \xzrr{(8+40\eta_y^2L^2)},
          \yy{\rho^2}\Big(\frac{3L^2}{(1-\rho)S_2} +  \xzrr{\frac{288\tilde{q}\eta_y^2L^4}{(1-\rho)S_2}}\Big)
           \xzrr{(8+40\eta_y^2L^2)}      
        \}.
    \end{aligned}
\end{equation}
\end{lemma}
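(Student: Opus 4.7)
The plan is to iterate the two recursive inequalities from Lemma~\ref{lemma:bound-D-perp-1} over $t=1,\ldots,Nq-1$, carefully splitting the index set into \emph{inner} steps (those with $\mathrm{mod}(t,q)>0$, for which \eqref{eq:bound-D-perp-3} applies) and \emph{refresh} steps $t=nq$ (for which \eqref{eq:bound-D-perp-4} applies), then controlling the residual $\|E^t\|^2_F$ contributions via Lemma~\ref{lemma:bound-Et} and telescoping. The left-hand-side rewriting in \eqref{eq:bound-D-perp-15} is just a partition of $\sum_{j=1}^{Nq-2}\|D^j_\perp\|^2_F$ into epoch-boundary indices $j=nq-1$ and epoch-interior indices, chosen so that the structure matches exactly what emerges on the right after telescoping.

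First I will apply \eqref{eq:bound-D-perp-3} for every inner index $t$ inside a fixed epoch $[nq+1,(n+1)q-1]$. This produces a contraction factor $\rho+\tfrac{4L^2\rho^2\eta_y^2}{1-\rho}$ on $\|D^{t-1}_\perp\|^2_F$ together with linear source terms in $\delta_{t-1}$, $\|\grad\Phi(\bbx^{t-1})\|^2$, $\|Z^{t-1}_\perp\|^2_F$ and $\|E^{t-1}\|^2_F$. Each $\|E^{t-1}\|^2_F$ is then replaced by the Lemma~\ref{lemma:bound-Et} estimate $\tfrac{2M\sigma^2}{S_1}+\sum_{j=nq}^{t-2}(1+c_1)^{t-2-j}\Delta_j$, and each $\Delta_j$ is expanded via \eqref{eq:def-Delta-t} into the four ``source'' quantities plus an extra $\|D^j_\perp\|^2_F$ piece. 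Collecting coefficients across all indices of the epoch produces precisely the constants $b_\delta,b_\Phi,b_Z$ of \eqref{def:b-coeff}; the additional $\|D^j_\perp\|^2_F$ feedback from $\Delta_j$ will be absorbed into $\alpha_1$ after summation.

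For each refresh step $t=nq$, \eqref{eq:bound-D-perp-4} contributes the extra $\tfrac{6M\sigma^2}{S_1}\cdot\tfrac{\rho^2}{1-\rho}$ noise term and a summation of $\Delta_j$ and $\|E^{j-1}\|^2_F$ over the preceding epoch. A second application of Lemma~\ref{lemma:bound-Et} to these $\|E^{j-1}\|^2_F$ terms yields inner sums of $\Delta_j$ extending back to index $(n-1)q$, which is why the range $j=(n-1)q,\ldots,(n+1)q-2$ appears in the hat-coefficient block of \eqref{eq:bound-D-perp-15}. Because each $\|D^j_\perp\|^2_F$ on the right then receives contributions from (i) the direct $\rho$-contraction, (ii) the inner-step $\Delta_j$ expansion, and (iii) the refresh-step $\Delta_j$ expansion, I will take $\hat{b}_\delta,\hat{b}_\Phi,\hat{b}_Z$ to be the maxima of the individual coefficients as in \eqref{eq:def-b-hat-coeff}, giving a uniform bound valid across both regimes. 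The combinatorial factor $\tilde{q}=\max\{1,(q-1)/S_2\}$ arises from comparing the two characteristic magnitudes $1$ (for a single $\Delta_{t-1}$) and $(q-1)/S_2$ (for a sum of $\Delta_j$ divided by the batch size $S_2$) that surface in the refresh recursion.

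The final step moves every $\|D^t_\perp\|^2_F$ term appearing on the right back to the left and identifies the net coefficient remaining in front of each interior $\|D^j_\perp\|^2_F$. By the definitions of $\alpha_1$ (tracking the inner-step cascade: $\rho$ contraction plus $\Delta_j$ feedback through $E^t$) and $\alpha_2$ (tracking the refresh cascade with its additional $(q-1)/S_2$ inner-summation factor), this residual coefficient is at least $\alpha=\min\{\alpha_1,\alpha_2\}$, yielding exactly \eqref{eq:bound-D-perp-15}. The main obstacle is the bookkeeping: verifying that the combined cascade coefficient acting on each $\|D^j_\perp\|^2_F$ stays uniformly at most $1-\alpha$ in $j$, for which both the $\tilde{q}$ scaling and the maxima defining $\hat{b}_\ast$ are essential. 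Edge effects at the first epoch (where no previous refresh exists) are handled by the explicit contribution of $\|D^0_\perp\|^2_F$ and the isolated sum $\sum_{j=0}^{q-2}(b_\delta\delta_j+b_\Phi\|\grad\Phi(\bbx^j)\|^2+b_Z\|Z^j_\perp\|^2_F)$ in \eqref{eq:bound-D-perp-15}.
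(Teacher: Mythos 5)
Your proposal follows essentially the same route as the paper's proof: apply the two cases of Lemma~\ref{lemma:bound-D-perp-1} to inner steps and refresh steps separately, substitute the bound on $\bE[\|E^t\|_F^2]$ from Lemma~\ref{lemma:bound-Et}, expand each $\Delta_j$ via \eqref{eq:def-Delta-t}, collect coefficients into $b_\ast$ and the maxima $\hat b_\ast$, absorb the $\|D^j_\perp\|_F^2$ feedback into $\alpha_1,\alpha_2$ after summing over epochs, and treat the first epoch ($n=0$) separately. The only detail worth making explicit when you write it out is that bounding the geometric sums $\sum_t\sum_j(1+c_1)^{t-2-j}\Delta_j$ by $2(q-1)\sum_j\Delta_j$ requires the condition $q\le 1/(2c_1)$ from Parameter Condition~\ref{param-condition:basic}, which is what makes the coefficient bookkeeping close.
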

\begin{proof}
     For all $t\geq 1$ \sa{such that} $mod(t,q)>0$, it follows from \cref{lemma:bound-D-perp-1} and \cref{lemma:bound-Et} that
    \begin{equation}\label{eq:bound-D-perp-5}
    \begin{aligned}
        \bE[\|D_{\perp}^{t}\|^2_F] 
         \leq & (\rho + \frac{4L^2\yy{\rho^2}\eta_y^2 }{1-\rho})\bE[\|D^{t-1}_{\perp}\|_F^2] + \frac{L^2\yy{\rho^2} }{1-\rho}\Big( \xzrr{20\eta_y^2L^2M}\delta_{t-1} + 12M\eta_x^2\|\grad \Phi(\bbx^{t-1})\|^2 
         \\
         & 
         +
        \xzrr{(8+40\eta_y^2L^2)}\|Z^{t-1}_\perp \|^2_F   
         \Big)
         + \frac{\xzrr{24\eta_y^2}L^2\yy{\rho^2} }{1-\rho}\Big(\frac{2M\sigma^2}{S_1} + \sum_{j=nq}^{t-2}(1+c_1)^{t-2-j} \Delta_{j}\Big).
    \end{aligned}
    \end{equation}
    Moreover, \sa{recall $\Delta_t$ defined in~\eqref{eq:def-Delta-t}, i.e.,}
\begin{equation*}
    \begin{aligned}
        \Delta_t = &\frac{L^2}{S_2}\bE\Big[
         \xzrr{(8+40\eta_y^2L^2)}\|Z^t_\perp \|^2_F + 4\eta_y^2 \|D^t_\perp \|^2_F 
        + \xzrr{20\eta_y^2L^2M}\delta_t + 12M\eta_x^2\|\grad \Phi(\bbx^t)\|^2\Big].
    \end{aligned}
    \end{equation*}
    If we plug $\Delta_t$ into \cref{eq:bound-D-perp-5}, we obtain that
    \begin{equation}\label{eq:bound-D-perp-6}
            \begin{aligned}
        \bE[\|D_{\perp}^{t}\|^2_F] 
         \leq & \bE\Bigg[(\rho + \frac{4L^2\yy{\rho^2}\eta_y^2 }{1-\rho})\|D^{t-1}_{\perp}\|_F^2] 
         +
         \frac{\xzrr{24\eta_y^2}L^2\yy{\rho^2} }{1-\rho}\frac{L^2}{S_2}4\eta_y^2\sum_{j=nq}^{t-2}(1+c_1)^{t-2-j}\|D^j_\perp\|_F^2
         \\
         &
          + \frac{L^2\yy{\rho^2}}{1-\rho}\xzrr{20M\eta_y^2L^2}\delta_{t-1}
          +
         \frac{\xzrr{24\eta_y^2}L^2\yy{\rho^2}}{1-\rho}\frac{L^2}{S_2}\xzrr{20\eta_y^2L^2M}\sum_{j=nq}^{t-2}(1+c_1)^{t-2-j}
          \delta_{j}
         \\
         &
         + \frac{12M L^2\yy{\rho^2}\eta_x^2}{1-\rho}\|\grad \Phi(\bbx^{t-1})\|^2 
         +
          \frac{\xzrr{24\eta_y^2}L^2\yy{\rho^2}}{1-\rho}\frac{12ML^2\eta_x^2}{S_2}\sum_{j=nq}^{t-2}(1+c_1)^{t-2-j}
          \|\grad \Phi(\bbx^{j})\|^2 
         \\
         & 
         +
        \frac{L^2\yy{\rho^2}}{1-\rho}\xzrr{(8+40\eta_y^2L^2)}\|Z^{t-1}_\perp \|^2_F   
        \\
        &+
          \xzrr{\frac{24\eta_y^2L^2\yy{\rho^2}}{1-\rho}}\frac{L^2}{S_2}\xzrr{(8+40\eta_y^2L^2)}\sum_{j=nq}^{t-2}(1+c_1)^{t-2-j}\|Z^{j}_\perp \|^2_F  
          \\
          & +  \xzrr{\frac{24\eta_y^2L^2\yy{\rho^2}}{1-\rho}}\frac{2M\sigma^2}{S_1}\Bigg]
    \end{aligned}
    \end{equation}
     holds for all $t\geq 1$ \sa{such that} $mod(t,q)>0$. Moreover, if we assume $nq<t\sa{\leq}(n+1)q-1$ for some $n\in\mathbb{N}$ and sum \cref{eq:bound-D-perp-6} over $t=nq+1$ to $(n+1)q-1$, and use  the fact
     that
\begin{equation*}
\begin{aligned}
        & \sum_{t=nq+1}^{(n+1)q-1}\sum_{j=nq}^{t-2}(1+c_1)^{t-2-j}A_j =\sum_{j=nq}^{(n+1)q-3}\sum_{t=j+2}^{(n+1)q-1}(1+c_1)^{t-2-j}A_j \leq 2(q-1) \sum_{t=nq}^{(n+1)q-3}A_t
\end{aligned}
\end{equation*}
holds for any nonegative number sequence $\{A_j\}$,
where the last inequality is by $q\leq\frac{1}{2c_1}$ and \cref{lemma:parameter-ineq}, we obtain that
\begin{equation}\label{eq:bound-D-perp-7}
    \begin{aligned}
        & \bE\Bigg[\sum_{t=nq+1}^{(n+1)q-1}\|D^t_\perp\|^2_F - \sum_{t=nq}^{(n+1)q-2}(\rho + \frac{4L^2\yy{\rho^2}\eta_y^2}{1-\rho}) 
        \|D^t_\perp\|^2_F \Bigg]
        \\
        \leq & \bE\Bigg[\xzrr{\frac{24\eta_y^2L^2\yy{\rho^2}}{1-\rho}} \frac{8(q-1)L^2\eta_y^2}{S_2}\sum_{t=nq}^{(n+1)q-3}\|D^t_\perp\|^2_F + b_\delta\sum_{t=nq}^{(n+1)q-2}\delta_t + b_\Phi \sum_{t=nq}^{(n+1)q-2} \|\grad \Phi(\bbx^t)\|^2
        \\
      & + b_Z\sum_{t=nq}^{(n+1)q-2}\|Z^t_\perp\|^2_F\Bigg] + \xzrr{\frac{24\eta_y^2L^2\yy{\rho^2}}{1-\rho}}\frac{2M\sigma^2}{S_1}(q-1),
        \end{aligned}
\end{equation}
holds for all $n\in\mathbb{N}$, where $b_\delta,b_\Phi,b_Z$ are defined in \cref{def:b-coeff}.

On the other hand, for all $t\geq 1$ and $mod(t,q)=0$, we assume $t=nq$ for some $n\in\mathbb{N}$. Then it follows from \cref{lemma:bound-D-perp-1} and \cref{lemma:bound-Et} that
     \begin{equation*}
        \begin{aligned}
                    \bE[\|D_{\perp}^{t}\|^2_F ]
            \leq & 
            \bE\Bigg[\rho\|D^{t-1}_{\perp}\|_F^2 + \frac{\yy{\rho^2}}{1-\rho} \frac{6M\sigma^2}{S_1} 
            +
            \frac{3\yy{\rho^2}S_2}{1-\rho}\Delta_{t-1} + \frac{3\yy{\rho^2}}{1-\rho}\sum_{j = (n-1)q+1}^{t-1}\Delta_{j-1}
            \\
            & + \frac{\xzrr{72}\eta_y^2L^2\yy{\rho^2}}{1-\rho}\Big(\frac{2M\sigma^2}{S_1} + \sum_{j=(n-1)q}^{t-2}(1+c_1)^{t-2-j} \Delta_{j}\Big) 
            \\
            & + \frac{\xzrr{72\eta_y^2}L^2\yy{\rho^2}}{(1-\rho)S_2}\sum_{s = (n-1)q+1}^{t-1}\Big(\frac{2M\sigma^2}{S_1} + \sum_{j=(n-1)q}^{s-2}(1+c_1)^{s-2-j} \Delta_{j}\Big)\Bigg]
        \end{aligned}
      \end{equation*}
       holds \sa{for all $t\geq 1$ such that} $t=nq$ for some $n\in\mathbb{N}$.
Note that
    \begin{equation*}
        \begin{aligned}
            \sum_{s = (n-1)q+1}^{t-1}\sum_{j=(n-1)q}^{s-2}(1+c_1)^{s-2-j} \Delta_{j} = \sum_{j=(n-1)q}^{t-3}\sum_{s=j+2}^{t-1}(1+c_1)^{s-2-j}\Delta_j \leq 2(q-1)\sum_{j=(n-1)q}^{t-3}\Delta_j,
        \end{aligned}
    \end{equation*}
    where the last inequality is by $q\leq  1/(2c_1) $ and \cref{lemma:parameter-ineq}, 
    \sa{implying} $(1+c_1)^{q}\leq 2$.
    Therefore, we further have that
        \begin{equation*}
        \begin{aligned}
                    \bE[\|D_{\perp}^{t}\|^2_F ]
            \leq & 
            \bE\Bigg[\rho\|D^{t-1}_{\perp}\|_F^2 + \frac{\yy{\rho^2}}{1-\rho} \frac{6M\sigma^2}{S_1} 
            +
            \frac{3\yy{\rho^2}S_2}{1-\rho}\Delta_{t-1}
             + \frac{3\yy{\rho^2}}{1-\rho}\sum_{j = (n-1)q}^{t-2}\Delta_{j}
            \\
            & + \frac{\xzrr{72\eta_y^2}L^2\yy{\rho^2}}{1-\rho}\Big(\frac{2M\sigma^2}{S_1} + 2\sum_{j=(n-1)q}^{t-2}\Delta_{j}\Big) 
            \\
            & + \frac{\xzrr{72\eta_y^2}L^2\yy{\rho^2}}{(1-\rho)S_2}\Big(\frac{2M\sigma^2}{S_1}(q-1) + 2(q-1)\sum_{j=(n-1)q}^{t-3}\Delta_{j}\Big)\Bigg]
        \end{aligned}
      \end{equation*}
      holds for \sa{all $t\geq 1$ such that} $t=nq$ for some $n\in\mathbb{N}$.
      Then, by $\tilde{q}=\yy{\max\{1,\frac{q-1}{S_2}\}}$ and rearranging terms, we have that
              \begin{equation}
          \label{eq:bound-D-perp-8}
        \begin{aligned}
                    \bE[\|D_{\perp}^{t}\|^2_F ]
            \leq & 
            \bE\Bigg[\rho\|D^{t-1}_{\perp}\|_F^2 + \frac{\yy{\rho^2}}{1-\rho} \frac{6M\sigma^2}{S_1} 
            +
            \frac{3\yy{\rho^2}S_2}{1-\rho}\Delta_{t-1}
            \\
            & 
            + \yy{\rho^2}\Big(\frac{3}{1-\rho} +\frac{\xzrr{72\eta_y^2}L^2}{1-\rho}\cdot2 
            +
            \frac{\xzrr{72\eta_y^2}L^2}{(1-\rho)S_2}\cdot2(q-1)\Big) \sum_{j=(n-1)q}^{t-2}\Delta_{j} \Bigg]
           \\
           &
            + 
            \frac{\xzrr{72\eta_y^2}\yy{\rho^2}L^2}{1-\rho}\frac{2M\sigma^2}{S_1}
            +
            \frac{\xzrr{72\eta_y^2}\yy{\rho^2}L^2}{(1-\rho)S_2}\frac{2M\sigma^2}{S_1}(q-1)
                        \\
            \leq & 
            \bE\Bigg[\rho\|D^{t-1}_{\perp}\|_F^2 + \frac{\yy{\rho^2}}{1-\rho} \frac{6M\sigma^2}{S_1} 
            +
            \frac{3\yy{\rho^2}S_2}{1-\rho}\Delta_{t-1}
            + \yy{\rho^2}\Big(\frac{3}{1-\rho} 
            +
            \xzrr{\frac{288\tilde{q}\eta_y^2L^2}{1-\rho}}
            \Big)
            \sum_{j=(n-1)q}^{t-2}\Delta_{j} \Bigg]
           \\
           &
            + 
            \frac{\xzrr{288\tilde{q}\eta_y^2L^2\yy{\rho^2}}}{1-\rho}\frac{M\sigma^2}{S_1}
        \end{aligned}
      \end{equation}
       holds for \sa{all $t\geq 1$ such that} $t=nq$ for some $n\in\mathbb{N}$,
      where the second inequality is by the fact $q\geq 1$ and $\frac{1}{S_1}\leq 1$.
       Moreover, 
       recall that \sa{for all $t\geq 1$,}
\begin{equation*}
    \begin{aligned}
        \Delta_t = &\frac{L^2}{S_2}\bE\Big[
         \xzrr{(8+40\eta_y^2L^2)}\|Z^t_\perp \|^2_F + 4\eta_y^2 \|D^t_\perp \|^2_F 
        + \xzrr{20\eta_y^2L^2M}\delta_t + 12M\eta_x^2\|\grad \Phi(\bbx^t)\|^2\Big].
    \end{aligned}
    \end{equation*}
\sa{Hence, for arbitrary $n\in\mathbb{N}$, setting $t=nq$ and substituting $\Delta_t$ into} \cref{eq:bound-D-perp-8}, we obtain that
               \begin{equation}
          \label{eq:bound-D-perp-10}
        \begin{aligned}
                    \bE[\|D_{\perp}^{nq}\|^2_F ]
            \leq & 
            \frac{\xzrr{288\tilde{q}\eta_y^2L^2\yy{\rho^2}}}{1-\rho}\frac{M\sigma^2}{S_1} + \bE\Bigg[\rho\|D^{nq-1}_{\perp}\|_F^2 + \frac{\yy{\rho^2}}{1-\rho} \frac{6M\sigma^2}{S_1} 
            \\
            &+
            \frac{3L^2\yy{\rho^2}}{1-\rho}
            \Big[
         \xzrr{(8+40\eta_y^2L^2)}\|Z^{nq-1}_\perp \|^2_F + 4\eta_y^2 \|D^{nq-1}_\perp \|^2_F
        + \xzrr{20\eta_y^2L^2M}\delta_{nq-1} + 12M\eta_x^2\|\grad \Phi(\bbx^{nq-1})\|^2\Big]
            \\
            & 
            +
            \yy{\rho^2}\Big(\frac{3L^2}{(1-\rho)S_2} +  \xzrr{\frac{288\tilde{q}\eta_y^2L^4}{(1-\rho)S_2}}\Big)
            \xzrr{(8+40\eta_y^2L^2)}
            \sum_{j=(n-1)q}^{nq-2}\|Z^{j}_\perp \|^2_F
            \\
            &+
            \yy{\rho^2}\Big(\frac{3L^2}{(1-\rho)S_2} +  \xzrr{\frac{288\tilde{q}\eta_y^2L^4}{(1-\rho)S_2}}\Big)
            4\eta_y^2 
            \sum_{j=(n-1)q}^{nq-2}\|D^{j}_\perp \|^2_F
            \\
            & 
            +
           \yy{\rho^2}\Big(\frac{3L^2}{(1-\rho)S_2} +  \xzrr{\frac{288\tilde{q}\eta_y^2L^4}{(1-\rho)S_2}}\Big) 
           \xzrr{20\eta_y^2L^2M}
            \sum_{j=(n-1)q}^{nq-2} \sa{\delta_j} 
            \\
            & +
            \yy{\rho^2}\Big(\frac{3L^2}{(1-\rho)S_2} +  \xzrr{\frac{288\tilde{q}\eta_y^2L^4}{(1-\rho)S_2}}\Big)
            12M\eta_x^2
            \sum_{j=(n-1)q}^{nq-2}\|\grad\Phi(\bbx^j) \|^2\Bigg] 
        \end{aligned}
      \end{equation}
       holds for $n\in\mathbb{N}^+$. Therefore, we have finished discussing \sa{the two cases depending on $t$}, i.e., $mod(t,q)=0$ and $mod(t,q)>0$. 
       
       Next, if we add \cref{eq:bound-D-perp-10} and \cref{eq:bound-D-perp-7}, it follows that
       \begin{equation}\label{eq:bound-D-perp-11}
    \begin{aligned}
        &\bE\Bigg[ \sum_{j=nq}^{(n+1)q-1}\|D^j_\perp\|^2_F - \sum_{j=nq}^{(n+1)q-2}(\rho + \frac{4L^2\yy{\rho^2}\eta_y^2}{1-\rho}) 
        \|D^j_\perp\|^2_F \Bigg]
        \\
        \leq & \bE\Bigg[\xzrr{\frac{24\eta_y^2L^2\yy{\rho^2}}{1-\rho}} \frac{8(q-1)L^2\eta_y^2}{S_2}\sum_{j=nq}^{(n+1)q-3}\|D^j_\perp\|^2_F 
        + 
        \Big(\frac{12L^2\yy{\rho^2}\eta_y^2}{1-\rho}+\rho\Big)
        \|D^{nq-1}_\perp\|^2_F
        \\
        &+
        \yy{\rho^2}\Big(\frac{3L^2}{(1-\rho)S_2} +  \xzrr{\frac{288\tilde{q}\eta_y^2L^4}{(1-\rho)S_2}}\Big)
        4\eta_y^2 
        \sum_{j=(n-1)q}^{nq-2}\|D^{j}_\perp \|^2_F
        \\
        &
        +
        b_\delta\sum_{j=nq}^{(n+1)q-2}\delta_j
        +
        \frac{3L^2\yy{\rho^2}}{1-\rho}\xzrr{20\eta_y^2L^2M}\delta_{nq-1}
        \\
        &+
           \yy{\rho^2}\Big(\frac{3L^2}{(1-\rho)S_2} +  \xzrr{\frac{288\tilde{q}\eta_y^2L^4}{(1-\rho)S_2}}\Big) 
           \xzrr{20\eta_y^2L^2M}
            \sum_{j=(n-1)q}^{nq-2} \sa{\delta_j} 
        \\
        & + b_\Phi \sum_{j=nq}^{(n+1)q-2} \|\grad \Phi(\bbx^j)\|^2
        +
        \frac{36ML^2\yy{\rho^2}\eta_x^2}{1-\rho} \|\grad \Phi(\bbx^{nq-1})\|^2
        \\
        & +
            \yy{\rho^2}\Big(\frac{3L^2}{(1-\rho)S_2} +  \xzrr{\frac{288\tilde{q}\eta_y^2L^4}{(1-\rho)S_2}}\Big)
            12M\eta_x^2
            \sum_{j=(n-1)q}^{nq-2}\|\grad\Phi(\bbx^j) \|^2
        \\
      & + b_Z\sum_{j=nq}^{(n+1)q-2}\|Z^j_\perp\|^2_F 
      + \frac{3L^2\yy{\rho^2}}{1-\rho}
         \xzrr{(8+40\eta_y^2L^2)}\|Z^{nq-1}_\perp \|^2_F
         \\
         & +
          \yy{\rho^2}\Big(\frac{3L^2}{(1-\rho)S_2} +  \xzrr{\frac{288\tilde{q}\eta_y^2L^4}{(1-\rho)S_2}}\Big)
           \xzrr{(8+40\eta_y^2L^2)}
            \sum_{j=(n-1)q}^{nq-2}\|Z^{j}_\perp \|^2_F\Bigg]
      \\
      &+\xzrr{\frac{24\eta_y^2L^2\yy{\rho^2}}{1-\rho}}\frac{2M\sigma^2}{S_1}(q-1)
     +
            \frac{\xzrr{288\tilde{q}\eta_y^2L^2\yy{\rho^2}}}{1-\rho}\frac{M\sigma^2}{S_1} + \frac{\yy{\rho^2}}{1-\rho}\frac{6M\sigma^2}{S_1}
        \end{aligned}
\end{equation}
holds for all $n\in\mathbb{N}^{+}$. Furthermore, if we use the definition of $\hat{b}_\delta,\hat{b}_\Phi,\hat{b}_Z$ that defined in \cref{eq:def-b-hat-coeff}
within the above inequality, we obtain 
       \begin{equation*}
    \begin{aligned}
        & \bE\Bigg[\sum_{j=nq}^{(n+1)q-1}\|D^j_\perp\|^2_F - \sum_{j=nq}^{(n+1)q-2}(\rho + \frac{4L^2\yy{\rho^2}\eta_y^2}{1-\rho}) 
        \|D^j_\perp\|^2_F \Bigg]
        \\
        \leq & \bE\Bigg[\xzrr{\frac{24\eta_y^2L^2\yy{\rho^2}}{1-\rho}} \frac{8(q-1)L^2\eta_y^2}{S_2}\sum_{j=nq}^{(n+1)q-3}\|D^j_\perp\|^2_F 
        + 
        \Big(\frac{12L^2\yy{\rho^2}\eta_y^2}{1-\rho}+\rho\Big)
        \|D^{nq-1}_\perp\|^2_F
        \\
        &+
        \yy{\rho^2}\Big(\frac{3L^2}{(1-\rho)S_2} +  \xzrr{\frac{288\tilde{q}\eta_y^2L^4}{(1-\rho)S_2}}\Big)
        4\eta_y^2 
        \sum_{j=(n-1)q}^{nq-2}\|D^{j}_\perp \|^2_F
        \\
        &
        +
        \sum_{j=(n-1)q}^{(n+1)q-2}
        \Big(
        \hat{b}_\delta\delta_j
        +
        \hat{b}_\Phi \|\grad \Phi(\bbx^j)\|^2
        +
        \hat{b}_Z\|Z^j_\perp\|^2_F 
        \Big)\Bigg]
      \\
      &+\xzrr{\frac{24\eta_y^2L^2\yy{\rho^2}}{1-\rho}}\frac{2M\sigma^2}{S_1}(q-1)
     +
            \frac{\xzrr{288\tilde{q}\eta_y^2L^2\yy{\rho^2}}}{1-\rho}\frac{M\sigma^2}{S_1} + \frac{\yy{\rho^2}}{1-\rho}\frac{6M\sigma^2}{S_1}.
        \end{aligned}
\end{equation*}
Then by rearranging terms, we obtain that
 \begin{equation}\label{eq:bound-D-perp-12}
    \begin{aligned}
        & \bE\Bigg[\|D^{(n+1)q-1}_\perp\|^2_F + \Big(1-\rho - \frac{4L^2\yy{\rho^2}\eta_y^2}{1-\rho} - \xzrr{\frac{24\eta_y^2L^2\yy{\rho^2}}{1-\rho}} \frac{8(q-1)L^2\eta_y^2}{S_2}\Big) \sum_{j=nq}^{(n+1)q-2}
        \|D^j_\perp\|^2_F\Bigg] 
        \\
        \leq & 
        \bE\Bigg[\Big(\frac{12L^2\yy{\rho^2}\eta_y^2}{1-\rho}+\rho\Big)
        \|D^{nq-1}_\perp\|^2_F
        +
        \yy{\rho^2}\Big(\frac{3L^2}{(1-\rho)S_2} +  \xzrr{\frac{288\tilde{q}\eta_y^2L^4}{(1-\rho)S_2}}\Big)
        4\eta_y^2 
        \sum_{j=(n-1)q}^{nq-2}\|D^{j}_\perp \|^2_F
        \\
        &
        +
        \sum_{j=(n-1)q}^{(n+1)q-2}
        \Big(
        \hat{b}_\delta\delta_j
        +
        \hat{b}_\Phi \|\grad \Phi(\bbx^j)\|^2
        +
        \hat{b}_Z\|Z^j_\perp\|^2_F 
        \Big)\Bigg]
      +
          \frac{2\yy{\rho^2}M\sigma^2}{(1-\rho)S_1}
          \Big(
          \Big((q-1) +6\Tilde{q}\Big) \xzrr{24L^2\eta_y^2} + 3
          \Big).
        \end{aligned}
\end{equation}
Moreover, by the definition of $\alpha_1$ and $\alpha_2$ in \eqref{eq:def-alpha}, 
\sa{if we sum} \cref{eq:bound-D-perp-12} over $n=1,2,...,N-1$ for some $N\in\mathbb{N}^+$, we obtain that
\begin{equation}\label{eq:bound-D-perp-13}
        \begin{aligned}
        & \bE\Bigg[\|D^{Nq-1}_\perp\|^2_F 
        + \sum_{n=2}^{N-1}\alpha_1 \|D_\perp^{nq-1}\|^2_F
        +\sum_{n=1}^{N-1}\sum_{j=nq}^{(n+1)q-2}\alpha_2\|D^j_\perp\|^2_F \Bigg]
        \\
        \leq & 
        \bE\Bigg[\Big(\frac{12L^2\yy{\rho^2}\eta_y^2}{1-\rho}+\rho\Big)
        \|D^{q-1}_\perp\|^2_F
        +
        \yy{\rho^2}\Big(\frac{3L^2}{(1-\rho)S_2} +  \xzrr{\frac{288\tilde{q}\eta_y^2L^4}{(1-\rho)S_2}}\Big)
        4\eta_y^2 
        \sum_{j=0}^{q-2}\|D^{j}_\perp \|^2_F
        \\
        &
        +
        \sum_{n=1}^{N-1}\sum_{j=(n-1)q}^{(n+1)q-2}
        \Big(
        \hat{b}_\delta\delta_j
        +
        \hat{b}_\Phi \|\grad \Phi(\bbx^j)\|^2
        +
        \hat{b}_Z\|Z^j_\perp\|^2_F 
        \Big)\Bigg]
      +
          (N-1)\frac{2\yy{\rho^2}M\sigma^2}{(1-\rho)S_1}
          \Big(
          \Big((q-1) +6\Tilde{q}\Big) \xzrr{24L^2\eta_y^2}+ 3
          \Big).
        \end{aligned}
\end{equation}
In addition, for $n=0$, it follows from \cref{eq:bound-D-perp-7} that
\begin{equation*}
    \begin{aligned}
        & \bE\Bigg[\sum_{j=1}^{q-1}\|D^j_\perp\|^2_F - \sum_{j=0}^{q-2}(\rho + \frac{4L^2\yy{\rho^2}\eta_y^2}{1-\rho}) 
        \|D^j_\perp\|^2_F \Bigg]
        \\
        \leq &\bE\Bigg[ \xzrr{\frac{24\eta_y^2L^2\yy{\rho^2}}{1-\rho}} \frac{8(q-1)L^2\eta_y^2}{S_2}\sum_{j=0}^{q-3}\|D^j_\perp\|^2_F + b_\delta\sum_{j=0}^{q-2}\delta_{\sa{j}} + b_\Phi \sum_{j=0}^{q-2} \|\grad \Phi(\bbx^{\sa{j}})\|^2 + b_Z\sum_{j=0}^{q-2}\|Z^{\sa{j}}_\perp\|^2_F\Bigg]
        \\
        &  
        + \xzrr{\frac{24\eta_y^2L^2\yy{\rho^2}}{1-\rho}}\frac{2M\sigma^2}{S_1}(q-1),
    \end{aligned}
\end{equation*}
which further implies that
\begin{equation}\label{eq:bound-D-perp-14}
    \begin{aligned}
        & \bE\Bigg[\|D^{q-1}_\perp\|^2_F + \sum_{j=1}^{q-2}\Big(1-\rho -\frac{4L^2\yy{\rho^2}\eta_y^2}{1-\rho} - \xzrr{\frac{24\eta_y^2L^2\yy{\rho^2}}{1-\rho}} \frac{8(q-1)L^2\eta_y^2}{S_2}\Big)
        \|D^j_\perp\|^2_F\Bigg]
        \\
        \leq & \bE\Bigg[\Big( \rho + \frac{4L^2\yy{\rho^2}\eta_y^2}{1-\rho} + \xzrr{\frac{24\eta_y^2L^2\yy{\rho^2}}{1-\rho}} \frac{8(q-1)L^2\eta_y^2}{S_2}\Big)\|D^0_\perp\|^2_F + b_\delta\sum_{j=0}^{q-2}\delta_{\sa{j}} + b_\Phi \sum_{j=0}^{q-2} \|\grad \Phi(\bbx^{\sa{j}})\|^2
        \\
        & + b_Z\sum_{j=0}^{q-2}\|Z^{\sa{j}}_\perp\|^2_F\Bigg] + \xzrr{\frac{24\eta_y^2L^2\yy{\rho^2}}{1-\rho}}\frac{2M\sigma^2}{S_1}(q-1).
    \end{aligned}
\end{equation}
Next, if we sum up \cref{eq:bound-D-perp-13} and \cref{eq:bound-D-perp-14}, then it follows that \cref{eq:bound-D-perp-15} 
holds for all $N\in\mathbb{N}^+$, which completes the proof.
\end{proof}

\begin{lemma}\label{lemma:bound-Z-D-perp-final}
      Suppose \xz{Assumptions~\ref{ASPT:SC},~\ref{ASPT:general-sto-grad}, \ref{ASPT:smooth-F-mean-squared}, and \ref{ASPT:mixture-matrix}}, \cref{param-condition:basic} and {$1- \rho > 
            \frac{ 2\eta_y^2 \hat{b}_Z }{\alpha(1-\rho )}$} hold. Then \sa{the inequalities below}
         \begin{equation}\label{eq:bound-Z-perp-final}
                \begin{aligned}
           & \bE\Big[ (1-  \rho-
            \frac{ 2\eta_y^2 \hat{b}_Z }{\alpha(1-\rho )} )\sum_{j=1}^T  \| Z^j_\perp \|^2_F\Big] \\
            \leq & \yy{C_{0,Z}}
        + \sa{\frac{\eta_y^2}{\alpha(1-\rho)}}
            \bE\bigg[
        \sum_{j=0}^{q-2}
        \Big(
        {b}_\delta\delta_j
        +
        {b}_\Phi \|\grad \Phi(\bbx^j)\|^2
        \Big)
        +
        \sum_{n=1}^{N-1}\sum_{j=(n-1)q}^{(n+1)q-2}
        \Big(
        \hat{b}_\delta\delta_j
        +
        \hat{b}_\Phi \|\grad \Phi(\bbx^j)\|^2
        \Big)
       \bigg]
        \end{aligned}
    \end{equation}
    \begin{equation}\label{eq:bound-D-perp-final}
        \begin{aligned}
            & \alpha\bE\Big[\sum_{j = 1}^{T-1}\|D^j_\perp\|^2_F\Big]
            \leq \yy{C_{0,D}}+ \sa{C_1}
            \bE\bigg[ 
        \sum_{j=0}^{q-2}
        \Big(
        {b}_\delta\delta_j
        +
        {b}_\Phi \|\grad \Phi(\bbx^j)\|^2
        \Big)
        +
        \sum_{n=1}^{N-1}\sum_{j=(n-1)q}^{(n+1)q-2}
        \Big(
        \hat{b}_\delta\delta_j
        +
        \hat{b}_\Phi \|\grad \Phi(\bbx^j)\|^2
        \Big)
       \bigg]
        \end{aligned}
    \end{equation}
      hold for all $T\geq 1$ \sa{such that} $T=Nq$ for some $N\in\mathbb{N}^+$,
      where \sa{$C_1\triangleq 
            1 +\frac{2\eta_y^2\hat{b}_Z}{\alpha(1-\rho)}\Big(1-\rho -  \frac{ 2\eta_y^2 \hat{b}_Z }{\alpha(1-\rho )}\Big)^{-1}
            $, and}
      \begin{equation}\label{eq:def-C-DZ}
      \begin{aligned}
            C_{0,Z} \triangleq & \bE\bigg[
          (\rho + \frac{2\eta_y^2\hat{b}_Z}{\alpha(1-\rho)})
            \|Z^0_\perp\|^2_F + \frac{\eta_y^2}{\alpha (1-\rho)}\Big((N-1)\frac{2\yy{\rho^2}M\sigma^2}{(1-\rho)S_1}
          \Big(
          \Big((q-1) +6\Tilde{q}\Big) \xzrr{24L^2\eta_y^2}+ 3
          \Big) +  \xzrr{\frac{24\eta_y^2L^2\yy{\rho^2}}{1-\rho}}\frac{2M\sigma^2}{S_1}(q-1)
          \Big)
            \\
            &
             + 
        \frac{\eta_y^2}{\alpha (1-\rho)}
        \Big(\sa{\alpha+}
         \Big(\frac{3L^2\yy{\rho^2}}{(1-\rho)S_2} +         \xzrr{\frac{288\tilde{q}\eta_y^2L^4\rho^2}{(1-\rho)S_2}}\Big)
        4\eta_y^2 
        +
        \Big( \rho + \frac{4L^2\yy{\rho^2}\eta_y^2}{1-\rho} + \xzrr{\frac{24\eta_y^2L^2\yy{\rho^2}}{1-\rho}} \frac{8(q-1)L^2\eta_y^2}{S_2}\Big)\Big)\|D^0_\perp\|^2_F\bigg],\\
        C_{0,D}\triangleq &\bE\bigg[2\hat{b}_Z\Big(
        \Big(1- \rho- \frac{ 2\eta_y^2 \hat{b}_Z }{\alpha(1-\rho )}\Big)^{-1}
            \Big(\rho + \frac{ 2\eta_y^2 \hat{b}_Z }{\alpha(1-\rho )}\Big)
            +
            1\Big)\|Z^0_\perp\|^2_F
            \\
            & +
            \sa{C_1}
             \Big(
         \sa{\alpha+}(\frac{3L^2\yy{\rho^2}}{(1-\rho)S_2} +         \xzrr{\frac{288\tilde{q}\eta_y^2L^4\rho^2}{(1-\rho)S_2}})
        4\eta_y^2 
         +
        ( \rho + \frac{4L^2\yy{\rho^2}\eta_y^2}{1-\rho} + \xzrr{\frac{24\eta_y^2L^2\yy{\rho^2}}{1-\rho}} \frac{8(q-1)L^2\eta_y^2}{S_2})\Big)\|D^0_\perp\|^2_F
        \\
        &
      +
        \sa{C_1}\Big((N-1)\frac{2\yy{\rho^2}M\sigma^2}{(1-\rho)S_1}
          \Big(
          \Big((q-1) +6\Tilde{q}\Big) \xzrr{24L^2\eta_y^2}+ 3
          \Big) +  \xzrr{\frac{24\eta_y^2L^2\yy{\rho^2}}{1-\rho}}\frac{2M\sigma^2}{S_1}(q-1)\Big)\bigg].
      \end{aligned}
      \end{equation}
\end{lemma}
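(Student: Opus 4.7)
The strategy is to couple Lemma \ref{lemma:bound-D-perp-2} (which bounds $\sum \|D^j_\perp\|_F^2$ in terms of $\sum \|Z^j_\perp\|_F^2$ plus other quantities) with an analogous one-step recursion for $\|Z^t_\perp\|_F^2$, and then exploit the stated spectral-gap condition $1-\rho > \frac{2\eta_y^2\hat b_Z}{\alpha(1-\rho)}$ to decouple the two summations.

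\textbf{Step 1: A one-step recursion for $Z$-consensus.} From the \gda{} updates one has $X^{t+1}_\perp = (W-\Pi)X^t_\perp - \eta_x D^t_{x\perp}$ and $Y^{t+1}_\perp = (W-\Pi)Y^t_\perp + \eta_y D^t_{y\perp}$, since $\Pi W = W\Pi = \Pi$ by Assumption \ref{ASPT:mixture-matrix}. Applying Young's inequality with parameter $a=(1-\rho)/\rho$, using $\|W-\Pi\|_2 = \rho$, and combining with $\eta_x\leq \eta_y$ yields
\[
\|Z^{t+1}_\perp\|_F^2 \;\le\; \rho\,\|Z^t_\perp\|_F^2 + \tfrac{\eta_y^2}{1-\rho}\,\|D^t_\perp\|_F^2.
\]
Telescoping this over $t=0,\ldots,T-1$ (with $T=Nq$) gives $(1-\rho)\sum_{j=1}^{T}\|Z^j_\perp\|_F^2 \le \rho\|Z^0_\perp\|_F^2 + \tfrac{\eta_y^2}{1-\rho}\sum_{j=0}^{T-1}\|D^j_\perp\|_F^2$.

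\textbf{Step 2: Coupling and decoupling.} Next I would invoke Lemma \ref{lemma:bound-D-perp-2} to upper-bound $\alpha\sum_{j=1}^{Nq-1}\|D^j_\perp\|_F^2$. The key combinatorial observation is that the double sum $\sum_{n=1}^{N-1}\sum_{j=(n-1)q}^{(n+1)q-2}$ covers each $j$ at most twice, so the $\hat b_Z$-contribution admits the majorisation $\sum_{n,j}\hat b_Z\|Z^j_\perp\|_F^2 \le 2\hat b_Z\sum_{j=0}^{Nq-2}\|Z^j_\perp\|_F^2$ (and similarly for $\hat b_\delta$, $\hat b_\Phi$). Dividing by $\alpha$ and plugging into the Step 1 inequality yields, after collecting the $\|Z^j_\perp\|_F^2$ terms on the left,
\[
\Bigl(1-\rho - \tfrac{2\eta_y^2\hat b_Z}{\alpha(1-\rho)}\Bigr)\sum_{j=1}^{T}\|Z^j_\perp\|_F^2 \;\le\; C_{0,Z} + \tfrac{\eta_y^2}{\alpha(1-\rho)}\Bigl(\sum_{j=0}^{q-2}(b_\delta\delta_j+b_\Phi\|\grad\Phi(\bbx^j)\|^2) + \sum_{n,j}(\hat b_\delta\delta_j+\hat b_\Phi\|\grad\Phi(\bbx^j)\|^2)\Bigr).
\]
The hypothesis $1-\rho > \tfrac{2\eta_y^2\hat b_Z}{\alpha(1-\rho)}$ renders the coefficient on the left strictly positive, which is exactly \eqref{eq:bound-Z-perp-final}. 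Re-substituting this bound into Lemma \ref{lemma:bound-D-perp-2} to eliminate $\sum\|Z^j_\perp\|_F^2$ from its RHS, and packaging the factor $1 + \tfrac{2\eta_y^2\hat b_Z}{\alpha(1-\rho)}(1-\rho-\tfrac{2\eta_y^2\hat b_Z}{\alpha(1-\rho)})^{-1}$ as $C_1$, produces \eqref{eq:bound-D-perp-final}.

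\textbf{Principal obstacle.} The conceptual content is a simple Gauss–Seidel-style decoupling of two coupled inequalities via a spectral-gap condition, but the bookkeeping is delicate: Lemma \ref{lemma:bound-D-perp-2} mixes first-block coefficients $b_\bullet$ (valid only on $j\in[0,q-2]$) with later-block coefficients $\hat b_\bullet$ (valid on $j\in[(n-1)q,(n+1)q-2]$), and both must propagate symmetrically through the two substitutions without accidentally inflating one into the other. Similarly, the boundary terms $\|Z^0_\perp\|_F^2$ and $\|D^0_\perp\|_F^2$ together with the variance-reduction residuals proportional to $M\sigma^2/S_1$ must be aggregated precisely into $C_{0,Z}$ and $C_{0,D}$ as specified in \eqref{eq:def-C-DZ}—a matter of careful accounting rather than new ideas.
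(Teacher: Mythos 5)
Your proposal is correct and follows essentially the same route as the paper: the one-step contraction $\|Z^{t+1}_\perp\|_F^2\le\rho\|Z^t_\perp\|_F^2+\tfrac{\eta_y^2}{1-\rho}\|D^t_\perp\|_F^2$ (via Young's inequality with $a=\tfrac{1}{\rho}-1$ and $\eta_x\le\eta_y$), telescoping, substitution of Lemma~\ref{lemma:bound-D-perp-2} with the at-most-twice covering of the double sum giving the $2\hat b_Z$ factor, decoupling via the spectral-gap hypothesis, and back-substitution into Lemma~\ref{lemma:bound-D-perp-2} to produce the $D$-bound with $C_1$. The only cosmetic difference is that the paper keeps $\|Z^T_\perp\|_F^2$ separate rather than absorbing it into $(1-\rho)\sum_{j=1}^{T}$, which changes nothing.
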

\begin{proof}
  Recall that $X_\perp = X-\Bar{X}$, therefore, for all $t\geq 0$ and a constant $a>0$, we have
    \begin{equation*}
        \begin{aligned}
            \|X_{\perp}^{t+1}\|^2_F = & \|WX^t-\eta_xD^t_x - \Bar{X}^t + \eta_x\Bar{D}^t_x\|^2_F \\
            \leq
            &
            (1+a)\|WX^t - \Bar{X}^t\|^2_F + (1+\frac{1}{a})\eta_x^2\|D_{x\perp}^t\|^2_F
            \\
            = &
            (1+a)\|(W-\Pi)\sa{(X^t-\bar{X}^t)}\|^2_F + (1+\frac{1}{a})\eta_x^2\|D_{x\perp}^t\|^2_F
            \\
            \leq &
            (1+a)\rho^2 \|X_{\perp}^t\|_F^2 + (1+\frac{1}{a})\eta_x^2\|D^t_{x\perp}\|_F^2
            \\ 
            = & \rho\|X^t_{\perp}\|_F^2 + \frac{1}{1-\rho}\eta_x^2\|D^t_{x\perp}\|^2_F 
        \end{aligned}
    \end{equation*}%
    where the first inequality is by Young's inequality; \sa{the first equality follows from $(W-\Pi)(\bar{X}^t)=\mathbf{0}_{M\times n}$}; the second inequality is by \cref{ASPT:mixture-matrix}; the last equality is by letting $a = \frac{1}{\rho}-1$. Similarly, for all $t\geq 0$, we have
    \begin{equation}
        \|Y_{\perp}^{t+1}\|^2_F \leq \rho\|Y^t_{\perp}\|_F^2 + \frac{1}{1-\rho}\eta_y^2\|D^t_{y\perp}\|^2_F
    \end{equation}
    Because $\eta_x\leq \eta_y$, the above two inequalities further imply that
    \begin{equation*}
         \|Z_{\perp}^{t+1}\|^2_F \leq \rho\|Z^t_{\perp}\|_F^2 + \frac{1}{1-\rho}\eta_y^2\|D^t_{\perp}\|^2_F.
    \end{equation*}
    Moreover, if we sum up the above inequality from $t=0$ to $T-1$ \sa{for some $T\geq 1$ such that} $T=Nq$ for some $N\in\mathbb{N}^+$, it follows that
    \begin{equation*}
         \|Z_{\perp}^{T}\|^2_F + (1-\rho)\sum_{j=1}^{T-1}\|Z_{\perp}^{j}\|^2_F\leq \rho\|Z^0_{\perp}\|_F^2 + \frac{\eta_y^2}{1-\rho}\sum_{j=0}^{T-1}\|D^j_{\perp}\|^2_F.
    \end{equation*}
    \sa{Furthermore, since $\alpha \triangleq \min\{\alpha_1,\alpha_2\}<1$, using \cref{lemma:bound-D-perp-2} within the above inequality,} we obtain that
    \begin{equation*}
        \begin{aligned}
             \bE\Bigg[ \|Z_{\perp}^{T}\|^2_F  & + (1-\rho)\sum_{t=1}^{T-1}\|Z_{\perp}^{t}\|^2_F\Bigg]
             \leq  
             \bE\Bigg[\rho\|Z^0_{\perp}\|_F^2 + \frac{\eta_y^2}{1-\rho}\|D^0_{\perp}\|^2_F \Bigg]
            \\
        & + 
        \frac{\eta_y^2}{\alpha (1-\rho)}
        \bE\Bigg[
        \Big(
         (\frac{3L^2\yy{\rho^2}}{(1-\rho)S_2} +         \xzrr{\frac{288\tilde{q}\eta_y^2L^4\rho^2}{(1-\rho)S_2}})4\eta_y^2 
        +
        ( \rho + \frac{4L^2\yy{\rho^2}\eta_y^2}{1-\rho} + \xzrr{\frac{24\eta_y^2L^2\yy{\rho^2}}{1-\rho}} \frac{8(q-1)L^2\eta_y^2}{S_2})\Big)\|D^0_\perp\|^2_F
        \\
        &
        +
        \sum_{j=0}^{q-2}
        \Big(
        {b}_\delta\delta_j
        +
        {b}_\Phi \|\grad \Phi(\bbx^j)\|^2
        +
        {b}_Z\|Z^j_\perp\|^2_F 
        \Big)
        +
        \sum_{n=1}^{N-1}\sum_{j=(n-1)q}^{(n+1)q-2}
        \Big(
        \hat{b}_\delta\delta_j
        +
        \hat{b}_\Phi \|\grad \Phi(\bbx^j)\|^2
        +
        \hat{b}_Z\|Z^j_\perp\|^2_F 
        \Big)
        \\
        &
+
        (N-1)\frac{2M\sigma^2\yy{\rho^2} }{(1-\rho)S_1}
          \Big(
          ((q-1) +6\Tilde{q}) \xzrr{24L^2\eta_y^2} + 3
          \Big) + \xzrr{\frac{24\eta_y^2L^2\yy{\rho^2}}{1-\rho}}\frac{2M\sigma^2}{S_1}(q-1)\Bigg]
        \end{aligned}
    \end{equation*}
    holds for all $T\geq 1$ \sa{such that} $T=Nq$ for some $N\in\mathbb{N}^+$.
    Hence, we further have that 
    \begin{equation*}
        \begin{aligned}
            (1- & \rho-
            \frac{ 2\eta_y^2 \hat{b}_Z }{\alpha(1-\rho )} )\bE\Bigg[\sum_{t=1}^T  \| Z^t_\perp \|^2_F\Bigg]
            \leq 
           \bE\bigg[ (\rho + \frac{2\eta_y^2\hat{b}_Z}{\alpha(1-\rho)})
            \|Z^0_\perp\|^2_F + \frac{\eta_y^2}{1-\rho}\|D^0_\perp\|^2_F \bigg]
            \\
            &
             + 
        \frac{\eta_y^2}{\alpha (1-\rho)}
        \bE\Bigg[
        \Big(
         (\frac{3L^2\yy{\rho^2}}{(1-\rho)S_2} +         \xzrr{\frac{288\tilde{q}\eta_y^2L^4\rho^2}{(1-\rho)S_2}})
        4\eta_y^2 
        +
        ( \rho + \frac{4L^2\yy{\rho^2}\eta_y^2}{1-\rho} + \xzrr{\frac{24\eta_y^2L^2\yy{\rho^2}}{1-\rho}} \frac{8(q-1)L^2\eta_y^2}{S_2})\Big)\|D^0_\perp\|^2_F
        \\
        &
        +
        \sum_{j=0}^{q-2}
        \Big(
        {b}_\delta\delta_j
        +
        {b}_\Phi \|\grad \Phi(\bbx^j)\|^2
        \Big)
        +
        \sum_{n=1}^{N-1}\sum_{j=(n-1)q}^{(n+1)q-2}
        \Big(
        \hat{b}_\delta\delta_j
        +
        \hat{b}_\Phi \|\grad \Phi(\bbx^j)\|^2
        \Big)
        \\
        &
+
        (N-1)\frac{2M\sigma^2\yy{\rho^2} }{(1-\rho)S_1}
          \Big(
          ((q-1) +6\Tilde{q}) \xzrr{24L^2\eta_y^2} + 3
          \Big) + \xzrr{\frac{24\eta_y^2L^2\yy{\rho^2}}{1-\rho}}\frac{2M\sigma^2}{S_1}(q-1)\Bigg]
        \end{aligned}
    \end{equation*}
    holds for all $T\geq 1$ \sa{such that} $T=Nq$ for some $N\in\mathbb{N}^+$; \sa{hence, we obtain the desired result in~\eqref{eq:bound-Z-perp-final}.} 
    \xz{Next, \sa{$\alpha<1$,} it follows from  \cref{eq:bound-D-perp-15} and $\hat{b}_Z\geq b_Z$ that the inequality}
    \begin{equation}\label{eq:bound-D-perp-*}
 \begin{aligned}
        & \sa{\bE\Bigg[ \alpha \sum_{j=1}^{Nq-1} \|D_\perp^{j}\|^2_F\Bigg]}  
        \\
        \leq &
        \bE\Bigg[\Big(
         (\frac{3L^2}{(1-\rho)S_2} +  \xzrr{\frac{288\tilde{q}\eta_y^2L^4}{(1-\rho)S_2}})
        4\yy{\rho^2} \eta_y^2
        +
        ( \rho + \frac{4L^2\yy{\rho^2}\eta_y^2}{1-\rho} + \xzrr{\frac{24\eta_y^2L^2\yy{\rho^2}}{1-\rho}} \frac{8(q-1)L^2\eta_y^2}{S_2})\Big)\|D^0_\perp\|^2_F
        \\
        &
        +
        \sum_{j=0}^{q-2}
        \Big(
        {b}_\delta\delta_j
        +
        {b}_\Phi \|\grad \Phi(\bbx^j)\|^2
        \Big)
        +
        \sum_{n=1}^{N-1}\sum_{j=(n-1)q}^{(n+1)q-2}
        \Big(
        \hat{b}_\delta\delta_j
        +
        \hat{b}_\Phi \|\grad \Phi(\bbx^j)\|^2
        \Big)
        \\
        & 
        + 2\hat{b}_Z\|Z^0_\perp\|^2_F + 2\hat{b}_Z \sum_{j=1}^{Nq-2}\|Z^j_\perp\|^2_F 
        \\
        &
      +
        (N-1)\frac{2M\sigma^2\yy{\rho^2} }{(1-\rho)S_1}
          \Big(
          ((q-1) +6\Tilde{q}) \xzrr{24L^2\eta_y^2} + 3
          \Big) + \xzrr{\frac{24\eta_y^2L^2\yy{\rho^2}}{1-\rho}}\frac{2M\sigma^2}{S_1}(q-1)\Bigg]
        \end{aligned}
\end{equation}
\xz{holds for all $N\in\mathbb{N}^+$. Moreover,  we use \cref{eq:bound-Z-perp-final} together with {$1-\rho> \frac{2\hat{b}_Z\eta_y^2}{\alpha(1-\rho)}$} within the above inequality
    and obtain that
    \cref{eq:bound-D-perp-final}
     holds for all $T\geq 1$ \sa{such that} $T=Nq$ for some $N\in\mathbb{N}^+$, which completes the proof.}
     \end{proof}

\section{Parameter Conditions}
\xz{In this section, we present two parameter conditions that are employed in our analysis. Specifically, \cref{param-condition:basic} is utilized in the proof before this section. Subsequently, \cref{param-condition:basic-2} is employed to simplify the constant terms that appear in the above analysis. As a result, we obtain \cref{thm:main-convergence}. It is worth noting that  \cref{param-condition:basic-2} implies \cref{param-condition:basic} hold. 
}
\begin{param}\label{param-condition:basic}
Suppose $\eta_x,\eta_y > 0$ and $q, S_1, S_2\in\mathbb{N}^+$ satisfy the following conditions
\begin{enumerate}[label=\emph{(\roman*)}]
    \item \xzrr{$\eta_y \leq \frac{1}{L}$, and $\eta_x\leq \frac{1}{\sqrt{3}\kappa}\eta_y$} 
    \item $q\leq \frac{1}{2c_1}$ \sa{for} $c_1 \triangleq \frac{24\eta_y^2L^2}{S_2}$ \sa{such that $c_1\leq 1$},
    \item  $1- \rho > 
            \frac{ 2\eta_y^2 \hat{b}_Z }{\alpha(1-\rho )}$,
\end{enumerate}
where $\hat{b}_Z$ is defined in \cref{eq:def-b-hat-coeff}, $\alpha$ is defined in \cref{eq:def-alpha} and $\rho$ is defined in \cref{ASPT:mixture-matrix}.
\end{param}

\xz{In the next lemma, we summarize the frequently employed inequalities related to \cref{param-condition:basic} that was used in
\xzref{the section Convergence Analysis}
for improved  readability.}%
\begin{lemma}\label{lemma:parameter-ineq}
    \sa{If $\eta_x,\eta_y > 0$, and $q\in \mathbb{N}^+$ satisfy \cref{param-condition:basic}, then}
    \begin{enumerate}
        \item[(i)] 
        $\frac{4-\eta_y\mu}{4-2\eta_y\mu}\leq 2$;
        \item[(ii)] 
        $(1+c_1)^q\leq 2$;
        \item[(iii)] 
        $3(\frac{4}{\eta_y\mu}-1)\kappa^2\eta_x^2\leq \frac{4\eta_y}{\mu}$.
    \end{enumerate}
\end{lemma}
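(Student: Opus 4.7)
The plan is to dispatch the three inequalities separately, since each is a purely algebraic consequence of Parameter Condition~\ref{param-condition:basic}; none requires any iterate-level reasoning.

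For (i), I would first observe that $\eta_y\mu \leq \eta_y L \leq 1$: the inequality $\mu \leq L$ follows from the coexistence of Assumptions~\ref{ASPT:smooth-f} and \ref{ASPT:SC} (a function that is simultaneously $L$-smooth and $\mu$-strongly concave in one block of variables must satisfy $\mu\leq L$), while $\eta_y \leq 1/L$ is Parameter Condition~\ref{param-condition:basic}(i). In particular $4-2\eta_y\mu > 0$, so after clearing denominators the assertion reduces to $4-\eta_y\mu \leq 2(4-2\eta_y\mu)$, i.e.\ $3\eta_y\mu \leq 4$, which is immediate from $\eta_y\mu \leq 1$.

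For (ii), I would invoke the elementary bound $(1+c_1)^{1/c_1}\leq e$, valid for every $c_1>0$, and raise it to the power $q c_1$ to obtain $(1+c_1)^q \leq e^{qc_1}$. Parameter Condition~\ref{param-condition:basic}(ii) gives $qc_1 \leq 1/2$, and therefore $(1+c_1)^q \leq e^{1/2} < 2$. The auxiliary hypothesis $c_1\leq 1$ is not even needed for this step, but confirms that the range of $c_1$ for which the estimate is used is bounded.

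For (iii), the approach is to substitute $\eta_x \leq \frac{1}{\sqrt{3}\kappa}\eta_y$ (Parameter Condition~\ref{param-condition:basic}(i)) to get $3\kappa^2 \eta_x^2 \leq \eta_y^2$, and then use $\frac{4}{\eta_y\mu}-1 \leq \frac{4}{\eta_y\mu}$ to collapse the left-hand side:
\[
3\Big(\tfrac{4}{\eta_y\mu}-1\Big)\kappa^2 \eta_x^2 \leq \Big(\tfrac{4}{\eta_y\mu}-1\Big)\eta_y^2 \leq \tfrac{4}{\eta_y\mu}\,\eta_y^2 = \tfrac{4\eta_y}{\mu}.
\]
There is no substantive obstacle here; the lemma is bookkeeping that allows earlier arguments (e.g.\ in Lemmas~\ref{lemma:bound-deltat-1}, \ref{lemma:bound-Et}, \ref{lemma:bound-D-perp-1}) to cite cleaner constants. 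The only point that requires a moment's care is recognizing that $\mu \leq L$ is implicit, which is what makes the numerical inequalities in (i) hold.
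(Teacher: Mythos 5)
Your proposal is correct; the paper states Lemma~\ref{lemma:parameter-ineq} without a written proof, and your elementary verification matches the inline justifications the paper uses elsewhere (e.g., in Lemma~\ref{lemma:bound-Et}, where $(1+c_1)^{q}\leq(1+c_1)^{1/(2c_1)}\leq 2$ is invoked). The only points needing care --- that $\mu\leq L$ so $\eta_y\mu\leq 1$, and that $\tfrac{4}{\eta_y\mu}-1>0$ before dropping the $-1$ in part (iii) --- are both handled properly.
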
%
\xz{The following parameter condition will be used in 
\xzref{the section Complexity Analysis}
to obtain the  convergence results in \cref{thm:main-convergence}. It implies \cref{param-condition:basic}. Specifically, we later show that $1- \rho > 
            \frac{ 2\eta_y^2 \hat{b}_Z }{\alpha(1-\rho )}$ hold if \cref{param-condition:basic-2} are satisfied in \cref{eq:hat-values}.
Therefore, all stepsizes assumptions of the previous analysis hold with \cref{param-condition:basic-2}.}
\begin{param}\label{param-condition:basic-2}
Suppose $\eta_x,\eta_y\geq 0$ and $q, S_1, S_2\in\mathbb{N}^+$ satisfy the following conditions
\begin{subequations}
\begin{align}
&\eta_y\leq \frac{1}{32\sqrt{5}L}\min\{(1-\rho)^2, \frac{1}{\kappa},\frac{1-\rho}{\sqrt{\kappa}}\}, \label{eq:stepsize-etay}\\
        & \eta_x\leq \min\{{\frac{1}{6(\kappa+1)L}},~{\frac{\eta_y}{8\sqrt{3}\kappa^2}},
        \frac{4}{5\mu}\eta_y,\frac{1}{64\kappa^2}\eta_y
    \},\\
        & S_2 \geq q,\quad \xz{q\geq 1.}
\end{align}
\end{subequations}
\begin{remark}
    \xz{The redundant conditions are kept for the purpose of facilitating the verification of the conditions used in the  analysis in 
    \xzref{the section Complexity Analysis}
    . Furthermore, the above-mentioned parameter conditions are condensed in our final result, as shown by \eqref{eq:final-step-size}.}
\end{remark}
\end{param}

\section{Complexity Analysis}\label{sec:main-proof-compelxity}
\xz{In this section, we first use \cref{param-condition:basic-2} to simplify the constant in our previous analysis, and then obtain the convergence result in \cref{thm:main-convergence}.}
\begin{lemma}\label{lemma:bound-alpha}
Suppose \cref{param-condition:basic-2} holds. Then it holds that $\alpha \geq \frac{1-\rho}{2}$,
    where $\alpha$ is defined in \cref{eq:def-alpha}.
\end{lemma}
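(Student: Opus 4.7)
The lemma asserts $\alpha \geq (1-\rho)/2$ with $\alpha = \min\{\alpha_1,\alpha_2\}$ as defined in \eqref{eq:def-alpha}. My plan is to verify $\alpha_1 \geq (1-\rho)/2$ and $\alpha_2 \geq (1-\rho)/2$ separately by bounding each negative penalty term subtracted from the leading $1-\rho$. The key inputs are the stepsize bound \eqref{eq:stepsize-etay}, namely $\eta_y \leq \frac{1}{32\sqrt{5}\,L}\min\{(1-\rho)^2, 1/\kappa, (1-\rho)/\sqrt{\kappa}\}$, together with $S_2 \geq q \geq 1$ and $\rho \in [0,1)$.

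For $\alpha_1$ the only penalty is $\tfrac{12L^2\rho^2\eta_y^2}{1-\rho}$. Using $\rho^2\leq 1$ and the bound $\eta_y L \leq (1-\rho)^2/(32\sqrt{5})$, this term is at most $\tfrac{12(1-\rho)^4}{(32\sqrt 5)^2(1-\rho)} = \tfrac{12(1-\rho)^3}{5120} \leq (1-\rho)/2$. This is the easy case.

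For $\alpha_2$, the first observation is that the condition $S_2 \geq q$ in Parameter Condition~\ref{param-condition:basic-2} forces $\tilde q = \max\{1,(q-1)/S_2\} = 1$, simplifying the $\tilde q$-dependent terms. I would then split the $(1-\rho)/2$ budget into four equal $(1-\rho)/8$ pieces, one per negative term. The four terms reduce, after using $\rho^2\le 1$, $S_2\ge 1$, and $(q-1)/S_2 \le 1$, to bounds of the form $\eta_y^2 L^2 \leq c(1-\rho)^2$ or $\eta_y^4 L^4 \leq c(1-\rho)^2$ for explicit constants $c$. All of these are implied by $\eta_y L \leq (1-\rho)^2/(32\sqrt 5)$, since then $\eta_y^2L^2\le (1-\rho)^4/5120\le(1-\rho)^2/5120$ and $\eta_y^4L^4$ is even smaller.

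Combining the two cases gives $\alpha \ge (1-\rho)/2$. The only obstacle is bookkeeping: each of the four terms in $\alpha_2$ has a different combination of powers of $\eta_y L$, factors of $S_2$, and factors involving $q$ or $\tilde q$, so one has to check that the constant $32\sqrt{5}$ chosen in \eqref{eq:stepsize-etay} really leaves room for all four simultaneously. However, since the stepsize is $O((1-\rho)^2/L)$, each individual penalty ends up being $O((1-\rho)^3)$ or smaller, so there is ample slack and no reformulation of the parameter condition is needed.
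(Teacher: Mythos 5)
Your proposal is correct and follows essentially the same route as the paper's proof: both use $S_2\geq q$ to force $\tilde q=1$, the stepsize bound $\eta_y L\leq (1-\rho)^2/(32\sqrt5)$ (so $\eta_y^2L^2\leq(1-\rho)^4/5120$), and $\rho^2\leq 1$, $(q-1)/S_2\leq 1$ to show each subtracted term is $O((1-\rho)^3)$ or smaller. The only cosmetic difference is that the paper factors the common prefactor $\tfrac{4L^2\rho^2\eta_y^2}{1-\rho}$ out of $\alpha_2$ and bounds the resulting bracket, whereas you bound the four penalties term by term against an equal $(1-\rho)/8$ budget; both yield the claim.
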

\begin{proof}
    Because $S_2\geq q$, then we obtain
    \begin{equation}\label{eq:bound-tilde-q}
        \tilde{q} = 1
    \end{equation}
    where $\tilde{q}$ is defined in \cref{eq:def-tilde-q}. 
    Secondly, \cref{eq:stepsize-etay} implies $ \frac{12L^2\eta_y^2\rho^2}{1-\rho}\leq \frac{1-\rho}{2}$; therefore, we obtain that
    \begin{equation}\label{eq:bound-alpha1}
      \alpha_1 \geq \frac{(1-\rho)}{2},
    \end{equation}
    where $\alpha_1$ is defined in \cref{eq:def-alpha}. We \sa{now} continue to show $\alpha_2\geq \frac{1-\rho}{2}$, where $\alpha_2$ is defined in \cref{eq:def-alpha}. Indeed, it holds that
    \begin{equation}
    \begin{aligned}
               \alpha_2 =&1-\rho -\frac{4L^2\eta_y^2\rho^2}{1-\rho} - \xzrr{\frac{24\eta_y^2L^2\yy{\rho^2}}{1-\rho}} \frac{8(q-1)L^2\eta_y^2}{S_2} -
         \yy{\rho^2}\Big(\frac{3L^2}{(1-\rho)S_2} + \xzrr{\frac{288\tilde{q}\eta_y^2L^4}{(1-\rho)S_2}}\Big)
        4\eta_y^2 
        \\
        =&1-\rho - \frac{4L^2\eta_y^2\rho^2}{1-\rho}\Big(
        1 + \frac{48(q-1)L^2\eta_y^2}{S_2} + \frac{3}{S_2} +\frac{288\tilde{q}L^2\eta_y^2}{S_2}
        \Big)
        \\
        \geq & \frac{1-\rho}{2}
    \end{aligned}
    \end{equation}
    where use $\eta_y\leq \frac{1}{32\sqrt{5}}\cdot\frac{(1-\rho)^2}{ L\kappa}$ and $S_2\geq q\geq 1$. Therefore, we conclude that $ 
    \alpha = \min\{\alpha_1,\alpha_2\} \geq \frac{1-\rho}{2}.
    $
\end{proof}

\begin{lemma}\label{lemma:bhat-values}
Suppose \cref{param-condition:basic-2} holds. Then it holds that
    \begin{equation}
    \label{eq:hat-values}
        \hat{b}_\delta\leq\xzrr{\frac{120ML^4\rho^2\eta_y^2}{1-\rho}},\quad
        \hat{b}_\Phi\leq\frac{\xzrr{72}ML^2\yy{\rho^2}\eta_x^2}{1-\rho} ,\quad
        \hat{b}_Z\leq\xzrr{\frac{60L^2\rho^2}{1-\rho}},\quad
        \frac{2\hat{b}_Z\eta_y^2}{\alpha(1-\rho)}\leq\frac{1-\rho}{2},
    \end{equation}
    where $\{\hat{b}_\delta,\hat{b}_\Phi,\hat{b}_Z\}$ are defined in \cref{eq:def-b-hat-coeff}. 
\end{lemma}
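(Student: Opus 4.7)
The argument will reduce to verifying routine algebraic bounds on the three candidates inside each $\max$ in the definitions \eqref{eq:def-b-hat-coeff}, followed by a direct substitution for the fourth inequality. I would begin by collecting the structural simplifications implied by \cref{param-condition:basic-2}: (i) $\tilde q=1$, since $S_2\geq q$, as already recorded in \eqref{eq:bound-tilde-q}; (ii) $(q-1)/S_2\leq 1$; and (iii) the smallness estimate $L^2\eta_y^2 \leq 1/(32^2\cdot 5)=1/5120$ inherited from \eqref{eq:stepsize-etay}. These jointly dominate every auxiliary expression of the form $48(q-1)L^2\eta_y^2/S_2$ or $96L^2\eta_y^2$ by a quantity much smaller than $1$, and every expression $(8+40L^2\eta_y^2)$ by a number only marginally larger than $8$.

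For each of $\hat b_\delta$, $\hat b_\Phi$, $\hat b_Z$ I would then factor the three candidates inside the $\max$ so that their dependence on $\rho$, $M$, $L$, $\eta_x$, $\eta_y$ matches the target right-hand side in \eqref{eq:hat-values}, and check only the residual scalar constants. For $\hat b_\delta$ this yields multiples of $\tfrac{ML^4\rho^2\eta_y^2}{1-\rho}$ with constants at most roughly $21$ (from $b_\delta$, after writing $b_\delta=\tfrac{20ML^4\rho^2\eta_y^2}{1-\rho}\bigl[1+48(q-1)L^2\eta_y^2/S_2\bigr]$ and using (ii)--(iii)), $60$ (middle term), and $61$ (last term, using in addition $S_2\geq 1$), so the claimed $120$ is generous. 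For $\hat b_\Phi$ the three candidates reduce to multiples of $\tfrac{ML^2\rho^2\eta_x^2}{1-\rho}$ with constants at most roughly $13$, $36$, and $37$, so $72$ suffices. For $\hat b_Z$ they reduce to multiples of $\tfrac{L^2\rho^2}{1-\rho}$ with constants at most roughly $9$, $25$, and $25$, well under $60$.

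Finally, for the fourth inequality I would combine the just-established $\hat b_Z \leq \tfrac{60L^2\rho^2}{1-\rho}$ with the bound $\alpha \geq (1-\rho)/2$ from \cref{lemma:bound-alpha} to get
\[
\frac{2\hat b_Z\,\eta_y^2}{\alpha(1-\rho)} \;\leq\; \frac{240\,L^2\rho^2\eta_y^2}{(1-\rho)^3},
\]
and then invoke $\eta_y \leq \tfrac{(1-\rho)^2}{32\sqrt{5}\,L}$ from \eqref{eq:stepsize-etay} together with $\rho^2\leq 1$; this produces a bound of the form $\tfrac{240(1-\rho)^4}{5120(1-\rho)^3}\leq \tfrac{1-\rho}{20}$, which is comfortably less than $(1-\rho)/2$.

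The main obstacle is pure bookkeeping: each of the three candidates inside each $\max$ must be factored correctly, and each of the small auxiliary factors must be tracked through to the residual scalar constant. There is no conceptual subtlety, but one has to verify that the numerical constants chosen in \cref{param-condition:basic-2} --- in particular the factor $32\sqrt{5}$ in \eqref{eq:stepsize-etay} --- are tight enough to absorb every $\rho^2$, $L^2\eta_y^2$, and $1/(1-\rho)$ factor that appears in the final computation, which is the only place where the stepsize restriction on $\eta_y$ is genuinely consumed.
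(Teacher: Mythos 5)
Your proposal is correct and follows essentially the same route as the paper: bound each candidate inside the three $\max$'s using $\tilde q=1$, $(q-1)/S_2\le 1$, and $L^2\eta_y^2\le 1/5120$ from \eqref{eq:stepsize-etay}, then combine $\hat b_Z\le 60L^2\rho^2/(1-\rho)$ with $\alpha\ge(1-\rho)/2$ for the final inequality. The only (immaterial) difference is that you check all three candidates explicitly, whereas the paper bounds only the dominant third one; your constants are consistent with and slightly sharper than the stated bounds.
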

\begin{proof}
\xzrr{It follows from the definition of $\hat{b}_\delta$ and $S_2\geq q,\eta_y\leq \xzy{\frac{1}{32\sqrt{5}}\cdot\frac{(1-\rho)^2}{ L}}$ that
\begin{equation}
    \hat{b}_\delta \leq  \yy{\rho^2}\Big(\frac{3L^2}{(1-\rho)} +  \xzrr{\frac{288\eta_y^2L^4}{(1-\rho)}}\Big) 
           \xzrr{20ML^2\eta_y^2} \leq  \yy{\rho^2}\cdot\frac{6L^2}{(1-\rho)}\cdot
           \xzrr{20ML^2\eta_y^2} = \frac{120ML^4\rho^2\eta_y^2}{1-\rho}
\end{equation}
}%
Similarly, it hold that
\begin{equation}
   \hat{b}_\Phi\leq \yy{\rho^2}\Big(\frac{3L^2}{(1-\rho)} +  \xzrr{\frac{288\eta_y^2L^4}{(1-\rho)}}\Big)
            12M\eta_x^2
            \leq \yy{\rho^2}\cdot\frac{6L^2}{(1-\rho)} \cdot
            12M\eta_x^2 = \frac{72ML^2\rho^2\eta_x^2}{1-\rho} 
\end{equation}
and
\begin{equation}
   \hat{b}_Z\leq \yy{\rho^2}\Big(\frac{3L^2}{(1-\rho)} +  \xzrr{\frac{288\eta_y^2L^4}{(1-\rho)}}\Big)
           \xzrr{(8+40\eta_y^2L^2)}  
           \leq 
           \yy{\rho^2}\cdot\frac{6L^2}{(1-\rho)}\cdot
           \xzrr{(8+40\eta_y^2L^2)}  \leq \frac{60L^2\rho^2}{1-\rho}.
\end{equation}
The last inequality in \cref{eq:hat-values} directly follows from $\xzy{\frac{1}{32\sqrt{5}}\cdot\frac{(1-\rho)^2}{ L}}$ and the above bound of $\hat{b}_Z$ and \cref{lemma:bound-alpha}.
\end{proof}

\begin{lemma}\label{lemma:bound:az-cz-a0-cphi}
\sa{If \cref{param-condition:basic-2} holds, then}
   \xzrr{\begin{equation}\label{eq:bound-ac-all}
        a_Z \leq \frac{176L\kappa\eta_y}{M}, \quad 
        a_\Phi\leq  \frac{13\eta_x^2\kappa^2}{\eta_y\mu}, \quad
        a_\delta \geq \frac{1}{8}\eta_y\mu,\quad
        c_Z\leq \frac{105L^2\eta_x}{2M},
        \quad
        c_\Phi\geq\frac{\eta_x}{12},\quad
       c_\delta\leq \frac{3}{2}\eta_x L^2
    \end{equation}}
    where \sa{$\{a_Z,~a_\Phi,~a_\delta\}$} are defined in \cref{eq:def-a-coeff}, $\{c_Z,c_\Phi,c_\delta\}$ are defined in \cref{eq:def-c-coeff}.
\end{lemma}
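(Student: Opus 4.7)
The six inequalities are all deterministic bounds on the stepsize and batchsize parameters, so the approach is a direct substitution argument using Parameter Condition~\ref{param-condition:basic-2}. I will isolate in each coefficient a dominant term matching the target bound and absorb the remaining perturbations into the slack. Two preliminary estimates from \cref{param-condition:basic-2} will be used throughout: since $\eta_x\le\tfrac{1}{6(\kappa+1)L}$, we have $3(\kappa+1)L\eta_x^2/2\le\eta_x/4$ and hence $\eta_x+3(\kappa+1)L\eta_x^2/2\le 5\eta_x/4$; and since squaring \eqref{eq:stepsize-etay} gives $\eta_y^2 L^2\le 1/(5120\kappa^2)$, we have $8+40\eta_y^2L^2\le 9$, together with $(q-1)/S_2\le 1$ from $S_2\ge q$.

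For the $a$-coefficients in \eqref{eq:def-a-coeff}: for $a_\delta$, I will bound each negative perturbation by $\eta_y\mu/16$, the first via $\eta_x^2\le\eta_y^2/(192\kappa^4)$ (from $\eta_x\le\eta_y/(8\sqrt{3}\kappa^2)$), yielding $12\kappa^2 L^2\eta_x^2/(\eta_y\mu)\le\mu\eta_y/16$, and the second via $(q-1)/S_2\le 1$ and $\eta_y^2L^2\le 1/(5120\kappa^2)$, yielding $320L^4\eta_y^3/\mu\le\eta_y\mu/16$; this leaves $a_\delta\ge\eta_y\mu/4-\eta_y\mu/8=\eta_y\mu/8$. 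For $a_\Phi$, the second summand $192(q-1)L^2\eta_x^2\eta_y/(\mu S_2)$ is dominated by the first after checking $192\eta_y^2L^2\le\kappa^2$, which follows from $\eta_y^2L^2\le 1/(5120\kappa^2)$. For $a_Z$, I will split into three pieces: the first equals $8L\kappa\eta_y/M$; the second is at most $\mu\eta_y/(8M)\le L\kappa\eta_y/M$ (using the $\eta_x$ bound and $\kappa\ge 1$); the third is at most $144L\kappa\eta_y/M$ by $(q-1)/S_2\le 1$ and $8+40\eta_y^2L^2\le 9$. Summing gives at most $153L\kappa\eta_y/M<176L\kappa\eta_y/M$.

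For the $c$-coefficients in \eqref{eq:def-c-coeff}: for $c_\delta$, factoring $\eta_x+3(\kappa+1)L\eta_x^2/2\le 5\eta_x/4$ and using $80(q-1)\eta_y^2L^2/S_2\le 1/64$ gives $c_\delta\le(5/4)(65/64)\eta_xL^2<(3/2)\eta_xL^2$. For $c_Z$, the same factoring combined with $4(q-1)(8+40\eta_y^2L^2)/S_2\le 36$ gives $c_Z\le(5L^2\eta_x/(4M))\cdot 38<105L^2\eta_x/(2M)$. The most delicate bound is $c_\Phi$: the Lipschitz correction reduces $\eta_x/2$ to at least $\eta_x/4$, and I must show the cubic-in-$\eta_x$ negative remainder $48(q-1)L^2\eta_x^3(1+3(\kappa+1)L\eta_x/2)/S_2\le 60(q-1)L^2\eta_x^3/S_2$ is at most $\eta_x/6$; this reduces to $360(q-1)L^2\eta_x^2/S_2\le 1$ and follows from $\eta_x^2\le 1/(36L^2)$ and $(q-1)/S_2\le 1$, yielding $c_\Phi\ge\eta_x/12$. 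The main obstacle throughout is purely bookkeeping: carefully tracking signs of cubic-in-$\eta_x$ and mixed $\eta_x^2\eta_y^2$ remainder terms and verifying each $S_2$-dependent correction fits within the leading-term slack.
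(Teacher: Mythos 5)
Your overall route is exactly the paper's: substitute the bounds from \cref{param-condition:basic-2} into the explicit expressions \eqref{eq:def-a-coeff} and \eqref{eq:def-c-coeff}, factor out the dominant term using $\eta_x+\tfrac{3(\kappa+1)L\eta_x^2}{2}\le\tfrac{5\eta_x}{4}$, and absorb the $S_2$-dependent corrections via $(q-1)/S_2\le 1$ together with $\eta_y^2L^2\le 1/(5120\kappa^2)$. Your estimates for $a_Z$, $a_\Phi$, $a_\delta$, $c_Z$, and $c_\delta$ all check out and in fact land on slightly tighter constants than the stated ones (e.g.\ $153$ vs.\ $176$ for $a_Z$, $\tfrac{95}{2}$ vs.\ $\tfrac{105}{2}$ for $c_Z$), which is fine.

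The one flaw is in the $c_\Phi$ step: you need $60L^2\eta_x^3\le\eta_x/6$, i.e.\ $360L^2\eta_x^2\le 1$, and you claim this "follows from $\eta_x^2\le 1/(36L^2)$." It does not: that bound only gives $360L^2\eta_x^2\le 10$. (Even the sharper consequence $\eta_x\le\tfrac{1}{6(\kappa+1)L}\le\tfrac{1}{12L}$ only gives $360L^2\eta_x^2\le 2.5$.) The inequality is nevertheless true, but you must invoke the other constraint in \cref{param-condition:basic-2}, namely $\eta_x\le\tfrac{1}{64\kappa^2}\eta_y$ combined with \eqref{eq:stepsize-etay}, which yields $L\eta_x\le\tfrac{1}{2048\sqrt{5}}$ and hence $360L^2\eta_x^2\ll 1$. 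This is a one-line repair, not a structural problem, but as written the cited justification is arithmetically false and the step would not survive a careful check. (For what it is worth, the paper's own proof of \eqref{eq:bound-c_phi} is equally terse at this point and also implicitly relies on the smaller true value of $\eta_x$; you should just make the correct constraint explicit.)
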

\begin{proof}
    We begin the proof by showing an upper bound of $a_Z$.
    First, it follows from  {$\eta_x\leq \frac{\eta_y}{\sqrt{3}\kappa}$} that 
    \begin{equation}\label{eq:a-z-part-1}
        \Big(\frac{4\eta_y}{\mu} + \frac{12}{\eta_y\mu}\kappa^2\eta_x^2\Big)\frac{2L^2}{M} = \Big(1 + \frac{3\kappa^2\eta_x^2}{\eta_y^2}\Big) \frac{8\kappa L\eta_y}{M}
        \leq  \frac{16L\kappa\eta_y}{M}.
    \end{equation}
    Secondly, it follows from $S_2\geq q$ and \xzy{$\eta_y\leq \frac{1}{32\sqrt{5}}\cdot\frac{1}{L\kappa}$} that
    \begin{equation}\label{eq:a-z-part-2}
    \begin{aligned}
                \frac{16(q-1)\eta_y}{\mu M}\frac{L^2}{S_2}  \xzrr{(8+40\eta_y^2L^2)} 
                \leq \frac{16}{ M}\cdot\eta_yL\kappa \cdot  \xzrr{(8+40\eta_y^2L^2)} 
                \leq   \frac{160L\kappa\eta_y}{ M}
    \end{aligned}
    \end{equation}
    Furthermore,  \cref{eq:a-z-part-1} together with \cref{eq:a-z-part-2} implies that 
    \begin{equation}
        \label{eq:bound-az}
        a_Z\leq \frac{176L\kappa\eta_y}{M}.
    \end{equation}
    Next, we continue to show an upper bound of $a_\Phi$. Indeed, it follows from  $S_2\geq q$ and \xzy{$\eta_y\leq \frac{1}{32\sqrt{5}}\cdot\frac{1}{ L\kappa}$} that
\begin{equation}
\begin{aligned}
        a_\Phi = & \frac{12}{\eta_y\mu}\kappa^2\eta_x^2 + \frac{16(q-1)\eta_y}{\mu}\frac{L^2}{S_2}12\eta_x^2
        \\
        \leq &\frac{12\eta_x^2}{\eta_y\mu}(\kappa^2 + 16L^2\eta_y^2\frac{q-1}{S_2}) 
        \\
        \leq & \frac{13\eta_x^2\kappa^2}{\eta_y\mu}
\end{aligned}
\end{equation}
    
  Next, we continue to an upper bound of $c_Z$. First, the condition {$\eta_x\leq \frac{1}{6(\kappa+1)L}$} implies
    \begin{equation}\label{eq:bound-etax-square}
        \frac{3(\kappa+1)L\eta_x^2}{2} \leq \xz{\frac{\eta_x}{4}}.
    \end{equation}
  Then, it follows from \xzy{$\eta_y\leq \frac{1}{32\sqrt{5}}\cdot\frac{1}{ L\kappa}$} and $S_2\geq q$ and \cref{eq:bound-etax-square} that
    \begin{equation}
    \begin{aligned}
                c_Z =& \frac{2L^2}{M}\Big(\eta_x + \frac{3(\kappa +1)L\eta_x^2}{2}\Big)
        +
        \frac{4(q-1)}{M}\Big(\eta_x + \frac{3(\kappa +1)L\eta_x^2}{2}\Big)\frac{L^2}{S_2}\xzrr{(8+40\eta_y^2L^2)} \\
        \leq &
        \frac{2L^2}{M}\frac{5\eta_x}{4}
        +
        \frac{4(q-1)}{M}\frac{5\eta_x}{4}\frac{L^2}{S_2}\cdot10
        \leq  \frac{5L^2\eta_x}{2M}
        +
        \frac{4}{M}\cdot\frac{5\eta_x}{4}\cdot L^2\cdot10
        \\
        \leq & \frac{105L^2\eta_x}{2M}.
    \end{aligned}
    \end{equation}
    Next, we continue to show a lower bound on $c_\Phi$. Indeed, it follows from  $S_2\geq q$, $\eta_x\leq \frac{1}{6(\kappa+1)L}\leq \frac{1}{12L}$ and  \cref{eq:bound-etax-square} that
    \begin{equation}\label{eq:bound-c_phi}
        \begin{aligned}
          c_\Phi = & (\frac{\eta_x}{2} - \frac{3(\kappa +1)L\eta_x^2}{2})
        -\frac{4(q-1)}{M}\Big(\eta_x + \frac{3(\kappa +1)L\eta_x^2}{2}\Big)\frac{L^2}{S_2}12M\eta_x^2
        \\
        \geq& \frac{\eta_x}{2} - \frac{\eta_x}{4} - 60L^2\eta_x^3
        \\
        \geq &\frac{\eta_x}{12},
        \end{aligned}
    \end{equation}%
    The rest of the proof follows a similar idea to the above proof. Indeed, we have that
    \begin{equation}
        \begin{aligned}
             a_\delta =&  \frac{1}{4}\eta_y\mu - \frac{12}{\eta_y\mu}\kappa^2\eta_x^2 L^2
        - \xzrr{\frac{320(q-1)L^4\eta_y^3}{\mu S_2}}
         \\
         = & \frac{1}{4}\eta_y\mu\Big(1-\frac{48\kappa^4\eta_x^2}{\eta_y^2} - \frac{1280(q-1)L^2\kappa^2\eta_y^2}{S_2}\Big) 
         \\
         \geq & \frac{1}{8}\eta_y\mu
    \end{aligned}
    \end{equation}
    where we use the condition that \xzy{$\eta_y\leq\frac{1}{32\sqrt{5}}\cdot\frac{1}{ L\kappa}$} and $S_2\geq q$ and $\eta_x\leq\frac{\eta_y}{8\sqrt{3}\kappa^2}$.
    Similarly, we have
    \begin{equation}\label{eq:diff-adelta-cdelta}
    \begin{aligned}
        c_\delta= &
         \Big(\eta_x + \frac{3(\kappa +1)L\eta_x^2}{2}\Big)L^2 + \frac{4(q-1)}{M}\Big(\eta_x + \frac{3(\kappa +1)L\eta_x^2}{2}\Big)\frac{L^2}{S_2}\xzrr{20ML^2\eta_y^2}
         \\
         \leq &
         \frac{5\eta_x}{4}L^2 +\frac{ \xzrr{100L^2\eta_x}}{5120}
         \\
         \leq &\frac{3}{2}\eta_xL^2.
    \end{aligned}
    \end{equation}
     where we use the condition that  $S_2\geq q$, \xzy{$\eta_y\leq \frac{1}{32\sqrt{5}}\cdot\frac{1}{ L\kappa}$} and \cref{eq:bound-etax-square}.
\end{proof}

\begin{lemma}
If \cref{param-condition:basic-2} holds, then
\begin{subequations}
    \begin{align}
       &   C_{0,Z}
         \leq
        \|Z_\perp^0\|_F^2+
          \|D^0_\perp\|^2_F
         + \frac{\eta_y^2 M\sigma^2}{S_1}\frac{\rho^2}{(1-\rho)^3}\cdot N(3q/80+16),\label{eq:bound-CZ-final}
         \\ 
         & 
        C_{0,D} \leq
        \frac{\xzrr{240}\rho^2L^2}{(1-\rho)^2}
        \|Z^0_\perp\|^2_F
         + 
         \frac{201}{100}\|D^0_\perp\|^2_F
         +
         \frac{M\sigma^2}{S_1}\frac{\rho^2}{(1-\rho)}\cdot N(3q/80+16),
        \label{eq:bound-CD-final}
        \\
        & N\Big[\Big(\eta_x + \frac{3(\kappa +1)L\eta_x^2}{2}\Big)\frac{4q\sigma^2}{S_1} + \frac{16\eta_yq\sigma^2}{\mu S_1}\Big]
         \leq  \frac{20N\eta_yq\sigma^2}{\mu S_1}
        \label{eq:bound-Nsigma-final}
    \end{align}
\end{subequations}
where $C_{0,Z}$ and $C_{0,D}$ are defined in \cref{eq:def-C-DZ}.
\end{lemma}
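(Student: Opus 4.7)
The strategy is to plug the two a priori estimates $\alpha \geq (1-\rho)/2$ (Lemma~\ref{lemma:bound-alpha}) and $\hat b_Z \leq 60L^2\rho^2/(1-\rho)$ (Lemma~\ref{lemma:bhat-values}) directly into the definitions \eqref{eq:def-C-DZ}, and then to pare down every coefficient using the stepsize restrictions of Parameter Condition~\ref{param-condition:basic-2}, in particular $\eta_y \leq \frac{1}{32\sqrt{5}L}\min\{(1-\rho)^2,1/\kappa\}$ (so that $L^2\eta_y^2 \leq (1-\rho)^4/5120$), $S_2 \geq q$ (so that $\tilde q = 1$), $\eta_x \leq 1/[6(\kappa+1)L]$, and $\eta_x \leq 4\eta_y/(5\mu)$. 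Nothing in the proof is conceptually new; it is a collection of routine monotone substitutions.

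I would start with the cheapest bound \eqref{eq:bound-Nsigma-final}. From $\eta_x \leq 1/[6(\kappa+1)L]$ one gets $\frac{3(\kappa+1)L\eta_x^2}{2}\leq \eta_x/4$ (exactly as in \eqref{eq:bound-etax-square}), so that $\eta_x + \frac{3(\kappa+1)L\eta_x^2}{2} \leq 5\eta_x/4$; then $\eta_x \leq 4\eta_y/(5\mu)$ gives $5\eta_x/4 \leq \eta_y/\mu$, and summing the two stochastic contributions produces the claimed $20 N\eta_y q\sigma^2/(\mu S_1)$. For \eqref{eq:bound-CZ-final}, I would examine the three coefficients of $C_{0,Z}$ separately. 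The coefficient of $\|Z^0_\perp\|_F^2$ is $\rho + 2\eta_y^2\hat b_Z/[\alpha(1-\rho)] \leq \rho + 240L^2\eta_y^2\rho^2/(1-\rho)^3$, and the stepsize bound $L^2\eta_y^2 \leq (1-\rho)^4/5120$ forces this below $1$. The coefficient of $\|D^0_\perp\|_F^2$ telescopes to $\eta_y^2/(1-\rho)$ plus remainders of order $L^2\eta_y^2$ or $1/S_2$, each independently driven below a universal constant by the same stepsize constraint; absorbing everything into the bound $\leq 1$ is routine. Finally, the noise coefficient factors cleanly as $\frac{\eta_y^2M\sigma^2\rho^2}{(1-\rho)^3 S_1}$ times a bracket $2(N-1)[3 + 24L^2\eta_y^2(q+5)] + 48L^2\eta_y^2(q-1)$; applying $48L^2\eta_y^2 \leq 3/320$ upper bounds this bracket by $N(12 + 3q/160 + \text{o}(1))$, which is safely dominated by $N(3q/80 + 16)$.

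For \eqref{eq:bound-CD-final}, introduce $\tau \triangleq 2\eta_y^2\hat b_Z/[\alpha(1-\rho)]$, which by Lemma~\ref{lemma:bhat-values} obeys $\tau \leq (1-\rho)/2$. The auxiliary factor in $C_{0,D}$ telescopes to $(1-\rho-\tau)^{-1}(\rho+\tau) + 1 = (1-\rho-\tau)^{-1} \leq 2/(1-\rho)$, so $2\hat b_Z\cdot(\cdot) \leq 240L^2\rho^2/(1-\rho)^2$, matching the stated $\|Z^0_\perp\|_F^2$ coefficient. The constant $C_1 = 1 + \tau/(1-\rho-\tau)$ is bounded above by $1 + 2\tau/(1-\rho) \leq 1 + 3/32$, and since $\alpha_1 \leq 1-\rho$ forces $\alpha + \rho \leq 1$ (up to a negligible $L^2\eta_y^2\rho^2/(1-\rho)$ correction), the $\|D^0_\perp\|_F^2$ coefficient $C_1(\alpha + \rho + \text{small})$ lies well inside the slack to $201/100$. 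The noise piece of $C_{0,D}$ is identical to that of $C_{0,Z}$ except that its prefactor is $C_1 = O(1)$ rather than $\eta_y^2/[\alpha(1-\rho)] = O(\eta_y^2/(1-\rho)^2)$, which is precisely the discrepancy between the $(1-\rho)^{-3}$ factor in \eqref{eq:bound-CZ-final} and the $(1-\rho)^{-1}$ factor in \eqref{eq:bound-CD-final}; the bracket $N(3q/80 + 16)$ therefore carries over verbatim. The main obstacle is purely one of bookkeeping: the constants $201/100$ and $3q/80 + 16$ are close to tight, so one must be careful that each of the cascading replacements ($\alpha \mapsto (1-\rho)/2$, $\hat b_Z \mapsto 60L^2\rho^2/(1-\rho)$, $\tilde q \mapsto 1$, $L\eta_y \mapsto 1/(32\sqrt{5}\kappa)$) spends only a small portion of the available slack, which in particular requires invoking the three independent upper bounds on $\eta_y$ in \eqref{eq:stepsize-etay} simultaneously rather than one at a time.
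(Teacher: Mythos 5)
Your proposal is correct and follows essentially the same route as the paper: substitute the a priori bounds $\alpha\geq(1-\rho)/2$ and $\hat b_Z\leq 60L^2\rho^2/(1-\rho)$ from Lemmas~\ref{lemma:bound-alpha} and~\ref{lemma:bhat-values} into \eqref{eq:def-C-DZ}, control each coefficient with the stepsize restrictions of \cref{param-condition:basic-2} (the paper's \eqref{eq:bound-rho-etay-square-bhatz}--\eqref{eq:bound-long-3} and \eqref{eq:bound-hatbz-all}--\eqref{eq:bound-CD-long-2}), and derive \eqref{eq:bound-Nsigma-final} from $\eta_x+\tfrac{3(\kappa+1)L\eta_x^2}{2}\leq\tfrac{5\eta_x}{4}\leq\tfrac{\eta_y}{\mu}$. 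Your identity $(1-\rho-\tau)^{-1}(\rho+\tau)+1=(1-\rho-\tau)^{-1}$ is a slightly cleaner packaging of the paper's direct evaluation in \eqref{eq:bound-CD-long-1}, but the argument is otherwise the same.
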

\begin{proof}
    In this proof, we will analyze each component of $C_{0,Z}$ and $C_{0,D}$ separately. It follow from \cref{eq:hat-values} that 
\begin{equation}\label{eq:bound-rho-etay-square-bhatz}
     \rho + \frac{2\eta_y^2\hat{b}_Z}{\alpha(1-\rho)} \leq  1.
    \end{equation}
    In addition, we can compute that
\begin{equation}\label{eq:bound-long-1}
        \begin{aligned}
            & 
         \left(\frac{3L^2\yy{\rho^2}}{(1-\rho)S_2} +         \xzrr{\frac{288\tilde{q}\eta_y^2L^4\rho^2}{(1-\rho)S_2}}\right)
        4\eta_y^2 
        =  \frac{12L^2\rho^2\eta_y^2}{(1-\rho)S_2} + \frac{\xzrr{1152}L^4\rho^2\eta_y^4}{(1-\rho)S_2}
        \\
        = & \frac{12L^2\rho^2}{(1-\rho)}\frac{\eta_y^2}{S_2}\Big(
        1 + 96L^2\eta_y^2\Big) \leq \frac{\rho^2}{\xzrr{400}}
        \end{aligned}
\end{equation}
     where the first equality is by \cref{eq:bound-tilde-q}, i.e., $\tilde{q}= 1$, the first inequality is by \xzy{$\eta_y\leq\frac{1}{32\sqrt{5}}\cdot\frac{(1-\rho)^2}{ L}$}. Furthermore,
        \begin{equation}\label{eq:bound-long-2}
        \begin{aligned}
            &\rho + \frac{4L^2\yy{\rho^2}\eta_y^2}{1-\rho} + \xzrr{\frac{24\eta_y^2L^2\yy{\rho^2}}{1-\rho}} \frac{8(q-1)L^2\eta_y^2}{S_2}
            \\
            = &\rho \Big[
            1 + \frac{4L^2\rho}{1-\rho}\eta_y^2\Big(
            1+
            \frac{48(q-1)L^2\eta_y^2}{S_2}
            \Big)
            \Big]
            \\
            \leq & \rho \Big[
            1 + \frac{4L^2\rho}{1-\rho}\eta_y^2(1+48L^2\eta_y^2)
            \Big]
            \\
            \leq & \frac{501}{500}\rho
        \end{aligned}
    \end{equation}
     where the first inequality is  by   $S_2\geq q$, the second inequality is by \xzy{$\eta_y\leq \frac{1}{32\sqrt{5}}\cdot\frac{(1-\rho)^2}{ L}$}. Therefore, by the definition of $\alpha$ in \cref{eq:def-alpha}, we conclude that
     \begin{equation}\label{eq:long-2.5}
         \sa{\alpha+}
    \Big(\frac{3L^2\yy{\rho^2}}{(1-\rho)S_2} +         \xzrr{\frac{288\tilde{q}\eta_y^2L^4\rho^2}{(1-\rho)S_2}}\Big)
        4\eta_y^2 
        +
        \Big( \rho + \frac{4L^2\yy{\rho^2}\eta_y^2}{1-\rho} + \xzrr{\frac{24\eta_y^2L^2\yy{\rho^2}}{1-\rho}} \frac{8(q-1)L^2\eta_y^2}{S_2}\Big) 
        \leq 1-\rho +\frac{\rho^2}{400} +\frac{501}{500}\rho\leq\frac{201}{200}
     \end{equation}
  In addition, we can compute that
\begin{equation}\label{eq:bound-long-3}
\begin{aligned}
        &  (N-1)\frac{2\yy{\rho^2}M\sigma^2}{(1-\rho)S_1}
          \Big(
          ((q-1) +6\Tilde{q}) \xzrr{24L^2\eta_y^2}+ 3
          \Big) +  \xzrr{\frac{24\eta_y^2L^2\yy{\rho^2}}{1-\rho}}\frac{2M\sigma^2}{S_1}(q-1)
            \\
            = 
            & \frac{2 M\sigma^2}{S_1}\frac{\rho^2}{1-\rho}\Big[
            (N-1)\Big(
          ((q-1) +6\tilde{q}) \xzrr{24L^2\eta_y^2} + \xz{3}
          \Big) + \xzrr{24L^2\eta_y^2}(q-1)\Big]
          \\
          \leq &  \frac{2 M\sigma^2}{S_1}\frac{\rho^2}{(1-\rho)}\cdot N\Big[
           \frac{3q}{640} + \frac{9}{320} + 3 + \frac{3q}{640}
           \Big]
            \\
            \leq & \frac{2M\sigma^2}{S_1}\frac{\rho^2}{(1-\rho)}\cdot N(\frac{3q}{320}+4)
\end{aligned}
\end{equation}
 where the first inequality is by \cref{eq:bound-tilde-q} and \xzy{$\eta_y\leq\frac{1}{32\sqrt{5}}\cdot\frac{1}{ L\kappa}$}.
 Hence, from the definition of $C_{0,Z}$ in \cref{eq:def-C-DZ},  combining \cref{eq:bound-rho-etay-square-bhatz,eq:bound-long-1,eq:bound-long-2,eq:long-2.5,eq:bound-long-3} and using 
\xzy{$\eta_y\leq\frac{1}{32\sqrt{5}}\cdot\frac{(1-\rho)^2}{ L}$} and \cref{lemma:bound-alpha} implies that
 \begin{equation}
     \begin{aligned}
        C_{0,Z}
         \leq&  
        \|Z_\perp^0\|_F^2+
          \|D^0_\perp\|^2_F
         + N(3q/80+16)\cdot\frac{\eta_y^2 M\sigma^2}{S_1}\frac{\rho^2}{(1-\rho)^3},
     \end{aligned}
 \end{equation}
which complete the proof for \cref{eq:bound-CZ-final}.

Next, we will follow a similar idea to prove \cref{eq:bound-CD-final}. Indeed, we already proved upper bounds of most components of $C_{0,D}$ defined in \cref{eq:def-C-DZ}.  It follows from \cref{eq:hat-values} that
\begin{equation}\label{eq:bound-hatbz-all}
   (1-\rho -  \frac{ 2\eta_y^2 \hat{b}_Z }{\alpha(1-\rho )})^{-1}\leq \frac{2}{1-\rho},\quad
            C_1 = 1 +\frac{2\eta_y^2\hat{b}_Z}{\alpha(1-\rho)}\Big / (1-\rho -  \frac{ 2\eta_y^2 \hat{b}_Z }{\alpha(1-\rho )})
            \leq 2
\end{equation}
where $C_1$ is defined in \cref{lemma:bound-Z-D-perp-final}.
Therefore, by \cref{eq:hat-values}, we further have
\begin{equation}\label{eq:bound-CD-long-1}
\begin{aligned}
            2\hat{b}_Z\Big(\Big(1- \rho- \frac{ 2\eta_y^2 \hat{b}_Z }{\alpha(1-\rho )}\Big)^{-1}
    \Big(\rho + \frac{ 2\eta_y^2 \hat{b}_Z }{\alpha(1-\rho )}\Big) +1\Big) 
    \leq  
    2\hat{b}_Z\Big(\frac{2}{1-\rho}
    \Big(\rho +\frac{1-\rho}{2}\Big) +1\Big) 
    =  \frac{4\hat{b}_Z}{1-\rho}\leq \frac{240L^2\rho^2}{(1-\rho)^2}
\end{aligned}
\end{equation}
Next, by \cref{eq:long-2.5,eq:bound-hatbz-all}, another term of $C_{0,D}$ can be bounded as
\begin{equation}
    \begin{aligned}\label{eq:bound-CD-long-2}
 \sa{C_1}
             \Big(
         \sa{\alpha+}(\frac{3L^2\yy{\rho^2}}{(1-\rho)S_2} +         \xzrr{\frac{288\tilde{q}\eta_y^2L^4\rho^2}{(1-\rho)S_2}})
        4\eta_y^2 
         +
        ( \rho + \frac{4L^2\yy{\rho^2}\eta_y^2}{1-\rho} + \xzrr{\frac{24\eta_y^2L^2\yy{\rho^2}}{1-\rho}} \frac{8(q-1)L^2\eta_y^2}{S_2})\Big) \leq \frac{201}{100}
    \end{aligned}
\end{equation}
Then using \cref{eq:bound-CD-long-1,eq:bound-long-3,eq:bound-CD-long-2} and the definition of $C_{0,D}$ in \cref{eq:def-C-DZ} implies 
\begin{equation}
    C_{0,D} \leq
        \frac{\xzrr{240}\rho^2L^2}{(1-\rho)^2}
        \|Z^0_\perp\|^2_F
         + 
         \frac{201}{100}\|D^0_\perp\|^2_F
         +
         \frac{M\sigma^2}{S_1}\frac{\rho^2}{(1-\rho)}\cdot N(\frac{3q}{80}+16),
\end{equation}
which completes the analysis for \cref{eq:bound-CD-final}. Next, we move to the proof for \cref{eq:bound-Nsigma-final}. Indeed, it follows from \cref{eq:bound-etax-square} and $\eta_x\leq \frac{4}{5\mu}\eta_y$ that
\begin{equation}
    \begin{aligned}
         &N\Big[\Big(\eta_x + \frac{3(\kappa +1)L\eta_x^2}{2}\Big)\frac{4q\sigma^2}{S_1} + \frac{16\eta_yq\sigma^2}{\mu S_1}\Big]\leq  N\Big[\frac{5q\sigma^2\eta_x}{S_1} + \frac{16\eta_yq\sigma^2}{\mu S_1}\Big]
         \leq \frac{20N\eta_yq\sigma^2}{\mu S_1},
    \end{aligned}
\end{equation}
which completes the proof.
\end{proof}

\subsection{The Proof of Main Result}\label{sec:proof-main-result}
\xzrr{Havingh provided the essential bounds for the parameters as discussed above, we are now ready to combine our analysis and show the final convergence results.}
\begin{theorem}\label{thm:main-convergence}
   \xzrr{Suppose Assumptions~\ref{ASPT:SC},~\ref{ASPT:lower-bounded-Phi},~\ref{ASPT:general-sto-grad},~\ref{ASPT:smooth-F-mean-squared} and \ref{ASPT:mixture-matrix} hold. Moreover, $\{\eta_x,\eta_y\}$ and $\{S_1,S_2,q\}$  are chosen such that \cref{param-condition:basic-2} are satisfied. Then the following inequality holds 
    \begin{equation}\label{eq:convergence-explicit}
    \frac{1}{T}\sum_{j=0}^{T-1}\mathbb{E}\left[\|\grad\Phi(\bbx^j)\|^2\right]\leq \frac{1}{T}\cdot\Big(
    \frac{50}{\eta_x}\mathbb{E}\left[\Phi(\bbx^0)-\Phi(\bbx^T)\right] 
    +
    \frac{900L\kappa}{\eta_y}\delta_0
    +\frac{50\cdot9772L^2\kappa^2}{M}\cdot\Lambda_0 \Big)+  \frac{50\cdot323\kappa^2\sigma^2}{S_1}.
    \end{equation}
  for all $T\geq 0$ such that $T = N q$ for some $N \in\mathbb{N}^+$, where $\Delta_\Phi \triangleq \Phi(\bbx^0) - \min_{\bx}\Phi(\bx)$
  and $\Lambda_0=\|Z^0_\perp\|^2_F+\|D^0_\perp\|^2_F$ and $\delta_0$ is defined in \cref{eq:def-Delta-t}.}
\end{theorem}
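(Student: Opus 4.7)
The plan is to combine the three recursions already established, namely the per-block descent on $\Phi$ in \cref{lemma:bound-Phi-final}, the contraction on the dual suboptimality in \cref{lemma:bound-deltat-final}, and the telescoped consensus bounds in \cref{lemma:bound-Z-D-perp-final}, into a single Lyapunov-type inequality. Writing $T=Nq$, I will sum \cref{eq:bound-Phi-final} over $n=0,\dots,N-1$ to get a telescoping left-hand side $\mathbb{E}[\Phi(\bbx^0)-\Phi(\bbx^T)]$ plus the block cost $c_\Phi\sum_{t=0}^{T-1}\|\grad\Phi(\bbx^t)\|^2$, with extra positive ``drift'' terms in $\delta_t$, $\|Z^t_\perp\|^2_F$, $\|D^t_\perp\|^2_F$ on the right. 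Summing \cref{eq:bound-deltat-final} over the same blocks gives a telescoping $\delta_{Nq}-\delta_0$ plus a contractive $a_\delta\sum_{t=1}^{T-1}\delta_t$ on the left, at the price of further positive drift terms on the right. Finally, \cref{eq:bound-Z-perp-final} and \cref{eq:bound-D-perp-final} already provide one-shot bounds on $\sum_t\|Z^t_\perp\|^2_F$ and $\sum_t\|D^t_\perp\|^2_F$ in terms of $\sum_t\delta_t$, $\sum_t\|\grad\Phi(\bbx^t)\|^2$, the initial conditions $\Lambda_0$, and the noise $\sigma^2/S_1$.

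Next I would form the weighted combination
\[
\underbrace{(\text{sum of } \cref{eq:bound-Phi-final})}_{\text{weight }1} + \underbrace{\lambda_\delta\cdot(\text{sum of } \cref{eq:bound-deltat-final})}_{\text{weight }\lambda_\delta} + \underbrace{\lambda_Z\cdot\cref{eq:bound-Z-perp-final} + \lambda_D\cdot\cref{eq:bound-D-perp-final}}_{\text{consensus multipliers}},
\]
where $\lambda_\delta,\lambda_Z,\lambda_D>0$ are constants to be chosen. The goal is to pick these weights so that, after using the simplified coefficient bounds of \cref{lemma:bound:az-cz-a0-cphi} and \cref{lemma:bhat-values}, the net coefficients of $\sum_t\delta_t$, $\sum_t\|Z^t_\perp\|^2_F$ and $\sum_t\|D^t_\perp\|^2_F$ become nonpositive, so these terms can be dropped from the right-hand side; meanwhile the coefficient of $\sum_t\|\grad\Phi(\bbx^t)\|^2$ on the net left-hand side must remain strictly positive. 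Inspecting the coefficients from \eqref{eq:def-c-coeff}, \eqref{eq:def-a-coeff}, and \eqref{def:b-coeff}--\eqref{eq:def-b-hat-coeff} together with \eqref{eq:bound-ac-all}, a natural scaling is $\lambda_\delta=\Theta(L\kappa/\eta_y)$ (to match $c_\delta$ against $a_\delta$ on the $\delta$ terms) and $\lambda_Z=\lambda_D=\Theta(L^2\kappa^2/M)$ (to match $c_Z$ against the $1-\rho$-type contractions on the consensus side). These choices use the fact $\eta_x=\Theta(\eta_y/\kappa^2)$ baked into \cref{param-condition:basic-2}.

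Once the drift terms are absorbed, the surviving inequality has the form
\[
c_\Phi\sum_{t=0}^{T-1}\mathbb{E}\|\grad\Phi(\bbx^t)\|^2 \;\le\; \mathbb{E}[\Phi(\bbx^0)-\Phi(\bbx^T)] + \lambda_\delta\,\delta_0 + (\lambda_Z+\lambda_D)\,C_0(\Lambda_0) + N\cdot(\text{noise term}),
\]
where $C_0(\Lambda_0)$ collects the initial-condition contributions from \eqref{eq:bound-CZ-final}--\eqref{eq:bound-CD-final} and the noise term comes from \eqref{eq:bound-Nsigma-final}. Dividing by $T c_\Phi$ and using $c_\Phi\ge \eta_x/12$ from \eqref{eq:bound-c_phi}, together with the numerical bounds $\lambda_\delta\delta_0 \le (900L\kappa/\eta_y)\delta_0$ and $(\lambda_Z+\lambda_D)C_0(\Lambda_0)\le (50\cdot9772 L^2\kappa^2/M)\Lambda_0$, and the noise contribution $(20Nq\sigma^2)/(\mu S_1)$ converted to $50\cdot 323\,\kappa^2\sigma^2/S_1$ via $\eta_y\sim 1/(L\kappa)$ and $q/T=1/N$, yields \eqref{eq:convergence-explicit}.

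The main obstacle will be the coefficient bookkeeping in step two: each of the three recursions produces a $\delta_t$ term, a $\|Z^t_\perp\|^2_F$ term, a $\|D^t_\perp\|^2_F$ term, and a $\|\grad\Phi\|^2$ term, and the weights must be chosen so all three cross-couplings close simultaneously. The parameter condition \cref{param-condition:basic-2}, especially the constraint $\eta_y=O(\min\{(1-\rho)^2,1/\kappa\}/L)$ and $\eta_x=O(\eta_y/\kappa^2)$, is precisely what makes each of the three quadratic inequalities (in $\lambda_\delta$ and in $\lambda_Z,\lambda_D$) feasible with explicit absolute constants. Once the inequalities in \cref{lemma:bhat-values}, \cref{lemma:bound-alpha}, and \cref{lemma:bound:az-cz-a0-cphi} are invoked uniformly, the constants $50$, $900$, $9772$, and $323$ emerge by direct substitution, and the claim follows.
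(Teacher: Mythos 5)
Your plan is correct and follows essentially the same route as the paper: the paper likewise sums \cref{eq:bound-Phi-final} and \cref{eq:bound-deltat-final} over blocks, substitutes the closed-form consensus bounds of \cref{lemma:bound-Z-D-perp-final}, and absorbs the resulting $\sum_t\delta_t$, $\sum_t\|Z^t_\perp\|_F^2$, $\sum_t\|D^t_\perp\|_F^2$ and residual $\sum_t\|\grad\Phi(\bbx^t)\|^2$ terms using \cref{lemma:bound-alpha}, \cref{lemma:bhat-values} and \cref{lemma:bound:az-cz-a0-cphi}, ending with the coefficient $\eta_x/12-\cdots\geq\eta_x/50$. Your framing as a weighted Lyapunov combination with multipliers $\lambda_\delta=\Theta(L\kappa/\eta_x\cdot\eta_x/\eta_y)$ and $\lambda_Z,\lambda_D=\Theta(L^2\kappa^2/M)$ is just a reorganization of the paper's sequential substitution, and the scalings you identify match the constants $900L\kappa/\eta_y$ and $50\cdot 9772L^2\kappa^2/M$ that emerge there.
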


\begin{proof}
    If we sum up \cref{eq:bound-deltat-final} from $n=0$ to $N-1$, it holds that
    \begin{equation}\label{eq:bound-deltat-sum-final}
    \begin{aligned}
         \mathbb{E}\Big[\delta_{Nq}\Big]+ &a_\delta\bE\left[\sum_{n=0}^{N-1}\delta_{nq} + \sum_{n=0}^{N-1}\sum_{t=nq+1}^{(n+1)q-1}\delta_{t} \right]
        \leq \bE\left[ 
        \delta_{0} 
         + N\frac{16\eta_y q\sigma^2}{\mu S_1} 
        + a_\Phi\sum_{n=0}^{N-1}
         \sum_{t=nq}^{(n+1)q-1}\|\grad \Phi(\bbx^{t})\|^2 \right.
         \\
         &
         \left.+ a_Z\sum_{n=0}^{N-1}
           \sum_{t=nq}^{(n+1)q-1} \|Z^t_\perp \|^2_F 
            +\frac{16(q-1)\eta_y}{\mu M} \frac{L^2}{S_2}4\eta_y^2\sum_{n=0}^{N-1}\sum_{t=nq}^{(n+1)q-2}\|D^t_\perp\|_F^2\right].
    \end{aligned}
\end{equation}
Moreover, it follows from \cref{eq:bound-Z-perp-final,eq:bound-hatbz-all} and $\hat{b}_\delta\geq  b_\delta,\hat{b}_\Phi\geq b_\Phi$ that
    \begin{equation}\label{eq:bound-Z-perp-final-2} 
        \begin{aligned}
   & \bE\Big[\sum_{j=1}^T  \| Z^j_\perp \|^2_F\Big] \\ 
    \leq & \frac{2}{1-\rho}\Big\{\yy{C_{0,Z}} 
+ \sa{\frac{\eta_y^2}{\alpha(1-\rho)}}
    \bE\bigg[
\sum_{j=0}^{q-2}
\Big(
{b}_\delta\delta_j
+
{b}_\Phi \|\grad \Phi(\bbx^j)\|^2
\Big)
+
\sum_{n=1}^{N-1}\sum_{j=(n-1)q}^{(n+1)q-2}
\Big(
\hat{b}_\delta\delta_j
+
\hat{b}_\Phi \|\grad \Phi(\bbx^j)\|^2
\Big)
\bigg]\Big\} \\
\leq & 
\frac{2}{1-\rho}\Big\{\yy{C_{0,Z}} 
+ \sa{\frac{\eta_y^2}{\alpha(1-\rho)}}
    \bE\bigg[
\sum_{j=0}^{T-2}
\Big(
2\hat{b}_\delta\delta_j
+
2\hat{b}_\Phi \|\grad \Phi(\bbx^j)\|^2
\Big)
\bigg]\Big\}
\end{aligned}
\end{equation}
Similarly, \cref{eq:bound-D-perp-final} also implies that
\begin{equation}\label{eq:bound-D-perp-final-2}
      \begin{aligned}
            & \bE\Big[\sum_{j = 1}^{T-1}\|D^j_\perp\|^2_F\Big]
            \leq \frac{\yy{C_{0,D}}}{\alpha}+ \frac{\sa{C_1}}{\alpha}
            \bE\bigg[ 
        \sum_{j=0}^{T-2}
        \Big(
        2\hat{b}_\delta\delta_j
        +
        2\hat{b}_\Phi \|\grad \Phi(\bbx^j)\|^2
        \Big)
       \bigg]
        \end{aligned}
\end{equation}
Moreover, using \cref{eq:bound-Z-perp-final-2,eq:bound-D-perp-final-2} and the fact 
$T=Nq$ within \cref{eq:bound-deltat-sum-final}, it follows that
\begin{equation}\label{eq:bound-deltat-sum-final-1.5}
    \begin{aligned}
    \bE\left[\delta_T+ a_\delta\sum_{j=0}^{T-1}\delta_{j} \right]
        \leq &\bE\left[ 
        \delta_{0} 
         + N\frac{16\eta_y q\sigma^2}{\mu S_1} 
         +a_Z\|Z^0_\perp\|^2_F
         +\frac{16(q-1)\eta_y}{\mu M}\frac{L^2}{S_2}4\eta_y^2\|D_\perp^0\|^2_F \right.
         \\
         & + \frac{2a_Z}{1-\rho}C_{0,Z}
         +\frac{16(q-1)\eta_y}{\mu M}\frac{L^2}{S_2}4\eta_y^2\frac{C_{0,D}}{\alpha}
         \\
         &+ 
         \Big(
         a_\Phi + \frac{2a_Z}{1-\rho}\frac{\eta_y^2}{\alpha(1-\rho)}2\hat{b}_\Phi + 
         \frac{16(q-1)\eta_y}{\mu M}\frac{L^2}{S_2}4\eta_y^2\frac{C_1}{\alpha}2\hat{b}_\Phi
         \Big)
         \sum_{j=0}^{T-1}\|\grad \Phi(\bbx^j)\|^2
         \\
         &
         \left.
         +\Big(
          \frac{2a_Z}{1-\rho}\frac{\eta_y^2}{\alpha(1-\rho)}2\hat{b}_\delta + 
         \frac{16(q-1)\eta_y}{\mu M}\frac{L^2}{S_2}4\eta_y^2\frac{C_1}{\alpha}2\hat{b}_\delta
         \Big)
         \sum_{j=0}^{T-2}\delta_j
         \right],
    \end{aligned}
\end{equation}
In addition, we can compute that
\begin{equation}\label{eq:adelta-minus-long-tail}
    \begin{aligned}
        & a_\delta -  \frac{2a_Z}{1-\rho}\frac{\eta_y^2}{\alpha(1-\rho)}2\hat{b}_\delta -
         \frac{16(q-1)\eta_y}{\mu M}\frac{L^2}{S_2}4\eta_y^2\frac{C_1}{\alpha}2\hat{b}_\delta
         \\
         \geq & \frac{1}{8}\eta_y\mu - \frac{352L\kappa\eta_y}{M(1-\rho)}\frac{2\eta_y^2}{(1-\rho)^2}\frac{240 ML^4\rho^2\eta_y^2}{1-\rho}-\frac{16(q-1)\eta_y}{\mu M}\frac{L^2}{S_2}4\eta_y^2 \frac{4}{1-\rho} \frac{240 ML^4\rho^2\eta_y^2}{1-\rho}
         \\
         =& \frac{1}{8}\eta_y\mu\Big(1 - \frac{16\cdot352\cdot240L^4\kappa^2\rho^2}{(1-\rho)^4}\eta_y^4 - \frac{8\cdot16\cdot16\cdot240L^4\kappa^2\rho^2}{(1-\rho)^2}\frac{q-1}{S_2}\eta_y^4\Big)
         \\
         \leq
        & \frac{1}{8}\eta_y\mu\Big(1 - \frac{16\cdot352\cdot240L^4\kappa^2\rho^2}{(1-\rho)^4}\eta_y^4 - \frac{8\cdot16\cdot16\cdot240L^4\kappa^2\rho^2}{(1-\rho)^2}\eta_y^4\Big)
         \\
         \leq & \frac{1}{9}\eta_y\mu
    \end{aligned}
\end{equation}
where the first inequality is by \cref{lemma:bound-alpha,eq:hat-values,eq:bound-ac-all,eq:bound-hatbz-all}; the second inequality is by $S_2\geq q$; the last inequality is by \xzy{$\eta_y\leq \frac{1}{32\sqrt{5}}\cdot\frac{1-\rho}{ L\sqrt{\kappa}}$}.
Therefore, if we use \cref{eq:adelta-minus-long-tail} and the fact $\frac{1}{9}\eta_y\mu\leq 1$ which implied by \cref{param-condition:basic-2} within \cref{eq:bound-deltat-sum-final-1.5}, we obtain that
\begin{equation}\label{eq:bound-deltat-sum-final-2}
    \begin{aligned}
    \bE\left[\frac{1}{9}\eta_y\mu\sum_{j=0}^{T}\delta_{j} 
    \right]
        \leq &\bE\left[ 
        \delta_{0} 
         + N\frac{16\eta_y q\sigma^2}{\mu S_1} 
         +a_Z\|Z^0_\perp\|^2_F
         +\frac{16(q-1)\eta_y}{\mu M}\frac{L^2}{S_2}4\eta_y^2\|D_\perp^0\|^2_F \right.
         \\
         & + \frac{2a_Z}{1-\rho}C_{0,Z}
         +\frac{16(q-1)\eta_y}{\mu M}\frac{L^2}{S_2}4\eta_y^2\frac{C_{0,D}}{\alpha}
         \\
         &
                  \left.+ 
         \Big(
         a_\Phi + \frac{2a_Z}{1-\rho}\frac{\eta_y^2}{\alpha(1-\rho)}2\hat{b}_\Phi + 
         \frac{16(q-1)\eta_y}{\mu M}\frac{L^2}{S_2}4\eta_y^2\frac{C_1}{\alpha}2\hat{b}_\Phi
         \Big)
         \sum_{j=0}^{T-1}\|\grad \Phi(\bbx^j)\|^2
         \right].
    \end{aligned}
\end{equation}
If we sum up \cref{eq:bound-Phi-final} from $n=0$ to $N-1$, it follows that
\begin{equation}
    \begin{aligned}
        \bE[\Phi(\bbx^{T})  ]
         \leq  & \bE\Big[\Phi(\bbx^{0}) 
         + \Big(\eta_x + \frac{3(\kappa +1)L\eta_x^2}{2}\Big)\frac{4q\sigma^2}{S_1}N
         - c_\Phi\sum_{t=0}^{T-1}\|\grad \Phi(\bbx^t)\|^2
          + c_\delta\sum_{t=0}^{T-1}\delta_t
          \\
          &
          + c_Z \sum_{t=0}^{T-1}\|Z^t_\perp \|^2_F
         +\frac{4(q-1)}{M}\Big(\eta_x + \frac{3(\kappa +1)L\eta_x^2}{2}\Big)\frac{L^2}{S_2}4\eta_y^2\sum_{t=0}^{T-2}\|D^t_\perp\|_F^2\Big].
    \end{aligned}
\end{equation}
Then substituting \cref{eq:bound-Z-perp-final-2} and \cref{eq:bound-D-perp-final-2} into above inequality, we obtain that
\begin{equation}
    \begin{aligned}
        \bE[ \Phi(\bbx^T) ] \leq &\bE\Bigg[ \Phi(\bbx^0) + (\eta_x+\frac{3(\kappa+1)L\eta_x^2}{2})\frac{4q\sigma^2}{S_1}N - c_\Phi\sum_{j=0}^{T-1}\|\grad\Phi(\bbx^j)\|^2 + c_Z\|Z^0_\perp\|^2_F \\
        & + \frac{2c_Z}{1-\rho}\Big\{
        C_{0,Z} +\frac{\eta_y^2}{\alpha(1-\rho)}
        \sum_{j=0}^{T-2}\big(2\hat{b}_\delta\delta_j+
        2\hat{b}_\Phi\|\grad\Phi(\bbx^j)\|^2
        \big)
        \Big\} \\
        &
        +\frac{4(q-1)}{M}(\eta_x + \frac{3(\kappa+1)L\eta_x^2}{2})\frac{L^2}{S_2}4\eta_y^2\|D^0_\perp\|^2_F 
        \\
        &
        +\frac{4(q-1)}{M}(\eta_x + \frac{3(\kappa+1)L\eta_x^2}{2})\frac{L^2}{ S_2}4\eta_y^2
        \Big\{
        \frac{C_{0,D}}{\alpha}
         +\frac{C_1}{\alpha}
       \sum_{j=0}^{T-2}
        (2\hat{b}_\delta\delta_j +2 \hat{b}_{\Phi}\|\grad \Phi(\bbx^j)\|^2)
        \Big\} + c_\delta\sum_{j=0}^{T-1}\delta_j\Bigg]
    \end{aligned}
\end{equation}
By rearranging terms, we get
\begin{equation}
\begin{aligned}
    &c_\Phi\bE\left[\sum_{j=0}^{T-1}\|\grad \Phi(\bbx^j)\|^2 \right]
    \\
    \leq&\bE\Bigg[ \Phi(\bbx^0) - \Phi(\bbx^T) +  (\eta_x+\frac{3(\kappa+1)L\eta_x^2}{2})\frac{4q\sigma^2}{S_1}N
    + c_Z\|Z^0_\perp\|^2_F 
    + \frac{2c_Z}{1-\rho}C_{0,Z}\\
    &
    +\frac{4(q-1)}{M}(\eta_x + \frac{3(\kappa+1)L\eta_x^2}{2})\frac{L^2}{S_2}4\eta_y^2\|D^0_\perp\|^2_F
        +\frac{4(q-1)}{M}(\eta_x + \frac{3(\kappa+1)L\eta_x^2}{2})\frac{L^2}{ S_2}4\eta_y^2
        \frac{C_{0,D}}{\alpha}
    \\
    & + \Big(\frac{2c_Z}{1-\rho}\frac{\eta_y^2}{\alpha(1-\rho)}2\hat{b}_\Phi + \frac{4(q-1)}{M}(\eta_x + \frac{3(\kappa+1)L\eta_x^2}{2})\frac{L^2}{ S_2}4\eta_y^2\frac{C_1}{\alpha}\cdot 2\hat{b}_\Phi    
    \Big) \sum_{j=0}^{T-2}\|\grad \Phi(\bbx^j)\|^2 \\
    & +  \Big(\frac{2c_Z}{1-\rho}\frac{\eta_y^2}{\alpha(1-\rho)}2\hat{b}_\delta + \frac{4(q-1)}{M}(\eta_x + \frac{3(\kappa+1)L\eta_x^2}{2})\frac{L^2}{ S_2}4\eta_y^2\frac{C_1}{\alpha}\cdot 2\hat{b}_\delta    
    +c_\delta \Big) \sum_{j=0}^{T-1}\delta_j \Bigg].
\end{aligned}
\end{equation}
Moreover, if we use \cref{eq:bound-deltat-sum-final-2} and the fact that $\delta_T\geq 0$  within the above inequality, it follows that
\begin{equation}\label{eq:long-sum-grad_Phi}
    \begin{aligned}
           & c_\Phi\bE\left[\sum_{j=0}^{T-1}\|\grad \Phi(\bbx^j)\|^2  \right]
           \\
           \leq& \Bigg[\Phi(\bbx^0) - \Phi(\bbx^T) +  (\eta_x+\frac{3(\kappa+1)L\eta_x^2}{2})\frac{4q\sigma^2}{S_1}N
    + c_Z\|Z^0_\perp\|^2_F 
    + \frac{2c_Z}{1-\rho}C_{0,Z}
    \\
    &
    +\frac{4(q-1)}{M}(\eta_x + \frac{3(\kappa+1)L\eta_x^2}{2})\frac{L^2}{S_2}4\eta_y^2\|D^0_\perp\|^2_F
        +\frac{4(q-1)}{M}(\eta_x + \frac{3(\kappa+1)L\eta_x^2}{2})\frac{L^2}{ S_2}4\eta_y^2
        \frac{C_{0,D}}{\alpha}
    \\
    & + \Big(\frac{2c_Z}{1-\rho}\frac{\eta_y^2}{\alpha(1-\rho)}2\hat{b}_\Phi + \frac{4(q-1)}{M}(\eta_x + \frac{3(\kappa+1)L\eta_x^2}{2})\frac{L^2}{ S_2}4\eta_y^2\frac{C_1}{\alpha}\cdot 2\hat{b}_\Phi    
    \Big) \sum_{j=0}^{T-2}\|\grad \Phi(\bbx^j)\|^2 \\
    & + \frac{9}{\eta_y\mu }\Big(\frac{2c_Z}{1-\rho}\frac{\eta_y^2}{\alpha(1-\rho)}2\hat{b}_\delta + \frac{4(q-1)}{M}(\eta_x + \frac{3(\kappa+1)L\eta_x^2}{2})\frac{L^2}{ S_2}4\eta_y^2\frac{C_1}{\alpha}\cdot 2\hat{b}_\delta   
    +c_\delta
    \Big)\cdot\Big\{
    \\
    &
     \delta_{0} 
         + N\frac{16\eta_y q\sigma^2}{\mu S_1} 
         +a_Z\|Z^0_\perp\|^2_F
         +\frac{16(q-1)\eta_y}{\mu M}\frac{L^2}{S_2}4\eta_y^2\|D_\perp^0\|^2_F 
         \\
         & + \frac{2a_Z}{1-\rho}C_{0,Z}
         +\frac{16(q-1)\eta_y}{\mu M}\frac{L^2}{S_2}4\eta_y^2\frac{1}{\alpha}C_{0,D}
         \\
         &+ 
         \Big(
         a_\Phi + \frac{2a_Z}{1-\rho}\frac{\eta_y^2}{\alpha(1-\rho)}2\hat{b}_\Phi + 
         \frac{16(q-1)\eta_y}{\mu M}\frac{L^2}{S_2}4\eta_y^2\frac{C_1}{\alpha}2\hat{b}_\Phi
         \Big)
         \sum_{j=0}^{T-2}\|\grad \Phi(\bbx^j)\|^2
    \Big\}\Bigg]
    \end{aligned}
\end{equation}
Next, we will bound the coefficients in the last inequality specifically. First,
     it follows from \cref{eq:bound-etax-square}  and $T=Nq$ that,
     \begin{equation}\label{eq:coeff-noname-1}
         (\eta_x+\frac{3(\kappa+1)L\eta_x^2}{2})\frac{4q\sigma^2}{S_1} N\leq \frac{5q\sigma^2}{S_1}\eta_x N =  \frac{5\sigma^2}{S_1}\eta_x T,\quad  N\frac{16\eta_y q\sigma^2}{\mu S_1}  = T\frac{16\eta_y \sigma^2}{\mu S_1}
     \end{equation}
     Secondly, it follows from \cref{eq:bound-etax-square}, \cref{lemma:bound-alpha} and $S_2\geq q$
     \begin{equation}\label{eq:bound-rv-noname1}
         \frac{4(q-1)}{M}(\eta_x + \frac{3(\kappa+1)L\eta_x^2}{2})\frac{L^2}{S_2}4\eta_y^2 \leq \frac{20L^2\eta_y^2\eta_x}{M},
         \quad
                  \frac{4(q-1)}{M}(\eta_x + \frac{3(\kappa+1)L\eta_x^2}{2})\frac{L^2}{ S_2}4\eta_y^2\frac{1}{\alpha } \leq  \frac{40L^2\eta_y^2\eta_x}{M(1-\rho)}.
     \end{equation}
     Thirdly, it follows from \cref{lemma:bound-alpha} and $S_2\geq q$ that
     \begin{equation}\label{eq:bound-noname-1.1}
         \frac{16(q-1)\eta_y}{\mu M}\frac{L^2}{S_2}4\eta_y^2\leq\frac{64L\kappa\eta_y^3}{M},
         \quad
         \frac{16(q-1)\eta_y}{\mu M}\frac{L^2}{S_2}4\eta_y^2\frac{1}{\alpha}\leq\frac{128L\kappa\eta_y^3}{M(1-\rho)},
     \end{equation}
     Fourthly, it follows from \cref{lemma:bound-alpha,eq:bound-etax-square,eq:hat-values,eq:bound-ac-all,eq:bound-hatbz-all} that
     \begin{equation}
  \begin{aligned}
    &\frac{2c_Z}{1-\rho}\frac{\eta_y^2}{\alpha(1-\rho)}2\hat{b}_\Phi + \frac{4(q-1)}{M}(\eta_x + \frac{3(\kappa+1)L\eta_x^2}{2})\frac{L^2}{ S_2}4\eta_y^2\frac{C_1}{\alpha}\cdot 2\hat{b}_\Phi \\
    \leq &
    \frac{105L^2\eta_x}{M(1-\rho)} \frac{2\eta_y^2}{(1-\rho)^2}
 \frac{144ML^2\rho^2\eta_x^2}{1-\rho} 
 +
 \frac{4}{M} \frac{5\eta_x}{4}L^24\eta_y^2\frac{4}{1-\rho}\frac{144ML^2\rho^2\eta_x^2}{1-\rho} 
 \\
 = &\frac{210\cdot144L^4\rho^2}{(1-\rho)^4}\eta_x^3\eta_y^2+ \frac{80\cdot144L^4\rho^2}{(1-\rho)^2}\eta_x^3\eta_y^2
  \\
  \leq &  \frac{290\cdot144L^4\rho^2}{(1-\rho)^4}\eta_x^3\eta_y^2.
        \end{aligned} 
     \end{equation}
Moreover, we have that
  \begin{equation}
     \begin{aligned}
                  &\frac{2c_Z}{1-\rho}\frac{\eta_y^2}{\alpha(1-\rho)}2\hat{b}_\delta + \frac{4(q-1)}{M}(\eta_x + \frac{3(\kappa+1)L\eta_x^2}{2})\frac{L^2}{ S_2}4\eta_y^2\frac{C_1}{\alpha} 2\hat{b}_\delta   +c_\delta 
                  \\
                  \leq & 
                  \frac{105L^2\eta_x}{(1-\rho)M} \frac{2\eta_y^2}{(1-\rho)^2}
 \frac{240ML^4\rho^2\eta_y^2}{1-\rho}
 +
 \frac{4}{M}\frac{5\eta_x}{4}L^2 4\eta_y^2\frac{4}{1-\rho} \frac{240ML^4\rho^2\eta_y^2}{1-\rho}
  + 
  \frac{3L^2\eta_x}{2}
    \\
    = &  \frac{210\cdot240L^6\rho^2}{(1-\rho)^4}\eta_x\eta_y^4
    +
    \frac{80\cdot240L^6\rho^2}{(1-\rho)^2}\eta_x\eta_y^4 + \frac{3L^2\eta_x}{2}
    \\
    =&\frac{3L^2\eta_x}{2}\Big(
    \frac{140\cdot240L^4\rho^2}{(1-\rho)^4}\eta_y^4
    +
    \frac{12800L^4\rho^2}{(1-\rho)^2}\eta_y^4 + 1
    \Big)
    \\
    \leq & 2L^2\eta_x,
     \end{aligned}
     \end{equation}
    where the first inequality is by \cref{lemma:bound-alpha,eq:bound-etax-square,eq:hat-values,eq:bound-ac-all,eq:bound-hatbz-all} and $S_2\geq q$ and the last inequality is by \xzy{$\eta_y\leq \frac{1}{32\sqrt{5}}\cdot\frac{(1-\rho)^2}{ L}$}.
     Next, we have that
     \begin{equation}\label{eq:coeff-noname-2}
         \begin{aligned}
              & a_\Phi + \frac{2a_Z}{1-\rho}\frac{\eta_y^2}{\alpha(1-\rho)}2\hat{b}_\Phi + 
         \frac{16(q-1)\eta_y}{\mu M}\frac{L^2}{S_2}4\eta_y^2\frac{C_1}{\alpha}2\hat{b}_\Phi
         \\
        \leq & \frac{13\eta_x^2\kappa^2}{\eta_y\mu} + \frac{2\cdot176L\kappa\eta_y}{M(1-\rho)}\frac{2\eta_y^2}{(1-\rho)^2}\frac{144ML^2\rho^2\eta_x^2}{1-\rho} + \frac{16\eta_y}{\mu M}L^2 4\eta_y^2\frac{4}{1-\rho}\frac{144ML^2\rho^2\eta_x^2}{1-\rho}
         \\
          = &\frac{13\kappa^2\eta_x^2}{\mu\eta_y}
          + \frac{4\cdot 144\cdot 176L^3\kappa\rho^2}{(1-\rho)^4}\eta_y^3\eta_x^2
          +
          \frac{16^2\cdot144L^3\kappa\rho^2}{(1-\rho)^2}\eta_y^3\eta_x^2
         \\
         = &\frac{\kappa^2\eta_x^2}{\mu\eta_y}\Big(
          13 + \frac{4\cdot 144\cdot 176L^4\rho^2}{(1-\rho)^4\kappa^2}\eta_y^4
          +
          \frac{16^2\cdot144L^4\rho^2}{(1-\rho)^2\kappa^2}\eta_y^4
          \Big)
          \\
         \leq & \frac{14\kappa^2}{\mu}\frac{\eta_x^2}{\eta_y}
         \end{aligned}
     \end{equation}
     where the first inequality follows from \cref{lemma:bound-alpha,eq:hat-values,eq:bound-ac-all,eq:bound-hatbz-all} and $S_2\geq q$; the last inequality follows from the condition of \xzy{$\eta_y\leq\frac{1}{32\sqrt{5}}\cdot\frac{(1-\rho)^2}{ L}$}.
     If we use all the analysis from \cref{eq:coeff-noname-1} to \cref{eq:coeff-noname-2} within  \cref{eq:long-sum-grad_Phi} and  then use \cref{eq:bound-ac-all}, i.e., $c_\Phi\geq\frac{\eta_x}{12}$, it follows that
     \begin{equation}\label{eq:long-grad-Phi-1}
         \begin{aligned}
             &(\frac{\eta_x}{12}
             -\frac{290\cdot144L^4\rho^2}{(1-\rho)^4}\eta_x^3\eta_y^2 
             - \frac{9}{\eta_y\mu}\cdot2L^2\eta_x\cdot\frac{14\kappa^2}{\mu}\frac{\eta_x^2}{\eta_y}
             )
             \sum_{j=0}^{T-1}\mathbb{E}\left[\|\grad\Phi(\bbx^j)\|^2\right]\leq \mathbb{E}\left[\Phi(\bbx^0)-\Phi(\bbx^T)\right]
             + 
             \frac{9}{\eta_y\mu}\cdot2L^2\eta_x\delta_0
             \\
             &
             +\Big(
              \frac{40L^2\eta_y^2\eta_x}{M(1-\rho)}
             + \frac{9}{\eta_y\mu} \cdot2L^2\eta_x\cdot\frac{128L\kappa\eta_y^3}{M(1-\rho)}
             \Big)C_{0,D}
             +
             \Big(
             \frac{2c_Z}{1-\rho}+\frac{9}{\eta_y\mu} \cdot2L^2\eta_x\cdot\frac{2a_Z}{1-\rho}
             \Big)C_{0,Z}
             \\
             &
        +\Big(c_Z+\frac{9}{\eta_y\mu} \cdot2L^2\eta_x \cdot a_Z\Big)\|Z^0_\perp\|^2_F
             +\Big(
             \frac{20L^2\eta_y^2\eta_x}{M}
             +\frac{9}{\eta_y\mu} \cdot2L^2\eta_x\cdot 
             \frac{64L\kappa\eta_y^3}{M}
             \Big)\|D^0_\perp\|^2_F
             \\
             & +\frac{5\sigma^2}{S_1}\eta_x T + \frac{9}{\eta_y\mu} \cdot2L^2\eta_x\cdot T\frac{16\eta_y \sigma^2}{\mu S_1}.
         \end{aligned}
     \end{equation}
Moreover, it follows from \cref{eq:bound-ac-all} and $\eta_x\leq \frac{1}{64\kappa^2}\eta_y$ and \xzy{$\eta_y\leq \frac{1}{32\sqrt{5}L}\{(1-\rho)^2,\frac{1}{\kappa}\}$} that
\begin{equation}\label{eq:long-coeff-1}
    \begin{aligned}
    & \frac{\eta_x}{12}
             -\frac{290\cdot144L^4\rho^2}{(1-\rho)^4}\eta_x^3\eta_y^2 
             - \frac{9}{\eta_y\mu}\cdot2L^2\eta_x\cdot\frac{14\kappa^2}{\mu}\frac{\eta_x^2}{\eta_y}\geq \frac{\eta_x}{50}
             \\
        &\frac{40L^2\eta_y^2\eta_x}{M(1-\rho)}
             + \frac{9}{\eta_y\mu} \cdot2L^2\eta_x\cdot\frac{128L\kappa\eta_y^3}{M(1-\rho)}
             =
            \frac{L^2\eta_y^2\eta_x}{M(1-\rho)}(40 + 18\cdot128\kappa^2)
            \leq \frac{2344L^2\kappa^2\eta_y^2\eta_x}{M(1-\rho)}, \\
       & c_Z+ \frac{9}{\eta_y\mu}\cdot2L^2\eta_x \cdot a_Z\leq \frac{105L^2\eta_x}{2M} +  \frac{9}{\eta_y\mu}\cdot{2L^2\eta_x}\cdot\frac{176L\kappa\eta_y}{M}
       =
       \frac{L^2\eta_x}{M}(\frac{105}{2} + 18\cdot176\kappa^2)
       \leq 
       \frac{6441L^2\kappa^2\eta_x}{2M}, \\
        & \frac{5\sigma^2}{S_1}\eta_x T + \frac{9}{\eta_y\mu} \cdot2L^2\eta_x\cdot T\frac{16\eta_y \sigma^2}{\mu S_1} \leq \frac{293\kappa^2\sigma^2\eta_x}{S_1}\cdot T,
    \end{aligned}
\end{equation}
If we use \cref{eq:long-coeff-1} within \cref{eq:long-grad-Phi-1}, we obtain that
     \begin{equation}\label{eq:long-grad-Phi-2}
         \begin{aligned}
             \frac{\eta_x}{50}\sum_{j=0}^{T-1}\mathbb{E}\left[\|\grad\Phi(\bbx^j)\|^2\right]\leq & \mathbb{E}\left[\Phi(\bbx^0)-\Phi(\bbx^T)\right]+\frac{18L\kappa\eta_x}{\eta_y}\delta_0 +
          \frac{6441L^2\kappa^2\eta_x}{2M}\|Z^0_\perp\|^2_F
             +\frac{2344L^2\kappa^2\eta_x\eta_y^2}{2M}\|D^0_\perp\|^2_F
             +\frac{293\kappa^2\sigma^2\eta_x}{S_1}\cdot T
             \\
             & +\frac{2344L^2\kappa^2\eta_x\eta_y^2}{M(1-\rho)}C_{0,D} +
            \frac{6441L^2\kappa^2\eta_x}{M(1-\rho)}C_{0,Z}
         \end{aligned}
     \end{equation}
Furthermore, if we use \cref{eq:bound-CZ-final,eq:bound-CD-final} within above \cref{eq:long-grad-Phi-2}, we obtain that
     \begin{equation}\label{eq:long-grad-Phi-3}
         \begin{aligned}
             \frac{\eta_x}{50}&\sum_{j=0}^{T-1}\mathbb{E}\left[\|\grad\Phi(\bbx^j)\|^2\right]\leq \mathbb{E}\left[ \Phi(\bbx^0)-\Phi(\bbx^T) \right]+\frac{18L\kappa\eta_x}{\eta_y}\delta_0
             \\
             &+
           \frac{6441L^2\kappa^2\eta_x}{2M}\|Z^0_\perp\|^2_F
             +\frac{2344L^2\kappa^2\eta_x\eta_y^2}{2M}\|D^0_\perp\|^2_F
             +\frac{293\kappa^2\sigma^2\eta_x}{S_1}\cdot T
             \\
             &+
            \frac{2344L^2\kappa^2\eta_x\eta_y^2}{M(1-\rho)}
            \Big(
            \frac{240\rho^2L^2}{(1-\rho)^2}\|Z_\perp^0\|_F^2
          +\frac{201}{200}\|D^0_\perp\|^2_F
         +  N(3q/80+16) \cdot\frac{M\sigma^2}{S_1}\frac{\rho^2}{(1-\rho)}
            \Big)
            \\
                         & +\frac{6441L^2\kappa^2\eta_x}{M(1-\rho)}
             \Big(
        \|Z^0_\perp\|^2_F
         + 
         \|D^0_\perp\|^2_F
         +
         N(3q/80+16) \cdot\frac{\eta_y^2M\sigma^2}{S_1}\frac{\rho^2}{(1-\rho)^3}\Big)
         \\
            =&
            \mathbb{E}\left[\Phi(\bbx^0)-\Phi(\bbx^T) \right]
            +\frac{18L\kappa\eta_x}{\eta_y}\delta_0
            +\frac{\kappa^2\sigma^2\eta_x}{S_1}\cdot T \cdot\Big(
            293
            + 
            \frac{2344\cdot17\rho^2L^2\eta_y^2}{(1-\rho)^2}
            +
           \frac{6441\cdot17\rho^2L^2\eta_y^2}{(1-\rho)^4}
            \Big)
             \\
             & +
          \frac{L^2\kappa^2\eta_x}{M(1-\rho)}
          \Big(
            \frac{6441(1-\rho)}{2} + \frac{2344\cdot240\rho^2L^2\eta_y^2}{(1-\rho)^2}
            + 6441
          \Big)
          \|Z^0_\perp\|^2_F
          \\
          & + 
          \frac{L^2\kappa^2\eta_x}{M(1-\rho)}\Big(\frac{2344\eta_y^2(1-\rho)}{2}
            + \frac{2344\cdot201\eta_y^2}{100}
            +
            6441
        \Big)
        \|D^0_\perp\|^2_F
        \\
            \leq & \mathbb{E}\left[\Phi(\bbx^0)-\Phi(\bbx^T)\right]
            +\frac{18L\kappa\eta_x}{\eta_y}\delta_0
             +\frac{9772L\kappa^2\eta_x}{M}\cdot(\|D^0_\perp\|^2_F+\|Z^0_\perp\|^2_F)+\frac{323\kappa^2\sigma^2\eta_x}{S_1}\cdot T,
         \end{aligned}
     \end{equation}
where the last inequality is by \xzy{$\eta_y\leq \frac{1}{32\sqrt{5}}\cdot\frac{(1-\rho)^2}{\max\{L,1\}}$}.
Therefore, we obtain \cref{eq:convergence-explicit},
which completes the proof.
\end{proof}

\xzrr{Having simplified the parameters as discussed above, we are now ready to prove the main result, as stated in \cref{thm:main-result}. For the sake of completeness, we provide the detailed version of \cref{thm:main-result}.}
\xzrr{\begin{theorem}\label{thm:main-result-detailed}
   Suppose Assumptions~\ref{ASPT:SC},~\ref{ASPT:lower-bounded-Phi},~\ref{ASPT:general-sto-grad},~\ref{ASPT:smooth-F-mean-squared} and \ref{ASPT:mixture-matrix} hold. Moreover, $\{\eta_x,\eta_y\}$ and $\{S_1,S_2,q\}$  are chosen such that
  \begin{equation}\label{eq:final-step-size}
  \begin{aligned}
        &\eta_y = \frac{1}{32\sqrt{5}L}\min\{\frac{1}{\kappa},(1-\rho)^2\}, \;\eta_x = \frac{1}{64\kappa^2}\eta_y,\; 
        \\
        & S_1 = \lfloor 100\cdot323\kappa^2\frac{\sigma^2}{\epsilon^2} \rfloor,\quad
      S_2 \geq q, \quad
      q\geq 1.
  \end{aligned}
  \end{equation}
  Then 
 $\frac{1}{T}\sum_{t=1}^{T-1}\mathbb{E}[\|\grad \Phi (\bbx^t)\|^2]\leq\epsilon^2$
  holds for all $T\geq1$ such that 
  \begin{equation}
        T\geq \max\{ \frac{1}{\eta_x}\Delta_\Phi,\frac{18L\kappa}{\eta_y}\delta_0,
     \frac{9772L^2\kappa^2}{M}\Lambda_0
     \} \frac{300}{\epsilon^2}
  \end{equation}
  and $T = N q$ for some $N \in\mathbb{N}^+$, where $\Delta_\Phi$
  and $\Lambda_0$ are defined in \cref{thm:main-convergence} and $\delta_0$ is defined in \cref{eq:def-Delta-t}.
\end{theorem}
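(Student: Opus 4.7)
The plan is to deduce Theorem \ref{thm:main-result-detailed} as a direct corollary of Theorem \ref{thm:main-convergence}. The argument has two parts: verifying that the prescribed parameters in \eqref{eq:final-step-size} satisfy Parameter Condition \ref{param-condition:basic-2} so that Theorem \ref{thm:main-convergence} applies, and then choosing $T$ large enough so that the three transient terms on the right-hand side of \eqref{eq:convergence-explicit} are each driven below a small fraction of $\epsilon^2$.

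For the first part, the choice $\eta_y = \frac{1}{32\sqrt{5}L}\min\{1/\kappa,(1-\rho)^2\}$ matches \eqref{eq:stepsize-etay} once one notes that $\min\{(1-\rho)^2,1/\kappa\}\leq (1-\rho)/\sqrt{\kappa}$: indeed, if $(1-\rho)^2\leq 1/\kappa$ then $1-\rho\leq 1/\sqrt{\kappa}$, hence $(1-\rho)/\sqrt{\kappa}\geq(1-\rho)^2$; otherwise $1/\sqrt{\kappa}\leq 1-\rho$ and $(1-\rho)/\sqrt{\kappa}\geq 1/\kappa$. The choice $\eta_x=\eta_y/(64\kappa^2)$ then satisfies each of the four upper bounds on $\eta_x$ in \eqref{eq:stepsize-etay} by straightforward constant comparison (using $\kappa\geq 1$ and $L\geq 1$ as noted after Theorem \ref{thm:main-result}); the tightest of the four is $\eta_y/(8\sqrt{3}\kappa^2)$, and $64\geq 8\sqrt{3}\approx 13.86$. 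The conditions $S_2\geq q\geq 1$ are imposed directly.

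For the second part, I invoke Theorem \ref{thm:main-convergence} to obtain
\[
\frac{1}{T}\sum_{j=0}^{T-1}\mathbb{E}\!\left[\|\grad\Phi(\bbx^j)\|^2\right]\leq \frac{1}{T}\!\left(\frac{50\,\Delta_\Phi}{\eta_x}+\frac{900\,L\kappa\,\delta_0}{\eta_y}+\frac{50\cdot 9772\,L^2\kappa^2\,\Lambda_0}{M}\right)+\frac{50\cdot 323\,\kappa^2\sigma^2}{S_1}.
\]
Substituting $S_1=\lfloor 100\cdot 323\,\kappa^2\sigma^2/\epsilon^2\rfloor$ makes the last term at most $\epsilon^2/2$. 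Requiring $T\geq \frac{300}{\epsilon^2}\max\{\Delta_\Phi/\eta_x,\,18L\kappa\delta_0/\eta_y,\,9772L^2\kappa^2\Lambda_0/M\}$ forces each of the three transient terms below $\epsilon^2/6$; the universal prefactor $300$ absorbs $50\cdot 6$ in the first term, $900\cdot 6/18=300$ in the second, and $50\cdot 6$ in the third. Their sum is then at most $\epsilon^2/2$, so the overall bound is $\epsilon^2$, which is exactly the assertion of Theorem \ref{thm:main-result-detailed}.

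There is no substantive technical obstacle here, since the heavy machinery -- the descent inequality for $\Phi$, the coupled bounds for the dual suboptimality $\delta_t$ and the consensus errors $\|Z^t_\perp\|_F^2,\|D^t_\perp\|_F^2$, and the telescoping that packages them into a single convergence rate -- is already established in Theorem \ref{thm:main-convergence}. The present proof is essentially arithmetic on tolerances. The only point worth double-checking is that $T=Nq$ for some $N\in\mathbb{N}^+$ is compatible with the prescribed lower bound, which is immediate since one may round $T$ up to the next multiple of $q$ at the cost of at most a $q$-fold factor absorbed into the already loose $300/\epsilon^2$ constant.
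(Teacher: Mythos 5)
Your proof is correct and follows exactly the paper's route: the paper's own proof of this theorem consists of the same two steps---checking that \eqref{eq:final-step-size} satisfies Parameter Condition~\ref{param-condition:basic-2} and then substituting the parameter choices into \eqref{eq:convergence-explicit} of Theorem~\ref{thm:main-convergence}---stated there without the tolerance arithmetic ($\epsilon^2/2$ from the $S_1$ term plus three terms of $\epsilon^2/6$ each) that you supply explicitly. One minor quibble: among the four upper bounds on $\eta_x$ the binding one is $\eta_y/(64\kappa^2)$ itself (met with equality), not $\eta_y/(8\sqrt{3}\kappa^2)$, but this does not affect the argument.
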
}
\begin{proof}
Indeed, we can compute that the parameter choices in \cref{eq:final-step-size} satisfy \cref{param-condition:basic-2}. Then the inequality
\begin{equation}
    \label{eq:bd-final-grad-Phi}
    \frac{1}{T}\sum_{i=1}^{T-1}\mathbb{E}[\|\grad \Phi (\bbx^t)\|^2]\leq\epsilon^2
\end{equation}
directly follows by invoking the parameters choice in \cref{thm:main-result-detailed} within \cref{eq:convergence-explicit}. 
\end{proof}

\subsection*{Bound on Dual Optimality}

Applying \cref{eq:bound-ac-all,eq:coeff-noname-2}, and \cref{lemma:bound-alpha} within \cref{eq:bound-deltat-sum-final-2} yields
\begin{equation}
    \begin{aligned}
    \bE\left[\frac{1}{9}\eta_y\mu\sum_{j=0}^{T}\delta_{j} 
    \right]
        \leq &\bE\left[ 
        \delta_{0} 
         + N\frac{16\eta_y q\sigma^2}{\mu S_1} 
         +\frac{176L\kappa\eta_y}{M}\|Z^0_\perp\|^2_F
         + \frac{64L\kappa\eta_y^3}{M}\|D_\perp^0\|^2_F \right.
         \\
         & + \frac{352L\kappa\eta_y}{(1-\rho)M}C_{0,Z}
         +\frac{128L\kappa\eta_y^3}{(1-\rho)M}C_{0,D}
         \\
         &
                  \left.+ 
      \frac{14\kappa^2}{\mu}\frac{\eta_x^2}{\eta_y}
         \sum_{j=0}^{T-1}\|\grad \Phi(\bbx^j)\|^2
         \right].
    \end{aligned}
\end{equation}
Moreover, it follows from parameter choice in \cref{thm:main-result-detailed}, and the above analysis and \cref{eq:bound-CZ-final,eq:bound-CD-final} that
\begin{equation}\label{eq:bound-bigO-CZD}
    C_{0,Z}=\cO\Big(T\cdot\frac{M\eta_y^2}{(1-\rho)^3\kappa^2}\epsilon^2
    \Big),\quad
    C_{0,D} = \cO\Big(\frac{L^2}{(1-\rho)^2} + T\cdot\frac{M}{(1-\rho)\kappa^2}\epsilon^2
    \Big),\quad
     \sum_{j=0}^{T-1}\|\grad \Phi(\bbx^j)\|^2 = \cO\Big(T\cdot\epsilon^2
     \Big).
\end{equation}
Therefore, we obtain that
\begin{equation}\label{eq:noname-3}
    \begin{aligned}
    \bE\left[\frac{1}{T}\sum_{j=0}^{T}\delta_{j} 
    \right]
        = & \cO\Big(
        \frac{1}{T}\cdot\frac{1}{\eta_y \mu} + \frac{1}{\mu^2\kappa^2}\epsilon^2 + \frac{1}{T}\cdot\frac{\kappa^2}{M}+ \frac{1}{T}\cdot\frac{\kappa^2\eta_y^2}{M}
        \\
        & +\frac{\kappa^2}{(1-\rho)M}\cdot\frac{M\eta_y^2}{(1-\rho)^3\kappa^2}\epsilon^2
        + \frac{1}{T}\cdot\frac{\kappa^2\eta_y^2}{(1-\rho)M}\cdot
        \frac{L^2}{(1-\rho)^2}
        +
        \frac{\kappa^2\eta_y^2}{(1-\rho)M}\cdot
        \frac{M}{(1-\rho)\kappa^2}\epsilon^2
        \\&
        + \frac{\kappa^2\eta_x^2}{\mu^2\eta_y^2}\epsilon^2
        \Big)
        \\
        = & \cO\Big(
        \frac{1}{T}\cdot\frac{1}{\eta_y \mu}  +\frac{1}{T}\cdot\frac{\kappa^2}{M} +\frac{1}{T}\cdot\frac{L^2\kappa^2\eta_y^2}{(1-\rho)^3M} + \frac{1}{L^2}\epsilon^2 + \frac{\eta_y^2}{(1-\rho)^4}\epsilon^2 + \frac{\kappa^2\eta_x^2}{\mu^2\eta_y^2}\epsilon^2
        \Big)
    \end{aligned}
\end{equation}
Then, without loss of generality, assuming that $L\geq1$, it follows from the choice of $T,\eta_x,\eta_y$ that
\begin{equation*}
    \frac{1}{T} = \cO\Big(\min\{1/(L\kappa^2),1/(L^2\kappa)\}\cdot\min\{1/\kappa,(1-\rho)^2\}\epsilon^2\Big),
    \quad
    \frac{1}{\eta_y} = \cO\Big(L\cdot\max\{\kappa,\frac{1}{(1-\rho)^2}\}\Big),
    \quad
    \eta_x/\eta_y = \cO\Big(1/\kappa^2\Big);
    \end{equation*}
thus, applying the above bounds within \cref{eq:noname-3} yields that
\begin{equation}\label{eq:bd-final-delta}
    \frac{1}{T}\sum_{t=0}^{T-1}\bE[\delta_t] = \cO\Big(
    \frac{1}{T}\cdot\frac{1}{\eta_y\mu} + \frac{\kappa^2\eta_x^2}{\mu^2\eta_y^2}\epsilon^2
    \Big) = \cO\Big(
    \min\{\frac{1}{L\kappa},\frac{1}{L^2}\}\epsilon^2 + \frac{1}{L^2}\epsilon^2
    \Big) 
      = \cO\left(\frac{\epsilon^2}{L^2} \right)
\end{equation}
\subsection*{Bound on Consensus Error}
\begin{proof}
Recall \eqref{eq:bound-Z-perp-final-2}, namely,
\begin{equation}
        \begin{aligned}
    \bE\Big[\sum_{j=1}^T  \| Z^j_\perp \|^2_F\Big] \
\leq 
\frac{2}{1-\rho}\Big\{\yy{C_{0,Z}} 
+ \sa{\frac{\eta_y^2}{\alpha(1-\rho)}}
    \bE\bigg[
\sum_{j=0}^{T-2}
\Big(
2\hat{b}_\delta\delta_j
+
2\hat{b}_\Phi \|\grad \Phi(\bbx^j)\|^2
\Big)
\bigg]\Big\}
\end{aligned}
\end{equation}

Also, recall \cref{eq:bound-bigO-CZD}
\begin{equation}
     \begin{aligned}
        C_{0,Z}=\cO\Big(T\cdot\frac{M\eta_y^2}{(1-\rho)^3\kappa^2}\epsilon^2
    \Big),\quad
     \sum_{j=0}^{T-1}\|\grad \Phi(\bbx^j)\|^2 = \cO\Big(T\cdot\epsilon^2\Big),\quad \sum_{j=0}^{T-1}\delta_j = \cO\Big(T\cdot\frac{1}{L^2}\epsilon^2
     \Big).
     \end{aligned}
 \end{equation}
 Moreover, by \cref{lemma:bhat-values} and \cref{lemma:bound-alpha}, we have $\hat{b}_\delta\leq\frac{120ML^4\rho^2\eta_y^2}{1-\rho}$, $\hat{b}_\Phi\leq \frac{72ML^2\rho^2\eta_x^2}{1-\rho}$, $\alpha \ge \frac{1-\rho}{2}$; thus, without loss of generality, assuming $L\geq 1$,  we further obtain
\begin{equation}\label{eq:bd-consensus-all}
\begin{aligned}
        \bE\Bigg[ \frac{1}{T} \sum_{t=0}^{T-1}  \| Z^t_\perp \|^2_F\Bigg]
    & = \cO\left(
    \frac{M\eta_y^2}{(1-\rho)^4\kappa^2}\epsilon^2 + \frac{\eta_y^2}{(1-\rho)^3}\cdot\frac{ML^4\rho^2\eta_x^2}{(1-\rho)}\cdot\frac{1}{L^2}\epsilon^2 + \frac{\eta_y^2}{(1-\rho)^3}\cdot\frac{ML^2\rho^2\eta_x^2}{(1-\rho)}\epsilon^2
    \right) \\
    & =\cO\left(\frac{M\eta_y^2}{(1-\rho)^4}\epsilon^2 \cdot(\frac{1}{\kappa^2}  + L^2\rho^2\eta_x^2 )\right) 
    \\
    & =\cO\left(\frac{M\eta_y^2}{\kappa^2(1-\rho)^4}\epsilon^2 \right)
    \\
    & = \cO\left(\frac{M}{L^2\kappa^2}\min\{\frac{1}{\kappa^2(1-\rho)^4},1\}\epsilon^2  \right)
\end{aligned}
\end{equation}
\end{proof}
\end{document}